\newcommand{\pp}[2]{\frac{\partial#1}{\partial#2}}
\theoremstyle{definition}
\newtheorem{thm}{Theorem}[section]
\newtheorem{lem}[thm]{Lemma}
\newtheorem{prop}[thm]{Proposition}
\newtheorem{defn}[thm]{Definition}
\newtheorem{rem}[thm]{Remark}
\newtheorem{ex}[thm]{Example}
\numberwithin{equation}{section}
\newcommand{\vel}{\mathrm{VEL}}
\newcommand{\carrier}{\mathrm{Carrier}}
\newcommand{\Pac}{\mathcal{P}}
\newcommand{\area}{\mathrm{area}}
\newcommand{\dis}{\mathrm{d}}
\title{Infinite ideal polyhedra in hyperbolic 3-space: existence and rigidity}
\author{Huabin Ge, Hao Yu, Puchun Zhou
	%\thanks{The authors are supported by the project etc.}
}
\newcommand{\HH}{\mathbb{H}}
\newcommand{\red}{\textcolor{red}}
\date{}
\providecommand{\classification}[1]
{
	\small	
	\textbf{Mathematics Subject Classification (2020):} #1
}
\begin{document}
	\maketitle
	\begin{abstract}
		In the seminal work \cite{MR1370757}, Rivin obtained a complete characterization of finite ideal polyhedra in hyperbolic 3-space by the exterior dihedral angles. Since then, the characterization of infinite hyperbolic polyhedra has become an extremely challenging open problem. By studying embedded ideal circle patterns (ICPs), we characterize infinite ideal polyhedra (IIP) and give a complete existence theorem under Rivin-type angle conditions, and establish rigidity/uniformization results under natural sharp hypotheses. Specifically, we establish the existence and rigidity of embedded ICPs in the plane, and then the existence and rigidity of the IIP are naturally inferred. We further solve the type problem for infinite ICPs by proving a uniformization theorem. In contrast to the uniformization theorem obtained by He and Schramm in \cite{HE,He_schramm}, our uniformization theorem, together with Example \ref{keyexample}, reveals a very different phenomenon: for ICPs with arbitrary intersection angles, our example demonstrates that the VEL-parabolicity and ICP-parabolicity are not equivalent (while in He and Schramm's settings, VEL-parabolicity and CP-parabolicity are equivalent), which gives a negative answer to the type problem with generalized intersection angles. 
        Therefore, our results on the type problem of infinite ICPs and the existence of IIP are sharp. 
        To prove our results, we develop a uniform Ring Lemma via the technique of pointed Gromov-Hausdorff convergence for ICPs.
        \\
	\\	
        \classification{05C81, 52B10, 52C10, 52C26, 51M10, 60D05}	
		
	\end{abstract}
	\tableofcontents
	\section{Introduction}
        \setcounter{thm}{-1}
The characterization of convex polyhedra is a classical and fundamental problem in mathematics, with a rich history and numerous significant results. Among the most celebrated is Cauchy's Rigidity Theorem, which states that a convex polyhedron is uniquely determined by the shape of its faces. In 1832, Steiner asked for a combinatorial characterization of convex polyhedra inscribed in the sphere. This was considered intractable, and in 1996 Rivin solved it cleanly and thoroughly in \cite{MR1370757}. In the Klein model of hyperbolic 3-space, such a polyhedron is equivalent to an ideal polyhedron. Rivin completely characterized finite ideal polyhedra in $\HH^3$. His elegant theorem says:
	\begin{thm}(Rivin)\label{riv}
		Let $\mathcal{D}=(V,E,F)$ be a finite cellular decomposition of the sphere $\mathbb{S}^2$ and let $\Theta\in(0,\pi)^E$, then there exists an ideal polyhedron $P$ which is combinatorially equivalent to the Poincar\'e dual of $\mathcal{D}$ with dihedral angle $\Theta(e^*)=\Theta(e)$ if and only if the following conditions hold:
		\begin{enumerate}[($C_1$)]
			\item    
			If a closed curve $e_1,...,e_n$ is a boundary of a face $f$ in $\mathcal{D}$, then $\sum_{i=1}^n(\pi-\Theta(e^*_i))= 2\pi$. 
			\item If $e_1,...,e_n$ is not a boundary of a face in $\mathcal{D}$, then $\sum_{i=1}^n(\pi-\Theta(e^*_i))>2\pi$. 
			
		\end{enumerate}
Moreover, the ideal hyperbolic polyhedron is unique up to isometry.
	\end{thm}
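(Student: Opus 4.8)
The plan is to pass from ideal polyhedra to circle patterns on the round sphere and then run a deformation (continuity) argument. Work in the Klein projective model of $\HH^3$: an ideal polyhedron combinatorially equivalent to the Poincar\'e dual of $\mathcal D$ is a convex Euclidean polytope $P$ with all vertices on $S^2_\infty$, its vertices indexed by $F$, its edges by $E$, and its faces by $V$. Each face of $P$ lies in an affine plane meeting $S^2_\infty$ in a round circle, so $P$ determines a circle pattern on $S^2$ with combinatorics $\mathcal D$, the intersection angle of two circles being the dihedral angle of $P$ along the common edge (in the sign convention of the statement). Stereographic projection turns this into a circle pattern in $\mathbb{C}$, which is why the statement belongs to the circle-pattern circle of ideas developed later in the paper. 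Under this dictionary, condition (C1) at a face $f$ of $\mathcal D$ says exactly that the petals around the corresponding ideal vertex of $P$ close up---equivalently, that a horospherical cross-section there is a genuine convex Euclidean polygon---while (C2) is the non-degeneracy condition that forbids the prismatic collapses.

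\noindent First I would establish necessity. Given such a $P$, cut it by a small horosphere centered at the ideal vertex dual to a face $f$ of $\mathcal D$; the slice is a convex Euclidean polygon whose interior angles are the dihedral angles of the edges of $P$ incident to that vertex, and summing its exterior angles to $2\pi$ yields (C1). For (C2), a closed edge-path $e_1,\dots,e_n$ of $\mathcal D$ that does not bound a face separates $S^2$ into two disks, each containing a vertex of $\mathcal D$, hence a face of $P$; cutting $P$ along the totally geodesic ideal polygon spanned by the dual edges $e_i^*$ and applying a Gauss--Bonnet/angle-defect estimate to the two pieces---each with strictly positive defect because it contains an honest face of $P$---gives the strict inequality, the strictness being precisely what degenerates in a prism.

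\noindent Next comes the continuity method. Let $\mathcal{AP}(\mathcal D)\subset(0,\pi)^E$ be the set of $\Theta$ obeying (C1) and (C2); since (C1) are linear equalities and (C2) strict linear inequalities, $\mathcal{AP}(\mathcal D)$ is open and convex, hence connected and simply connected, and it is nonempty by an explicit construction. Let $\mathcal P(\mathcal D)$ be the space of ideal polyhedra realizing $\mathcal D$ modulo $\mathrm{Isom}^+(\HH^3)$; a standard count shows $\mathcal P(\mathcal D)$ and $\mathcal{AP}(\mathcal D)$ are smooth manifolds of equal dimension. The dihedral-angle map $\Phi\colon\mathcal P(\mathcal D)\to\mathcal{AP}(\mathcal D)$ is well defined by the previous step, and the theorem says $\Phi$ is a homeomorphism. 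I would show (i) $\Phi$ is a local homeomorphism, i.e.\ $d\Phi$ is injective---this is infinitesimal rigidity of ideal polyhedra, which I would derive from rigidity of ideal circle patterns via a maximum principle on the $1$-skeleton of $\mathcal D$, or equivalently from strict concavity of the associated volume functional of Rivin (Bobenko--Springborn) whose Hessian is the discrete Laplacian of the pattern; and (ii) $\Phi$ is proper---a sequence of ideal polyhedra whose angles stay in a compact subset of $\mathcal{AP}(\mathcal D)$ cannot degenerate, since any collapse (an edge angle tending to $0$ or $\pi$, or two ideal vertices colliding) would in the limit violate (C1) or the strict part of (C2). A proper local homeomorphism onto the connected, simply connected $\mathcal{AP}(\mathcal D)$ is a homeomorphism: surjectivity gives existence, and injectivity---equivalently, uniqueness of the critical point of the strictly concave functional---gives rigidity up to isometry.

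\noindent The main obstacle is step (i) together with the degeneration analysis in step (ii): constructing the strictly concave functional whose Hessian is the Laplacian of the circle pattern, so that $d\Phi$ is invertible and the critical point unique, and then verifying that the strict inequalities in (C2) really do obstruct every way in which an ideal polyhedron could collapse. Everything else---necessity, convexity and nonemptiness of $\mathcal{AP}(\mathcal D)$, the dimension count, and the topological endgame---is routine once the translation into circle patterns is in place.
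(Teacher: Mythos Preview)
The paper does not prove this statement: Theorem~\ref{riv} is quoted as Rivin's theorem with a citation to \cite{MR1370757}, and serves purely as background motivating the paper's extension to the infinite case. There is therefore no ``paper's own proof'' to compare your proposal against.

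That said, your outline is a faithful sketch of the standard route to results of this type (and close in spirit to Rivin's original argument and its variational reformulations by Bobenko--Springborn and others): the dictionary with circle patterns, necessity via horospherical links and Gauss--Bonnet, and sufficiency via a continuity/variational argument with infinitesimal rigidity plus properness. The one place where your sketch is thin is the properness step: ruling out all degenerations of an ideal polyhedron with dihedral angles in a compact subset of $\mathcal{AP}(\mathcal D)$ is where most of the technical work lies, and ``any collapse would violate (C1) or (C2)'' hides a genuine case analysis (edges shrinking, faces becoming coplanar, vertices escaping to infinity along the boundary of Teichm\"uller-type moduli). If you were to write this up, that is the part that would need the most care.
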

    
Rivin further suggested \textbf{extending Theorem \ref{riv} to hyperbolic polyhedra with infinitely many vertices (assume that all vertices have a finite degree)} in \cite[section 12]{MR1370757}, which are called infinite ideal polyhedra (\textbf{IIP} for short) in this paper. Related works first appeared in \cite{rivin2003combinatorial}, where Rivin investigated singular Euclidean structures with infinite cone points of certain Delaunay triangulations with prescribed dihedral angles. This unified his results in \cite{Rivin_Euclidean}. He also suggested \textbf{studying the embedded Euclidean structure and considering the type and rigidity problems} in \cite[section 6.1]{rivin2003combinatorial}. By studying ideal circle patterns (\textbf{ICPs} for short), we will give affirmative answers to all these questions in the following Theorems \ref{thm-exist-IIP}, \ref{uniformization} and \ref{thm-rigidity-IIP}. 
%语法是不是对的

Thurston  (see \cite[Chapter 13]{thurston1980geometry} or \cite{MR2022715,Huang2017}) observed that convex hyperbolic polyhedra in $\mathbb{H}^3$ produce specific circle patterns in $\partial\mathbb{H}^3=\mathbb{S}^2$. The ideal boundaries of the hyperbolic planes that define the faces of such a polyhedron $P$ form a circle pattern, whose Poincar\'e dual is combinatorially equivalent to $P$. Therefore, the study of infinite ideal polyhedra translates into the study of infinite ideal circle patterns in the plane. By definition, \( \mathcal{D} = (V, E, F) \) is an \textbf{infinite disk cellular decomposition} if it is a cellular decomposition of a simply connected planar domain with only one infinite end. Our first result says: 
\begin{thm}[Existence of ICPs]\label{infinite_existence}
    Let $\mathcal{D}=(V,E,F)$ be an infinite disk cellular decomposition,  and $\Theta\in(0,\pi)^E$ be the prescribed intersection angle with $\sup_{e\in E}\Theta(e)<\pi$. If $(C_1)$ and $(C_2)$ hold, there exists an embedded ICP $\mathcal{P}$ of $\mathcal{D}$ with angle $\Theta$. 
\end{thm}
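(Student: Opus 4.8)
The plan is to realize $\mathcal{D}$ by exhausting it with finite subcomplexes, solving each finite piece with Rivin's theorem, and passing to a geometric limit controlled by a uniform Ring Lemma. First I would fix a base vertex $v_0$ and write $\mathcal{D}=\bigcup_{n\ge 1}\mathcal{D}_n$ with $\mathcal{D}_1\subset\mathcal{D}_2\subset\cdots$ finite, simply connected, and exhausting $\mathcal{D}$ (each cell eventually interior to some $\mathcal{D}_n$), with $v_0$ interior to $\mathcal{D}_1$. For each $n$ I would cap $\mathcal{D}_n$ off to a cellular decomposition $\widehat{\mathcal{D}}_n$ of $\mathbb{S}^2$ by adjoining one vertex in the outer region and coning $\partial\mathcal{D}_n$ to it, choosing the new edge weights so that $(C1)$ and $(C2)$ survive on $\widehat{\mathcal{D}}_n$ while keeping $\Theta$ on the interior edges; Theorem \ref{riv} then yields an ideal polyhedron and hence, via Thurston's correspondence, an embedded ICP $\mathcal{P}_n$. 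Restricting to the circles indexed by $V(\mathcal{D}_n)$ gives an embedded ICP of $\mathcal{D}_n$ whose interior intersection angles are exactly $\Theta$. (Alternatively this finite step can be carried out by a direct variational argument modeled on Rivin's, for decompositions of a disk with prescribed boundary angles.)

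Next I would normalize: applying a M\"obius transformation of $\mathbb{S}^2=\partial\HH^3$ --- which preserves ICPs --- I arrange that the circle $C_{v_0}^{(n)}$ associated to $v_0$ is the unit circle centered at $0$ for every $n$. The heart of the argument is then the uniform Ring Lemma: for each fixed vertex $v$ and each $n$ with $v$ interior to $\mathcal{D}_n$, every neighboring circle $C_w^{(n)}$ has radius comparable to $r(C_v^{(n)})$, with comparability constants depending only on $\epsilon$ and the local combinatorics of $\mathcal{D}$ around $v$, not on $n$. Propagating this bound along a fixed combinatorial path from $v_0$ to $v$ and using $r(C_{v_0}^{(n)})=1$, I obtain uniform two-sided bounds $c(v)\le r(C_v^{(n)})\le C(v)$ and a uniform bound on $|\mathrm{center}(C_v^{(n)})|$: along the exhaustion no circle collapses to a point or escapes to a line. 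Establishing this Ring Lemma with $n$-independent constants, when the weights are only bounded away from $\pi$ and the degrees of $\mathcal{D}$ are not globally bounded, is precisely where the pointed Gromov--Hausdorff compactness for ICPs enters.

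With these estimates, a diagonal argument produces a subsequence along which $C_v^{(n)}\to C_v^\infty$ (in center and radius) for every vertex $v$, with $C_v^\infty$ a genuine circle. Continuity of intersection angles shows adjacent circles $C_v^\infty, C_w^\infty$ meet at angle $\Theta(vw)$; the incidence combinatorics and the flower-closing identity $(C1)$ pass to the limit, so $\mathcal{P}:=\{C_v^\infty\}_{v\in V}$ is an ICP of $\mathcal{D}$ with intersection angle $\Theta$. To see $\mathcal{P}$ is embedded: each $\mathcal{P}_n$ is embedded and the limit is non-degenerate, so $\mathcal{P}$ is locally embedded; a monodromy argument using that the carrier is simply connected upgrades this to global embeddedness --- any genuine overlap of two faces in $\mathcal{P}$ would, by the uniform control, already occur in $\mathcal{P}_n$ for large $n$, contradicting embeddedness of $\mathcal{P}_n$; as in Rivin's finite proof, condition $(C2)$ is exactly what prevents a separating combinatorial curve from collapsing in the limit.

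I expect the two genuine obstacles to be: (i) the uniform Ring Lemma itself --- obtaining constants independent of the exhaustion despite the absence of a global degree bound and the possibility of intersection angles degenerating to $0$, which is what forces the Gromov--Hausdorff argument; and (ii) passing from a non-degenerate geometric limit to a truly embedded pattern, i.e.\ ruling out overlaps and boundary escape in the limit and carrying $(C2)$ across the limit. The capping construction in the finite step is routine but must be done so as to preserve $(C1)$--$(C2)$ on $\widehat{\mathcal{D}}_n$.
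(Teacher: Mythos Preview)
Your overall strategy---exhaust by finite disk subcomplexes, solve each finite piece, normalize at a base vertex, control radii via a Ring Lemma along combinatorial paths, and extract a diagonal limit---is exactly the paper's argument. The paper's finite step is precisely your parenthetical ``direct variational argument'': Theorem~\ref{finite_existence} produces an embedded ICP of each finite $\mathcal{D}_n$ with all boundary circles internally tangent to a fixed disk, and the Ring Lemma invoked is the non-uniform one (Lemma~\ref{ring1}), whose constant depends on $(v,\Theta,\mathcal{D})$ through the ball $B_{6\pi/\epsilon}(v)$ and hence stabilizes once $n$ is large; this matches your description of constants depending on ``$\epsilon$ and the local combinatorics around $v$, not on $n$.''

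Your primary route through Rivin's theorem, however, has a genuine obstruction you should not call routine. Coning $\partial\mathcal{D}_n$ to a single apex forces, for each new triangular face, the relation $\theta_i+\theta_{i+1}=\pi-\Theta(e_i)$ on the cone-edge weights; cycling around the boundary gives a linear system that is generically inconsistent when $|\partial\mathcal{D}_n|$ is even and, even when solvable, need not land all $\theta_i$ in $(0,\pi)$. Worse, $(C2)$ must hold for every non-facial closed curve of $\widehat{\mathcal{D}}_n$, including those using cone edges, and there is no evident mechanism to enforce this. So the capping step, as written, is a gap; you should instead take the finite disk ICP from the variational principle (your alternative), which is what the paper does. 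The paper, incidentally, is terser than you about why the limit remains embedded; your local-to-global argument via the non-degenerate limit and simply-connected carrier is a reasonable way to fill that in.
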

The existence of IIP follows naturally from the correspondence between ideal circle patterns and ideal polyhedra.
\begin{thm}[Existence of IIP]\label{thm-exist-IIP}
		Let $\mathcal{D}=(V,E,F)$ be an infinite disk cellular decomposition, and $\Theta\in(0,\pi)^E$ be the prescribed intersection angle with $\sup_{e\in E}\Theta(e)<\pi$. Assuming $(C_1)$ and $(C_2)$, there exists an IIP in $\HH^3$ that is combinatorially equivalent to the Poincar\'e dual of $\mathcal{D}$ with the dihedral angle $\Theta(e^*)=\Theta(e)$.
	\end{thm}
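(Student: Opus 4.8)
The plan is to derive Theorem~\ref{thm-exist-IIP} as an essentially formal consequence of Theorem~\ref{infinite_existence} together with Thurston's correspondence between ideal circle patterns in $\partial\HH^3=\mathbb{S}^2$ and convex ideal polyhedra in $\HH^3$. First I would recall the geometric dictionary: a face $f$ of a convex ideal polyhedron $P$ lies in a hyperbolic plane $\Pi_f$, whose ideal boundary $\partial_\infty\Pi_f$ is a circle $C_f$ on $\mathbb{S}^2$; two faces $f,f'$ sharing an edge $e^*$ (dual to $e\in E$) meet along a geodesic whose endpoints are the two intersection points of $C_f$ and $C_{f'}$, and the exterior dihedral angle along that edge equals the intersection angle of the circles $C_f$ and $C_{f'}$. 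Thus the collection $\{C_f\}_{f\in F}$ is precisely a circle pattern combinatorially dual to $P$, and conversely, given an embedded ICP $\Pac$ realizing $\mathcal{D}$ with intersection angles $\Theta$, the family of hyperbolic planes bounded by the circles of $\Pac$, taken on the appropriate side, cuts out a convex region whose boundary is an ideal polyhedron combinatorially equivalent to the Poincar\'e dual of $\mathcal{D}$ with the prescribed exterior dihedral angles.

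Concretely, the argument runs as follows. By Theorem~\ref{infinite_existence}, conditions $(C1)$ and $(C2)$ guarantee an embedded ICP $\Pac$ of $\mathcal{D}$ with intersection angle $\Theta$. Work in the upper half-space model, identifying $\partial\HH^3$ with $\mathbb{S}^2=\widehat{\mathbb{C}}$ so that the circles of $\Pac$ are round circles in the plane. For each face $f\in F$ (equivalently each vertex $f^*$ of the dual polyhedron), let $\Pi_f\subset\HH^3$ be the totally geodesic plane with $\partial_\infty\Pi_f=C_f$, and let $H_f$ be the closed half-space bounded by $\Pi_f$ on the side corresponding to the interior of the face $f$ in the pattern (the side whose boundary disk is the one assigned to $f$ by the embedding). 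Set $P=\bigcap_{f\in F}H_f$. I would then verify three things: (i) $P$ is a convex subset of $\HH^3$, being an intersection of half-spaces; (ii) the combinatorics of $P$ match the Poincar\'e dual of $\mathcal{D}$ --- each $\Pi_f$ contributes a face, two faces $\Pi_f,\Pi_{f'}$ share an edge exactly when $C_f$ and $C_{f'}$ are tangent to a common edge of $\mathcal{D}$ (i.e.\ $f,f'$ are adjacent across some $e\in E$), and the vertices of $P$ are ideal, lying at the points of $\mathbb{S}^2$ where the circles around a face of $\mathcal{D}$ concur --- and here condition $(C1)$, which forces the circles around each face of $\mathcal{D}$ to pass through a common point, is exactly what makes the vertices ideal; (iii) the exterior dihedral angle of $P$ along the edge $e^*$ dual to $e$ equals the intersection angle $\Theta(e)$ of the two circles, by the standard computation that the dihedral angle between two geodesic planes equals the angle between their boundary circles.

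For the local-to-global combinatorial identification I would lean on the embeddedness of $\Pac$: because the pattern is embedded in the plane, the nerve of the circle configuration is a planar graph isomorphic to the $1$-skeleton of $\mathcal{D}$, and the carrier of the pattern tessellates the open disk; transferring this to $\HH^3$, the half-spaces $H_f$ fit together along genuine faces and edges without extra incidences, and every vertex of $\mathcal{D}$ (a face of the dual) yields an ideal vertex of $P$ of the correct finite degree. One should also note that the single infinite end of $\mathcal{D}$ corresponds to the behavior of the pattern near $\partial$ of the disk (or near a single boundary point after a M\"obius normalization), so $P$ is a genuinely infinite ideal polyhedron with all vertices ideal and of finite valence, as required.

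The main obstacle I anticipate is not the angle computation or the convexity, which are local and classical, but making the global combinatorial claim (ii) fully rigorous in the infinite setting: one must rule out that the intersection $\bigcap_f H_f$ degenerates (e.g.\ becomes lower-dimensional or empty) or that ``accidental'' incidences appear in the limit of infinitely many half-spaces, and one must confirm that the realized face lattice is exactly the Poincar\'e dual of $\mathcal{D}$ rather than a quotient or refinement of it. I would handle this by exhausting $\mathcal{D}$ by finite subcomplexes, applying the finite dictionary (as in \cite{thurston1980geometry}, and compatibly with Rivin's Theorem~\ref{riv}) on each piece, and passing to the limit using that embeddedness of $\Pac$ gives uniform local finiteness of the circle configuration --- so only finitely many half-spaces are relevant near any point of $\HH^3$, and the limiting polyhedron inherits the combinatorics of $\mathcal{D}$ faithfully.
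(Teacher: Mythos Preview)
Your approach is exactly the paper's: Theorem~\ref{thm-exist-IIP} is stated as an immediate corollary of Theorem~\ref{infinite_existence} via the Thurston correspondence between ideal circle patterns on $\partial\HH^3$ and ideal polyhedra in $\HH^3$, with no further details given. Your outline of the dictionary (circles $\to$ geodesic planes $\to$ faces, intersection angles $\to$ exterior dihedral angles, common intersection points $\to$ ideal vertices) and your exhaustion argument for the infinite combinatorics are more explicit than anything the paper writes down.

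There is, however, a consistent indexing error you should fix. In this paper's conventions (Definition~\ref{ICP}), the circles of an ICP are indexed by the \emph{vertices} $v\in V$ of $\mathcal{D}$, not by the faces: one has a circle $\Pac(v)$ for each $v\in V$, two circles $\Pac(u),\Pac(v)$ intersect when $u\sim v$, and all circles incident to a face $f\in F$ pass through the single point $v_f$. Under the correspondence, then, vertices of $\mathcal{D}$ become faces of the polyhedron, faces of $\mathcal{D}$ become ideal vertices, and edges go to edges; this is what makes the polyhedron the Poincar\'e dual of $\mathcal{D}$. Your construction assigns a plane $\Pi_f$ and half-space $H_f$ to each $f\in F$, while simultaneously asserting that $f$ corresponds to a vertex of the dual polyhedron and that ``every vertex of $\mathcal{D}$ \ldots\ yields an ideal vertex of $P$''; these statements are mutually inconsistent and reversed from the paper's setup. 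Replace $f\in F$ by $v\in V$ throughout the construction (planes $\Pi_v$ with $\partial_\infty\Pi_v=\Pac(v)$, half-spaces $H_v$), and let the ideal vertices of $P$ be the points $v_f$ for $f\in F$; then your argument goes through as written.
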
 

Theorem \ref{thm-exist-IIP} answers the existence part of Rivin's question. It should be noted that the embedded ICP (see Definition \ref{def-embed-ICP}) may be supported inside the unit disk or in the entire plane, but not both. In order to characterize its type, we introduce
	\begin{defn}
		A pair $(\mathcal{D},\Theta)$ with the cellular decomposition $\mathcal{D}$ and intersection angle $\Theta\in (0,\pi)^E$ is called \textbf{ICP-parabolic} if there exists an embedded ICP of $\mathcal{D}$ with intersection angle $\Theta$ that is locally finite in $\mathbb{R}^2$. The pair $(\mathcal{D},\Theta)$ is called \textbf{ICP-hyperbolic} if there exists an embedded ICP of $\mathcal{D}$ with intersection angle $\Theta$ that is locally finite in the unit disk $\mathbb{D}^2$. 
	\end{defn}

\begin{comment}
The definition above corresponds to two kinds of IIP, which are defined as follows.

\begin{defn}
Consider the Poincar\'e model $\mathbb{B}^3$ of $\mathbb{H}^3$.
We call an infinite ideal polyhedron an IIP of parabolic type (PIIP for short) if its ideal vertices accumulate to a single point in $\mathbb{S}^2$.
\end{defn}
\end{comment} 

It should be mentioned that ICP-parabolic (hyperbolic resp.) depends not only on the structures of the cellular decompositions but also on $\Theta$. Compared to He-Schramm's definition of CP-hyperbolic (parabolic resp.) for circle packings in \cite{He_schramm,HE}, where they actually require $\Theta=0$, or more generally in $[0,\pi/2]$, very different phenomena will occur (see Section \ref{sec:4.4}) when $\Theta$ has no uniform ``compact" bounds here. Roughly speaking, an IIP $P$ is of \textbf{parabolic type} (of \textbf{hyperbolic type} resp.), which is abbreviated as \textbf{PIIP} (\textbf{HIIP} resp.), if the limit set of the ideal boundary of $P$ consists of an ideal point in $\mathbb{S}^2$ (an equator of $\mathbb{S}^2$ resp.) in the Poincar\'e ball model (see Preliminary \ref{Sec:2} for rigorous definitions). The following discrete uniformization theorem for infinite ICPs and IIP provides a satisfactory criterion to distinguish the different types of ICPs and IIP.
    
	\begin{thm}[Uniformization of ICPs and IIP]\label{uniformization}
		Let $\mathcal{D}$ and $\Theta$ be defined as in Theorem \ref{infinite_existence}. Assume $\mathcal{D}$ has bounded vertex degree. If $(C_2)$ is substituted with the following
        \begin{enumerate}[($C_2^+$)]
		\item $\inf\limits_{\gamma}\sum\limits_{e\in\gamma}(\pi-\Theta(e))>2\pi$, where the infimum is taken over all closed curves \(\gamma\) in \(\mathcal{D}\) that are not face boundaries. 
	\end{enumerate}
        then the following statements are equivalent:
		\begin{enumerate}[($U_1$)]
			\item $\mathcal{D}$ is VEL-parabolic (VEL-hyperbolic resp.).
			\item $(\mathcal{D},\Theta)$ is ICP-parabolic (ICP-hyperbolic resp.).
			\item 
            There exists an infinite ideal polyhedron $\mathcal{P}$ of parabolic type (of hyperbolic type resp.) that is combinatorially equivalent to the Poincar\'e dual of $\mathcal{D}$ with the dihedral angle $\Theta(e^*)=\Theta(e)$.
            \item $\mathcal{D}$ is recurrent (transient resp.).
		\end{enumerate}
	\end{thm}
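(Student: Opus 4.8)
\emph{Proof strategy.} The plan is to deduce the four-way equivalence from two genuine dichotomies, matched up by the existence/rigidity of ICPs and a length--area estimate. First I would dispose of the two ``soft'' equivalences. The equivalence $(U1)\Leftrightarrow(U4)$ is the classical comparison between vertex extremal length to infinity and the effective resistance of the network: for a graph of bounded vertex degree, VEL-parabolic $\Leftrightarrow$ recurrent and VEL-hyperbolic $\Leftrightarrow$ transient (this is where the hypothesis of bounded vertex degree first enters), so this step is a citation (e.g.\ \cite{HE,He_schramm}). The equivalence $(U2)\Leftrightarrow(U3)$ is Thurston's dictionary between ideal circle patterns in $\widehat{\mathbb{C}}$ and ideal polyhedra in $\HH^3$, now read in the infinite setting: the circles of an embedded ICP are the ideal boundaries of the face-planes of its Poincar\'e-dual polyhedron, so the circles accumulate exactly where the ideal vertices do; an ICP locally finite in $\R^2=\widehat{\mathbb{C}}\setminus\{\infty\}$ therefore corresponds to an IIP of parabolic type (ideal vertices accumulating at one point), and an ICP locally finite in the unit disk $D$ corresponds to an IIP of hyperbolic type (ideal vertices accumulating along a round circle); the converses are the same dictionary in reverse. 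The only point to verify is that embeddedness and conditions $(C1)$, $(C2')$ persist under the correspondence, which is routine given Theorem \ref{thm-exist-IIP}.

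Next I would record the two dichotomies. On the graph side, every infinite graph of bounded degree is VEL-parabolic or VEL-hyperbolic and not both. On the pattern side, Theorem \ref{infinite_existence} furnishes an embedded ICP $\mathcal{P}$ of $(\mathcal{D},\Theta)$, and the rigidity theorem makes it unique up to M\"obius transformations of $\widehat{\mathbb{C}}$; since the accumulation set of $\mathcal{P}$ is M\"obius-invariant and must be a single point or a round circle, $\mathcal{P}$ is locally finite in exactly one of $\R^2$ (after normalization) or $D$, so $(\mathcal{D},\Theta)$ is ICP-parabolic or ICP-hyperbolic and not both. Granting the two dichotomies, $(U1)\Leftrightarrow(U2)$ reduces to proving: (a) if $\mathcal{P}$ is locally finite in $D$ then $\mathcal{D}$ is VEL-hyperbolic; (b) if $\mathcal{P}$ is locally finite in $\R^2$ then $\mathcal{D}$ is VEL-parabolic. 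For (a), let $r_v$ be the Euclidean radius of the circle $C_v$ and put $m(v)=r_v$. Embeddedness together with $\Theta\in(0,\pi-\epsilon]^E$ and bounded degree bounds the covering multiplicity of $\{C_v\}$ by some $N=N(\epsilon,\max\deg)$, whence $\sum_v m(v)^2\asymp\sum_v\area(C_v\cap D)\le N\,\area(D)<\infty$; and any path $\gamma=(v_0,v_1,\dots)$ in $\mathcal{D}$ escaping to infinity has $\sum_i m(v_i)=\sum_i r_{v_i}$ bounded below by the distance from the fixed center of $C_{v_0}$ to $\partial D$, a positive constant. Hence $\vel(v_0\to\infty)>0$ and $\mathcal{D}$ is VEL-hyperbolic.

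Step (b) is the heart of the matter. Here I would invoke the uniform Ring Lemma established earlier in the paper: in $\mathcal{P}$, the ratio $r_v/r_w$ of Euclidean radii of adjacent circles is bounded above and below by a constant depending only on $\max\deg$ and on the uniform parameters in the hypotheses — this is precisely the place where $\Theta\le\pi-\epsilon$, the uniform gap $\epsilon_0$ in $(C2')$, and bounded degree are used, and where the pointed Gromov--Hausdorff compactness argument is needed. Local finiteness in $\R^2$, the Ring Lemma, and bounded covering multiplicity together give $\mathcal{P}$ bounded geometry: combinatorial distance in $\mathcal{D}$ is comparable, after rescaling by the local radius, to Euclidean distance, and only boundedly many circles meet a Euclidean disk of radius $\asymp$ the local radius. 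With these estimates in hand one runs the standard length--area argument (as in \cite{HE,He_schramm}): for the dyadic annuli $A_n=\{\,2^{n}\le|z|\le 2^{n+1}\,\}$, any path in $\mathcal{D}$ from a fixed base vertex to a circle outside $|z|=2^{K+1}$ must cross each of $A_1,\dots,A_K$, and distributing weight $1/K$ uniformly across the circles meeting these annuli yields $\vel(v_0\to\infty)\gtrsim K\to\infty$, i.e.\ the discrete modulus to infinity vanishes, exactly as in the proof that $\mathbb{C}$ is parabolic. Thus $\mathcal{D}$ is VEL-parabolic.

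Combining (a) and (b) with the two dichotomies gives $(U1)\Leftrightarrow(U2)$; together with $(U1)\Leftrightarrow(U4)$ and $(U2)\Leftrightarrow(U3)$ from the first paragraph, all four statements are equivalent. The principal obstacle throughout is the \emph{uniform} Ring Lemma underlying step (b): a non-uniform lower bound on adjacent radius ratios — one that degenerates somewhere along the infinite pattern — would collapse the bounded-geometry comparisons and the length--area estimate would break down, so securing this control for genuine circle \emph{patterns} rather than packings is the technical core. A secondary subtlety is the normalization underlying the ICP dichotomy in the second paragraph, where rigidity must be used to exclude behavior intermediate between $D$ and $\R^2$.
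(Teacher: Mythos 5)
Your reduction of $(U1)\Leftrightarrow(U4)$ to He--Schramm and of $(U2)\Leftrightarrow(U3)$ to the circle-pattern/polyhedron dictionary matches the paper, and your step (b) is essentially the paper's argument for ``ICP-parabolic $\Rightarrow$ VEL-parabolic'' (Lemma \ref{teichumller1} plus the serial superadditivity Lemma \ref{vel_sum}); one caveat there is that the covering multiplicity of the disks $\{\Pac(v)\}$ themselves need not be bounded in a pattern with large intersection angles --- the paper controls areas via the maples, using $\sin\epsilon\, D(v)\subset\mathcal{M}(v)$ and the fact that maples have multiplicity at most $2$. But two of your steps have genuine gaps. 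First, your ``ICP dichotomy'' is not established: the rigidity theorems you invoke require that one of the two patterns is \emph{already known} to be locally finite in $\R^2$ or in the unit disk, so they cannot be applied to the arbitrary embedded ICP furnished by Theorem \ref{infinite_existence}, whose accumulation set could a priori be neither a point nor a round circle. Deciding that the pattern is locally finite in $\R^2$ or in $D$ is precisely the hard content of the theorem, and the paper proves it constructively in both directions: from VEL-parabolicity via Lemma \ref{teichmuller2} (large VEL between nested vertex sets forces separating Euclidean circles of radii $\rho_i\to\infty$, hence local finiteness in the plane), and from VEL-hyperbolicity via an exhaustion by finite patterns with horocyclic boundary (Theorem \ref{finite_existence}), monotonicity from the maximum principle \ref{MaximumPrincipleHyper}, the uniform Ring Lemma, Lemma \ref{vellimt}, and He's criterion Lemma \ref{hehyp} to rule out interior accumulation. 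None of this appears in your proposal, so the ``at least one type occurs'' half of your dichotomy is unsupported.

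Second, step (a) proves the wrong inequality. With the paper's definitions, VEL-hyperbolic means $\vel(V_0,\infty)<\infty$, i.e.\ $\mathrm{MOD}(\Gamma(V_0,\infty))>0$. Exhibiting one $\Gamma(V_0,\infty)$-admissible metric $m(v)=r_v$ of finite area only shows $\mathrm{MOD}<\infty$, i.e.\ $\vel(V_0,\infty)>0$, which holds for every graph (take $m=\mathbf{1}_{V_0}$), so your conclusion ``hence $\mathcal{D}$ is VEL-hyperbolic'' does not follow. A correct direct argument would have to bound $\mathrm{MOD}(\Gamma(V_0,\infty))$ from below, equivalently (by the duality $\vel(\Gamma)^{-1}=\vel(\Gamma^*)$) produce a finite-area admissible metric for the \emph{separating} family $\Gamma^*(V_0,\infty)$; the paper instead sidesteps any such estimate by deducing ``ICP-hyperbolic $\Rightarrow$ VEL-hyperbolic'' from the already-proved parabolic equivalence (Theorem \ref{uniformization1}) together with the mutual exclusivity of the two ICP types. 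As it stands, both the hyperbolic-side modulus estimate and the existence of a locally finite pattern of the correct type are missing from your argument.
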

\begin{rem}
    The equivalence of $(U_1)$ and $(U_4)$ was previously obtained by He-Schramm \cite{He_schramm}. In addition, the assumption that $\mathcal{D}$ has bounded vertex degree is necessary in the above theorem, since $(U_1)$ and $(U_4)$ are not equivalent for some planar graphs with unbounded degree; also see \cite{He_schramm}.  
\end{rem}

\begin{rem}
We note that Theorem \ref{uniformization} is sharp. Specifically, Example \ref{keyexample} (see Section \ref{sec:4.4}) demonstrates that the assumption $\sup\limits_{e\in E}\Theta(e)<\pi$ cannot be weakened to merely $\Theta\in (0,\pi)^E$, and that the condition $(C_2^+)$ cannot be relaxed to $(C_2)$. Furthermore, unlike the type theory developed by He and Schramm \cite{HE, He_schramm}, the analogous type theory for ICPs with generalized intersection angles fails to hold, a distinction also illustrated by Example \ref{keyexample}. More precisely, the example shows that VEL-parabolicity and ICP-parabolicity are not equivalent under certain general intersection angles satisfying conditions $(C_1)$ and $(C_2)$. This stands in sharp contrast to the framework of He and Schramm \cite{HE, He_schramm}, where VEL-parabolicity is equivalent to CP-parabolicity when all intersection angles are non-obtuse. Consequently, the uniform bound on the intersection angles is indispensable for Theorem \ref{uniformization}.
\end{rem}

 To extend the uniqueness (often referred to as rigidity) part of Rivin's result to infinite ideal hyperbolic polyhedra, we consider the rigidity of infinite ICPs first.

    \begin{thm}[Rigidity of parabolic ICPs]\label{rigidity_PIIP}
		Let $\mathcal{D}=(V,E,F)$ be an infinite disk cellular decomposition with bounded vertex degree. Assume the intersection angle $\Theta\in(0,\pi)^E$ with $\sup_{e\in E}\Theta(e)<\pi$  satisfies $(C_2^+)$. If $\Pac$ and $\Pac^*$ are two embedded ICPs of $\mathcal{D}$, and $\Pac$ is locally finite in $\mathbb{R}^2,$ then there exists an affine transformation $h$ such that $\Pac^*=h(\Pac).$
	\end{thm}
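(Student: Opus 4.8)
The argument adapts the length--area rigidity method of Rodin--Sullivan and He--Schramm from circle packings to ideal circle patterns, with the uniform Ring Lemma as the crucial new tool. The first point is that $\Pac^*$ is also locally finite in $\R^2$. Since $\Pac$ is a locally finite circle pattern in the plane realizing $\mathcal{D}$, its Euclidean radii furnish a metric on $\mathcal{D}$ witnessing that $\mathcal{D}$ is VEL-parabolic; were $\Pac^*$ instead locally finite in the unit disk $D$, the radii of $\Pac^*$ would exhibit $\mathcal{D}$ as VEL-hyperbolic, which is impossible. Hence both $\Pac$ and $\Pac^*$ are locally finite in $\R^2$, with circles accumulating only at $\infty$, and there is a natural orientation-preserving homeomorphism $f\colon\R^2\to\R^2$ carrying $\carrier(\Pac)$ to $\carrier(\Pac^*)$, sending each circle $C_v$ to $C_v^*$, each center to the corresponding center and each ideal vertex $p_f$ to $p_f^*$, and affine on each face of a fixed triangulation of $\R^2$ whose vertices are the centers of $\Pac$ and the ideal vertices $p_f$, $f\in F$.

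Next I would establish that $f$ is uniformly quasiconformal. Applying the uniform Ring Lemma to $\Pac$ and to $\Pac^*$, the ratio of radii of any two intersecting circles is bounded above and below by a constant $C=C(\mathrm{deg},\epsilon)$ in both patterns; together with $\Theta\in[\epsilon,\pi-\epsilon]^E$ this keeps all the triangles above and away from degeneracy, so $f$ is $K$-quasiconformal on each triangle with $K=K(\mathrm{deg},\epsilon)$, hence globally $K$-quasiconformal on $\mathbb{C}$. The heart of the proof is then to show that the radius-ratio function $\rho(v):=r^*(v)/r(v)$ is constant. The Ring Lemma gives $|\log\rho(v)-\log\rho(w)|\le\log C$ whenever $C_v,C_w$ intersect. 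Fixing a base vertex $v_0$ and nested combinatorial annuli $A_1,A_2,\dots$ around it, one controls via the Ring Lemma the Euclidean geometry of each $A_j$ in both patterns; summing the resulting estimates bounds $|\log\rho(v_0)-\log\rho(v)|$, for $v$ outside $A_N$, by a quantity tending to $0$ as $N\to\infty$ precisely because the combinatorial moduli of the $A_j$ sum to infinity, which is exactly VEL-parabolicity of $\mathcal{D}$. Hence $\rho\equiv\lambda$ for a constant $\lambda>0$. (Alternatively one could argue by renormalization and pointed Gromov--Hausdorff compactness of ICPs: a nonconstant $\rho$ would yield, in the limit, two distinct embedded ICPs of a parabolic decomposition related by a non-similarity bi-Lipschitz map, a contradiction.)

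Finally, with $\rho\equiv\lambda$, replace $\Pac$ by $\lambda\Pac$: now $\lambda\Pac$ and $\Pac^*$ are embedded ICPs of $\mathcal{D}$ with equal radii, $r^*(v)=\lambda r(v)$ for every $v$, and equal intersection angles. A developing-map argument shows they are congruent: after a Euclidean motion, arrange that a fixed circle $C_{v_0}$ and one of its neighbours $C_{v_1}$ agree in the two patterns; the remaining circles of the flower at $v_0$ are then determined by their prescribed radii, their prescribed intersection angles with $C_{v_0}$, and $(C1)$, while the embeddedness of an ICP singles out, among the (at most) two possible positions of each new circle, the one actually occurring; propagating along the connected graph $\mathcal{D}$ shows $\Pac^*$ equals a Euclidean-motion image of $\lambda\Pac$. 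Composing, $\Pac^*=h(\Pac)$ with $h$ the asserted affine (Euclidean similarity) transformation.

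The main obstacle is the constancy of $\rho$: in contrast to tangency packings, the flowers of an ICP have ``fat'' interstices and genuinely variable intersection angles in $[\epsilon,\pi-\epsilon]$, so the length--area estimates of Rodin--Sullivan and He--Schramm must be re-derived with all constants depending only on $\epsilon$ and the degree bound, leaning essentially on the uniform Ring Lemma, and the divergence of the combinatorial moduli must be extracted cleanly from VEL-parabolicity. A secondary point requiring care is that only $(C1)$ and $(C2)$ — not the gap condition $(C2')$ used in Theorem~\ref{uniformization} — are assumed here, so the steps above must use the parabolic/hyperbolic dichotomy only through the ``easy'' implication that a locally finite plane ICP (resp. disk ICP) forces $\mathcal{D}$ to be VEL-parabolic (resp. VEL-hyperbolic), which does not rely on $(C2')$.
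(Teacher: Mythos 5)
Your overall skeleton---reduce to showing that $\rho(v)=r^*(v)/r(v)$ is constant, then recover $\Pac^*$ from $\lambda\Pac$ by a developing argument---is sound, and the final developing step is essentially right. The genuine gap is the central step. The Ring Lemma only yields a uniform bound on $|\log\rho(v)-\log\rho(w)|$ for adjacent $v,w$; it gives no decay of the oscillation of $\log\rho$ across combinatorial annuli, and VEL-parabolicity enters your sketch only as the slogan that ``the moduli of the $A_j$ sum to infinity.'' The assertion that summing Ring-Lemma estimates over nested annuli bounds $|\log\rho(v_0)-\log\rho(v)|$ by a quantity tending to $0$ is precisely the hard content of the rigidity theorem, and you offer no mechanism for it; your parenthetical alternative (a nonconstant $\rho$ would produce in the limit ``two distinct embedded ICPs \dots, a contradiction'') is circular, because the contradiction invoked is the rigidity statement being proved. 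The quasiconformality of the piecewise-affine map is likewise never used in a way that could close the argument: uniform quasiconformality alone cannot rule out a non-similar correspondence without an additional removability or Liouville-type input. A secondary inaccuracy: in this paper the ``easy'' implication that a locally finite plane ICP forces VEL-parabolicity is not free of the gap condition, since the extremal-length estimate (Lemma \ref{teichumller1}) is proved with the uniform Ring Lemma constant $C(\epsilon,\epsilon_0,N)$, i.e.\ under $(C2')$.

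For comparison, the paper closes this step by discrete potential theory rather than a length--area summation. It first proves a global two-sided bound $C^{-1}\le r^*_v/r_v\le C$ (Lemmas \ref{rigidity_bound_tau_finite}--\ref{uniform_bound_infinite}), comparing the two patterns through the distortion coefficient $\tau$, M\"obius renormalizations, the maximum principle (Lemma \ref{MaximumPrincipleHyper}) and the extremal-length lemmas \ref{teichumller1} and \ref{teichmuller2}. It then interpolates: setting $\tilde u_v(t)=(1-t)u_v(0)+t\,u_v(1)$, it solves the prescribed-boundary-radius problem (Lemma \ref{prescribed_boundary_radius_prob}) on an exhaustion, uses the Euclidean maximum principle (Lemma \ref{maximum_principle_euc}) to get a uniform Lipschitz bound in $t$, and passes to a limit path $u(t)$ of ICPs realizing $(\mathcal{D},\Theta)$. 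Differentiating $K_v(u(t))=0$ and using the variational principle (Lemma \ref{vari}) shows $f_v=\mathrm{d}u_v/\mathrm{d}t$ is harmonic for positive weights $\omega_{vw}$ with $\sum_{w}\omega_{vw}\le 2\pi C_0$ (Lemma \ref{partialthetabound}, where $\Theta\in[\epsilon,\pi-\epsilon]$ is used), and then VEL-parabolicity together with the discrete Liouville theorem (Lemma \ref{harmonic_current}) forces $f_v(t)=c(t)$, hence $\log\rho$ is constant and the patterns differ by a Euclidean similarity. Some such interpolation-plus-Liouville engine (or an equivalent He-type argument) is the missing ingredient that converts parabolicity into constancy of $\rho$ in your outline.
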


    \begin{thm}[Rigidity of hyperbolic ICPs]\label{rigidity_HIIP}
	Let $\mathcal{D}=(V,E,F)$ be an infinite disk cellular decomposition.
    If $\Pac$ and $\Pac^*$ are two ICPs of $\mathcal{D}$, and $\Pac$ and $\Pac^*$  are locally finite in the unit disk $\mathbb{D}^2,$ then there exists a M\"obius transformation $h$ such that $\Pac^*=h(\Pac).$
    \end{thm}
Consequently, we have the following rigidity theorem for IIP.
\begin{thm}[Rigidity of IIP]\label{thm-rigidity-IIP}
    Let $\mathcal{D}$ be an infinite disk cellular decomposition. Then the following statements hold.
    \begin{enumerate}
        \item 
        If $P_1$ and $P_2$ are two HIIP whose Poincar\'e dual is combinatorially equivalent to $\mathcal{D}$ with the same dihedral angle $\Theta$, then $P_1$ is isometric to $P_2$.
        \item If $P_1$ and $P_2$ are two PIIP with bounded face degree whose Poincar\'e dual is combinatorially equivalent to $\mathcal{D}$, such that the dihedral angle $\Theta\in(0,\pi)^E$ with $\sup_{e\in E}\Theta(e)<\pi$ satisfies $(C_2^+)$, then $P_1$ is isometric to $P_2$.
    \end{enumerate}
\end{thm}

\begin{rem}
    After this work was posted on arXiv, Lam told us that B\"ucking \cite{convergence_quad} also obtained Theorem \ref{rigidity_PIIP} (without the bounded degree condition) and Theorem \ref{rigidity_HIIP}.
\end{rem}

Historically, finite polyhedra have been studied in great detail. For modern developments, we refer the readers to the works of Andreev \cite{andreev1970convex1,andreev1970convex2}, Bobenko–Pinkall–Springborn \cite{bobenko2015discrete}, Bobenko–Springborn \cite{MR2022715}, Chen-Schlenker \cite{chen2022weakly}, Chen-Schlenker \cite{MR4907954}, Danciger-Maloni-Schlenker \cite{danciger2020polyhedra}, Dimitrov \cite{dimitrov2015hyper}, Gu{\'e}ritaud \cite{gueritaud2004elementary}, Huang–Liu \cite{Huang2017}, Luo-Wu \cite{MR4809242}, Rivin \cite{rivin2003combinatorial},  Rivin-Hodgson \cite{MR1193599}, Schlenker \cite{MR2052831,Schlenker2005Hyperideal}, Springborn \cite{springborn2008variational,Springborn2020} and Zhou \cite{zhou2023generalizing}. Most of these results can be viewed as solutions to Weyl problem of discrete type or for finite hyperbolic polyhedra (due to our limited knowledge, some important relevant literature may have been overlooked, and we apologize for this; we sincerely look forward to feedback from peer experts so that we can make changes and improvements in subsequent versions). 

In the above references, the common research methods are continuity methods (see, e.g., \cite{alexandrov2005convex,thurston1980geometry} ), variational methods (\cite{verdiere91}), combinatorial Ricci flow methods (see, e.g., \cite{2003Combinatorial,ge2021combinatorial} ), etc. For infinite hyperbolic polyhedra, as far as we know, there are very few known results. The root cause lies in the fact that common methods are difficult to generalize to infinite situations. The obstacle likely stems from the failure of variational principles in infinite-dimensional spaces and the inherent limitations in employing topological and combinatorial techniques. Despite these obstacles, Ge–Hua–Yu–Zhou \cite{ge2025characterizationinfiniteidealpolyhedra} has recently made attempts to provide an affirmative solution to Rivin’s problem via combinatorial Ricci flow on infinite disk cellular decompositions. Their results are natural continuation and extension of Bobenko-Springborn \cite{MR2022715} and Ge-Hua-Zhou \cite{ge2021combinatorial}, both of which focused on the finite setting. The differences between the results in this article and \cite{ge2025characterizationinfiniteidealpolyhedra} are as follows. By using infinite combinatorial Ricci flow methods, \cite{ge2025characterizationinfiniteidealpolyhedra} introduces the ``character" of $(\mathcal{D},\Theta)$ to 
characterize the existence of certain IIP and HIIP, which is sufficient, but not necessary. In this article, we follow the spirit of He-Schramm \cite{He_schramm} and He \cite{HE}, and the corresponding results are sufficient and necessary, hence relatively complete.

\begin{comment}
existence, uniqueness, and uniformization theorems for circle patterns with non-obtuse intersection angles on infinite disk triangulations were proved. However, after these works, results concerning more general circle patterns remained unproven for a long time. This is probably due to technical difficulties in establishing the relevant Ring lemma and the estimation of vertex extremal length for general circle patterns. In this article, we overcome those difficulties and prove the parallel results for a large class of ideal circle patterns.

Inspired by the work of He-Schramm \cite{He_schramm} and He \cite{HE}, we introduce the following definition to distinguish whether the embedded ICP is supported inside the unit disk or on the entire plane.
\end{comment}

\begin{comment}
    \red{of Rivin-Hodgson \cite{MR1193599}, the compact convex hyperbolic polyhedra in the hyperbolic space $\mathbb{H}^3$ were characterized, which is a fundamental result in hyperbolic geometry. After that,}\\[20pt]
\end{comment}

The organization of this paper is as follows. In Section \ref{Sec:2}, we will briefly recall some basic definitions of embedded ideal circle patterns and the Gromov-Hausdorff topology on graphs. In Section \ref{Sec:3}, we investigate the limit structures of embedded ICPs and establish several Ring lemmas. After that, with the help of the Ring lemmas, we prove the existence theorem of embedded ICPs and obtain a He-Schramm type estimate for discrete extremal length in Section \ref{Sec:4}. As a consequence, in Section \ref{Sec:4}, we prove the discrete uniformization theorem \ref{uniformization} and give an example that VEL-parabolicity is not equivalent to ICP-parabolicity for general intersection angles. Finally, in Section \ref{Sec:5}, we prove the rigidity of ICPs with the help of maximum principles and a Liouville's theorem for discrete harmonic functions.

	\section{Preliminary}\label{Sec:2}
	\subsection{Gromov-Hausdorff convergence on graphs}
Let \( \mathcal{D} = (V, E, F) \) be a cellular decomposition of a simply connected planar domain \( \Omega \), which is called a \textbf{disk cellular decomposition} for short, where \( V \), \( E \), and \( F \) denote the sets of vertices, edges, and faces of \( \mathcal{D} \), respectively. Throughout this paper, we assume that any two distinct faces of \( \mathcal{D} \) share at most one edge, and each face has at least three edges.

For each vertex \( v \in V \) (face \( f \in F \) resp.), we denote by \( \deg(v) \) ( \(\deg(f) \) resp.) the number of edges incident to \( v \) (to \( f \) resp.), which we refer to as the \emph{vertex degree} (\emph{face degree} resp.).
    
	We use $G = (V, E)$ to denote a graph, which always refers to the $1$-skeleton of a cellular decomposition $\mathcal{D}$ in this paper. We denote by $\overrightarrow{E}$ the set of directed edges, where for each undirected edge $\{v, w\} \in E$, both $(v, w)$ and $(w, v)$ are elements of $\overrightarrow{E}$. 
	The \emph{combinatorial distance} between two vertices $v$ and $w$ in the graph $G$ is defined as
	\[
	\mathrm{d}_G(v, w) := \inf\{n \in \mathbb{N} : v = v_0 \sim v_1 \sim \cdots \sim v_n = w\}.
	\]
	For any $N \in \mathbb{R}_{\geq 0}$ and $v \in V$, we define the \emph{combinatorial ball} and \emph{sphere} of radius $N$ centered at $v$ by
	\[
	B_N(v) := \{w \in V : \mathrm{d}_G(v, w) \leq N\}, \quad S_N(v) := \{w \in V : \mathrm{d}_G(v, w) = N\}.
	\]	
	
	We now introduce the concept of \emph{pointed Gromov-Hausdorff convergence} on rooted graphs, which can be found in the work \cite{MR4829680}. Let $\mathcal{G}^r$ denote the set of all rooted simple graphs $(G, v)$, where $G$ is a graph and $v$ is a vertex of $G$. 
	
	Let $G_i = (V_i, E_i)$ be two graphs with vertices $v_i \in V_i$ for $i = 1, 2$. A bijection $\phi: V_1 \rightarrow V_2$ is called a \emph{rooted graph isomorphism}, denoted by
	\[
	(G_1, v_1) \stackrel{\phi}{\simeq} (G_2, v_2),
	\]
	if the following conditions hold:
	\begin{enumerate}
		\item $v \sim w \iff \phi(v) \sim \phi(w)$ for all $v, w \in V_1$;
		\item $\phi(v_1) = v_2$.
	\end{enumerate}
	
	If $(G_1, v_1) \stackrel{\phi}{\simeq} (G_2, v_2)$, then for any edge $e = \{u, v\} \in E_1$, we denote the corresponding edge in $G_2$ by $\phi(e) := \{\phi(u), \phi(v)\} \in E_2$.
	
	Let $W$ be a subset of the vertex set $V$ of a graph $G$. We denote by $I_G(W) = (W, E_W)$ the subgraph of $G$ \emph{induced} by $W$, where
	\[
	\{w_1, w_2\} \in E_W \iff \{w_1, w_2\} \in E.
	\]
	
\begin{defn}[Pointed Gromov–Hausdorff convergence]\label{ghconvergence}
	Let $\{(G_i, v_i)\}_{i \in \mathbb{N}}$ be a sequence of rooted graphs. We say that $(G_i, v_i)$ converges in the pointed Gromov–Hausdorff sense to a rooted graph $(G_\infty, v_\infty)$, denoted by
	\[
	(G_i, v_i) \stackrel{\mathrm{pGH}}{\longrightarrow} (G_\infty, v_\infty),
	\]
	if for any $n > 0$, there exists a sufficiently large $N \in \mathbb{N}$ such that for all $i \geq N$, there exists a rooted graph isomorphism $\phi_{i,n}$ satisfying
	\[
	(I_{G_i}(B_n(v_i)), v_i) \stackrel{\phi_{i,n}}{\simeq} (I_{G_\infty}(B_n(v_\infty)), v_\infty).
	\]
\end{defn}

\begin{defn}
	We say that a graph $\mathcal{D} = (V, E, F)$ has \emph{bounded geometry} if there exists a constant $B > 0$ such that
	\[
	\deg(v) \leq B, \quad \forall v \in V.
	\]
\end{defn}

We denote by $\mathcal{BG}(B)$ the set of all graphs with bounded geometry whose vertex degree is bounded by $B$, and by $\mathcal{BG}^r(B)$ the set of all rooted graphs $(G, v)$ such that $G \in \mathcal{BG}(B)$ and $v$ is a vertex of $G$. The following result, which is a simplified version of \cite[Theorem 2.5]{MR4829680}, will be useful for proving the Ring Lemma in Section~\ref{Sec:3}.

\begin{lem}\label{compactness1}
	The space $\mathcal{BG}^r(B)$, equipped with the pointed Gromov–Hausdorff topology, is sequentially compact. That is, for any sequence $\{(G_i, v_i)\}_{i=1}^\infty \subseteq \mathcal{BG}^r(B)$, there exists a subsequence (still denoted by $\{(G_i, v_i)\}_{i=1}^\infty$) and a rooted graph $(G_\infty, v_\infty) \in \mathcal{BG}^r(B)$ such that
	\[
	(G_i, v_i) \stackrel{\mathrm{pGH}}{\longrightarrow} (G_\infty, v_\infty).
	\]
\end{lem}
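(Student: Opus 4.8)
The plan is to run the standard diagonal extraction argument, exploiting the fact that in a graph of bounded degree the combinatorial balls have uniformly bounded cardinality. First I would record the key finiteness estimate: if $G\in\mathcal{BG}(B)$ and $v\in V$, then $|S_k(v)|\le B(B-1)^{k-1}$ for $k\ge 1$, so
\[
|B_n(v)|\ \le\ 1+B+B(B-1)+\cdots+B(B-1)^{n-1}\ =:\ M(B,n),
\]
a bound depending only on $B$ and $n$. Hence the rooted graph $(I_G(B_n(v)),v)$ has at most $M(B,n)$ vertices, and since there are only finitely many graphs (up to isomorphism) on a vertex set of cardinality $\le M(B,n)$, the set of rooted isomorphism types of $n$-balls of graphs in $\mathcal{BG}(B)$ is finite.

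Next, the diagonal argument. Given $\{(G_i,v_i)\}_{i\in\mathbb{N}}\subseteq\mathcal{BG}^r(B)$, by the pigeonhole principle I extract a nested chain of infinite index sets $\mathbb{N}\supseteq I_1\supseteq I_2\supseteq\cdots$ such that, for each $n$, the rooted isomorphism type of $(I_{G_i}(B_n(v_i)),v_i)$ is the same for all $i\in I_n$; call this type $(H_n,u_n)$. The crucial compatibility observation is that the induced-subgraph operation is transitive and that a combinatorial geodesic from $v_i$ to a vertex at distance $m\le n$ stays inside $B_n(v_i)$; consequently, for $m\le n$ one has $I_{I_{G_i}(B_n(v_i))}(B_m(v_i))=I_{G_i}(B_m(v_i))$ with matching root, so $(H_m,u_m)$ is canonically the rooted induced subgraph of $(H_n,u_n)$ on the ball of radius $m$ about $u_n$. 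This makes $\{(H_n,u_n)\}_n$ a directed system of rooted graphs with injective bonding maps, and I let $(G_\infty,v_\infty)$ be its direct limit, realized concretely as $G_\infty=\bigcup_n H_n$ with the roots $u_n$ identified to a single vertex $v_\infty$.

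Finally I would verify the three things needed. (i) $G_\infty\in\mathcal{BG}(B)$: a vertex $w$ at distance $k$ from $v_\infty$ has all its $G_\infty$-neighbours inside $H_{k+1}$, and its degree there equals its degree in the corresponding $G_i$ with $i\in I_{k+1}$, hence is $\le B$. (ii) The balls match: a finite path in $G_\infty$ lies inside some $H_N$, from which one deduces $d_{G_\infty}(v_\infty,w)=d_{G_i}(v_i,w)$ for corresponding vertices, whence $B_n(v_\infty)=V(H_n)$ and $I_{G_\infty}(B_n(v_\infty))=H_n$ as rooted graphs. (iii) Convergence along a diagonal subsequence: choose $i_k\in I_k$ strictly increasing; then for any $n$ and any $k\ge n$ we have $i_k\in I_k\subseteq I_n$, so $(I_{G_{i_k}}(B_n(v_{i_k})),v_{i_k})\simeq(H_n,u_n)=(I_{G_\infty}(B_n(v_\infty)),v_\infty)$, which is exactly the condition in Definition~\ref{ghconvergence} for $(G_{i_k},v_{i_k})\stackrel{\mathrm{pGH}}{\longrightarrow}(G_\infty,v_\infty)$.

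The only genuinely delicate point is (ii)--(iii): one must check that distances computed inside the induced ball-subgraphs agree with distances in the ambient graphs --- which is true when one endpoint is the root, but can fail for arbitrary pairs --- so that the ball of radius $n$ in the direct limit really is $H_n$ and not some strictly larger graph. Everything else is routine bookkeeping, and if desired the whole statement can alternatively be phrased as the (sequential) compactness of a metrizable space, the metric being $d\big((G,v),(G',v')\big)=\inf\{\,2^{-n}: (I_G(B_n(v)),v)\simeq (I_{G'}(B_n(v')),v')\,\}$, but the proof of compactness still reduces to the diagonal argument above.
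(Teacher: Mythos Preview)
Your argument is the standard and correct diagonal extraction proof: the uniform degree bound gives only finitely many rooted isomorphism types of $n$-balls, pigeonhole plus diagonalization produces a coherent tower $(H_n,u_n)$, and the direct limit furnishes the limit graph. The delicate point you flag in (ii) --- that distances from the root are preserved when passing to induced ball-subgraphs, so that $B_n(v_\infty)$ really equals $V(H_n)$ --- is handled correctly.

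As for comparison: the paper does \emph{not} supply its own proof of this lemma. It is stated as a simplified version of \cite[Theorem~2.5]{MR4829680} and invoked as a black box. So there is nothing in the paper to compare your argument against; you have simply filled in the omitted proof, and done so correctly.
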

This is the discrete analog of Gromov's compactness theorem.
	
	\subsection{Ideal circle patterns and ideal polyhedra}\label{Ideal_circle_patterns_planes}
	%这一章用列表的办法写出来
For a face \( f \in F \), we denote by \( V(f) \) (\( E(f) \) resp.) the set of vertices (edges resp.) incident to \( f \). We write \( v < e \) (\( v < f \) resp.) to indicate that a vertex \( v \) is incident to an edge \( e \) (a face \( f \) resp.).  Furthermore, we say that \( v_1 \) and \( v_2 \) are \emph{connected by a face} if they are both contained in a common face, denoted by \( v_1 \stackrel{F}{\sim} v_2 \).
We say that a subset \( W \subset V \) is \emph{connected} ( \emph{face-connected} resp.) in \( \mathcal{D} \) if for any \( v, w \in W \), there exists a finite edge path (face path resp.)
\[
v = v_0 \sim \cdots \sim v_n = w \quad \text{( } v = v_0 \stackrel{F}{\sim} \cdots \stackrel{F}{\sim} v_n = w\text{ resp.)}
\]
connecting \( v \) and \( w \). Given vertex sets \( W_1 , W \subseteq V \) with $W_1\subseteq W$, we say that \( W_1 \) is a \textbf{connected component} (\textbf{face-connected component} resp.) of \( W \) in \( \mathcal{D} \) if \( W_1 \) is connected (face-connected resp.) in \( \mathcal{D} \) and there is no other vertex set \( W_2 \subset W \), which is also connected (resp. face-connected), such that $W_1\subsetneq W_2$.

We call a vertex \( v \in V \) an \emph{interior vertex} if there exists a closed curve \( \gamma \subset \Omega \), which does not pass through \( v \), such that \( v \) is separated from all other vertices in \( V \setminus (\gamma \cup \{v\}) \) by \( \gamma \). Otherwise, we call \( v \) a \emph{boundary vertex}. We denote by \( \mathrm{Int}(V) \) and \( \partial V \) the sets of interior and boundary vertices, respectively.
 Given a subset \( W \subseteq V \), we define the \emph{outer boundary} of \( W \) as
\[
\tilde{\partial}(W) := \{ v \in V \setminus W : \exists w \in W \text{ such that } v \sim w \}.
\]
Moreover, we say an edge $e=\{v,w\}$ a boundary edge if $v,w\in \partial V$. Otherwise, we call $e$ an interior edge. We denote by $E_{bd}$ and $E_{int}$ the sets of boundary edges and interior edges, respectively.
\begin{figure}[H]
	\centering
	\includegraphics[width=3.5in]{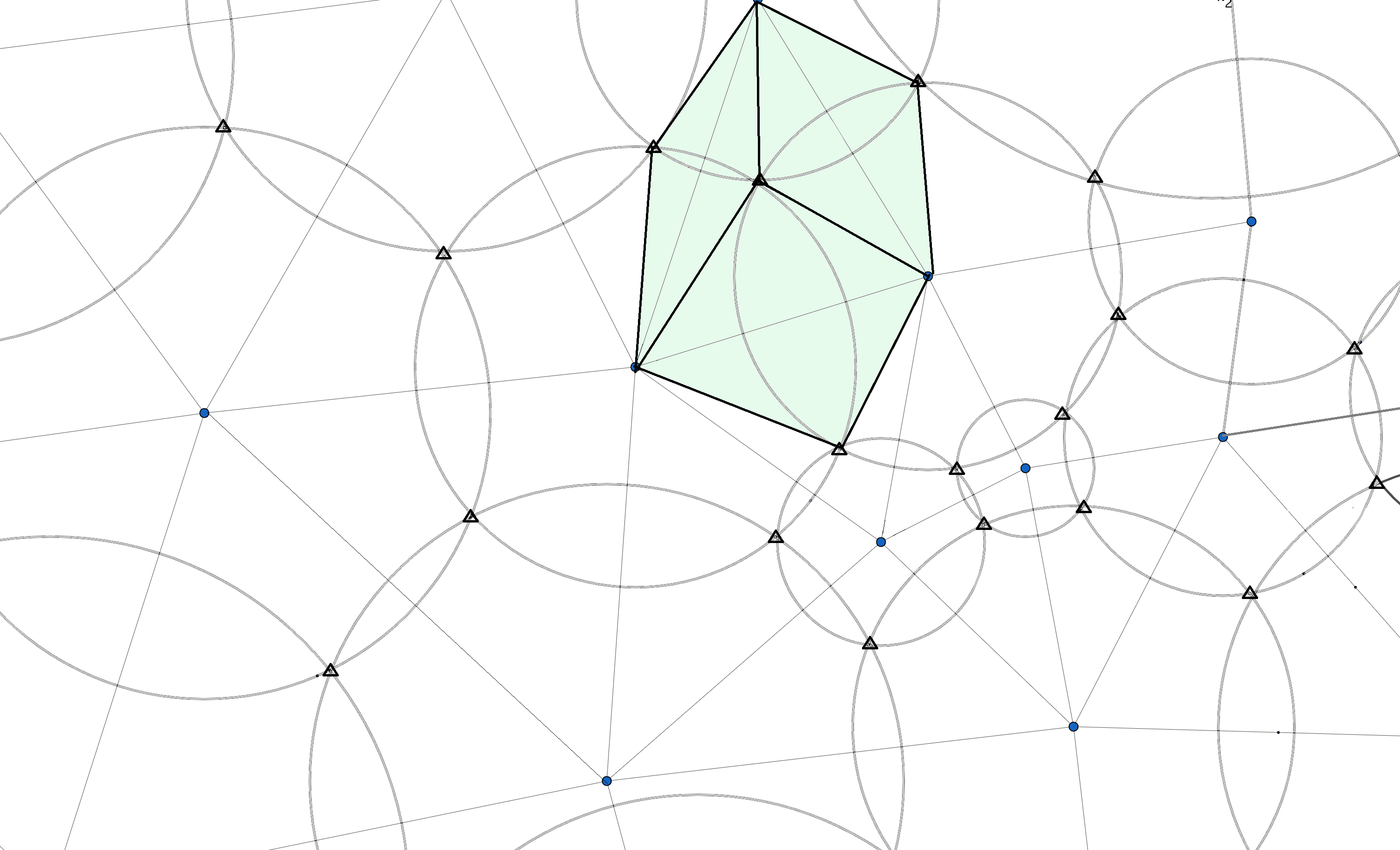}
	\caption{A portion of an ideal circle pattern in the plane.}
	\label{fig01}
\end{figure}

\begin{defn}[Ideal circle pattern]\label{ICP}
	A \textbf{circle pattern} \( \mathcal{P} \) in the plane (unit disk resp.) is a collection of circles \( \{\Pac(v)\}_{v \in V} \) in \( \mathbb{R}^2 \) ($\mathbb{D}^2$ resp.). A circle pattern \( \mathcal{P} \) is called an \emph{ideal circle pattern} (ICP for short) of a cellular decomposition \( \mathcal{D} = (V,E,F) \) if the following conditions hold:
	\begin{enumerate}[(1)]
		\item \(  \Pac(u) \cap \Pac(v) \neq \emptyset \) whenever \( u \sim v \).
		\item \( \bigcap\limits _{v < f} \Pac(v) = \{v_f\} \) for each face \( f \in F \), where \( v_f \) can be regarded as a dual vertex corresponding to the face \( f \) in the dual graph of \( \mathcal{D} \).
	\end{enumerate}
	We denote by \( D(v) \) the closed disk bounded by \( \Pac(v) \).
\end{defn}

For each edge \( e = \{u,v\} \), we denote by \( \Theta(e) = \Theta(u,v) \) the intersection angle of circles \(  \Pac(u) \) and \( \Pac(v) \). It is easy to see that for an ideal circle pattern defined as above, the intersection angles satisfy the condition $(C_1)$.
%\angle equality
From the condition $(C_1)$, we immediately obtain the following proposition:

\begin{prop}\label{face_degree}
	Let \( \mathcal{P} \) be an ideal circle pattern of a cellular decomposition \( \mathcal{D} = (V, E, F) \) with intersection angles \( \Theta(e) \in (0, \pi - \epsilon] \) for some \( \epsilon > 0 \). Then each face in $F$ is bounded by at most \( \frac{2\pi}{\epsilon} \) edges.
\end{prop}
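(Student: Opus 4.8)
The plan is to reduce the statement to the angle identity $(C1)$, which—as noted in the sentence preceding the proposition—holds automatically for any ideal circle pattern, and then to combine it with the uniform lower bound on exterior angles coming from the hypothesis $\Theta(e)\le \pi-\epsilon$.

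First I would make the identity $(C1)$ for ICPs precise. Let $f\in F$ be a face with incident vertices $v_1,\dots,v_n$ listed in the cyclic order induced by $\partial f$, and write $e_i=\{v_i,v_{i+1}\}$ (indices mod $n$) for its boundary edges, so that $n=\deg(f)$. By Definition \ref{ICP}(2), all the circles $\Pac(v_1),\dots,\Pac(v_n)$ pass through the common point $v_f$. Since the intersection angle of two circles is the same at both of their intersection points, the angle between $\Pac(v_i)$ and $\Pac(v_{i+1})$ measured at $v_f$ equals $\Theta(e_i)$. Orienting the circles consistently and recording the tangent direction of $\Pac(v_i)$ at $v_f$, the successive tangent directions turn by the exterior angle $\pi-\Theta(e_i)$ as $i$ runs once around the cycle; because the circles $\{\Pac(v_i)\}_{v_i<f}$ are encountered around $v_f$ in exactly the combinatorial cyclic order prescribed by $\partial f$, the total turning is one full rotation, and hence $\sum_{i=1}^{n}\bigl(\pi-\Theta(e_i)\bigr)=2\pi$. (One may equally well simply invoke $(C1)$ as already established.)

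Then I would finish with a pigeonhole estimate. Each boundary edge $e_i$ of $f$ satisfies $\Theta(e_i)\le \pi-\epsilon$, so $\pi-\Theta(e_i)\ge \epsilon>0$. Summing over the $n$ edges of $f$ and using the identity above,
\[
2\pi=\sum_{i=1}^{n}\bigl(\pi-\Theta(e_i)\bigr)\ \ge\ n\,\epsilon,
\]
whence $\deg(f)=n\le 2\pi/\epsilon$, which is the claim.

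The argument is elementary, and no serious obstacle remains: the only point requiring any care is the turning-number computation establishing $(C1)$—namely that the circles through $v_f$ appear in the combinatorial cyclic order of $\partial f$ and that their pairwise angles at $v_f$ coincide with the prescribed intersection angles—but this is precisely the observation recorded immediately before the statement, so the proof is a one-line counting bound once $(C1)$ is in hand.
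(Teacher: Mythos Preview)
Your proposal is correct and follows exactly the paper's approach: the paper simply states that the proposition is immediate from condition $(C1)$, and your argument is precisely the one-line estimate $2\pi=\sum_{i}(\pi-\Theta(e_i))\ge n\epsilon$ once $(C1)$ is in hand. Your additional paragraph justifying $(C1)$ via the turning of tangent directions at $v_f$ is more detail than the paper supplies, but it is consistent with the remark preceding the proposition that $(C1)$ holds for any ICP.
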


	\begin{prop}\label{compactness2}
		Suppose that $\{\mathcal{D}_i=(V_i,E_i,F_i)\}_{i\in \mathbb{N}}$ is a sequence of locally finite cellular decompositions that admit ICPs $\{\Pac_i\}_{i\in\mathbb{N}}$ with intersection angle $\Theta_i\in (0,\pi-\epsilon]^{E_i}$ for each $i$. Let $G_i$ be the $1-$skeleton of $\mathcal{D}_i$ and $v_i\in V_i$. Then $\{(G_i,v_i)\}_{i\in\mathbb{N}}$ has a convergent subsequence in the pointed Gromov-Hausdorff topology. 
		
		Moreover, if $G_i\in \mathcal{BG}^r(B)$ for some constant $B$, then the limit of those rooted graphs is a $1$-skeleton of a locally finite cellular decomposition.
	\end{prop}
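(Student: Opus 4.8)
The plan is to split the statement into two parts as indicated, and handle them in order. First I would establish that the sequence $\{(G_i, v_i)\}$ has a convergent subsequence in the pointed Gromov--Hausdorff topology. The natural route is to reduce to Lemma \ref{compactness1}, which gives sequential compactness of $\mathcal{BG}^r(B)$; but this requires a uniform bound on vertex degrees, which is \emph{not} assumed in the first part of the statement. So the first genuine task is to extract such a bound from the existence of an ICP with intersection angles in $[0,\pi-\epsilon]$. The key observation is that around any interior vertex $v$ the circle $\Pac(v)$ is crossed by the circles $\Pac(w)$ of its neighbors, each at a definite angle; consecutive neighbors around $v$ correspond to faces incident to $v$, and each such face, by Proposition \ref{face_degree}, is an at-most-$(2\pi/\epsilon)$-gon whose incident circles all pass through the common dual point $v_f$. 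I would argue that the arcs of $\Pac(v)$ cut off by successive neighbor circles each subtend an angle bounded below by a constant depending only on $\epsilon$ (using that the intersection angle at $e=\{u,v\}$ is at most $\pi-\epsilon$, so the two circles are genuinely transverse and the ``bigon'' between consecutive edges of a face at $v$ has angle at $v_f$ bounded below). Summing these arc contributions around $\Pac(v)$ to at most $2\pi$ yields $\deg(v) \le C(\epsilon)$ for interior vertices; boundary vertices of an infinite disk decomposition are handled because in the pGH picture only combinatorial balls matter and one can pass to interior vertices, or one notes the decomposition is locally finite so only finitely many edges meet $v_i$ at each stage. Once the uniform degree bound $B = B(\epsilon)$ is in hand, Lemma \ref{compactness1} applies verbatim and delivers $(G_i, v_i) \stackrel{\mathrm{pGH}}{\longrightarrow} (G_\infty, v_\infty)$ along a subsequence.

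For the ``moreover'' part, assuming from the outset $G_i \in \mathcal{BG}^r(B)$, the point is to upgrade the combinatorial limit graph $G_\infty$ to the $1$-skeleton of a locally finite cellular decomposition of a simply connected planar domain. The strategy is: by definition of pGH convergence, for each $n$ the induced ball $I_{G_\infty}(B_n(v_\infty))$ is isomorphic, as a rooted graph, to $I_{G_i}(B_n(v_i))$ for all large $i$; I would use these isomorphisms to pull back the face structure. Concretely, each face $f$ of $\mathcal{D}_i$ meeting $B_{n-1}(v_i)$ is, by Proposition \ref{face_degree}, a cycle of bounded length $\le 2\pi/\epsilon$ contained in $B_n(v_i)$, hence corresponds under $\phi_{i,n}^{-1}$ to a bounded cycle in $I_{G_\infty}(B_n(v_\infty))$; these pulled-back cycles are mutually consistent across $n$ (for $i$ large, the isomorphisms on nested balls agree up to a graph automorphism of the smaller ball fixing the root, and one can diagonalize / take a consistent choice by a further subsequence), so they assemble into a well-defined face set $F_\infty$ on $G_\infty$. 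I would then verify the combinatorial axioms of a cellular decomposition — every edge in two faces, faces are disk-like (bounded cycles), the resulting complex is a surface — all of which are \emph{local} conditions that hold in each $B_n$ because they hold in $\mathcal{D}_i$, and transfer under the isomorphisms. Local finiteness of the limit decomposition is immediate from the degree bound $B$ together with the face-degree bound $2\pi/\epsilon$. Simple connectivity and the ``one infinite end'' property, if needed, follow since each $\mathcal{D}_i$ is a disk decomposition and these are again captured by the exhausting balls.

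I expect the main obstacle to be the \textbf{coherence of the pulled-back face structure}: pGH convergence only guarantees isomorphisms of combinatorial \emph{balls}, with no a priori compatibility between the isomorphism $\phi_{i,n}$ on $B_n$ and $\phi_{i,n+1}$ on $B_{n+1}$, and moreover the isomorphism onto $I_{G_\infty}(B_n(v_\infty))$ is only unique up to root-fixing automorphisms of that finite ball. Turning the family $\{\phi_{i,n}\}$ into a single coherent identification that carries \emph{all} the face cycles simultaneously requires a diagonal argument: fix $n$, pass to a subsequence of $i$ along which the composed isomorphisms $\phi_{i,n+1}$ restricted to $B_n$ induce a fixed automorphism of $I_{G_\infty}(B_n(v_\infty))$, iterate over $n$, and diagonalize; then the bounded face cycles stabilize and define $F_\infty$. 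One must also check that distinct faces of $\mathcal{D}_i$ give distinct limit cycles and that no ``new'' faces are created at infinity — but the bounded face-degree $2\pi/\epsilon$ confines every face to some finite ball, so this is controlled. A secondary, more technical point is verifying that the abstract cell complex $(V_\infty, E_\infty, F_\infty)$ is realized as a decomposition of a \emph{simply connected planar domain} rather than some other surface; since every finite subcomplex is planar and simply connected (being a subcomplex of a disk decomposition) and the complex is a nested union of such, this follows, but it deserves an explicit remark.
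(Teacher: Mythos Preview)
Your approach to the first part has a genuine gap. You claim that the arcs of $\mathcal{P}(v)$ cut off by successive neighbor circles each subtend a central angle bounded below by some $c(\epsilon)>0$, and then sum to at most $2\pi$ to conclude $\deg(v)\le C(\epsilon)$. But the central angle at $v$ associated to the edge $e=\{v,w\}$ is exactly the quantity $\alpha_v^w$ from \eqref{anglefunction}, and one checks directly that $\alpha_v^w\to 0$ as $r(v)/r(w)\to\infty$, for any fixed $\Theta(e)\in[0,\pi-\epsilon]$. So there is no lower bound on these arcs in terms of $\epsilon$ alone, and the vertex degree of $G_i$ is \emph{not} controlled by the intersection-angle hypothesis. (Indeed, if your bound held, the separate hypothesis $G_i\in\mathcal{BG}^r(B)$ in the ``moreover'' clause would be redundant.) Your remark that the angle at the dual point $v_f$ is bounded below --- it equals $\pi-\Theta(e)\ge\epsilon$ --- is correct, but that is a bound at $v_f$, not at the center of $\mathcal{P}(v)$; it controls the \emph{face} degree, not the vertex degree.

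The paper's proof exploits precisely this distinction. Since $\deg(f)\le 2\pi/\epsilon$ by Proposition~\ref{face_degree}, the Poincar\'e \emph{dual} graph $G_i^*$ has vertex degree uniformly bounded by $2\pi/\epsilon$, so Lemma~\ref{compactness1} applies directly to the rooted duals $(G_i^*,v_i^*)$; the paper then invokes the equivalence of pGH convergence for a planar cellular decomposition and for its dual to transfer the subsequential limit back to $(G_i,v_i)$. In effect, your arc-counting argument is the proof of Proposition~\ref{face_degree} carried out at the wrong center; redirected to the dual graph it becomes exactly the paper's argument. Your treatment of the ``moreover'' part, via diagonalizing to pull back a coherent face structure, is considerably more detailed than the paper's one-line observation that $G_\infty\in\mathcal{BG}(B)$, and is reasonable as outlined.
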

	\begin{proof}
		By Proposition \ref{face_degree}, for every $i$, we have $\deg(f)\le \frac{2\pi}{\epsilon},\forall f\in F_i$. Therefore, the degree of dual graph $G^*_i$, which is the $1-$skeleton of the Poincar\'e dual of  $\mathcal{D}_i,$ is bounded by $\frac{2\pi}{\epsilon}$. Since the pointed Gromov-Hausdorff convergence of the dual graphs is equivalent to that of the original graphs, by Lemma \ref{compactness1}, we see that $\{(G_i,v_i)\}_{i\in\mathbb{N}}$ has a convergent subsequence, which converges to $(G_\infty,v_\infty)$. Moreover, if $G_i\in \mathcal{BG}(B),$ then $G_\infty\in \mathcal{BG}(B)$. Therefore, we finish the proof.
	\end{proof}
	
	We now introduce a geometric method for constructing ideal circle patterns. Let $r\in\mathbb{R}_+^V$ be radii of circles, which is called the \textbf{circle packing metric} in the literature.
	For each cell \( f \) in the cellular decomposition \( \mathcal{D} \),  we add an auxiliary vertex $v_f$ in the interior of the face as its dual point, labeled as a small triangle in Figure \ref{fig01}. We denote by $V_F$ the set of those dual vertices. We define the incidence graph of $G$; see, e.g., \cite{coxeter1950self}.

	\begin{defn}
		An \textit{incidence graph} $I(G)$ is a bipartite graph with the bipartition $\{V,V_F\}$. For $v\in V$ and $v_f\in V_F$, $v$ and $v_f$ are adjacent in $I(G)$ if and only if $v$ is on the boundary of the face $f$.
	\end{defn}
	The notion of incidence graph is used to demonstrate the relationship between vertices and faces. Let $e=\{v,w\}\in E_{int}$ ($ E_{bd}$ resp.). Assume that $f_1,f_2\in F$ are faces whose boundaries contain $e$ ($f\in F$ is the face whose boundary contains $e$ resp.). By $Q_e$ ($HQ_e$ resp.) we denote the quadrilateral $vv_{f_1}wv_{f_2}$ (triangle $vv_{f}w$) in the incidence graph $I(G)$. 
	\begin{figure}[H]
		\centering
		\includegraphics[width=3in]{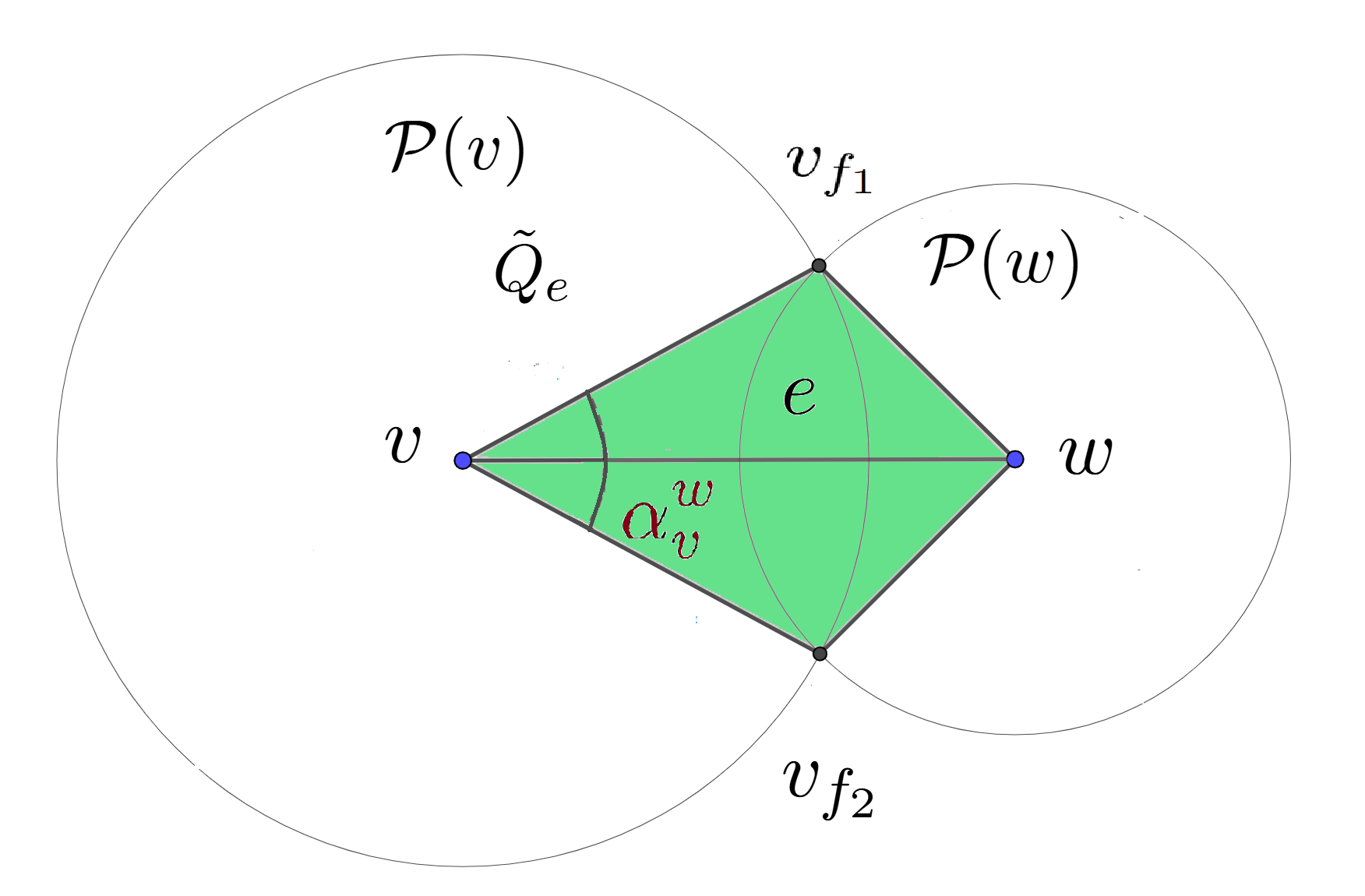}
		\caption{a Euclidean quadrilateral~$\tilde{Q}_e$.}
		\label{quadrilateral}
	\end{figure}
    
	Obviously, any edge in the original graph $G$ uniquely corresponds to a quadrilateral in the incidence graph $I(G)$, as shown in Figure \ref{fig01}. Then we can construct a Euclidean (or hyperbolic) quadrilateral $\tilde{Q}_e$ with 
	\begin{align*}
		\angle vv_{f_1}w=\angle vv_{f_2}w=\pi-\Theta(e),~
		|vv_{f_1}|=|vv_{f_2}|=r(v),~ |wv_{f_1}|=|wv_{f_2}|=r(w),
	\end{align*}
	as in Figure \ref{quadrilateral}.
	
	We denote by $\alpha_{(e,v)}=\alpha_{v}^w\in (0,2\pi)$ the angle $\angle v_{f_1}vv_{f_2}$ in $\Tilde{Q}_e$ in Euclidean background geometry, as shown in Figure \ref{quadrilateral}, given by 
	\begin{align}\label{anglefunction}
		\alpha_v^w(r(v),r(w),\Theta(e))=2\arccos\frac{r(v)+r(w)\cos\Theta(e)}{\sqrt{r(v)^2+r(w)^2+2r(v)r(w)\cos\Theta(e)}},~\forall(r(v),r(w))\in\mathbb{R}^2_+.
	\end{align}
	Since the function above only depends on the ratio $q_{vw}:=\frac{r(v)}{r(w)}$ and the angle $\Theta(e)$, we can write it as $\alpha_v^w(q_{vw},\Theta(e))$ in Euclidean background geometry.
 Similarly, in hyperbolic background geometry, one has
 \[
 \alpha_v^w(r(v),r(w),\Theta(e))=2\mathrm{arccotanh}\frac{\coth r_v\sinh r_w+\cos(\Theta(v,w))\cosh{r_w}}{\sin\Theta(v,w)}.
 \]
 And the following lemma holds in the hyperbolic case, see \cite{guo2007note}.
 \begin{lem}\label{hyperbolic estimate}
     In hyperbolic background geometry, fixing $\Theta=\Theta(v,w)$, for each $\epsilon>0$, there exists an $M=M(\epsilon)$ such that if $r_v>M$ then
     \[
     \alpha_v^w(r_v,r_w)<\epsilon.
     \]
 \end{lem}
	Gluing the Euclidean (hyperbolic resp.) quadrilaterals $\{Q(e)\}_{e\in E_{int}}$ as well as half-quadrilaterals $\{HQ(e)\}_{e\in E_{bd}}$ along the edges in $I(G)$, we obtain a piece-wise flat (hyperbolic resp.) metric $g=g(r,\Theta)$ on $\Sigma$ (for the gluing procedure, see \cite[Chapter 3]{burago2022course}). It is clear that $g$ is a flat (hyperbolic resp.) metric outside $V\cup V_F.$ $V$ and $V_F$ are possible cone points in $\Sigma.$ In this paper, we always assume $(C_1)$ holds for each pair $(\mathcal{D},\Theta)$ so that $V_F$ does not contain conical points.
	
    For each vertex \( v \in V \), the \emph{cone angle} \( \alpha_v \) is defined by  
    \[
    \alpha_v = \sum_{e : v < e} \alpha_{(e, v)} = \sum_{w:w \sim v} \alpha_v^w,
    \]  
    and the \textbf{vertex curvature} (also called the \textbf{discrete Gauss curvature}) at \( v \) is given by  
    \[
    K_v = 2\pi - \alpha_v.
    \]

	\begin{defn}\label{def-embed-ICP}
		Given a disk cellular decomposition $\mathcal{D}$ and intersection angles $\Theta\in (0,\pi)^E$ satisfying $(C_1)$, we say that the circle packing metric $r$ supports an \textbf{embedded planar ideal circle pattern} (embedded ICP for short) in Euclidean background geometry (hyperbolic background geometry resp.) if and only if the following conditions hold:
		\begin{enumerate}[I)]
			\item $\alpha_v=2\pi,~\forall v\in \mathrm{Int}(V),$ where $\alpha_v$ is the cone angle defined before.
			\item There exists an isometric embedding $$\eta:(\Omega,g(\Theta,r))\rightarrow(\mathbb{R}^2,\mathrm{d}s^2)~((\mathbb{D}^2,\mathrm{d}s_h^2)~\text{resp.}),$$ where $\mathrm{d}s^2$($\mathrm{d}s_h^2$ resp.) is the standard Euclidean metric (hyperbolic metric resp.).
		\end{enumerate}
        	\end{defn}
		%这里引一下
		In this paper except in the appendix, we typically refer to embedded ICP in Euclidean background geometry as ``embedded ICP'' for brevity. For simplicity, we will not distinguish $\Pac(v),\tilde{Q_e}$ and their image via $\eta$.
		It is easy to see that an ICP in $\mathbb{R}^2$, as defined in \ref{ICP}, is an embedded ICP if and only if the interiors of quadrilaterals associated with different edges do not intersect with each other.
		\begin{defn}
		Let $U$ be an open set of $\mathbb{R}^2$, for an embedded ICP contained in $U$, we say it is locally finite in $U$ if each compact set $K$ of $U$ intersects with finitely many quadrilaterals $\tilde{Q}(e)$.
	\end{defn}
	For an embedded ICP $\Pac$ of $\mathcal{D}$, we denote by $\carrier(\mathcal{\Pac})$ the union of all quadrilaterals $\{\tilde{Q}_{e}\}_{e\in E}$.
	Similar to the theory of circle packings, we establish the existence result for ICPs on finite disk cellular decompositions.

	\begin{thm}\label{finite_existence}
		Let $\mathcal{D}=(V,E,F)$ be a finite disk cellular decomposition. Let $\partial V$ be the boundary vertices of $\mathcal{D}$. If conditions $(C_1)$ and $(C_2)$ hold, then there exists an ICP $\Pac$ of $\mathcal{D}$ contained in the unit disk $\mathbb{D}^2$ such that $\Pac(v)$ is inner tangent to $\partial \mathbb{D}^2$ whenever $v\in \partial V.$ Moreover, $\Pac$ is an embedded ICP in the hyperbolic space.
	\end{thm}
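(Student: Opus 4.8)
The plan is to follow the variational strategy that underpins the existence theory for circle packings (Colin de Verdière, Thurston) adapted to the quadrilateral/incidence-graph picture developed above. I would work with the space of circle packing metrics $r\in\mathbb{R}_+^V$ and seek a metric with $K_v=0$ for every interior vertex $v$ and with the boundary circles realizing inner tangency to the unit disk; the latter boundary condition is encoded by fixing the ``total turning'' along $\partial V$, equivalently by prescribing $K_v=-\pi+\text{(something)}$ at boundary vertices so that the boundary polygon of the carrier closes up into a circle. Concretely, I would define a convex functional $\mathcal{F}(u)$ of the logarithmic radii $u_v=\log r(v)$ whose gradient is (up to sign) the vertex curvature $K_v$ at interior vertices, using the fact that $\partial \alpha_v^w/\partial u_w = \partial \alpha_w^v/\partial u_v$ — the standard closedness/symmetry identity for the angle function $\alpha_v^w(q_{vw},\Theta(e))$ in \eqref{anglefunction}. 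This makes $K$ the gradient of a concave function (an analog of the Bobenko–Springborn functional), and the desired metric is its critical point subject to the boundary normalization.

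The key steps, in order, would be: (i) verify that the angle function $\alpha_v^w$ in \eqref{anglefunction} is monotone and satisfies the symmetry $\partial_{u_w}\alpha_v^w=\partial_{u_v}\alpha_w^v$, and extract from it the closed $1$-form $\sum_v K_v\,du_v$ and its concave primitive $\mathcal{F}$; (ii) analyze the behavior of $\mathcal{F}$ (or of $K$) as $r\to\partial(\mathbb{R}_+^V)$, i.e. as some radii degenerate to $0$ or $\infty$ — here conditions $(C1)$ and $(C2)$ enter decisively, exactly as in Rivin's and Thurston's arguments: $(C2)$ (the strict inequality over non-facial closed curves) prevents the ``escape of curvature'' across separating curves and gives the a priori bound that makes the variational problem coercive on the slice where the boundary normalization is fixed; (iii) deduce existence of a critical metric $r$ with $\alpha_v=2\pi$ for all $v\in\mathrm{Int}(V)$; (iv) glue the Euclidean quadrilaterals $\tilde Q_e$ to get the flat cone metric $g(r,\Theta)$ on $\Omega$ with no cone points in $\mathrm{Int}(V)\cup V_F$, hence develop an isometric immersion $\eta:(\Omega,g)\to\mathbb{R}^2$; (v) upgrade immersion to embedding by the standard argument that a locally injective, proper map from a disk with geodesic-convex (here, inscribed-in-a-circle) boundary behaviour is injective, using the boundary turning condition to see that $\partial(\carrier)$ is an embedded convex curve tangent to $\partial U$; and finally (vi) normalize by a similarity so that the boundary circle is exactly the unit circle $U$ and each $\Pac(v)$, $v\in\partial V$, is internally tangent to it.

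I expect step (ii) — the degeneration/coercivity analysis — to be the main obstacle, just as it is the technical heart of Rivin's theorem. One must show that if a sequence of sub-optimal metrics has some radii ratios $q_{vw}$ tending to $0$ or $\infty$, then some sum $\sum(\pi-\Theta(e_i^*))$ over a closed curve violates $(C1)$ or contradicts the strict inequality in $(C2)$; packaging this requires the standard ``balanced'' combinatorial lemma (every proper nonempty vertex subset whose removal disconnects admits a separating closed curve in $\mathcal D$) together with careful accounting of how much angle is lost at the degenerating vertices. The remaining steps are comparatively routine: (i) is a direct computation with \eqref{anglefunction}; (iv) is the standard gluing construction from \cite[Chapter 3]{burago2022course}; and (v)–(vi) follow from the classical development-map argument for circle patterns with the convex boundary normalization, which is why I would present them briefly and concentrate the exposition on (i)–(iii).
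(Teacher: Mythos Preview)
Your variational/coercivity outline is broadly on target --- the paper likewise reduces existence to showing that the curvature map $u\mapsto K$ is injective (positive definite Jacobian, equivalently strict convexity of the functional you describe) and proper, and then invokes $(C1)$--$(C2)$ exactly where you expect, to place the zero curvature vector in its image (your step~(ii) is their Lemma~\ref{prescribed_boundary_radius_prob}). So the analytic core of your proposal matches the paper.

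The genuine gap is in how you handle the boundary condition. Your claim that ``$\Pac(v)$ internally tangent to $U$ for $v\in\partial V$'' can be encoded by prescribing boundary curvatures $K_v=-\pi+(\text{something})$ so that ``the boundary polygon closes up into a circle'' does not work: fixing boundary curvatures controls only the total turning of the boundary of the carrier, not its shape. A convex polygon with prescribed exterior angles need not be inscribed in a circle, and nothing in your Euclidean variational setup forces all boundary disks to be tangent to a \emph{common} circle. This step needs a different idea.

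The paper resolves this by changing background geometry. In the Poincar\'e disk model of $\mathbb{H}^2$, a circle internally tangent to $\partial U$ is a horocycle, i.e.\ a hyperbolic circle of infinite radius. So the awkward Euclidean boundary constraint becomes the natural limit $r_v\to\infty$ for $v\in\partial V$. The paper first solves the hyperbolic Dirichlet problem with finite prescribed boundary radii $\hat r_i^{[n]}=2^n\hat r_i$ (this is where the $(C1)$--$(C2)$ coercivity argument is actually run, in Lemma~\ref{prescribed_boundary_radius_prob}), then sends $n\to\infty$. Monotonicity of the interior radii follows from the hyperbolic maximum principle (Lemma~\ref{MaximumPrincipleHyper}), and an upper bound on interior radii from Proposition~\ref{hyperconstr} (large hyperbolic radius forces small cone angle, contradicting $\alpha_v=2\pi$). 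The limit then has horocycle boundary circles, as required. Your steps (iv)--(vi) are then replaced by the observation that the hyperbolic picture \emph{is} already the picture inside $U$, plus the topological embedding argument you cite.
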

	Since the proof is similar to the proof of a  theorem for tangential circle packings, we leave it to the Appendix \ref{appendix}, where the variational principles for ICPs are also introduced.
Now we introduce the concept of infinite ideal polyhedra of parabolic and hyperbolic types. 
\begin{defn}
    An \textbf{infinite ideal polyhedron (IIP)} in $\mathbb{H}^3$ is the convex hull of a countable set of isolated points located on the sphere at infinity, called ideal vertices. 
\end{defn}
Let $P$ be an infinite ideal polyhedron of $\mathbb{H}^3$, we denote by $\partial_0 P$ and $\partial_\infty P$ the boundary of $P$ inside $\mathbb{H}^3$ and the ideal boundary of $P$ in the Poincar\'e ball model respectively. It is clear that  $\partial_\infty P$ is exactly the set of isolated points which form $P$. 

Since $\mathbb{S}^2$ is compact, the set \( \partial_\infty P \) must have accumulation points in \( \mathbb{S}^2 \). It is clear that $\partial_0 P\cup \partial_\infty P$ is not homeomorphic to the sphere, but rather to the sphere with certain limit point sets removed.
The classification into parabolic and hyperbolic types reflects the structure of these accumulation points.

\begin{defn}
For an IIP $P$,
    \begin{enumerate}
    \item   We say $P$ is of parabolic type (PIIP for short) if $\partial_\infty P$ has exactly one accumulation point in $\mathbb{S}^2$.
    \item   We say $P$ is of hyperbolic type (HIIP for short) if the set of accumulation points of \( \partial_\infty P \) forms a circle \( C_\infty \subset \mathbb{S}^2 \), and all points of \( \partial_\infty P \) lie on the same side of \( C_\infty \).
\end{enumerate}
\end{defn}
%\section{Planar topology of embedded ICPs}
%In this section, we discuss 

	\section{Limit structures of ICPs}\label{Sec:3}
	In this section, we study the limit structures of ICPs.  As a direct application, we will establish two Ring lemmas for embedded ICPs with the help of the properties of the limit structures.
	\subsection{The limits of embedded ICPs}
	\begin{defn}
		Let $\{D_i=(V_i,E_i,F_i)\}_{i\in \mathbb{N}}$ be a sequence of locally finite cellular decompositions, of which $1$-skeleton is denoted by $G_i.$ Let $D_\infty=(V_\infty,E_\infty,F_\infty)$ be another locally finite cellular decomposition whose $1-$skeleton denoted by $G_\infty.$
		Let $\mathcal{P}_i$ be a sequence of ICPs of $D_i$ respectively, with intersection angles $\Theta_i\in (0,\pi)^{E_i}$ and radii $r_i\in \mathbb(0,\infty)^{V_i}.$ 
		We say a quadruple $(\mathcal{D}_\infty,v_\infty,\Theta_\infty,q_\infty)$ with $\Theta_\infty\in [0,\pi]^{E_\infty}$ and $q_\infty\in [0,+\infty]^{\overrightarrow{E_\infty}}$ an \textbf{ICP limit} of $\{(\mathcal{D}_i,v_i,\Theta_i,r_i)\}_{i\in \mathbb{N}}$, if the following statements hold:
		\begin{enumerate}
			\item $(G_i,v_i)\stackrel{pGH}{\longrightarrow}(G_\infty,v_\infty)$.
			\item Let $\phi_{i,n}$ be the graph isomorphism as defined in Definition \ref{ghconvergence}. Then for each $n\in \mathbb{N}$, for sufficiently large $i$ one has
			\[
			r_i(\phi^{-1}_{i,n}(v))/r_i(\phi^{-1}_{i,n}(w))\rightarrow q_\infty(v,w), ~\forall w\in B_n(v_\infty).
			\]
			\item For each $n\in \mathbb{N}$, for sufficiently large $i$, one has
			\[
			\Theta_i(\phi^{-1}_{i,n}(e))\rightarrow \Theta_\infty(e)\in (0,\pi),~\forall e\subseteq B_n(v_\infty).
			\]
		\end{enumerate}
	\end{defn}
	From the definition, it is easy to see that for an edge $\{v,w\}\in E$, if $q(v,w)\in (0,\infty)$, then $q(v,w)=q(w,v)^{-1};$ if $q(v,w)=0$, then $q(w,v)=\infty$.
	Now we recall the function $\alpha_v^w(r(v),r(w),\Theta(e))$ in \eqref{anglefunction}. For an edge $e=(v,w)$, we can extend it to the following form.
	$$\hat{\alpha}_{(e,v)}=\hat{\alpha}_v^w(q_{vw},\Theta(v,w))=\left\{\begin{aligned}
		&2\Theta(v,w),&q_{vw}=0,\\&\alpha_v^w(q_{vw},\Theta(v,w)),~&q_{vw}\in(0,\infty),\\
		&0,&q_{vw}=\infty.
	\end{aligned}
	\right.$$

	\begin{prop}\label{property}
		Let $(\mathcal{D}_\infty,v_\infty,\Theta_\infty,q_\infty)$ be an ICP limit of $\{(\mathcal{D}_i,v_i,\Theta_i,r_i)\}_{i\in \mathbb{N}}$, then the following statements hold:
		\begin{enumerate}[(A)]
			\item  For each $n\in\mathbb{N}$, for sufficiently large $i$, we have $$ \alpha_{(\phi^{-1}_{i,n}(e),\phi^{-1}_{i,n}(v))}\rightarrow\hat{\alpha}(q_\infty(v,w),\Theta_\infty(v,w)),\forall  e\subseteq B_n(v_\infty).$$
			\item $\sum\limits _{w:w\sim v}\hat{\alpha}(q_\infty(v,w),\Theta_\infty(v,w))=2\pi,~\forall v\in \mathrm{Int}(V_\infty).$
			\item If the closed curve $\gamma=(e_1,e_2,...,e_n)$ is a boundary of a face, then 
			\[
			\sum_{j=1}^n(\pi-\Theta_\infty(e_i))=2 \pi.
			\]
			
		\end{enumerate}
	\end{prop}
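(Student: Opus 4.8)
The plan is to treat (A) as a routine continuity statement about the angle function \eqref{anglefunction}, and then to obtain (B) and (C) by transporting, for $i$ large, an \emph{exact} identity that already holds in the finite pattern $\mathcal{P}_i$ onto the ball $B_n(v_\infty)$ through $\phi_{i,n}$, and letting $i\to\infty$.

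For (A), I would first rewrite \eqref{anglefunction} in terms of the ratio $q=r(v)/r(w)$: using the identity $q^2+1+2q\cos\Theta=(q+\cos\Theta)^2+\sin^2\Theta$ one gets $\alpha_v^w(q,\Theta)=2\arccos\big((q+\cos\Theta)/\sqrt{(q+\cos\Theta)^2+\sin^2\Theta}\big)$, an expression depending only on $q$ and $\Theta$ (in particular not on the two faces incident to the edge). For $\Theta\in(0,\pi)$ the radicand is $\ge\sin^2\Theta>0$, so $\alpha_v^w$ is a continuous function of $(q,\Theta)\in[0,\infty)\times(0,\pi)$; at $q=0$ it equals $2\arccos(\cos\Theta)=2\Theta$, and as $q\to\infty$ the quantity $(q+\cos\Theta)/\sqrt{(q+\cos\Theta)^2+\sin^2\Theta}$ tends to $1$, so $\alpha_v^w\to 2\arccos 1=0$. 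Thus $\hat\alpha$ is precisely the continuous extension of $\alpha_v^w$ to the compactified domain $[0,\infty]\times(0,\pi)$. Now fix $n$ and an edge $e=\{v,w\}\subseteq B_n(v_\infty)$; for every $i$ large enough that $\phi_{i,n}$ exists, the $\mathcal{P}_i$-angle $\alpha_{(\phi_{i,n}^{-1}(e),\phi_{i,n}^{-1}(v))}$ equals, by \eqref{anglefunction}, $\alpha_v^w\big(q_i,\Theta_i(\phi_{i,n}^{-1}(e))\big)$ with $q_i=r_i(\phi_{i,n}^{-1}(v))/r_i(\phi_{i,n}^{-1}(w))$; by the definition of an ICP limit $q_i\to q_\infty(v,w)\in[0,\infty]$ while $\Theta_i(\phi_{i,n}^{-1}(e))\to\Theta_\infty(v,w)\in(0,\pi)$. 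Continuity of $\hat\alpha$ then gives the asserted convergence, and since $B_n(v_\infty)$ contains only finitely many edges it holds simultaneously over all of them.

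For (B), I would fix $v\in\mathrm{Int}(V_\infty)$ and pick $n\ge\mathrm{d}_{G_\infty}(v_\infty,v)+2$, so that $B_n(v_\infty)$ contains $v$, all of its (finitely many) neighbours, and the full link of $v$. For $i$ large, $\phi_{i,n}$ identifies the combinatorial star of $v$ in $\mathcal{D}_\infty$ with that of $u_i:=\phi_{i,n}^{-1}(v)$ in $\mathcal{D}_i$; in particular $u_i$ is an interior vertex of $\mathcal{D}_i$ whose neighbours are precisely $\phi_{i,n}^{-1}(w)$, $w\sim v$. Since $\mathcal{P}_i$ is an ideal circle pattern of $\mathcal{D}_i$, the $\alpha$-angles around each of its interior vertices sum to $2\pi$, hence $\sum_{w\sim v}\alpha_{(\phi_{i,n}^{-1}(\{v,w\}),\,u_i)}=2\pi$. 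This sum has a fixed finite number of terms (namely $\deg(v)$, for all large $i$), so letting $i\to\infty$ and invoking (A) termwise yields $\sum_{w\sim v}\hat\alpha(q_\infty(v,w),\Theta_\infty(v,w))=2\pi$. For (C), I would let $\gamma=(e_1,\dots,e_n)$ bound a face $f\in F_\infty$ (so $n<\infty$, by local finiteness of $\mathcal{D}_\infty$) and choose $N$ so large that $f$ together with all of its edges and vertices lies in $B_N(v_\infty)$. For $i$ large, $\phi_{i,N}^{-1}(f)$ is a face of $\mathcal{D}_i$ bounded by $\phi_{i,N}^{-1}(e_1),\dots,\phi_{i,N}^{-1}(e_n)$; since the intersection angles of an ideal circle pattern satisfy $(C1)$, we get $\sum_{j=1}^{n}\big(\pi-\Theta_i(\phi_{i,N}^{-1}(e_j))\big)=2\pi$, and letting $i\to\infty$ along the convergence $\Theta_i(\phi_{i,N}^{-1}(e_j))\to\Theta_\infty(e_j)$ of the ICP limit (again a fixed finite sum) gives $\sum_{j=1}^{n}(\pi-\Theta_\infty(e_j))=2\pi$.

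The step I expect to need the most care is the combinatorial bookkeeping underlying (B) and (C): the definition of an ICP limit only records pointed Gromov--Hausdorff convergence of the \emph{$1$-skeletons} $G_i$, whereas the arguments above use that the planar \emph{cellular} structures are eventually transported correctly by the $\phi_{i,n}$ --- in particular that $u_i$ is genuinely interior, and that $\phi_{i,N}^{-1}$ sends $\partial f$ to a face boundary of $\mathcal{D}_i$. This should follow from the fact that, away from the outer boundary, the faces of a cellular decomposition of a planar domain are determined by its $1$-skeleton together with its planar embedding (one may alternatively build this compatibility directly into the convergence being used, cf.\ Proposition \ref{compactness2}). The remaining ingredients --- the continuous extension of $\alpha_v^w$ at $q\in\{0,\infty\}$ and the observation that all the sums in play have a fixed finite number of terms --- are routine.
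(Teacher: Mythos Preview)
Your proposal is correct and is precisely the natural expansion of the paper's own argument: the paper's proof consists of the single sentence ``This proposition can be obtained easily from the definition of ICPs,'' and what you wrote is exactly how one would unpack that sentence --- continuity of $\hat\alpha$ in $(q,\Theta)$ for (A), then passing the exact identities $\sum_{w\sim v}\alpha_v^w=2\pi$ and $(C1)$ through $\phi_{i,n}$ and taking limits for (B) and (C). Your final paragraph correctly flags the only genuinely delicate point (that pointed GH convergence of the $1$-skeletons must carry the planar cellular structure, so that interiority of $v$ and face boundaries transfer); the paper treats this as implicit, in the spirit of Proposition~\ref{compactness2}.
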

	\begin{proof}
		This proposition can be obtained easily from the definition of ICPs.
	\end{proof}
	Now we introduce an operation on graphs.
	\begin{defn}[Reduce graphs by faces]
		Let \( G_0 = (V_0, E_0) \) be a subgraph of the $1$-skeleton of a disk cellular decomposition \( \mathcal{D} = (V, E, F) \) such that \( G_0 \) does not contain all boundary edges of any two faces that share a common vertex in \( \mathcal{D} \). We denote by $F_0$ the set of faces in $F$ whose boundary edges are all contained in $E_0.$ For each face $f\in F_0,$ we introduce a new vertex $v_f.$ Then let $V_1=V_0\backslash(\cup_{f\in F_0}V(f))$ and $V_2=\{v_f:f\in F_0\}.$ We say a graph $G'=(V',E')$ is the \textbf{reduced graph} of $G_0$ by faces if the following statements hold (as shown in Figure \ref{fig:reduce}):
		\begin{enumerate}
			\item $V'=V_1\cup V_2.$
			\item Every two vertices $v_{f_1},v_{f_2}\in V_2$ are connected in $G'$ if there exist $v_1\in V(f_1)$ and $v_2\in V(f_2)$ such that $v_1\sim v_2$ in $G_0$.
			\item Every two vertices in $V_1$ are connected in $G'$ if and only if they are connected in $G_0$.
			\item Every two vertices \( v_1 \in V_1 \) and \( v_2 = v_f \in V_2 \), we have \( v_1 \sim v_2 \) in \( G' \) if and only if \( v_1 \sim v \) for some \( v \in V(f) \) in \( G_0 \).
		\end{enumerate}
	\end{defn}
	\begin{figure}
		\centering
		\includegraphics[width=0.7\linewidth]{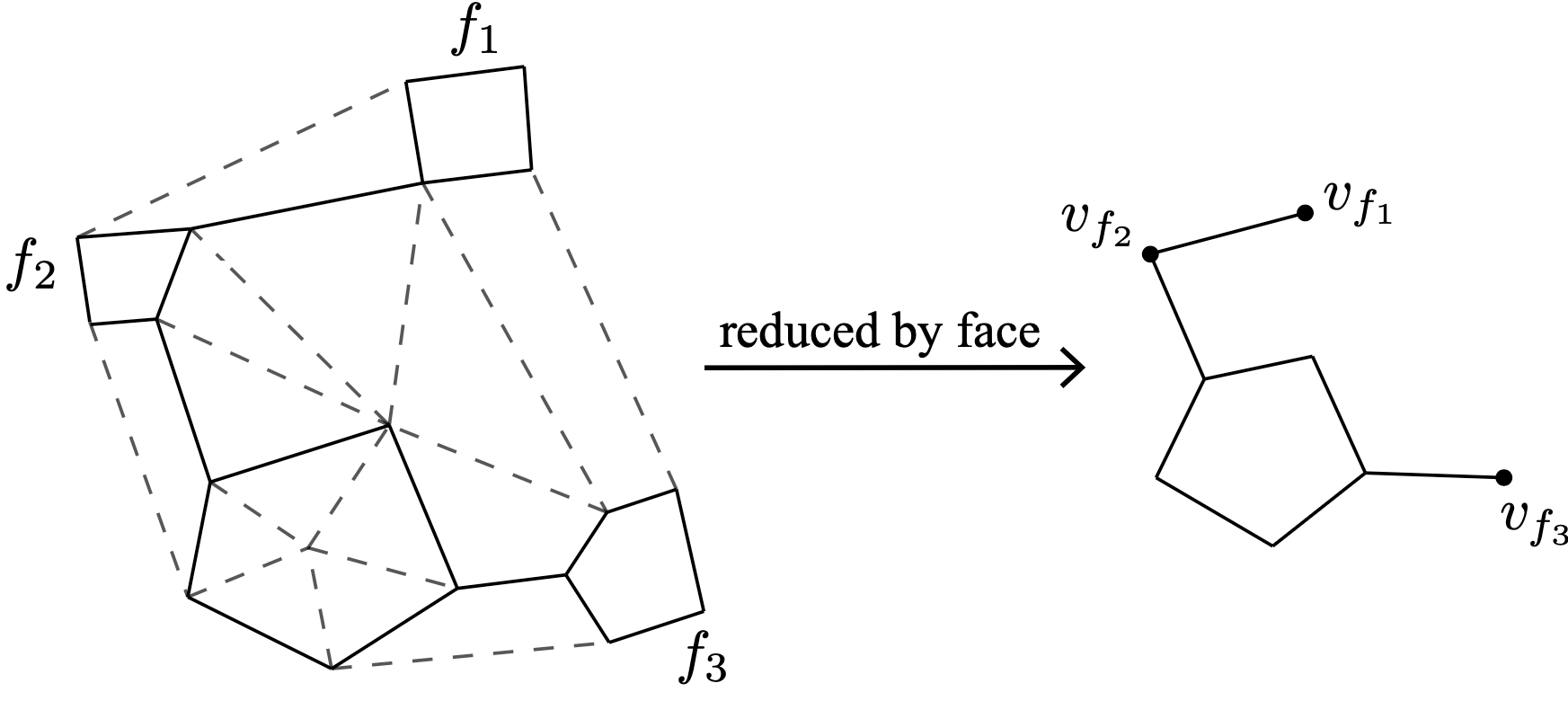}
		\caption{A subgraph $G_0$ of a cellular decomposition and its reduced graph.}
		\label{fig:reduce}
	\end{figure}
	Now we consider the following lemma, which is crucial in proving the Ring lemma.
	\begin{lem}\label{limit lemma}
		Let $\{D_i=(V_i,E_i,F_i)\}_{i\in \mathbb{N}}$ be a sequence of locally finite cellular decompositions, whose $1$-skeleton is denoted by $G_i.$ Let $\mathcal{P}_i$ be the embedded ICP of $\mathcal{D}_i$ with intersection angle $\Theta_i\in (0,\pi-\epsilon]^{E_i}$ and radius $r_i\in (0,\infty)^{V_i}$.
		Suppose that $(\mathcal{D}_\infty,v_\infty,\Theta_\infty,q_\infty)$ is an ICP limit of $\{(\mathcal{D}_i,v_i,\Theta_i,r_i)\}_{i\in \mathbb{N}}$, and that the condition $(C_2)$ in Theorem \ref{riv} holds for $(\mathcal{\mathcal{D}}_\infty,\Theta_\infty).$ For $v\in V,$ if $B_{\frac{6\pi}{\epsilon}}(v)\cap\partial V_\infty=\emptyset,$ then $q_\infty(v,w)<+\infty,~\forall w\sim v.$
	\end{lem}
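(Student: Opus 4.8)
The plan is to argue by contradiction: suppose $q_\infty(v,w) = +\infty$ for some $w \sim v$, which means that in the approximating patterns $\mathcal{P}_i$ the circle $\Pac_i(\phi_{i,n}^{-1}(v))$ degenerates relative to its neighbor $\Pac_i(\phi_{i,n}^{-1}(w))$. In terms of the extended angle function $\hat\alpha$, this forces $\hat\alpha_v^w(q_\infty(v,w),\Theta_\infty(v,w)) = 0$. The key structural tool is the \emph{reduce graphs by faces} operation: I would collapse every face all of whose boundary circles have a degenerate radius ratio (relative to some fixed reference scale, say $r(v)$) into a single vertex, thereby forming a reduced graph in which a subset $W$ of vertices around $v$ has collapsed together. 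Because $v$ is deep in the interior — precisely $B_{6\pi/\epsilon}(v)\cap\partial V_\infty = \emptyset$ — and because each face has at most $2\pi/\epsilon$ edges by Proposition \ref{face_degree}, the collapsed region is uniformly bounded in combinatorial size and stays within $B_{6\pi/\epsilon}(v)$, hence in $\mathrm{Int}(V_\infty)$.

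The first concrete step is to record the elementary monotonicity and limit properties of $\hat\alpha_v^w(q,\Theta)$: it is continuous on $[0,\infty]\times[\epsilon,\pi-\epsilon]$, decreasing in $q$, with $\hat\alpha_v^w \to 2\Theta$ as $q\to 0$ and $\hat\alpha_v^w \to 0$ as $q\to\infty$; symmetrically $\hat\alpha_w^v \to 2(\pi-\Theta)$ as $q = q_{vw}\to\infty$. Combined with property (B) of Proposition \ref{property}, $\sum_{w\sim v}\hat\alpha(q_\infty(v,w),\Theta_\infty(v,w)) = 2\pi$ at every interior vertex, this gives a discrete Gauss–Bonnet-type bookkeeping: if a cluster $W$ of vertices has collapsed (all mutual ratios $0$ or $\infty$, all of them interior), then summing (B) over $v'\in W$ and cancelling the contributions from edges internal to $W$ leaves only the angle contributions from the boundary edges of $W$, i.e. edges joining $W$ to $\tilde\partial(W)$; on those edges the $W$-side ratio is $0$, so the contribution is $2\Theta_\infty(e)$ (or $0$ if that neighbor is itself in the part of $W$ reached through a collapsed face — this is exactly what the reduced graph keeps track of). Carrying out this sum, the total angle around $W$ equals $2\pi|W_{\mathrm{int}}|$ on one side and $\sum_{e}2\Theta_\infty(e)$ (over boundary edges, suitably indexed) on the other, and translating through $\pi - \Theta$ produces $\sum(\pi-\Theta_\infty(e^*)) = 2\pi$ over the closed curve $\gamma$ bounding the collapsed cluster $W$. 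If $\gamma$ is not the boundary of a single face, this contradicts $(C2)$ for $(\mathcal{D}_\infty,\Theta_\infty)$; and $\gamma$ cannot be the boundary of a single face because $W$ is a genuine cluster of at least two circles that have collapsed together (the edge $\{v,w\}$ lies strictly inside it, with $q_\infty = \infty$, whereas on a face boundary $(C1)$ with $\Theta_\infty \in (0,\pi)$ forces all ratios finite).

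The main obstacle I anticipate is making the collapsing/reduction argument precise and closed: one must show that the set of vertices ``collapsed toward $v$'' — defined via a chain of edges with degenerate ratio, or via faces all of whose boundary ratios degenerate — is (i) nonempty and contains the edge $\{v,w\}$, (ii) entirely contained in $B_{6\pi/\epsilon}(v)$ hence interior, so that (B) applies at each of its vertices, and (iii) bounded by a closed curve $\gamma$ to which $(C2)$ can be applied with the correct angle sum. Step (ii) is where the radius $6\pi/\epsilon$ enters: a priori a chain of collapsed circles could be long, but the face-reduction shows each reduction step moves at most $2\pi/\epsilon$ edges, and the Gauss–Bonnet deficit one accumulates per step is bounded below, so only $O(1)$ steps — comfortably fewer than $6\pi/\epsilon$ — are possible before the angle sum around the growing cluster would violate $(C2)$. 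Quantifying this ``deficit per step is bounded below'' cleanly, using the uniform bound $\Theta_i \in [\epsilon,\pi-\epsilon]$, is the delicate point; everything else is the continuity of $\hat\alpha$ and the combinatorics of the reduced graph already set up before the lemma.
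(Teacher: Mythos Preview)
Your high-level plan (contradiction, a ``collapsed'' region, a closed curve violating $(C2)$) matches the paper, but your execution diverges in a way that leaves a genuine gap. First a directional slip: $q_\infty(v,w)=r(v)/r(w)\to\infty$ means the \emph{neighbor} $w$ degenerates relative to $v$, not $v$ itself; after normalizing $r(v)=1$ the set $W=\{u:r_\infty(u)=0\}$ contains $w$, while $v$ sits on its outer boundary $\tilde\partial W$. So the edge $\{v,w\}$ is not ``strictly inside'' $W$ --- it crosses $\partial W$ --- and your reason for excluding the single-face case collapses.

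The more serious issue is where you try to confine $W$ to $B_{6\pi/\epsilon}(v)$. You propose summing property~(B) over the collapsed cluster $W$ (a Gauss--Bonnet bookkeeping), but that summation is only meaningful if $W$ is finite and entirely interior --- and you have no argument for finiteness beyond the handwave ``Gauss--Bonnet deficit per step is bounded below,'' which is not a proof. The paper does \emph{not} attempt to bound $W$; instead it works with $W_+$, the connected component of $\tilde\partial W_0$ containing $v$ (where $W_0$ is the face-connected component of $W$ through $w$), and shows that $I_{G_\infty}(W_+)$ contains no infinite simple path. The crucial ingredient you are missing is a \emph{geometric} one coming from the actual planar embedding of the $\mathcal{P}_i$: along such a path the dual vertices $v_{f_j}^i$ all lie within $\sum r_i(\mathcal{P}_i(\phi_i^{-1}(w_j)))\to 0$ of each other, and non-overlapping of the embedded quadrilaterals then forces $\sum_{j\le N}(\pi-\Theta_i(e_j))\le 2\pi$ for every $N$, contradicting $\Theta\le\pi-\epsilon$. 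This is what yields $W_+\subset B_{2\pi/\epsilon}(v)$, with the remaining factor of $3$ in $6\pi/\epsilon$ consumed by the face-reduction step when the first closed curve found in $W_+$ happens to bound a face. Your purely combinatorial plan has no substitute for this embedding argument, and without it the collapsed region cannot be controlled.
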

	\begin{proof}
		Without loss of generality, we assume $v=v_\infty, B_{\frac{6\pi}{\epsilon}}(v_\infty)\cap\partial V_\infty=\emptyset.$ Since $\mathcal{P}_i$ is an embedded ICP for each $i,$ after scaling and translation, we may assume that 
		$\mathcal{P}_i(\phi_i^{-1}(v_\infty))$ is the unit disk in $\mathbb{R}^2.$
		By subtracting a subsequence, we may assume that for each vertex $v\in V_\infty$, the limit $r_\infty(v):=\lim_{i\rightarrow\infty}r_i(\phi_i^{-1}(v))$ exists in $[0,\infty].$
		We now prove this lemma by contradiction. 
		
		Suppose that $q_\infty(v,w)=+\infty,$ for some $w\sim v_\infty.$ Then the limit $r_\infty(w)=0$. Let $W$ be the set
		\[
		\{v\in V_\infty:r_\infty(v)=0\}.
		\]
		We denote by $W_0$ the face connected component of $W$ containing $w$.  Let $W_1$ be the set of vertices that are face connected to some vertex in $W_0$ but are not contained in $W_0$. Then $v_\infty\in W_1$. We denote by $W_+$ the connected component of $W_1$ containing $v_\infty$.   By $(B)$ of Proposition \ref{property}, we see that for each vertex $v\in V_\infty,$ there exist at least two vertices $v',v''\sim v$ such that $q_\infty(v,v')$ and $q_\infty(v,v'')<+\infty.$ Otherwise, the summation $\sum_{w:w\sim v}\hat{\alpha}_v^w(q_\infty(v,w),\Theta(v,w))\le2\pi-2\epsilon.$ Therefore, for the induced subgraph $I_{G_\infty}(W_+)$, one of the three situations below must happen.
        \begin{enumerate}
            \item $I_{G_\infty}(W_+)$ contains a closed curve.
            \item $I_{G_\infty}(W_+)$ contains a path connecting $W_+$ to the boundary of $\partial V_\infty$.
            \item $I_{G_\infty}(W_+)$ contains an infinite path without self-intersection.
        \end{enumerate}
       
          We claim that $I_{G_\infty}(W_+)$ contains no infinite path .
		
		We prove that 2. and 3. cannot happen. Otherwise, there exists a path $\gamma=(e_1,e_2,...,e_n)$ in $I_{G_\infty}(W_+)$ without self-intersection such that $n\ge\frac{6\pi}{\epsilon}$. Since $W_+$ is the boundary of $W_0,$ for each $j$, there exists a face $f_j\in F_\infty$ and a vertex $w_j\in W_0$ such that $w_j<f_j$ and $e_j<f_j$. For simplicity, we write $\phi_i$ for $\phi_{i,n}$, assuming that the vertices considered in the proof is contained in a sufficiently large ball $B_n(v_\infty)$. We denote by $v_{f_j}^i$ the dual vertex of $\phi_i^{-1}(f_j)$ at the quadrilateral $\tilde{Q}_{\phi_i^{-1}(e_j)}$. Since the radii of circles in $W_0$ converge to $0$ and $W_0$ is face connected, for fixed $i$ and $N$ we have
		\[
		\lim_{i\rightarrow\infty}\mathrm{diam}(\{v_{f_1}^i,\cdots v_{f_N}^i\})\le \lim_{i\rightarrow\infty}\sum_{j=1}^Nr_i(\mathcal{P}_i(\phi_i^{-1}(w_j)))=0.
		\]
		Here the diameter is the Euclidean diameter of the vertex set of the embedded ICP in $\mathbb{R}^2$, as shown in Figure \ref{fig:accumulation} , where the vertices $v_{f_j}^i$ are colored red. Since the interiors of those embedded quadrilaterals do not intersect with each other, it is easy to see that 
        $$\lim_{i\rightarrow+\infty}\sum_{j=1}^N(\pi-\Theta_i(e_j))\le2\pi.$$ The fact that $N$ can be larger than $\frac{6\pi}{\epsilon}$ contradicts the assumption that $\sup_{e\in E}\Theta(e)<\pi$. Thus the claim is proved.
		\begin{figure}
			\centering
			\includegraphics[width=0.6\linewidth]{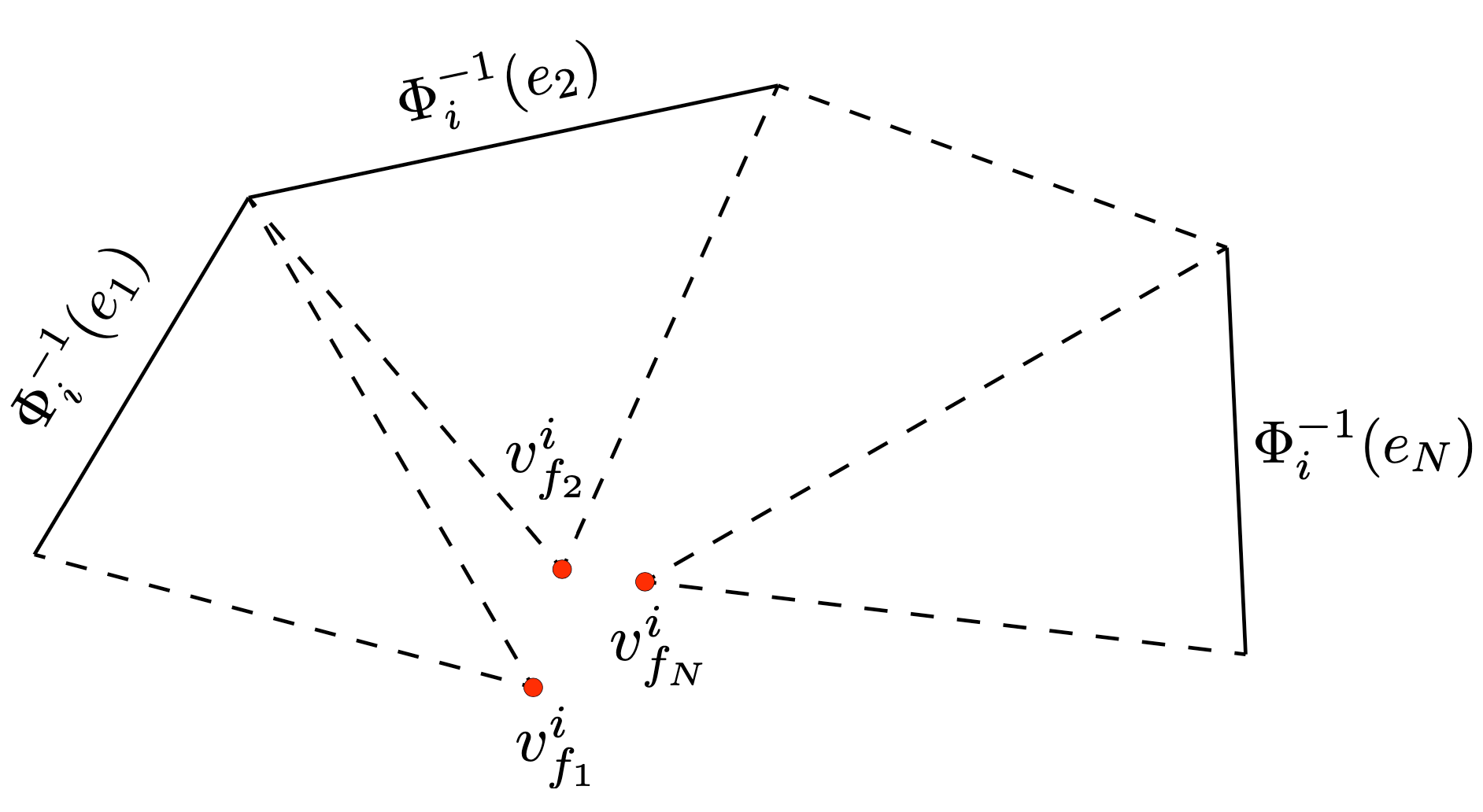}
			\caption{Half of the quadrilaterals $\{\tilde{Q}_{(\phi^{-1}_i(e_1))},\cdots,\tilde{Q}_{(\phi^{-1}_i(e_N))}\}$ in the embedded ICP $\Pac_i$.}
			\label{fig:accumulation}
		\end{figure}
		
		By the discussion above, we see that $I_{G_\infty}(W_+)$ must contain a closed curve. Moreover, we have $W_+\subseteq B_{\frac{2\pi}{\epsilon}}(v_\infty).$ Now we prove that $I_{G_\infty}(W_+)$ must contain a closed curve which is not a boundary of a face. If $I_{G_\infty}(W_+)$ does not contain a boundary of any face in $\mathcal{D}_\infty$, then the proof is completed. Therefore, we assume that $V(f)\subset W_+$ for a face $f$.
		
		We claim that $\tilde{\partial}V(f)\cap W_+$ must contain at least two vertices. 
		To prove this claim, we consider the equation obtained from Proposition \ref{property} as follows.
		\begin{align}\label{sum1}
			\sum_{v\in V(f)}\hat{\alpha}_v=\sum_{v\in V(f)}\sum_{w:w\sim v}\hat{\alpha}_v^w=2\deg(f)\pi.
		\end{align}
		We further write the summation as
		\small\begin{align}
			\sum_{v\in V(f)}\sum_{w:w\sim v}\hat{\alpha}_v^w&=\sum_{v\in V(f)}\sum_{e\in E(f):v<e}\hat{\alpha}_{(e,v)}
			+\sum_{v\in V(f)}\sum_{e\notin E(f):v<e}\hat{\alpha}_{(e,v)}\nonumber,\\
			\nonumber&=\sum_{e:e<f}2\Theta_\infty(e)+\sum_{v\in V(f)}\sum_{e\notin E(f):v<e}\hat{\alpha}_{(e,v)}\nonumber,
			\\&=2(\deg(f)-2)\pi+\sum_{v\in V(f)}\sum_{e\notin E(f):v<e}\hat{\alpha}_{(e,v)},\label{sum2}
		\end{align}
		where the last equation comes from $(C)$ of Proposition \ref{property}. By equations \eqref{sum1} and \eqref{sum2}, we have
		\[
		\sum_{v\in V(f)}\sum_{e\notin E(f):v<e}\hat{\alpha}_{(e,v)}=4\pi.
		\]
		Since $\hat{\alpha}(\infty,\cdot)=0,$ we prove the claim.
		
		By the above discussion, we know that $W_+\backslash V(f)\neq\emptyset.$ If $I_{G_\infty}(W_+)$ contains the edges of the boundaries of two faces that share a common vertex, then the proof is completed. Therefore, we only need to consider the situation where the preceding case does not occur. Let $G'=(V',E')$ be the reduced graph of $I_{G_\infty}(W_+)$ by face. Then by the claim above, we see that each $v\in V'$ has at least two neighbors. Therefore, the graph $G'$ has a closed curve. This means that $I_{G_\infty}(W_+)$ has a closed curve which is not the boundary of a face. 
		
		In summary, $I_{G_\infty}(W_+)$ must contain a curve $\gamma$ that is not a boundary of a face. Since for each vertex $v$ in $W_0$, $\Pac_i(v)$ will degenerate to a point. The circles $\{\Pac(v)\}_{v\in \gamma}$ with radii $r_\infty(v)$ form an ``ideal circle pattern'' that may contain circles with infinitely large radii.
        It follows that $\sum_{e\in \gamma} (\pi-\Theta_\infty(e))=2\pi.$ However, this contradicts the assumption of $\Theta_\infty$. Therefore, we finish the proof.
		
	\end{proof}

	\subsection{Ring lemmas for ICPs}
    The Ring Lemma is crucial for the existence of infinite circle packings. Such a result was first established for tangential circle packings by Rodin and Sullivan~\cite{Rodin_Sullivan}. We say that a circle packing \( \mathcal{P} = \{\Pac(v)\}_{v \in V} \) is \emph{univalent} if and only if the following condition holds:
    \[
     \Pac(u) \cap \Pac(v) \neq \emptyset \iff u \sim v.
    \]
    
	\begin{lem}[\cite{Rodin_Sullivan} Ring lemma]
		Let $\Pac$ be a univalent circle packing of a triangulation $\mathcal{T}$. Set $v\in V$ and $w\in N(v)$. If $|N(v)|= k$ for some $k\ge 3$, then there exists a positive constant $\mathcal{R}(k)$ that only depends on $k$ such that $$\frac{r(w)}{r(v)}>\mathcal{R}(k).$$  Moreover, $$\mathcal{R}(k+1)<\mathcal{R}(k)\leq 1.$$
	\end{lem}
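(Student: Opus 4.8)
The plan is to reduce the statement to a fact about the ``flower'' of $v$ and to establish it by a degeneration argument in the spirit of Lemma~\ref{limit lemma}. Since $\mathcal{T}$ is a triangulation and $\Pac$ is univalent, the link of $v$ is a $k$-cycle $w_1\sim w_2\sim\cdots\sim w_k\sim w_1$, and the circles $C_j:=\Pac(w_j)$ of radius $r_j:=r(w_j)$ are externally tangent to $C_0:=\Pac(v)$ (radius $r_0:=r(v)$, centre $O_0$) and to $C_{j+1}$, while the disks they bound have pairwise disjoint interiors apart from these consecutive tangencies. Normalising $r_0=1$, a direct computation with the triangle $O_0O_jO_{j+1}$ gives $\angle O_jO_0O_{j+1}=\theta(r_j,r_{j+1})$ with $\theta(a,b)=2\arcsin\sqrt{\frac{a}{1+a}\cdot\frac{b}{1+b}}$, and since the tangency point of $C_j$ with $C_0$ lies on $O_0O_j$, the \emph{closing identity} $\sum_{j=1}^{k}\theta(r_j,r_{j+1})=2\pi$ holds (the tangency points run once around $C_0$). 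First I would record this identity together with the elementary properties of $\theta$: it extends continuously to $[0,+\infty]^{2}$ with $\theta(0,\cdot)=0$ and $\theta(+\infty,+\infty)=\pi$, and $\theta(a,b)\le\pi$ always, with $\theta(a,b)<\pi$ unless $a=b=+\infty$.

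Suppose the bound fails. Then there are univalent packings $\Pac_i$ of triangulations $\mathcal{T}_i$, vertices $v_i$ of degree $k$, and neighbours $w_i\sim v_i$ with $r_i(w_i)/r_i(v_i)\to 0$. After normalising $r_i(v_i)=1$, relabelling $w_i=w_{i,1}$, and passing to a subsequence, each $r_{i,j}$ converges to some $\hat r_j\in[0,+\infty]$ and (after a further subsequence) the tangency points of the petals with $C_0$ converge; by hypothesis $\hat r_1=0$. Passing to the limit in the closing identity gives $\sum_{j=1}^{k}\theta(\hat r_j,\hat r_{j+1})=2\pi$, and since $\hat r_1=0$ the two links incident to $C_1$ contribute $0$, so $\sum_{j=2}^{k-1}\theta(\hat r_j,\hat r_{j+1})=2\pi$. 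Geometrically, keeping $C_0$ equal to the unit circle, the petals $C_2,\dots,C_k$ converge to a chain $\hat C_2,\dots,\hat C_k$ of generalised circles (points, circles, or half-planes), each lying in the closed exterior of $C_0$ and tangent to $C_0$, with consecutive members tangent and with pairwise disjoint interiors; moreover, because the tangency points of $C_1,C_2$ and of $C_k,C_1$ with $C_0$ all collapse to the single point $p:=\lim(\text{tangency point of }C_1)$, both $\hat C_2$ and $\hat C_k$ are tangent to $C_0$ at $p$.

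The crux is the following \emph{Nesting Lemma}: any two generalised closed disks that lie in the closed exterior of $C_0$ and are tangent to $C_0$ at the same point $p$ are nested, one containing the other. This follows by inverting at $p$, which sends $C_0$ to a line $L$, the exterior of $C_0$ to a closed half-plane, and the two disks to parallel half-planes $\{y\ge h_1\}$, $\{y\ge h_2\}$, which are manifestly comparable. Consequently, of any two such disks with disjoint interiors one must be the degenerate point $p$. I would then induct on $k$. For $k=3$ there is a single surviving link, so $\theta(\hat r_2,\hat r_3)=2\pi$, impossible. For $k\ge 4$, apply the Nesting Lemma to the non-adjacent pair $\hat C_2,\hat C_k$ to conclude that one of them equals $p$; its link then contributes $0$, and the new ``extreme'' member is again tangent to $C_0$ at $p$ (a generalised circle tangent to the point-circle $\{p\}$ passes through $p$, and being tangent to $C_0$ it is tangent there at $p$), so the number of positive links strictly drops; iterating, we are forced down to one positive link, i.e. $\theta(\cdot,\cdot)=2\pi$, a contradiction. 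This proves $\mathcal{R}(k):=\inf\{r(w)/r(v)\}>0$, and the infimum is not attained (the infimal configurations are degenerate), so $r(w)/r(v)>\mathcal{R}(k)$. For the ``moreover'': $\mathcal{R}(k)\le 1$ since the regular hexagonal flower realises ratio $1$ (and regular $k$-flowers with $k\ge 7$ realise ratio $<1$, so in fact $\mathcal{R}(k)\le\frac{\sin(\pi/k)}{1-\sin(\pi/k)}\to 0$), while $\mathcal{R}(k+1)<\mathcal{R}(k)$ follows by splitting the smallest petal of a near-extremal $k$-flower into two tangent petals, producing a univalent $(k+1)$-flower of strictly smaller minimal ratio; alternatively one simply replaces the sharp infima by a strictly decreasing sequence in $(0,1]$, which is permissible since only a lower bound is asserted.

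I expect the main obstacle to be the degeneration analysis underlying the Nesting Lemma and the induction step: controlling the behaviour when petals blow up (radii $\to+\infty$, i.e. limit to half-planes) while keeping track of the univalence constraint, and verifying that the limiting chain is indeed a non-overlapping family of generalised circles tangent to $C_0$. Once the correct limit picture and the Nesting Lemma are in place the remainder is bookkeeping. (The pointed Gromov--Hausdorff apparatus of Section~\ref{Sec:2} is not needed here, as the relevant combinatorics --- the wheel $W_k$ --- is fixed; it becomes essential for the ICP Ring Lemmas, where face degrees vary.)
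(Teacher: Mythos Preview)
The paper does not supply a proof of this lemma; it is quoted from \cite{Rodin_Sullivan} as background before the paper's own ICP Ring Lemmas (Lemmas~\ref{ring1} and~\ref{ring2}). Your compactness/degeneration argument is correct and, in fact, mirrors the strategy the paper itself uses for those later lemmas (pass to a limit configuration and derive a structural contradiction, cf.\ Lemma~\ref{limit lemma}) rather than Rodin--Sullivan's original proof, which is a short hands-on induction on $k$: if one petal is tiny, its two neighbours nearly meet over it, so at least one neighbour is large, and one then checks directly that the remaining petals cannot wrap once around $C_0$ if a further petal is also too small. Your route trades that explicit geometry for the cleaner Nesting Lemma and an iterative peeling; the payoff is a template that generalises (as the paper demonstrates) to non-tangential patterns, whereas the Rodin--Sullivan argument is tied to tangency.

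One point to tighten: the assertion that $\hat C_2$ and $\hat C_k$ are tangent to $C_0$ \emph{at $p$} follows because each of them contains $p$ (as the limit of its tangency point with the shrinking $C_1$) while lying in the closed exterior of $C_0$, so its unique contact point with $C_0$ must be $p$; you state the conclusion but the reason deserves one sentence. For the ``moreover'' clause your final remark is the correct reading: the lemma only asserts the existence of \emph{some} bound, so any strictly decreasing choice in $(0,1]$ below the true infima suffices, and no petal-splitting construction is needed.
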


	Subsequently, He \cite[Lemma 7.1]{HE} extended the Ring lemma for circle patterns with the intersection angle $\Theta\in[0,\frac{\pi}{2}]^E$.

	In the proof of He, the limit structure of circle packings is considered, which is used to make a contradiction. Recently, a work done by Bowers-Ruffoni \cite{zbMATH08008639} also considers the Ring Lemma on surfaces with conical points. In the spirit of He, we prove the Ring lemma for ideal circle patterns.
	\begin{lem}[Ring lemma for ICPs embedded in $\mathbb{R}^2$]\label{ring1}
		Let $\Pac$ be an ICP of a finite disk cellular decomposition $\mathcal{D}=(V,E,F)$ with the intersection angle $\Theta\in(0,\pi-\epsilon]^E$ embedded in $\mathbb{R}^2$ satisfying conditions $(C_1)$ and $(C_2)$ in Theorem \ref{riv}. If $v$ is a vertex in $V$ such that $B_{\frac{6\pi}{\epsilon}}(v)\cap\partial V=\emptyset$, then there exists a constant $C=C(v,\Theta,\mathcal{D})>0$ such that 
		\[
		\frac{r(w)}{r(v)}\ge C,~\forall w\sim v.
		\]
	\end{lem}
	\begin{proof}
		We prove this lemma by contradiction. If the statement does not hold, then there exists a sequence of embedded ICPs $\{\mathcal{P}_i\}_{i\in\mathbb{N}}$ such that $\frac{r_i(w)}{r_i(v)}\rightarrow0.$ Let $\mathcal{D}_i\equiv\mathcal{D}$ and $\Theta_i\equiv\Theta$, by passing to a subsequence, we can obtain an ICP limit $(D_\infty,v_\infty,\Theta_\infty,q_\infty)$ with 
		$D_\infty=\mathcal{D}, v_\infty=v,\Theta_\infty=\Theta$ and $q_\infty(v,w)=\infty.$ Since the condition $(C_2)$ holds for $(\mathcal{D},\Theta),$ by Lemma \ref{limit lemma} we finish the proof.
	\end{proof}
	
	Now we assume that $(C_2^+)$ holds for some constant $\epsilon_0$, then we can obtain a uniform Ring lemma, which is an analog of the earliest one for tangential circle packings with bounded vertex degree.
	\begin{lem}[Ring lemma for ICPs embedded in $\mathbb{H}^2$]\label{hyperbolicring}
	    Let $\Pac$ be an ICP of a finite disk cellular decomposition $\mathcal{D}=(V,E,F)$ with the intersection angle $\Theta\in(0,\pi-\epsilon]^E$ embedded in $\mathbb{H}^2$ satisfying conditions $(C_1)$ and $(C_2)$ in Theorem \ref{riv}. If $v$ is a vertex in $V$ such that $B_{\frac{6\pi}{\epsilon}}(v)\cap\partial V=\emptyset$, then there exists a constant $C=C(v,\Theta,\mathcal{D})>0$ such that 
		\[
		\frac{r(w)}{r(v)}\ge C,~\forall w\sim v.
		\]
	\end{lem}
\begin{proof}
    The proof is almost the same as in the Euclidean case. The main difference is when proving the Ring Lemma with contradiction, we observe that the radius $r_i(v)$ is a uniform upper bound for each interior vertex $v$, which only depends on the degree of the vertex $v$, due to Lemma \ref{hyperbolic estimate}. Therefore, the limit of the embedded hyperbolic ICP exists.
\end{proof}
	\begin{lem}[Uniform Ring lemma for ICPs]\label{ring2}
		Let $\Pac$ be an embedded ICP of a finite disk cellular decomposition $\mathcal{D}=(V,E,F)$ with vertex degree bounded by $N$. Assume that the corresponding intersection angle $\Theta\in(0,\pi-\epsilon]^E$ satisfies conditions $(C_1)$ and $(C_2^+)$ for some constant $\epsilon_0$. If $v\in V$ such that $B_{\frac{6\pi}{\epsilon}}(v)\cap\partial V=\emptyset$, then there exists a constant $C=C(\epsilon,\epsilon_0,N)>0$ such that 
		\[
		\frac{r(w)}{r(v)}\ge C,~\forall w\sim v.
		\]
	\end{lem}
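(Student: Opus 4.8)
The plan is to mirror the proof of Lemma~\ref{ring1} (the non-uniform Ring lemma), but now track the dependence of the constant on the data and exploit the uniform gap $\epsilon_0$ from $(C2')$. The key observation is that the only place in the proof of Lemma~\ref{ring1} where the constant $C$ depended on the specific decomposition $\mathcal{D}$ and vertex $v$ (rather than just on $\epsilon$) was through the passage to an ICP limit: the contradiction came from Lemma~\ref{limit lemma}, which produces, in the limit, either an infinite non-self-intersecting path in $I_{G_\infty}(W_+)$ or a closed curve that is not a face boundary. Under $(C2')$ with vertex degree bounded by $N$, the limit decomposition $(\mathcal{D}_\infty,\Theta_\infty)$ inherits $(C2')$ with the \emph{same} $\epsilon_0$ (since $(C2')$ is a statement about finitely many edges along a closed curve and passes to the pGH limit by Proposition~\ref{property}), and $\mathcal{D}_\infty$ still has vertex degree bounded by $N$. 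So the entire argument becomes ``uniform'' once we check that no step secretly used finiteness of $\mathcal{D}$ or the identity of $v$.

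First I would argue by contradiction exactly as before: if the claimed uniform bound fails, there is a sequence of finite disk cellular decompositions $\mathcal{D}_i$ with vertex degree $\le N$, intersection angles $\Theta_i\in(0,\pi-\epsilon]^{E_i}$ satisfying $(C1)$ and $(C2')$ with the \emph{fixed} constant $\epsilon_0$, vertices $v_i$ with $B_{6\pi/\epsilon}(v_i)\cap\partial V_i=\emptyset$, embedded ICPs $\mathcal{P}_i$, and neighbors $w_i\sim v_i$ with $r_i(w_i)/r_i(v_i)\to 0$. By Proposition~\ref{compactness2} (using the face-degree bound from Proposition~\ref{face_degree} and the hypothesis $G_i\in\mathcal{BG}^r(N)$) we extract a pGH-convergent subsequence, and after passing to a further subsequence we obtain an ICP limit $(\mathcal{D}_\infty,v_\infty,\Theta_\infty,q_\infty)$ with $q_\infty(v_\infty,w_\infty)=+\infty$ for the limiting edge. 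Here $\mathcal{D}_\infty$ is a locally finite cellular decomposition with vertex degree $\le N$, $\Theta_\infty\in[\epsilon,\pi-\epsilon]^{E_\infty}$ (the lower bound $\epsilon$ is not needed, but the upper bound $\pi-\epsilon$ is), and by Proposition~\ref{property}(C) the limit satisfies $(C1)$; crucially, $(C2')$ with constant $\epsilon_0$ also passes to the limit, because for any closed non-facial curve $\gamma$ in $\mathcal{D}_\infty$ of combinatorial length $n$, for $i$ large the isomorphic copy $\phi_{i}^{-1}(\gamma)$ is a closed non-facial curve in $\mathcal{D}_i$ (pGH-isomorphism preserves which cycles bound faces, once $n$ is fixed and $i$ is large), so $\sum(\pi-\Theta_i(\phi_i^{-1}(e)))>2\pi+\epsilon_0$, and letting $i\to\infty$ using Proposition~\ref{property}(3) gives $\sum(\pi-\Theta_\infty(e))\ge 2\pi+\epsilon_0>2\pi$. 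In particular $(C2)$ holds for $(\mathcal{D}_\infty,\Theta_\infty)$.

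Now I would apply Lemma~\ref{limit lemma} verbatim: the ball $B_{6\pi/\epsilon}(v_\infty)\cap\partial V_\infty=\emptyset$ (this passes to the limit since $B_{6\pi/\epsilon}(v_i)\cap\partial V_i=\emptyset$ and the relevant balls are eventually isomorphic), and $(C2)$ holds for $(\mathcal{D}_\infty,\Theta_\infty)$, so Lemma~\ref{limit lemma} forces $q_\infty(v_\infty,w)<+\infty$ for all $w\sim v_\infty$, contradicting $q_\infty(v_\infty,w_\infty)=+\infty$. This contradiction shows the uniform bound must hold. Finally, one should remark that the resulting constant $C$ depends only on $\epsilon$, $\epsilon_0$ and $N$: if for each triple $(\epsilon,\epsilon_0,N)$ the infimum of $r(w)/r(v)$ over all admissible configurations were zero, the above extraction would produce a contradiction, so the infimum is a positive number $C(\epsilon,\epsilon_0,N)$.

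The main obstacle is the limit step: one must be careful that $(C2')$ — an open, strict inequality with a uniform slack — genuinely survives the pGH limit. This is where the vertex-degree bound $N$ is essential: it guarantees that the dual graphs also have bounded degree (Proposition~\ref{compactness2}), so that the limit $\mathcal{D}_\infty$ is again a \emph{locally finite} cellular decomposition to which Lemma~\ref{limit lemma} applies, and it ensures that the combinatorial type of any fixed finite subcomplex (in particular, which cycles are face boundaries) stabilizes along the sequence, so the hypothesis $(C2')$ transfers edge-by-edge along each fixed closed curve. Once the limit $(\mathcal{D}_\infty,\Theta_\infty)$ is known to satisfy $(C2)$, the rest is an immediate invocation of Lemma~\ref{limit lemma}, exactly as in Lemma~\ref{ring1}.
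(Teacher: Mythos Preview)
Your proposal is correct and is precisely the intended unpacking of the paper's one-line proof, which simply reads ``This follows from Proposition~\ref{compactness2} and Lemma~\ref{limit lemma}.'' You have correctly identified the two points that make the argument uniform: the vertex-degree bound $N$ ensures (via Proposition~\ref{compactness2}) that the pGH limit is again the $1$-skeleton of a locally finite cellular decomposition, and the uniform slack $\epsilon_0$ in $(C2')$ survives the limit to give $(C2)$ for $(\mathcal{D}_\infty,\Theta_\infty)$, after which Lemma~\ref{limit lemma} applies verbatim.
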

	\begin{proof}
		This follows from Proposition \ref{compactness2} and Lemma \ref{limit lemma}.
	\end{proof}

By $B(0,r)$ we denote the ball centered at the origin with the radius $r$.
    With the help of the uniform ring lemma, we have the following lemma.
    \begin{lem}\label{appl_ring}
        Let $\Pac$ be an embedded ICP of a finite disk cellular decomposition $\mathcal{D}=(V,E,F)$ with vertex degree bounded by $N$. Assume that $\Pac(v_0)$ is centered at the origin for some $v_0\in V$. We denote by $V(r)$ the set of vertices, centers of circles of which are contained in the ball $B(0,r)$. If $r\ge r(\Pac(v_0))$,
        then there exists a constant $\delta_0$ such that 
        $V(r)$ and $\tilde{\partial}V(\delta_0r)$ are not connected by any edge in $E$.
    \end{lem}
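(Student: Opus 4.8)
The statement says: given an embedded ICP with bounded vertex degree $N$, with $\mathcal{P}(v_0)$ centered at the origin and $r \ge r(\mathcal{P}(v_0))$, there is a constant $\delta_0$ (depending only on $\epsilon, \epsilon_0, N$) so that the vertex set $V(r)$ whose circle-centers lie in $B(0,r)$ is not joined by any edge of $E$ to $\tilde\partial V(\delta_0 r)$. The idea is a quantitative ``quasi-geodesic'' estimate: an edge-path leaving $B(0,r)$ and reaching $B(0,\delta_0 r)^c$ must pass through a definite number of consecutive circles, and the Uniform Ring Lemma forces each of these circles to have radius comparable to its neighbours, so the path cannot grow the radial coordinate too fast. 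Concretely, I would set up the contradiction: suppose $u \in V(r)$, $w \in \tilde\partial V(\delta_0 r)$, and $u \sim w'$ for some $w' \in V(\delta_0 r)$ adjacent to... — more cleanly, suppose there is an edge $\{u,w\} \in E$ with the center of $\mathcal{P}(u)$ in $B(0,r)$ and the center of $\mathcal{P}(w)$ outside $B(0,\delta_0 r)$. I want to derive that $\delta_0$ cannot be too large, i.e. if $\delta_0$ is large this is impossible.

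**Key steps.** First, I would record the elementary geometric fact: if two circles $\mathcal{P}(u)$, $\mathcal{P}(w)$ intersect at angle $\Theta(u,w) \le \pi - \epsilon$, then the distance between their centers is at most $r(u) + r(w)$ (and at least $\sqrt{r(u)^2 + r(w)^2 - 2r(u)r(w)\cos(\pi - \Theta)} \ge c(\epsilon)\max(r(u),r(w))$ for a lower bound, though for this lemma the upper bound is what matters). Hence along any edge-path $v_0 = u_0 \sim u_1 \sim \cdots \sim u_m = w$ the center of $\mathcal{P}(u_j)$ lies within $\sum_{k<j}(r(u_k)+r(u_{k+1}))$ of the origin — roughly $2\sum_j r(u_j)$. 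Second, invoke the Uniform Ring Lemma (Lemma \ref{ring2}): since vertex degrees are bounded by $N$ and $(C1),(C2')$ hold, there is $C = C(\epsilon,\epsilon_0,N) \le 1$ with $r(u_{j+1})/r(u_j) \ge C$, hence also $r(u_{j+1})/r(u_j) \le 1/C$. This needs the mild point that interior balls of radius $6\pi/\epsilon$ stay away from $\partial V$ — one should either restrict to $r$ small enough that $B_{6\pi/\epsilon}(v_0) \cap \partial V = \emptyset$ or, more robustly, phrase the Ring Lemma conclusion as holding for the vertices actually occurring on the path; I would be careful to state the hypothesis so that the finite-disk boundary does not interfere, possibly by passing (as in the earlier proofs) to a limit / using Proposition \ref{compactness2} to make the constant uniform. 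Third, combine: starting from $r(u_0) = r(\mathcal{P}(v_0)) \le r$, we get $r(u_j) \le r \cdot C^{-j}$, but we also need a two-sided control relating the radius growth to how fast the center can move outward. The cleanest route: the center of $\mathcal{P}(w)$ has norm $> \delta_0 r$, yet it is within $2\sum_{j=0}^{m} r(u_j)$ of the origin, and each $r(u_j)$ is controlled — but this alone is not enough unless we also bound $m$. So instead I would run the argument the other way: bound how large $r(u_j)$ can be in terms of $\|$center of $\mathcal{P}(u_j)\|$. Since the circle $\mathcal{P}(u_0)$ (radius $\ge$... wait, radius of $\mathcal{P}(v_0) \le r$ by hypothesis) and the packing is embedded, any circle $\mathcal{P}(u)$ whose center is within $B(0,\delta_0 r)$ and which is connected through a bounded path to $v_0$ has radius $\le K \delta_0 r$ for $K = K(\epsilon,\epsilon_0,N)$ — this follows because an embedded circle of radius $\rho$ sitting inside (roughly) $B(0,(\delta_0+1)r)$ has $\rho \le (\delta_0+1)r$ trivially; then its neighbours have radius $\ge C\rho$, so a chain of circles of uniformly large radius near the boundary would have total area exceeding that of the ambient disk unless the chain is short — i.e. area packing. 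Actually the sharp tool here is: the circles $\{\mathcal{P}(u)\}$ have pairwise disjoint interiors (embeddedness), so $\sum \pi r(u)^2 \le \area(\carrier(\mathcal{P}))$; combined with the Ring Lemma's lower bound on ratios this bounds the number and size of circles in any annulus.

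**The main obstacle.** The crux — and the step I expect to be delicate — is turning the Uniform Ring Lemma's pointwise ratio bound into a global statement that an edge cannot ``jump'' from inside $B(0,r)$ to outside $B(0,\delta_0 r)$. A single edge $\{u,w\}$ has $\|$center$(u)$ $-$ center$(w)\| \le r(u)+r(w)$, so if $r(w)$ is enormous the center of $\mathcal{P}(w)$ can indeed be far out; what prevents this is that $r(w)$ enormous forces (via Ring Lemma) all of $w$'s other neighbours, and their neighbours, to be comparably enormous, and these circles must fit — with disjoint interiors — into the carrier, which near $v_0$ has bounded size (the circle $\mathcal{P}(v_0)$ has radius $\le r$ and the structure is embedded). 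Making ``bounded size of the carrier near $v_0$'' precise is the subtle point: a priori the carrier could be all of $\R^2$. The right formulation, following He--Schramm, is a \emph{relative} statement — I would show that if the center of $\mathcal{P}(v_0)$ is at the origin and $\mathcal{P}(v_0)$ has radius $\le r$, then because $\mathcal{P}(v_0)$ has $\ge$ two neighbours with ratio $\ge C$ that ``pin'' it, one cannot have an adjacent circle of radius $\gg r$; iterating, radii grow at most geometrically in combinatorial distance and centers move at most geometrically too, so there is a fixed $\delta_0$ (in terms of $C$ and $N$) such that the first combinatorial sphere whose circles can reach $\partial B(0,\delta_0 r)$ is already ``past'' $V(r)$. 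Packaging this bookkeeping cleanly — and in particular choosing $\delta_0$ explicitly, e.g. $\delta_0 = 3/C$ or similar — while avoiding circularity with the global finiteness of the carrier, is where the real work lies; the rest is the elementary inclusion-of-disks geometry above.
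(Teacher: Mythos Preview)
You are making this much harder than necessary. The paper's proof is essentially three lines and invokes the Uniform Ring Lemma exactly once --- not iterated along a path. The structure is:
\begin{enumerate}
\item[(i)] For any $v\in V(r)$ one has $r(\mathcal{P}(v))\le 2r(\mathcal{P}(v_0))+r\le 3r$. The reason is the embedding: since $c(v_0)=0$ cannot lie in the interior of the maple $\mathcal{M}(v)$ when $v\neq v_0$, and $\sin\epsilon\,D(v)\subset\mathcal{M}(v)$ by Proposition~\ref{maple}, the radius $r(\mathcal{P}(v))$ is controlled by $|c(v)|\le r$ (compare the opening paragraph of the proof of Lemma~\ref{teichumller1}).
\item[(ii)] One application of Lemma~\ref{ring2} to the single edge $\{v,w\}$ gives $r(\mathcal{P}(w))\le 3C(\epsilon,\epsilon_0,N)\,r$.
\item[(iii)] Since $\mathcal{P}(v)$ and $\mathcal{P}(w)$ intersect, $|c(w)|\le |c(v)|+r(\mathcal{P}(v))+r(\mathcal{P}(w))$, a fixed multiple of $r$; take $\delta_0$ equal to that multiple (the paper sets $\delta_0=1+3C(\epsilon,\epsilon_0,N)$).
\end{enumerate}
The ``main obstacle'' you isolate --- ruling out $r(w)$ enormous --- is dispatched by (i) followed by (ii), with no edge-paths, no iteration, and no area packing. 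Your sketched machinery (summing radii along a chain from $v_0$, bounding the number of large disjoint disks by area, controlling chain length) could perhaps be pushed through, but it is a detour around the one observation you are missing: the embedding alone already bounds $r(\mathcal{P}(v))$ for every $v\in V(r)$, \emph{directly}, without any reference to a path back to $v_0$. Once you see that, the lemma is immediate --- and indeed your own closing guess ``$\delta_0=3/C$ or similar'' lands on essentially the paper's constant.
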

    \begin{proof}
    Let $\delta_0=7+6C(\epsilon,\epsilon_0,N).$
        We prove this lemma by contradiction. If $w\in\tilde\partial V(\delta_0 r)$ and $w\sim v$ for some $v\in V(r)$. By the assumption, we see that $r(\Pac(v))\le 2r(\Pac(v_0))+r\le3r$. Therefore, by the uniform Ring Lemma \ref{ring2}, we have 
        $$r(\Pac(w))\le 3C(\epsilon,\epsilon_0,N)r.$$ 
        Therefore, we have
        \[
        \mathrm{d}(0,w)\le r+2(\Pac(v))+2r(\Pac(w))\le( 7+6C(\epsilon,\epsilon_0,N))r.
        \]
        In conclusion, the center of $\Pac(w)$ is contained in  $B(0,(7+6C(\epsilon,\epsilon_0,N))r)$, which contradicts our assumption.
    \end{proof}
	\section{The existence of infinite ICPs}\label{Sec:4}
	%\eta的符号要改
	In this section, we will establish the existence of embedded ideal circle patterns on the plane $\mathbb{R}^2$ and prove Theorem \ref{infinite_existence} and Theorem \ref{uniformization}.
	\begin{proof}[Proof of Theorem \ref{infinite_existence}]
		Let $G$ be the $1-$skeleton of $\mathcal{D}=(V,E,F)$.
		Let $\{\mathcal{D}_i=(V_i,E_i,F_i)\}_{i\in\mathbb{N}}$ be an exhausting sequence of sub-complexes of the cellular decomposition $\mathcal{D}$, which are finite disk cellular decompositions, i.e.
		\[
		\mathcal{D}_i\subset\mathcal{D}_{i+1},\cup_{i\in \mathbb{N}}{\mathcal{D}_i}=\mathcal{D}.
		\]
		By Theorem \ref{finite_existence}, there exists an embedded ICP $\Pac_i$ for each $\mathcal{D}_i$. Fix a vertex $v\in \cap_i\mathcal{D}_i$, by the M\"obius transformation, we can assume that the hyperbolic center of $\Pac_i(v)$  is located at the center of the unit disk.
 Now we divide the proof into two cases.
 \begin{enumerate}
     \item Case 1: If $r_i(\Pac(v))$ has a positive limit. 
     Then for each vertex $w\in V_i$, by the Ring lemma \ref{ring1}, it is easy to see that the hyperbolic radii of circles $\Pac(w)$ satisfying $0<a(w)<r_i(w)<b(w)<\infty$ for sufficiently large $i$, where $a(w)$ and $b(w)$ are some constants that only depend on $\mathcal{D}$, $\Theta$ and $\mathrm{d}_G(v,w)$. Then we see that the centers of $\{\mathcal{P}_i(w)\}_{i\in \mathbb{N}}$ are also contained in a compact subset of $\mathbb{D}^2$ for sufficiently large $i$. Therefore, by the standard diagonal argument, we can find a hyperbolic embedded infinite ICP as the limit of the subsequence of $\{\Pac_i\}_{i\in\mathbb{N}}$.
     \item Case 2: If $r_i(\Pac(v))$ converge to $0$. By the Ring lemma we see for every fixed $w\in V$, the hyperbolic radius $r(\Pac_i(v))$ tends to zero, which means that after a scaling, the corresponding quadrilaterals converge to Euclidean quadrilaterals.  Then we scale $\Pac_i(v)$ to the unit circle. With the same method in case 1 and the Ring Lemma \ref{hyperbolicring}, we obtain an ICP of $\mathcal{D}$ embedded in $\mathbb{R}^2$.
 \end{enumerate}
The proof is completed.
 
	\end{proof}
	\subsection{Vertex extremal length}
	For resolving the type problem of infinite ICPs, we need to  introduce the concept of \textbf{vertex extremal length}, which was introduced and studied by Cannon \cite{MR1301392}.
    Let $G=(V,E)$ be an infinite connected graph. We call a non-negative function $m\in [0,+\infty)^V$ a vertex metric. The \textbf{area} of $m$ is given by
	\[
	\mathrm{area}(m)=\sum_{v\in V}m_v^2.
	\]
	Suppose $\gamma=(v_0,v_1,...,v_n,...)$ is a finite or infinite path. We denote by $|\gamma|$ the number of vertices in $\gamma$ (counting repeated vertices, e.g., for a path $\gamma=(v=v_1,v_2,...,v_n=v)$, we have $|\gamma|=n$ in this case). The length of the curve $\gamma$ in the metric $m$ is given by 
	\[
	\int_\gamma\mathrm{d}m=\sum_{i=1}^{|\gamma|}m(v_i).
	\]
	\begin{defn}
		Let $\Gamma$ be a collection of paths. We call a vertex metric $m$ $\Gamma-$admissible if
		\[
		\int_{\gamma}\mathrm{d}m\ge 1, ~\forall \gamma\in \Gamma.
		\]
		The \textbf{vertex module} of $\Gamma$ is given by 
		\[
		\mathrm{MOD}(\Gamma)=\inf\{\area(m):m: V\rightarrow[0,\infty)~\text{is} ~\Gamma-\text{admissible} \}.
		\]
		The vertex extremal length (VEL for short) of $\Gamma$ is defined to be 
		\[
		\mathrm{VEL}(\Gamma)={\mathrm{MOD}(\Gamma)}^{-1}.
		\]
	\end{defn}
	Let $V_1,V_2\subset V$ be two disjoint vertex sets, we denote by $\Gamma(V_1,V_2)$ ($\Gamma^*(V_1,V_2)$ resp.) the paths connecting (the closed curves separating resp.) $V_1$ and $V_2$. By $\Gamma(V_1,\infty)$ and $\Gamma^*(V_1,\infty)$ we denote the sets of paths given by 
	\begin{align*}
		&
\Gamma(V_1, \infty) = \left\{ \gamma : 
\begin{aligned}
& \gamma \text{ starts from } V_1 \text{ and} \\
& \text{cannot be contained in any finite subset of } V
\end{aligned}
\right\},
\\
		&\Gamma(V_1,\infty)^*=\{\gamma:\gamma \cap \gamma'\neq \emptyset,~\forall \gamma'\in \Gamma(V_1,\infty)\}.
	\end{align*}
	Furthermore, allowing $V_2=\infty$, we write VEL$(V_1,V_2)$ and VEL$(V_1,V_2)^*$  for VEL$(\Gamma(V_1,V_2))$ and VEL$(\Gamma(V_1,V_2)^*)$ respectively.
	\begin{defn}
		We call an infinite graph $G$ a VEL-parabolic graph if there exists a finite subset $V_0\subset V$ such that VEL$(V_0,\infty)=\infty.$ Otherwise, we call $G$ a VEL-hyperbolic graph.
	\end{defn}
	The following lemmas for vertex extremal lengths, which are similar to that of the extremal length in the continuous case, are listed as below.
	
	\begin{lem}\label{vel_sum}
		Let $V_1, V_2, ..., V_{2m}$ be mutually disjoint, nonempty subsets of vertices such that for $i_1 < i_2 < i_3, V_{i_2}$ 
		separates $V_{i_1}$ from $V_{i_3}$. We allow $V_{2m} = \infty$. Then we have
		\[
		\mathrm{VEL}(V_1,V_{2m})\ge \sum_{k=1}^m \mathrm{VEL(V_{2k-1},V_{2k})}.
		\]
	\end{lem}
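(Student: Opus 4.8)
The plan is to prove the equivalent inequality for vertex moduli,
\[
\mathrm{MOD}\big(\Gamma(V_1,V_{2m})\big)\ \le\ \Big(\sum_{k=1}^{m}\mathrm{MOD}\big(\Gamma_k\big)^{-1}\Big)^{-1},\qquad \Gamma_k:=\Gamma(V_{2k-1},V_{2k}),
\]
which is equivalent to the assertion since $\mathrm{VEL}=\mathrm{MOD}^{-1}$ (when $V_{2m}=\infty$, $\Gamma_m$ is understood as $\Gamma(V_{2m-1},\infty)$, in accordance with the convention above); it is the discrete analogue of the classical series rule for extremal length. We may assume each $\mathrm{MOD}(\Gamma_k)$ is finite and positive: if some $\mathrm{MOD}(\Gamma_k)=\infty$ then $\mathrm{VEL}(\Gamma_k)=0$, and we discard the pair $(V_{2k-1},V_{2k})$ and induct on $m$, the remaining $V_j$'s still forming a nested separation chain; if some $\mathrm{MOD}(\Gamma_k)=0$ the right-hand side is $0$, and the construction below with $\mathrm{area}(\rho_k)$ taken arbitrarily small for that index forces $\mathrm{MOD}(\Gamma(V_1,V_{2m}))=0$ as well. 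Now fix $\varepsilon>0$; for each $k$ pick a $\Gamma_k$-admissible vertex metric $\rho_k$ with $A_k:=\mathrm{area}(\rho_k)\le\mathrm{MOD}(\Gamma_k)+\varepsilon$, set $S:=\sum_j A_j^{-1}$ and $c_k:=A_k^{-1}/S$ (so $c_k>0$ and $\sum_k c_k=1$), and define $\rho(v):=\max_{1\le k\le m}c_k\rho_k(v)$.

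To check that $\rho$ is $\Gamma(V_1,V_{2m})$-admissible, take any $\gamma=(w_0,w_1,\dots)\in\Gamma(V_1,V_{2m})$ with $w_0\in V_1$. For $2\le j\le 2m-1$ the set $V_j$ separates $V_1$ from $V_{2m}$, and $\gamma$ has a vertex in $V_{2m}$ when $V_{2m}$ is a genuine set, so all the relevant first-hitting indices $a_j:=\min\{i:w_i\in V_j\}$ are finite (those with $1\le j\le 2m-1$, plus $a_{2m}$ when $V_{2m}\neq\infty$), with $a_1=0$. For $j\ge3$, $V_{j-1}$ separates $V_1$ from $V_j$, so the initial segment $w_0,\dots,w_{a_j}$ meets $V_{j-1}$ at a position $\le a_j$, hence $<a_j$ since $V_{j-1}\cap V_j=\emptyset$; therefore $0=a_1<a_2<\cdots$. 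Let $\gamma_k$ be the segment of $\gamma$ between positions $a_{2k-1}$ and $a_{2k}$, except that when $V_{2m}=\infty$ we let $\gamma_m$ be the tail of $\gamma$ starting at position $a_{2m-1}$, which likewise fails to lie in any finite subset of $V$. Then $\gamma_k\in\Gamma_k$, the index blocks $[a_{2k-1},a_{2k}]$ are pairwise disjoint because $a_{2k}<a_{2k+1}$, and $\rho\ge c_k\rho_k$ pointwise, so
\[
\int_\gamma\mathrm{d}\rho\ \ge\ \sum_{k=1}^{m}\ \sum_{a_{2k-1}\le i\le a_{2k}}\rho(w_i)\ \ge\ \sum_{k=1}^{m}c_k\int_{\gamma_k}\mathrm{d}\rho_k\ \ge\ \sum_{k=1}^{m}c_k\ =\ 1 .
\]
On the other hand $\big(\max_k c_k\rho_k(v)\big)^2\le\sum_k c_k^2\rho_k(v)^2$ for each $v$, so summing over $V$ gives $\mathrm{area}(\rho)\le\sum_k c_k^2A_k=S^{-1}$, the last equality being the equality case of Cauchy--Schwarz for the chosen weights. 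Hence $\mathrm{MOD}(\Gamma(V_1,V_{2m}))\le S^{-1}\le\big(\sum_j(\mathrm{MOD}(\Gamma_j)+\varepsilon)^{-1}\big)^{-1}$, and letting $\varepsilon\to0$ finishes the proof.

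The only step carrying real content is the combinatorial one: an arbitrary path from $V_1$ to $V_{2m}$ splits into ordered subpaths occupying disjoint ranges of positions and realizing the successive pairs $(V_{2k-1},V_{2k})$. This is exactly where the nested-separation hypothesis enters — through the strict monotonicity $a_1<a_2<\cdots$ of the first-hitting indices — and it is also the place that needs slight extra care in the limiting case $V_{2m}=\infty$, where the last subpath is an infinite tail rather than a finite segment. The rest is the standard ``splice the extremal metrics in series'' argument; the one point worth flagging is that in the discrete setting the weighted pointwise maximum $\max_k c_k\rho_k$ plays the role of a sum of disjointly supported metrics, which is exactly what makes the sharp estimate $\mathrm{area}(\max_k c_k\rho_k)\le\sum_k c_k^2\,\mathrm{area}(\rho_k)$ available instead of the lossy $\ell^2$ triangle inequality.
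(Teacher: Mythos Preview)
The paper does not actually prove this lemma: it is listed, together with the duality relation and the limit lemma that follow it, as a known property of vertex extremal length, prefaced by the remark that these facts ``are similar to that of the extremal length in the continuous case.'' So there is no proof in the paper to compare against.

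Your argument is correct and is exactly the standard series-law proof for extremal length, transported verbatim to the vertex setting: pick near-optimal admissible metrics $\rho_k$ for each pair, combine them as a weighted pointwise maximum with the harmonic weights $c_k=A_k^{-1}/S$, verify admissibility by cutting an arbitrary path $\gamma\in\Gamma(V_1,V_{2m})$ at the first-hitting times $a_j$ into subpaths in the successive $\Gamma_k$, and bound the area by $\sum_k c_k^2 A_k=S^{-1}$. The nested-separation hypothesis is used precisely where you say, to force $a_1<a_2<\cdots$ and hence disjoint index ranges. Your treatment of the degenerate cases $\mathrm{MOD}(\Gamma_k)\in\{0,\infty\}$ and of the limiting case $V_{2m}=\infty$ (where $\gamma_m$ is an infinite tail rather than a finite segment) is also correct. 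One cosmetic remark: a path in $\Gamma(V_1,V_{2m})$ need not literally begin at a vertex of $V_1$, but passing to the subpath between its first visit to $V_1$ and last visit to $V_{2m}$ only shortens it, so your reduction to $w_0\in V_1$ is harmless.
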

	\begin{lem}
		Let $V_1$ and $V_2$ be two disjoint nonempty vertex sets. We allow $V_{2} = \infty$. Then
		\[
		\mathrm{VEL}(V_1,V_2)^*=\mathrm{VEL}(V_1,V_2)^{-1}.
		\]
	\end{lem}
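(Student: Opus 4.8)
The plan is to prove the equivalent identity $\mathrm{MOD}(\Gamma)\cdot\mathrm{MOD}(\Gamma^*)=1$, where $\Gamma:=\Gamma(V_1,V_2)$ and $\Gamma^*:=\Gamma^*(V_1,V_2)$; since $\mathrm{VEL}=\mathrm{MOD}^{-1}$ this gives the claim (with the conventions $0^{-1}=\infty$, $\infty^{-1}=0$). Two facts from planar topology will be used, both valid because $G$ is the $1$-skeleton of a disk cellular decomposition: (i) every $\gamma\in\Gamma$ and every $\gamma^*\in\Gamma^*$ meet in at least one vertex (for $V_2=\infty$ this is literally the definition of $\Gamma^*$, and in general it is a discrete Jordan-curve statement), and (ii) any vertex set that meets every curve of $\Gamma$ contains a curve of $\Gamma^*$. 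I would prove the two inequalities separately.

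\emph{The inequality $\mathrm{MOD}(\Gamma)\cdot\mathrm{MOD}(\Gamma^*)\ge1$} I would obtain by a discrete coarea estimate. Fix any $\Gamma$-admissible $m$ and $\Gamma^*$-admissible $\rho$, and let $d_m(v)$ be the least $m$-length $\sum_{u\in\gamma}m(u)$ among paths $\gamma$ in $G$ from $V_1$ to $v$; with an appropriate convention at the source (for instance adjoining to $V_1$ an auxiliary vertex carrying zero mass) one has $d_m(w)\le d_m(v)+m(w)$ whenever $v\sim w$, $d_m\equiv0$ on the source set, and $d_m\ge1$ on $V_2$. For $t\in[0,1)$ set $L_t:=\{w: d_m(w)>t \text{ and } d_m(v)\le t \text{ for some } v\sim w\}$. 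Every $\gamma\in\Gamma$ leaves $\{d_m\le t\}$, hence meets $L_t$, so by (ii) $L_t$ contains a curve of $\Gamma^*$ and $\sum_{w\in L_t}\rho(w)\ge1$. Since $w\in L_t$ forces $t\in[d_m(w)-m(w),d_m(w))$, an interval of length $\le m(w)$, integrating in $t$ gives $1\le\int_0^1\big(\sum_{w\in L_t}\rho(w)\big)\,dt=\sum_w\rho(w)\,|\{t\in[0,1):w\in L_t\}|\le\sum_w m(w)\rho(w)$, and Cauchy--Schwarz then yields $1\le\mathrm{area}(m)^{1/2}\mathrm{area}(\rho)^{1/2}$. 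Taking infima over $m$ and $\rho$ finishes this inequality.

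\emph{The inequality $\mathrm{MOD}(\Gamma)\cdot\mathrm{MOD}(\Gamma^*)\le1$} I would get from the extremal metric, first on a finite graph and then transferred to the infinite case by exhaustion. On a finite graph the area is strictly convex on the nonempty closed convex set of $\Gamma$-admissible metrics, giving a minimiser $m_0$ with $\mathrm{area}(m_0)=\mathrm{MOD}(\Gamma)=:A$; its first-order (Kuhn--Tucker) conditions produce weights $\mu_\gamma\ge0$ supported on tight curves (those with $\sum_{v\in\gamma}m_0(v)=1$) satisfying $2m_0(v)=\sum_{\gamma\ni v}\mu_\gamma$ wherever $m_0(v)>0$. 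Then $\sum_\gamma\mu_\gamma=\sum_\gamma\mu_\gamma\sum_{v\in\gamma}m_0(v)=\sum_v m_0(v)\sum_{\gamma\ni v}\mu_\gamma=2A$, and for any $\gamma^*\in\Gamma^*$ fact (i) gives $\sum_{v\in\gamma^*}m_0(v)=\tfrac12\sum_\gamma\mu_\gamma|\gamma\cap\gamma^*|\ge\tfrac12\sum_\gamma\mu_\gamma=A$. Hence $m_0/A$ is $\Gamma^*$-admissible, so $\mathrm{MOD}(\Gamma^*)\le\mathrm{area}(m_0/A)=1/A$. For infinite $G$ I would run this on a finite exhaustion $G_1\subset G_2\subset\cdots$ and use that the moduli of the relevant path families in $G_n$ converge to those in $G$ (a weak-limit argument, since every $\Gamma$- and $\Gamma^*$-curve lies in some $G_n$, the unbounded curves of $\Gamma(V_1,\infty)$ being treated exactly as by He and Schramm). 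Combining the two inequalities gives $\mathrm{MOD}(\Gamma)\cdot\mathrm{MOD}(\Gamma^*)=1$.

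I expect the main obstacle to be precisely this last step: on an infinite graph a minimising sequence of $\Gamma$-admissible metrics can lose mass to the end, so no minimiser need exist directly, and the remedy is the finite-exhaustion argument together with the monotone convergence of the moduli. The only other nontrivial input is the planar combinatorics behind (i) and (ii); both are standard, but they are the points where the hypothesis that $G$ underlies a disk cellular decomposition is genuinely used.
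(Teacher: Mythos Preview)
The paper does not prove this lemma; it is stated without proof among the background facts about vertex extremal length, implicitly drawing on Cannon and He--Schramm. Your proposal is the standard two-sided argument and is essentially correct: the level-set/coarea estimate plus Cauchy--Schwarz for $\mathrm{MOD}(\Gamma)\,\mathrm{MOD}(\Gamma^*)\ge 1$, and Beurling's extremal-metric (KKT) analysis on finite graphs for the reverse inequality, followed by exhaustion.

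Two remarks. First, your fact (ii) depends on how $\Gamma^*$ is read. With the paper's definition of $\Gamma^*(V_1,\infty)$ as all vertex curves meeting every member of $\Gamma$, the level set $L_t$ already belongs to $\Gamma^*$ and (ii) is vacuous; if instead $\Gamma^*$ is required to consist of closed edge-cycles in $G$ (as the paper writes for finite $V_2$), then (ii) is a genuine planar separation statement and does need the disk-cellular-decomposition hypothesis, exactly as you flag. Second, the exhaustion step you single out is indeed the real technical content for infinite $G$: one needs that each curve in $\Gamma$ and in $\Gamma^*$ eventually lives in some $G_n$, together with monotonicity/weak-limit control of the finite extremal metrics. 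Your sketch names the right mechanism, and this is precisely where the published proofs concentrate their effort.
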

	\begin{lem}\label{vellimt}
		Let $\{\mathcal{D}_i=(V_i,E_i,F_i)\}_{i\in\mathbb{N}}$ be an exhausting sequence of sub-complexes of the cellular decomposition $\mathcal{D}$. Then for a finite vertex set $W\subset V$, we have 
		\[
		\lim_{i\rightarrow\infty}\vel(W,\partial V_i)=\vel(W,\infty).
		\]
	\end{lem}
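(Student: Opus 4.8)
The plan is to prove the two inequalities separately; one direction is a routine ``overflowing'' comparison of curve families, and the other — the real content — is proved by contradiction using a König-type compactness argument that produces a limiting escaping path.

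The inequality $\lim_i\vel(W,\partial V_i)\le\vel(W,\infty)$ is the easy half. Since $W$ is finite and $\{\mathcal{D}_i\}$ exhausts $\mathcal{D}$, for all large $i$ one has $W\subseteq\mathrm{Int}(V_i)$, and $\partial V_i$ separates $W$ both from $\partial V_{i+1}$ and from infinity in $G$; hence every path in $\Gamma(W,\partial V_{i+1})$ and every path in $\Gamma(W,\infty)$ contains an initial segment belonging to $\Gamma(W,\partial V_i)$, i.e. both families overflow $\Gamma(W,\partial V_i)$. As overflowing can only decrease $\mathrm{MOD}$, the sequence $\mathrm{MOD}(\Gamma(W,\partial V_i))$ is non-increasing and $\mathrm{MOD}(\Gamma(W,\infty))\le\mathrm{MOD}(\Gamma(W,\partial V_i))$ for every $i$. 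Writing $\mu:=\lim_i\mathrm{MOD}(\Gamma(W,\partial V_i))=\inf_i\mathrm{MOD}(\Gamma(W,\partial V_i))$ — finite, since $\mathrm{MOD}(\Gamma(W,\partial V_i))\le|V_i|<\infty$ once $W\subseteq V_i$ — this already gives $\mathrm{MOD}(\Gamma(W,\infty))\le\mu$, equivalently $\vel(W,\infty)\ge\lim_i\vel(W,\partial V_i)$.

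It remains to prove $\mathrm{MOD}(\Gamma(W,\infty))\ge\mu$, and I would argue by contradiction. Assume $\mathrm{MOD}(\Gamma(W,\infty))<\mu$. Scaling a near-extremal $\Gamma(W,\infty)$-admissible metric by a factor $1+\delta$ slightly larger than $1$, we get $\delta>0$ and a vertex metric $\tilde m:V\to[0,\infty)$ with $\area(\tilde m)<\mu$ and $\int_\gamma\mathrm{d}\tilde m\ge 1+\delta$ for all $\gamma\in\Gamma(W,\infty)$. For every $i$ we have $\mathrm{MOD}(\Gamma(W,\partial V_i))\ge\mu>\area(\tilde m)$, so $\tilde m$ is not $\Gamma(W,\partial V_i)$-admissible; passing to a simple subpath, there is a simple path $\gamma_i\in\Gamma(W,\partial V_i)$ with $\int_{\gamma_i}\mathrm{d}\tilde m<1$. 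Since $W$ is finite, after passing to a subsequence all $\gamma_i$ begin at one vertex $w_0\in W$, and since $\mathrm{d}_G(w_0,\partial V_i)\to\infty$ the number of vertices of $\gamma_i$ tends to $\infty$. Now form the rooted tree $T$ whose depth-$N$ nodes are the simple paths $(w_0=u_0,u_1,\dots,u_N)$ that occur as an initial segment of $\gamma_i$ for arbitrarily large $i$, with extension-by-one-vertex as edges: local finiteness of $G$ makes each ball $B_N(w_0)$ finite, so $T$ is locally finite, and pigeonholing the length-$N$ initial segments of the $\gamma_i$ gives a node of $T$ at every depth, so $T$ is infinite. By König's lemma $T$ has an infinite branch $\gamma_\infty=(w_0=u_0,u_1,u_2,\dots)$. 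For each $N$ the segment $(u_0,\dots,u_N)$ is an initial segment of some $\gamma_i$, hence $\sum_{k=0}^N\tilde m(u_k)\le\int_{\gamma_i}\mathrm{d}\tilde m<1$; letting $N\to\infty$ gives $\int_{\gamma_\infty}\mathrm{d}\tilde m\le 1$. But $\gamma_\infty$ is a simple path with infinitely many distinct vertices, hence is not contained in any finite subset of $V$, so $\gamma_\infty\in\Gamma(W,\infty)$ and $\int_{\gamma_\infty}\mathrm{d}\tilde m\ge 1+\delta$ — a contradiction. This yields $\mathrm{MOD}(\Gamma(W,\infty))\ge\mu$, so $\vel(W,\infty)\le\lim_i\vel(W,\partial V_i)$, and with the easy half we conclude $\lim_i\vel(W,\partial V_i)=\vel(W,\infty)$.

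The main obstacle is precisely this last compactness step — extracting from $\{\gamma_i\}$ an honest infinite escaping path along which $\tilde m$ stays cheap — where local finiteness of $G$ (so König's lemma applies) and the slack $1+\delta$ (so that the limiting weak bound $\int_{\gamma_\infty}\mathrm{d}\tilde m\le 1$ really contradicts admissibility, which requires $\ge 1$) are both essential. A cleaner alternative that avoids König's lemma is to dualize: by the duality lemma $\vel(W,\partial V_i)=\mathrm{MOD}(\Gamma^*(W,\partial V_i))$ and $\vel(W,\infty)=\mathrm{MOD}(\Gamma^*(W,\infty))$; the families $\Gamma^*(W,\partial V_i)$ then increase to $\Gamma^*(W,\infty)$, since any cycle separating $W$ from infinity is a finite curve and hence separates $W$ from $\partial V_i$ once $i$ is large; and $\mathrm{MOD}$ is continuous along increasing unions of curve families by a routine Hilbert-space minimization argument, each separating cycle imposing only a single finite — hence $\ell^2$-continuous — linear constraint, so the norm-minimizers of the decreasing closed convex admissible sets converge weakly to a minimizer over their intersection.
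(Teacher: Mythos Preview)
The paper does not prove this lemma at all: it is stated in the ``Vertex extremal length'' subsection together with Lemma~\ref{vel_sum} and the duality lemma, prefaced by the remark that these are standard facts ``similar to that of the extremal length in the continuous case,'' and no argument is given. So there is nothing to compare against; the relevant question is whether your proof stands on its own.

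Your main argument is correct. The overflowing half is routine, and the K\"onig-lemma extraction is the standard way to produce an escaping path of bounded $\tilde m$-length from the short paths $\gamma_i$. You handled the two delicate points properly: reducing to simple paths so that the tree of initial segments is locally finite (this uses local finiteness of $G$, which holds since $\mathcal{D}$ is a locally finite cellular decomposition), and building in the slack $1+\delta$ so that the limiting inequality $\int_{\gamma_\infty}\mathrm{d}\tilde m\le 1$ genuinely contradicts admissibility. One small wording issue: you say $\mathrm{MOD}(\Gamma(W,\partial V_i))\le |V_i|$, but the correct trivial bound is $\mathrm{MOD}\le\area(\mathbf{1}_{V_i})=|V_i|$ only after checking $\mathbf{1}_{V_i}$ is admissible, which it is since every connecting path has at least one vertex; this is fine but worth stating.

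Your sketched dual alternative is less clean than you suggest. The inclusion $\Gamma^*(W,\partial V_i)\subseteq\Gamma^*(W,\partial V_{i+1})$ is correct, but the claim $\bigcup_i\Gamma^*(W,\partial V_i)=\Gamma^*(W,\infty)$ needs care: a curve $\gamma\in\Gamma^*(W,\infty)$ need not be finite in general, and even when it is, showing that $\gamma$ eventually separates $W$ from $\partial V_i$ requires the one-ended planar structure of $\mathcal{D}$ (so that from $\partial V_i$ one can escape to infinity without re-entering the finite set containing $\gamma$). This can be made to work here, but it is not as automatic as you indicate; the K\"onig argument is the cleaner route.
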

	\subsection{Geometric behaviors of embedded ICPs}
	For the rest of the paper, we denote by ICP$(\mathcal{D,}\epsilon,\epsilon_0)$ the set of ICPs of the disk cellular decomposition $\mathcal{D}=(V,E,F)$ satisfying $(C_2^+)$ for some constant $\epsilon_0$ with the intersection angle $\Theta\in(0,\pi-\epsilon]^E$. 

	Now we introduce several geometric lemmas for embedded ICPs, which can be used to estimate the vertex extremal length of the corresponding cellular decompositions.
	
	For an embedded ICP $\Pac$ of $\mathcal{D}=(V,E,F)$, for a vertex $v\in V$, we denote by $\mathcal{M}(v)$ the union $\cup_{e\in E:v<e}\tilde{Q}_{(v,e)}$, which is called the \textbf{maple} of $v$, as shown in Figure \ref{fig:maple}. For a disk $D(v)$ and a positive constant $\lambda$, we denote by $\lambda D (v)$ the disk that shares the same center as $D(v)$ and has the radius $\lambda r(v)$. Moreover, we denote by $c(v)$ the center of $\Pac(v)$.
    	\begin{figure}[H]
		\centering
		\includegraphics[width=3in]{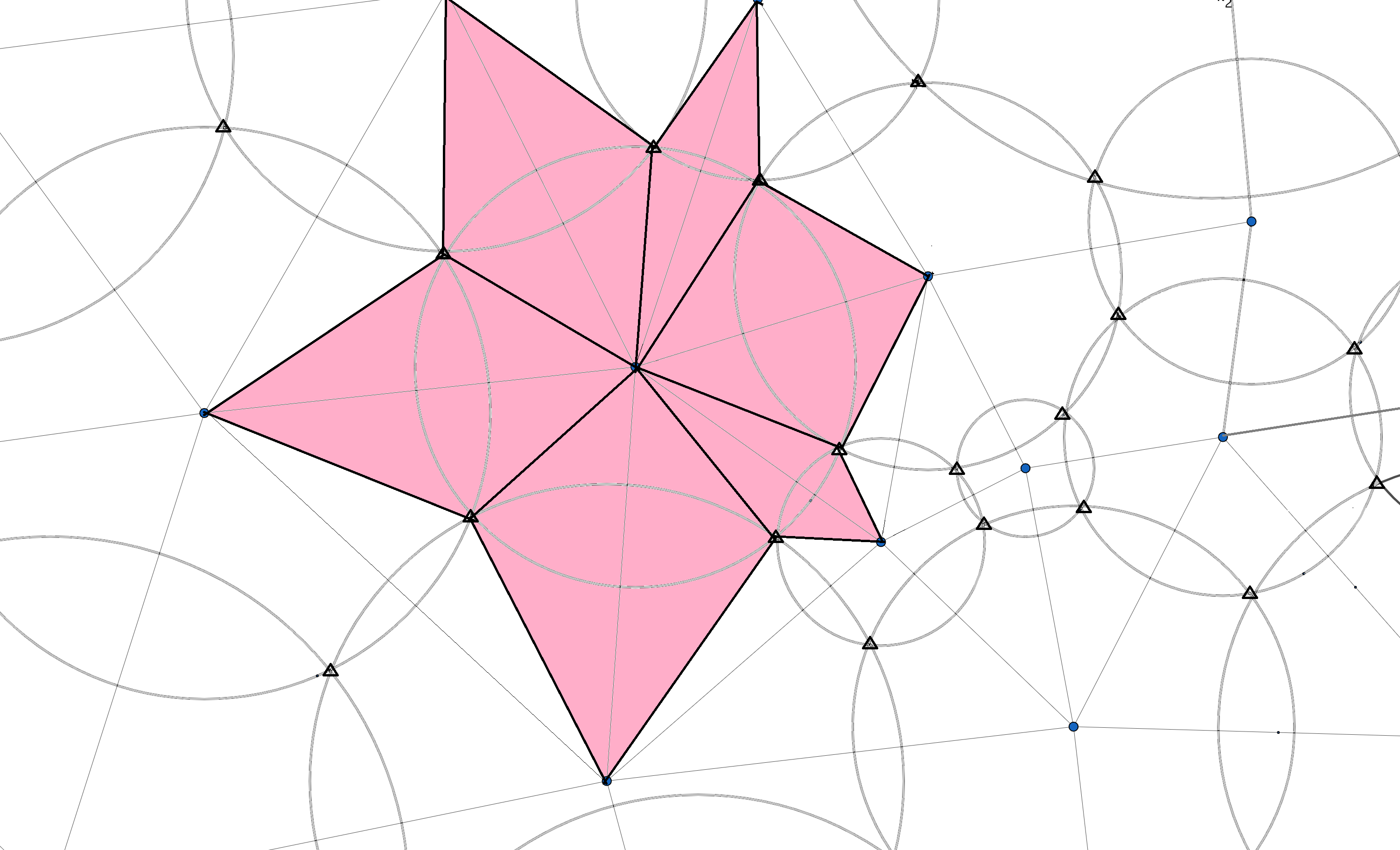}
		\caption{A maple $\mathcal{M}(v)$.}
		\label{fig:maple}
	\end{figure}
	\begin{prop}\label{maple}
		Let $\Pac\in \mathrm{ICP}(D,\epsilon,\epsilon_0)$ be an embedded ICP, then for each vertex $v\in V$, $\sin\epsilon \Pac(v)\subsetneq\mathcal{M}(v)$.
	\end{prop}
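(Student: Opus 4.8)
The plan is to show that the disk $\sin\epsilon\,\Pac(v)$, concentric with $\Pac(v)$ and of radius $r(v)\sin\epsilon$, is contained in the maple $\mathcal{M}(v)=\bigcup_{e:v<e}\tilde Q_{(v,e)}$. First I would reduce the statement to a single quadrilateral: since the quadrilaterals $\tilde Q_{(v,e)}$ fit together around $c(v)$ with total angle $\alpha_v=2\pi$ (the vertex is interior, using condition I) of Definition \ref{def-embed-ICP}, or the hypothesis that the ICP is embedded), it suffices to show that each wedge of the disk $\sin\epsilon\,\Pac(v)$ cut out by the angle $\alpha_{(e,v)}=\alpha_v^w$ at $c(v)$ lies inside the corresponding quadrilateral $\tilde Q_{(v,e)}=vv_{f_1}wv_{f_2}$. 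Equivalently, working in the single Euclidean quadrilateral $\tilde Q_e$ with apex at $v=c(v)$, I must show that the two bounding segments $vv_{f_1}$ and $vv_{f_2}$ stay outside the open disk of radius $r(v)\sin\epsilon$ about $v$ — i.e. the distance from $v$ to each of these segments is at least $r(v)\sin\epsilon$.

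Next I would compute that distance. In the triangle $v\,v_{f_1}\,w$ we have $|vv_{f_1}|=r(v)$, $|wv_{f_1}|=r(w)$, and $\angle vv_{f_1}w=\pi-\Theta(e)$. The segment $vv_{f_1}$ is a full side of the maple's boundary, so the relevant obstruction is not the side $vv_{f_1}$ itself but the diagonal behavior: actually the boundary of $\mathcal{M}(v)$ near $v$ consists precisely of the segments $c(v)v_{f_j}$, so the distance I need is $\mathrm{dist}(v, \overline{v_{f_1}v_{f_2}}) $ — no: the segments emanating from $v$ in $\partial\mathcal M(v)$ are the $vv_{f_j}$, and a point of $\sin\epsilon\,\Pac(v)$ escapes $\mathcal M(v)$ only by crossing the far side $v_{f_1}wv_{f_2}$ of some quadrilateral. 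So the quantity to bound below is $\mathrm{dist}\big(v,\ \overline{v_{f_1}w}\cup\overline{wv_{f_2}}\big)$, minimized over all edges $e$ at $v$. The distance from $v$ to the line through $v_{f_1}$ and $w$: drop a perpendicular from $v$; in triangle $vv_{f_1}w$ with angle $\pi-\Theta$ at $v_{f_1}$, this distance equals $|vv_{f_1}|\sin(\pi-\Theta)=r(v)\sin\Theta$ if the foot lies on the segment, and otherwise it is at least $\min(|vv_{f_1}|,|vw|)\sin(\text{something})$; in all cases a short trigonometric check gives $\mathrm{dist}(v,\overline{v_{f_1}w})\ge r(v)\sin\Theta(e)\ge r(v)\sin\epsilon$, using $\Theta(e)\in(0,\pi-\epsilon]$ and monotonicity of $\sin$ on the relevant range (note $\sin(\pi-\epsilon)=\sin\epsilon$, and on $[\epsilon,\pi-\epsilon]$ one has $\sin\Theta\ge\sin\epsilon$). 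Summing the wedges over all edges at $v$ and using $\alpha_v=2\pi$, the union of wedges is the whole disk $\sin\epsilon\,\Pac(v)$, and it is covered. The strict inclusion (the $\subsetneq$) follows because $r(w)<\infty$ forces $\Theta(e)<\pi$ strictly, or simply because the maple is a bounded region strictly larger than this disk.

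The step I expect to be the main obstacle is the careful trigonometric case analysis for $\mathrm{dist}(v,\overline{v_{f_1}w}\cup\overline{wv_{f_2}})$: one must verify that the perpendicular foot from $v$ to the relevant side actually lands on the segment (not its extension), which requires checking that the angle at $w$ in triangle $vv_{f_1}w$ is acute, or else handling the obtuse case by bounding the distance by the nearer vertex distance $|vw|$ times an appropriate sine. A clean way around this is to argue geometrically: the disk $D(v)$ and $D(w)$ intersect at angle $\Theta(e)$, $v_{f_1}$ and $v_{f_2}$ are the two intersection points of the boundary circles $\Pac(v)\cap\Pac(w)$, and the chord $\overline{v_{f_1}v_{f_2}}$ of $\Pac(v)$ subtends an angle $2(\pi-\Theta(e))$ or $2\Theta(e)$ at $c(v)$, so its distance from $c(v)$ is $r(v)|\cos\Theta(e)|$; combined with the fact that $v_{f_1},v_{f_2}\in\partial D(v)$, the entire quadrilateral side $v_{f_1}wv_{f_2}$ lies outside the disk of radius $r(v)\sin\Theta(e)$ about $c(v)$, since any point on that polygonal arc is at distance $\ge r(v)\sin\Theta(e)$ from $c(v)$ (this is a consequence of the circles meeting at angle $\Theta(e)$: the portion of $\partial D(w)$ inside $D(v)$ stays at radial distance $\ge r(v)\sin\Theta(e)$, with equality only at the "deepest" point, and the chord lies even farther out). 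This reduces everything to a one-line estimate and avoids the case split entirely.
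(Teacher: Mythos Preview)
Your overall plan is the right one and coincides with what the paper has in mind: the one–line proof ``this is deduced from $\Theta\in(0,\pi-\epsilon]$'' unpacks exactly to the wedge–by–wedge estimate you describe, namely that in each triangle $vv_{f_1}w$ the side $\overline{v_{f_1}w}$ stays at distance at least $r(v)\sin\epsilon$ from $c(v)$.

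There is, however, a genuine gap in your chain of inequalities. You write $\mathrm{dist}(v,\overline{v_{f_1}w})\ge r(v)\sin\Theta(e)\ge r(v)\sin\epsilon$ and justify the last step by ``on $[\epsilon,\pi-\epsilon]$ one has $\sin\Theta\ge\sin\epsilon$''. But the hypothesis $\Pac\in\mathrm{ICP}(\mathcal{D},\epsilon,\epsilon_0)$ only gives $\Theta(e)\in(0,\pi-\epsilon]$, \emph{not} $\Theta(e)\ge\epsilon$; for $\Theta(e)\in(0,\epsilon)$ the inequality $\sin\Theta(e)\ge\sin\epsilon$ is false. The correct case split is on the angle at $v_{f_1}$, not at $w$: when $\Theta(e)\le\pi/2$ the angle $\angle vv_{f_1}w=\pi-\Theta(e)\ge\pi/2$ is non-acute, so the foot of the perpendicular from $v$ to the line through $v_{f_1},w$ lies on the opposite side of $v_{f_1}$ from $w$, and the nearest point of the segment $\overline{v_{f_1}w}$ to $v$ is $v_{f_1}$ itself, at distance $|vv_{f_1}|=r(v)\ge r(v)\sin\epsilon$. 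When $\Theta(e)\in(\pi/2,\pi-\epsilon]$ one has $\sin\Theta(e)=\sin(\pi-\Theta(e))\ge\sin\epsilon$ and your computation goes through. With this split the argument is complete.

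Your proposed ``clean way around'' contains errors and should be discarded: the central angle at $c(v)$ subtended by the chord $\overline{v_{f_1}v_{f_2}}$ is $\alpha_v^w$, which depends on both radii and is not $2\Theta(e)$ or $2(\pi-\Theta(e))$ in general; consequently the claimed distance $r(v)|\cos\Theta(e)|$ to the chord and the assertion about the arc of $\partial D(w)$ are not correct as stated. The elementary case split above is both simpler and rigorous.

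Finally, note that your reduction uses $\alpha_v=2\pi$, which holds only for interior vertices; the statement as written says ``for each vertex $v\in V$''. In the intended applications the relevant vertices are interior (or the decomposition has no boundary), so this is harmless, but it is worth flagging.
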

	\begin{proof}
		This is deduced from the fact that $\Theta\in(0,\pi-\epsilon]^E$.
	\end{proof}

	We denote by $C(r)$ the circle centered at the origin of $\mathbb{R}^2$ with the radius $r$. We have the following lemma, which estimates the vertex extremal length via the geometry of embedded ICP.
	\begin{lem}\label{teichumller1}
		Let $\mathcal{D}$ be an infinite disk cellular decomposition with vertex degree bounded by $N$. Let $\mathcal{P}\in \mathrm{ICP}(\mathcal{D},\epsilon,\epsilon_0)$.
		Assuming that $V_1\subseteq V$ is a finite nonempty vertex set such that $\cup_{v\in V_1}\mathcal{M}(v)\subset B(0,r_1)$ for some $r_1$. Suppose that $C(r_2)\cap\carrier(\Pac)\neq\emptyset$ and $r_2\ge 2r_1$. Let $W=\{v\in V: c(v)\in B(0,r_2)\}$ and $V_2=\tilde{\partial}W$. Then 
		\[
		\vel(V_1,V_2)\ge \frac{\sin^4\epsilon(r_2-r_1)^2}{2(\sin\epsilon+4C(\epsilon,\epsilon_0,N)+4)^2r_2^2}\ge\frac{\sin^4\epsilon}{8(\sin\epsilon+4C(\epsilon,\epsilon_0,N)+4)^2}.
		\]
	\end{lem}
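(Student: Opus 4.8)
The plan is to bound $\vel(V_1,V_2)=\mathrm{MOD}(\Gamma(V_1,V_2))^{-1}$ from below by producing a single $\Gamma(V_1,V_2)$–admissible vertex metric of controlled area. The natural candidate is the radius metric on the annular region: set $S:=W\cup V_2$ and define $m(v):=\lambda\,r(v)$ for $v\in S$ and $m(v):=0$ otherwise, where $\lambda>0$ is to be fixed by admissibility. Once $m$ is admissible one has $\vel(V_1,V_2)\ge\area(m)^{-1}$, so everything reduces to (i) checking admissibility, which will force $\lambda$ to be a multiple of $(r_2-r_1)^{-1}$, and (ii) an upper bound of the form $\area(m)\le\lambda^2\cdot\mathrm{const}\cdot r_2^2/\sin^2\epsilon$. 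The final inequality in the statement is then immediate from the first together with $r_2\ge 2r_1$, which gives $(r_2-r_1)/r_2\ge\frac12$.

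For admissibility, take $\gamma=(v_0,\dots,v_n)\in\Gamma(V_1,V_2)$ with $v_0\in V_1$, $v_n\in V_2$. Since $c(v_0)$ is a vertex of each quadrilateral constituting $\mathcal{M}(v_0)$, we have $c(v_0)\in\mathcal{M}(v_0)\subset B(0,r_1)$; in particular $v_0\in W$. As $v_n\in\tilde\partial W$ means $v_n\notin W$, i.e.\ $|c(v_n)|\ge r_2$, there is a smallest index $j\ge 1$ with $v_j\notin W$, and then $v_{j-1}\in W$ forces $v_j\in\tilde\partial W=V_2$; thus the initial segment $v_0,\dots,v_j$ lies entirely in $S$ and satisfies $|c(v_0)|<r_1$, $|c(v_j)|\ge r_2$. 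Because $\Pac(v_i)\cap\Pac(v_{i+1})\neq\emptyset$ (condition (1) of Definition \ref{ICP}), the centres satisfy $|c(v_{i+1})-c(v_i)|\le r(v_i)+r(v_{i+1})$; summing over $i=0,\dots,j-1$ and using $|c(v_j)-c(v_0)|\ge r_2-r_1$ gives $2\sum_{i=0}^{j}r(v_i)\ge r_2-r_1$. Hence $\int_\gamma\mathrm{d}m\ge\sum_{i=0}^{j}m(v_i)=\lambda\sum_{i=0}^{j}r(v_i)\ge 1$ as soon as $\lambda=2/(r_2-r_1)$, so $m$ is admissible with this value of $\lambda$.

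For the area bound I would use embeddedness together with Proposition \ref{maple}. Since the quadrilaterals $\{\tilde{Q}_e\}$ of an embedded ICP have pairwise disjoint interiors, so do the maples $\{\mathcal{M}(v)\}_{v\in V}$, and $\sin\epsilon\,\Pac(v)\subsetneq\mathcal{M}(v)$ yields $\pi\sin^2\epsilon\sum_{v\in S}r(v)^2\le\sum_{v\in S}\area(\mathcal{M}(v))=\area\big(\bigcup_{v\in S}\mathcal{M}(v)\big)\le\pi R^2$, where $R:=\sup\{|p|:p\in\mathcal{M}(v),\ v\in S\}$. Each dual vertex of $\tilde{Q}_e$ lies at distance $r(v)$ from $c(v)$, so $\mathcal{M}(v)\subset\overline{B}(c(v),r(v))$ and $R\le\sup_{v\in S}(|c(v)|+r(v))$; since every $v\in S$ either lies in $W$ (so $|c(v)|<r_2$) or is adjacent to $W$, the task is to bound $r(v)$ uniformly for $v\in S$. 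This is exactly what the uniform Ring Lemma supplies, in the packaged form of Lemma \ref{appl_ring}, whose statement and proof are translation–covariant: applying it with a base vertex $v_0\in V_1$ (for which $\sin\epsilon\,\Pac(v_0)\subsetneq\mathcal{M}(v_0)\subset B(0,r_1)$ gives $r(\Pac(v_0))\le r_1/\sin\epsilon$ and $|c(v_0)|<r_1$) and radius parameter $\rho\asymp r_2/\sin\epsilon$ — chosen large enough that $B(c(v_0),\rho)$ contains the centres of all vertices of $W$ and $\rho\ge r(\Pac(v_0))$ — one gets $r(\Pac(v))\lesssim\rho$ for $v\in W$, and then, via Lemma \ref{ring2}, $r(\Pac(v))\lesssim C(\epsilon,\epsilon_0,N)\,\rho$ for the neighbours $v\in V_2$. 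Tracking the constants (and using $\sin\epsilon\le 1$, $r_1\le r_2/2$) gives $R\le(\sin\epsilon+4C(\epsilon,\epsilon_0,N)+4)\,r_2/\sin\epsilon$, hence $\sum_{v\in S}r(v)^2\le R^2/\sin^2\epsilon$ and $\area(m)=\lambda^2\sum_{v\in S}r(v)^2\le\frac{4R^2}{(r_2-r_1)^2\sin^2\epsilon}$, which is the desired bound.

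The elementary parts are the admissibility estimate (plane geometry of overlapping circles plus a first–exit argument) and the disjoint–maple area inequality. The crux, and the place where the quantitative hypothesis $(C2')$ enters through $C(\epsilon,\epsilon_0,N)$, is the uniform radius control: without the uniform Ring Lemma a single anomalously large circle centred near the origin would blow up $R$ and destroy the area bound. Two secondary technical points to address: Lemmas \ref{ring2} and \ref{appl_ring} are stated for finite decompositions, so I would either restrict $\Pac$ to the finite subcomplex carried in a large ball (the Ring constant being uniform in the subcomplex) or note that their proofs — resting on the pointed Gromov–Hausdorff compactness of Section \ref{Sec:3} — carry over verbatim to bounded–degree infinite $\mathcal{D}$; and the hypothesis $C(r_2)\cap\carrier(\Pac)\neq\emptyset$ is used only to exclude the degenerate situation in which $\Gamma(V_1,V_2)$ is empty.
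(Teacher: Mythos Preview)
Your overall architecture matches the paper exactly: take the radius metric $m(v)=\tfrac{2r(v)}{r_2-r_1}$ on $S=W\cup V_2$, check admissibility via the chain of overlapping disks, and bound $\sum_{v\in S}r(v)^2$ using Proposition \ref{maple} together with a uniform radius bound coming from the Ring lemma. The admissibility argument you wrote out is exactly what the paper leaves implicit.

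There is, however, a genuine error in your area estimate. You write ``Since the quadrilaterals $\{\tilde{Q}_e\}$ of an embedded ICP have pairwise disjoint interiors, so do the maples $\{\mathcal{M}(v)\}_{v\in V}$''. This inference is false: for $e=\{v,w\}$ the quadrilateral $\tilde{Q}_e$ belongs to \emph{both} $\mathcal{M}(v)$ and $\mathcal{M}(w)$, so adjacent maples overlap in an entire quadrilateral. What is true (and what the paper uses) is that every point of the plane lies in at most two maples, since each $\tilde{Q}_e$ contributes to exactly the two maples of its endpoints. Hence $\sum_{v\in S}\area(\sin\epsilon\,D(v))\le\sum_{v\in S}\area(\mathcal{M}(v))\le 2\,\area\big(\bigcup_{v\in S}\mathcal{M}(v)\big)$, which is where the factor $2$ in the denominator of the stated bound comes from. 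Your version would give a constant that does not match the statement, and more importantly rests on a false premise.

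A second, softer gap is in your radius bound. Lemma \ref{appl_ring} as stated gives a separation conclusion, not a direct estimate $r(\Pac(v))\lesssim\rho$, so invoking it with ``$\lesssim$'' and ``$\asymp$'' hides the actual step. The paper's argument here is both simpler and sharper: for any $w\in W\cup V_2$ the disk $\sin\epsilon\,D(w)$ cannot contain $B(0,r_1)$, since otherwise $\mathcal{M}(w)\supsetneq\sin\epsilon\,D(w)\supset\bigcup_{v\in V_1}\mathcal{M}(v)$, contradicting that the quadrilaterals of an embedded ICP have disjoint interiors. This immediately gives $\sin\epsilon\,r(w)<|c(w)|+r_1$, which for $w\in W$ yields $r(w)\le(r_1+r_2)/\sin\epsilon\le 2r_2/\sin\epsilon$; one application of the uniform Ring Lemma \ref{ring2} then handles the neighbours in $V_2$. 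This is how the explicit constant $\sin\epsilon+4C(\epsilon,\epsilon_0,N)+4$ arises, and you should replace the appeal to Lemma \ref{appl_ring} by this direct containment argument.
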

	\begin{proof}
		Since $V_1\subset W$, $V_1$ and $V_2$ are two disjoint vertex sets. We define a vertex metric $m$ as follows.
		\[m(v)=
		\left\{\begin{array}{cc}
			\frac{2r(v)}{r_2-r_1}, & v\in W\cup V_2,\\
			0, & v\notin W\cup V_2.
		\end{array}\right.
		\]
		It is clear that $m$ is $\Gamma(V_1,V_2)-$admissible. Since $\vel(V_1,V_2)=\mathrm{MOD}(V_1,V_2)^{-1},$ by the definition of the vertex module, we only need to estimate the summation $\sum_{v\in W\cup V_2}r(v)^2$.
		
		First of all, we observe that for each vertex $w\in W$, $B(0,r_1)\backslash \sin\epsilon D(w)\neq\emptyset$. Otherwise, by the assumption, $\cup_{v\in V_1}\mathcal{M}(v)\subset \sin\epsilon D(w)\subsetneq\mathcal{M}(w)$, which contradicts the embedding condition. Therefore, we have 
		\[
		r(w)\le \frac{(2r_1+r_2)}{\sin\epsilon}\le\frac{2r_2}{\sin\epsilon},~\forall w\in W.
		\]
		By the uniform Ring lemma \ref{ring2}, we see that $r(v)\le \frac{2C(\epsilon,\epsilon_0,N)r_2}{\sin\epsilon},~\forall v\in V_2.$
		
		Therefore, we see that 
		$
		\Pac(v)\subset B(0,(1+\frac{4(1+C(\epsilon,\epsilon_0,N))}{\sin\epsilon})r_2).
		$
		It is easy to see that every point on the plane is contained in at most two different maples.
		Then we have 
		\begin{align}
			\sum_{v\in W\cup V_2} \sin^2\epsilon r^2(v)&= \frac{\mathrm{Area}(\sin\epsilon D(v))}{\pi}  \nonumber
			\\&\le \frac{2}{\pi}\mathrm{Area}(B(0,(1+\frac{4(1+C(\epsilon,\epsilon_0,N))}
			{\sin\epsilon})r_2))\nonumber\\&
			\le \frac{2(\sin\epsilon+4C(\epsilon,\epsilon_0,N)+4)^2r_2^2}{\sin^2\epsilon}.
		\end{align}
		Therefore, we have
		\[
		\mathrm{area}(m)\le \frac{2(\sin\epsilon+4C(\epsilon,\epsilon_0,N)+4)^2r_2^2}{\sin^4\epsilon(r_2-r_1)^2}.
		\]
		This proves the estimate of $\vel(V_1,V_2)$.
	\end{proof}
	Next, we introduce a lemma from the converse direction, which depicts the geometric property of an embedded ICP by its vertex extremal length. We denote by $V(\rho)$ the set of vertices whose corresponding circle intersect $\partial B(0,\rho)$. Moreover, we denote by $P(\gamma)$ the set $\cup_{v\in\gamma}P(v)$ for each curve $\gamma$.
	\begin{lem}\label{teichmuller2}
		Let $\mathcal{D}$ be an infinite disk cellular decomposition with vertex degree bounded by $N$. Let $\mathcal{P}\in \mathrm{ICP}(\mathcal{D},\epsilon,\epsilon_0)$. Let $V_0=\{v_0\}$, $V_1$, and $V_2$ be mutually disjoint, finite, connected subsets of vertices, and let $V_4=\infty$. Assume that for any $ 0 < i_1 < i_2 < i_3 < i_4$, the set $V_{i_2}$ separates $V_{i_1}$
		from $V_{i_3}$. We assume that $v_0$ is located at the origin. 
		If \begin{align}\label{vel_condition}
			\vel(V_1,V_2)>\frac{72\pi^2(\sin\epsilon+4C(\epsilon,\epsilon_0,N)+4)^2}{\sin^4\epsilon},\end{align} 
		then there exists a constant $\rho>0$, such that $V(r)$ separates $V_1$ and $V_2$ for any $r\in[\rho,2\rho]$.
	\end{lem}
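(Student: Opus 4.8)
The plan is to argue by contradiction: suppose that for every $\rho > 0$ there is some radius $r \in [\rho, 2\rho]$ for which $V(r)$ fails to separate $V_1$ from $V_2$. Since $v_0$ sits at the origin and $V_1$ separates $v_0$ from $V_2$ (by the separation hypothesis with $i_1=0,i_2=1,i_3=2$), the set $V(r)$ for large $r$ lies ``beyond'' $V_2$, so the failure of separation must come from $V(r)$ meeting a path that goes around. More precisely, if $V(r)$ does not separate $V_1$ from $V_2$, then there is a path $\gamma$ from $V_1$ to $V_2$ all of whose vertices $v$ satisfy $c(v)\notin B(0,r)$ — equivalently, the carriers $\carrier$ of the circles along $\gamma$ stay essentially outside $B(0,r)$ except possibly near the ends. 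The idea is to use such a bad path, together with Lemma \ref{teichumller1} applied in the annular region between a small ball around $v_0$ (containing the finite set of maples indexed by vertices ``inside'' $V_1$) and the circle $C(r)$, to manufacture a $\Gamma(V_1,V_2)$-admissible metric of small area, contradicting the lower bound \eqref{vel_condition} on $\vel(V_1,V_2)$.

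The key steps, in order, are: (1) Fix $r_1$ so that $\cup_{v}\mathcal M(v) \subset B(0,r_1)$ where the union is over the finitely many vertices of $V_1$ and everything it bounds together with $v_0$; this is possible since all these sets are finite and each maple is compact. (2) For a bad radius $r = r_2 \geq 2r_1$, let $W = \{v : c(v)\in B(0,r)\}$ and $\tilde V_2 = \tilde\partial W$. By Lemma \ref{teichumller1}, $\vel(V_1, \tilde V_2) \geq \frac{\sin^4\epsilon}{8(\sin\epsilon + 4C(\epsilon,\epsilon_0,N)+4)^2}$. (3) Now observe that a bad path $\gamma$ from $V_1$ to $V_2$ avoiding $V(r)$ must cross $\tilde V_2$ — indeed it must pass from $W$ (it starts in $V_1 \subset W$) to its complement and then return, or it must reach $V_2$ which lies outside $W$; either way it hits $\tilde\partial W$. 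So any path in $\Gamma(V_1, V_2)$ that is ``bad'' contains a subpath in $\Gamma(V_1,\tilde V_2)$, OR the separation failure forces a path from $V_1$ to $V_2$ that stays inside $W$, contradicting that $V_1$ separates $v_0$ from $V_2$ and that $V_2$ lies past $C(r)$. (4) Combine the estimates: using the sub-additivity Lemma \ref{vel_sum} across the ``layers'' $V_1, \tilde V_2, \ldots$ one packs in enough copies of the uniform lower bound from step (2) — specifically, if separation fails for radii in $[\rho,2\rho]$ for a sequence $\rho \to \infty$, one can find disjoint annuli each contributing at least $\frac{\sin^4\epsilon}{8(\sin\epsilon+4C+4)^2}$, and two such annuli already exceed $\frac{18(\sin\epsilon+4C+4)^2}{\sin^4\epsilon}$... — wait, the cleaner route is: the hypothesis \eqref{vel_condition} says $\vel(V_1,V_2)$ is more than $18/$ the per-layer constant times $(\cdots)$, i.e. more than a fixed multiple (roughly $144$) of the single-annulus bound, forcing many separating circles; pick $\rho$ in the ``gap'' between consecutive forced separators.

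The main obstacle I expect is step (3)–(4): carefully relating the combinatorial separation of $V(r)$ (circles meeting $C(0,r)$) to the geometric ball $W=\{c(v)\in B(0,r)\}$ and its outer boundary $\tilde\partial W$, and then quantifying exactly how the lower bound \eqref{vel_condition} — which is $18/\sin^4\epsilon$ times $(\sin\epsilon+4C+4)^2$, i.e. roughly $144$ times the single-layer estimate $\frac{\sin^4\epsilon}{8(\cdots)^2}$ — forces the existence of a good separating radius in $[\rho,2\rho]$. Concretely: if $V(r)$ never separates for $r\in[\rho,2\rho]$, then by Lemma \ref{teichumller1} applied with $r_1$ fixed and $r_2$ ranging over a geometric progression $\rho, 2\rho, 4\rho, \ldots$ inside this dyadic window one gets one copy of the per-layer bound, but the window $[\rho,2\rho]$ is too short for a geometric progression; the correct fix is to apply Lemma \ref{teichumller1} with $r_2 = 2\rho$ and note $r_2 - r_1 \geq \tfrac{3}{4}r_2$ once $\rho$ is large, giving $\vel(V_1,\tilde\partial W) \geq \frac{9\sin^4\epsilon}{32(\sin\epsilon+4C+4)^2}$, while simultaneously $\tilde\partial W$ would have to be ``transparent'' to a $V_1$–$V_2$ path, so in fact $\vel(V_1,V_2)\le \vel(V_1,\tilde\partial W) \cdot(\text{const})$ cannot hold — the contradiction arises because a non-separating $V(r)$ makes $\tilde\partial W$ essentially useless as a barrier, forcing $\vel(V_1,V_2)$ down to the bounded quantity on the right of \eqref{vel_condition}. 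Making this last implication precise — that repeated failure of separation on $[\rho,2\rho]$, combined with monotonicity of $W$ in $r$, caps $\vel(V_1,V_2)$ by a bounded multiple of the single-layer constant — is where the real work lies, and it is essentially a packing/counting argument controlled by the uniform Ring Lemma \ref{ring2}.
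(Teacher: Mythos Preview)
Your proposal has a genuine gap: the direction of the inequalities is wrong throughout, and the key idea—duality between $\Gamma$ and $\Gamma^*$—is missing.

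The hypothesis \eqref{vel_condition} is a \emph{lower} bound on $\vel(V_1,V_2)$, so a contradiction requires an \emph{upper} bound. Every tool you invoke (Lemma~\ref{teichumller1}, the serial law Lemma~\ref{vel_sum}) produces lower bounds on vertex extremal length. Stacking annuli as in your step~(4) only shows $\vel(V_1,V_2)$ is large, which is consistent with the hypothesis, not a contradiction. Your closing heuristic, that ``failure of separation forces $\vel(V_1,V_2)$ down,'' is the right intuition, but you never supply a mechanism for it.

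The paper's mechanism is duality. It fixes $\rho=\sup\{\,|c(v)|:v\in V_1\,\}$ explicitly (you never choose $\rho$). If some $\rho_0\in[\rho,2\rho]$ fails to separate, there is a path $\gamma_0\in\Gamma(V_1,V_2)$ whose circles avoid $C(0,\rho_0)$; since $\gamma_0$ starts at $V_1$, the circles along $\gamma_0$ lie \emph{inside} $D(0,\rho_0)$ (your step~(3) has this backwards—you put the bad path outside). Now every separating curve $\gamma^*\in\Gamma^*(V_1,V_2)$ meets $\gamma_0$, so $\gamma^*$ has a vertex with a circle inside $D(0,\rho_0)\subset D(0,2\rho)$. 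One then builds a metric $m$ supported on $\{v:c(v)\in B(0,3\rho)\}\cup\tilde\partial\{\cdots\}$ by $m(v)=2r(v)$ (normalizing $\rho=1$), shows it is $\Gamma^*(V_1,V_2)$-admissible, and bounds its area exactly as in Lemma~\ref{teichumller1}. The duality relation $\vel(V_1,V_2)=\mathrm{MOD}(\Gamma^*(V_1,V_2))\le\area(m)$ then gives the desired upper bound $\frac{18(\sin\epsilon+4C+4)^2}{\sin^4\epsilon}$, contradicting \eqref{vel_condition}.

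In short: replace your attempt to use Lemma~\ref{teichumller1} directly on $\Gamma(V_1,V_2)$ by the dual construction—build an admissible metric for $\Gamma^*(V_1,V_2)$ with controlled area, anchored by the bad path $\gamma_0$.
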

	\begin{proof}
		Let 
        \begin{equation}\label{choiceofrho}
            \rho=\mathrm{diam}(\Pac(V_1)).
        \end{equation} We now prove that $\rho$ is the desired constant. 
		
		We prove this by contradiction. Without loss of generality, we assume that $\rho=1$. 
         For each vertex curve $\gamma^*$ separating $V_1$ from $\infty$, we have 
		\[
        \sum_{v\in \gamma^*}2\pi r(v)\ge 1.
        \]
        If this lemma fails, then there exists a constant $\rho_0\in [1,2]$ such that $V(\rho_0)$ does not separate $V_1$ and $V_2$. 
		By the definition of $\rho$, it is clear that $c(v)\in B(0,1),~\forall v\in V_1$.
		Therefore, $P(\gamma)\cap B(0,\rho_0)\ne\emptyset,~\forall\gamma\in \Gamma(V_1,V_2).$ Since $V(\rho_0)$ does not separate $V_1$ and $V_2$. There exists $\gamma_0\in\Gamma(V_1,V_2)$ such that $\gamma_0\cap V(\rho_0)=\emptyset.$ 
		Therefore, $\mathcal{P}(v)$ is contained in the interior of $B(0,\rho_0)$ for each $v\in \gamma_0$. Let $W$ be the set of vertices whose centers are contained in $B(0,3)$. We define a vertex metric
		\[m(v)=
		\left\{\begin{array}{cc}
			4\pi r(v) & v\in W\cup \tilde{\partial}W,\\
			0, & v\notin W\cup \tilde{\partial}W.
		\end{array}\right.
		\]
		For each closed curve $\gamma^*\in \Gamma^*(V_1,V_2),$ there exists a vertex $v\in \gamma_0\cap\gamma^*$, whose circle is contained in the disk $B(0,\rho_0).$ Therefore, either  $P(\gamma^*)$ is contained in the interior of $B(0,3)$ or it links $B(0,\rho_0)$ with $\partial B(0,3)$. In any case, it is easy to see that $\int_{\gamma^*} \dis m\ge 1$. Therefore, $m$ is $\Gamma^*(V_1,V_2)$-admissible. By a similar argument as in the proof of Lemma \ref{teichumller1}, we see that
		\[
		\vel(\Gamma(V_1,V_2))=\mathrm{MOD}(\Gamma^*(V_1,V_2))\le  \frac{72\pi^2(\sin\epsilon+4C(\epsilon,\epsilon_0,N)+4)^2}{\sin^4\epsilon}.
		\]
		This contradicts the assumption \ref{vel_condition}.
	\end{proof}

	\subsection{Proof of the uniformization theorem}
	Now, we prove the uniformization theorem. We divide it into two parts.
	
	\begin{thm}\label{uniformization1}
		Let $\mathcal{D}$ be an infinite disk cellular decomposition with vertex degree bounded by $N$. Assume that $\sup_{e\in E}\Theta(e)<\pi$, $(C_1)$, and $(C_2^+)$ hold. Then the following statements are equivalent:
		\begin{enumerate}
			\item $(\mathcal{D,}\Theta)$ is ICP-parabolic. 
			\item The infinite disk cellular decomposition $\mathcal{D}$ is VEL-parabolic.
		\end{enumerate}
	\end{thm}
	\begin{proof}
		We first assume that $(\mathcal{D,}\Theta)$ is ICP-parabolic. Then, by the definition of locally finiteness, we can easily see that $\carrier(\Pac)=\mathbb{R}^2$. Let $V_1=\{v_1\}$ for some vertex $v_1\in V$.  Now we define the vertex set $V_i$ by induction. When $i$ is odd, $V_i$ is defined. Let $r_i$ be a number large enough such that 
		$\cup_{v\in V_i}\mathcal{M}(v_i)\subseteq B(0,r_i)$ is contained in the interior of $B(0,r_i)$. Let $r_{i+1}=2r_i$. We denote by $W_{i+1}=\{v\in V:c(v)\in B(0,r_{i+1})\}$. We define $V_{i+1}:=\tilde{\partial}W_i$. When $i>0$ is even, we choose a vertex set $V_{i+1}$ that separates $V_i$ and $\infty$. 
		Then, by Lemma \ref{teichumller1}, we see that 
		\[
		\vel(V_{2i-1},V_{2i})\ge\frac{\sin^4\epsilon}{8(\sin\epsilon+4C(\epsilon,\epsilon_0,N)+4)^2},~\forall i\in\mathbb{N}_+.
		\]
		By Lemma \ref{vel_sum}, we see that $\mathcal{D}$ is VEL-parabolic.
		
		Secondly, we prove the converse direction. 
		By Theorem \ref{infinite_existence} existence, there exists an ICP of $\mathcal{D}$ in the plane. 
		Assume that $\mathcal{D}$ is VEL-parabolic. Let $v_1$ be a vertex in $V$. We can choose a sequence of mutually disjoint, nonempty vertex sets $\{V_i\}_{i\in\mathbb{N_+}}$ with $V_1=\{v_1\}$ such that for $i_1 < i_2 < i_3, V_{i_2}$ 
		separates $V_{i_1}$ from $V_{i_3}$ with 
		\[
		\vel(V_{2i-1},V_{2i})>\frac{72\pi^2(\sin\epsilon +4C(\epsilon,\epsilon_0,N)+4)^2}{\sin^4\epsilon}.
		\]
		Therefore, there exists a sequence of $\rho_i$, such that $V(r)$ separates $V_{2i-1}$ and $V_{2i}$, $\forall r\in [\rho_i,2\rho_i]$. It is easy to see that $\rho_{i+1}\ge2\rho_i$. Thus $\rho_i\rightarrow+\infty$. Therefore, any compact set in $\mathbb{R}^2$ only intersects with finitely many circles in $\Pac$. Therefore, we finish the proof.
	\end{proof}
	
	Before proving the equivalence of ICP-hyperbolic and VEL-hyperbolic, we need to introduce the following lemma, which is proved by He \cite{HE} for the triangulations.
	\begin{lem}\label{hehyp}
		Let $\mathcal{D}$ be an infinite disk cellular decomposition with bounded face degree. Then the following statements are equivalent.
		\begin{enumerate}
			\item The cellular decomposition $\mathcal{D}$ is VEL-hyperbolic.
			\item For any vertex subsets $V_k$ such that $V_{\infty}=\cup_{k=1}^\infty V_k$ is infinite, we have $$\lim_{k\rightarrow\infty} \vel (V_k,\infty) = 0.$$
		\end{enumerate}
	\end{lem}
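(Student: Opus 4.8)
The plan is to prove the two implications separately: $(2)\Rightarrow(1)$ is soft, while $(1)\Rightarrow(2)$ is the substantive part and follows the strategy of He's argument for triangulations \cite{HE}, using as combinatorial input the bounded face degree together with Lemma~\ref{vel_sum} and the duality $\mathrm{VEL}(\cdot,\cdot)^{*}=\mathrm{VEL}(\cdot,\cdot)^{-1}$ proved above, which we rewrite as $\vel(W,\infty)=\mathrm{MOD}\bigl(\Gamma^{*}(W,\infty)\bigr)$. I read the $V_{k}$ as an increasing exhaustion of $V$ by finite connected sets; the mild generalization in the statement is handled the same way once the single end of $\mathcal{D}$ is invoked.

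For $(2)\Rightarrow(1)$: applying (2) to $V_{k}=B_{k}(v_{0})$ gives a finite set $V_{k_{0}}$ with $\vel(V_{k_{0}},\infty)<\infty$, i.e.\ $\mathrm{MOD}\bigl(\Gamma(V_{k_{0}},\infty)\bigr)>0$. I would then show that positivity of $\mathrm{MOD}(\Gamma(\cdot,\infty))$ does not depend on the chosen finite base set, which is exactly VEL-hyperbolicity. If some finite $W$ had $\mathrm{MOD}\bigl(\Gamma(W,\infty)\bigr)=0$, fix a finite path $P$ from $W$ to $V_{k_{0}}$ and a $\Gamma(W,\infty)$-admissible metric $m$ of area $\delta$. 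For every $\gamma\in\Gamma(V_{k_{0}},\infty)$ the concatenation $P\cdot\gamma$ lies in $\Gamma(W,\infty)$, so $\int_{\gamma}m\ge 1-\int_{P}m\ge 1-|P|\sqrt{\delta}$; hence $m/(1-|P|\sqrt{\delta})$ is $\Gamma(V_{k_{0}},\infty)$-admissible with area $\delta/(1-|P|\sqrt{\delta})^{2}\to 0$, contradicting $\mathrm{MOD}\bigl(\Gamma(V_{k_{0}},\infty)\bigr)>0$. Splitting an arbitrary finite base set into single vertices and using $\mathrm{MOD}(\Gamma_{1}\cup\Gamma_{2})\le\mathrm{MOD}(\Gamma_{1})+\mathrm{MOD}(\Gamma_{2})$ finishes this direction.

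For $(1)\Rightarrow(2)$: fix $v_{0}$, so $\vel(v_{0},\infty)<\infty$. Using that $\mathcal{D}$ is a cellular decomposition of a disk with one end, choose finite connected sets $S_{1},S_{2},\dots$ (for instance inner boundaries of the unbounded complementary component of growing combinatorial balls), each separating $v_{0}$ from $\infty$, with $S_{j}$ separating $S_{j-1}$ from $S_{j+1}$ and $S_{j}\to\infty$. Applying Lemma~\ref{vel_sum} (with last set $\infty$, letting the chain length grow) to the odd- and even-indexed subchains of $(S_{j})$, together with $\vel(v_{0},\infty)\ge\vel(S_{1},\infty)$ and $\vel(v_{0},\infty)\ge\vel(S_{2},\infty)$, gives $\sum_{j\ge 1}\vel(S_{j},S_{j+1})\le 2\,\vel(v_{0},\infty)<\infty$, so $\sum_{j\ge N}\vel(S_{j},S_{j+1})\to 0$ as $N\to\infty$. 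Now given an exhaustion $V_{k}\nearrow V$, put $N(k)=\max\{n:S_{1}\cup\cdots\cup S_{n}\subseteq V_{k}\}\to\infty$, and for $j\ge N(k)$ take a near-extremal metric $\mu_{j}$ for $\Gamma^{*}(S_{j},S_{j+1})$, which we may assume supported in the annular region $A_{j}$ between $S_{j}$ and $S_{j+1}$, with $\area(\mu_{j})\le\vel(S_{j},S_{j+1})+2^{-j}$. A planar Jordan-curve argument shows any cycle $\gamma^{*}$ separating $V_{k}$ from $\infty$ separates $S_{j}$ from $S_{j+1}$ for some $j\ge N(k)$, whence $m^{*}_{k}:=\sum_{j\ge N(k)}\mu_{j}$ satisfies $\int_{\gamma^{*}}m^{*}_{k}\ge\int_{\gamma^{*}}\mu_{j}\ge 1$, so $m^{*}_{k}$ is $\Gamma^{*}(V_{k},\infty)$-admissible. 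Since every vertex lies in at most two of the $A_{j}$, $\area(m^{*}_{k})\le 2\sum_{j\ge N(k)}\area(\mu_{j})\to 0$, and therefore $\vel(V_{k},\infty)=\mathrm{MOD}\bigl(\Gamma^{*}(V_{k},\infty)\bigr)\le\area(m^{*}_{k})\to 0$.

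The main obstacle is the planar-topological bookkeeping in $(1)\Rightarrow(2)$: arranging that the separating sets $S_{j}$ are connected (this is exactly where the single-end hypothesis is used) and that every finite cycle separating $V_{k}$ from $\infty$ is genuinely ``caught'' by one of the annuli $A_{j}$, $j\ge N(k)$ — including the degenerate cases where $\gamma^{*}$ meets some $S_{j}$ or threads across several of them, which force one to thicken the $S_{j}$ slightly or to space them out. An alternative route that avoids this uses the equivalence of VEL-hyperbolicity with transience \cite{He_schramm}: then there is an equilibrium potential $h$ with $h(v_{0})=0$, $h\to 1$ at $\infty$ and finite Dirichlet energy, the level sets $\{h<t_{k}\}$ with $t_{k}\uparrow 1$ form a finite exhaustion with $\vel(\{h<t_{k}\},\infty)\le 1/\mathrm{cap}(\{h<t_{k}\})=(1-t_{k})/\mathrm{cap}(v_{0})\to 0$, and an arbitrary exhaustion reduces to this one by monotonicity, since each fixed $\{h<t\}$ is eventually contained in $V_{k}$, so $\limsup_{k}\vel(V_{k},\infty)\le\vel(\{h<t\},\infty)$ for every $t<1$.
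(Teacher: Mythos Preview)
Your approach differs substantially from the paper's. The paper does not reprove He's argument from scratch; it quotes He's Lemma~7.3 (the triangulation case) as a black box and reduces to it via a short comparability lemma: inserting a dual vertex $v_f$ in every face yields a triangulation $\mathcal{T}$, and bounded face degree --- this is exactly where that hypothesis enters, which your argument never invokes --- gives $C^{-1}\vel_{\mathcal{D}}(W_1,W_2)\le\vel_{\mathcal{T}}(W_1,W_2)\le C\,\vel_{\mathcal{D}}(W_1,W_2)$ for a constant $C=C(N)$, so both VEL-hyperbolicity and statement~(2) transfer between $\mathcal{D}$ and $\mathcal{T}$. Your route reproves the content of He's lemma directly; that is legitimate, and even shows the hypothesis ``bounded face degree'' is an artifact of the paper's reduction rather than intrinsic, but it is not what the paper does.

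That said, your direct argument has a genuine gap in the case that matters. You explicitly read the $V_k$ as an exhaustion of $V$, and the step $N(k)=\max\{n:S_1\cup\cdots\cup S_n\subseteq V_k\}\to\infty$ (and likewise ``each fixed $\{h<t\}$ is eventually contained in $V_k$'' in your alternative route) rests on this. But the lemma as stated, and its sole application in the proof of Theorem~\ref{uniformization2}, take the $V_k$ to be a growing finite connected chain whose union is merely an infinite connected subset of $V$ --- in that application $W_n$ is literally a path of $n$ vertices --- so no separating annulus $S_j$ need ever be contained in $V_k$. The sentence ``the mild generalization is handled the same way once the single end of $\mathcal{D}$ is invoked'' does not address this: one-endedness buys you connected separators $S_j$, not the containment $S_j\subset V_k$. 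What one actually needs is that any $\gamma^*\in\Gamma^*(V_k,\infty)$ is forced, by the fact that the connected set $V_k$ reaches combinatorial distance $R_k\to\infty$ from a fixed base vertex, to separate some pair $S_j,S_{j+1}$ with $j$ large; this is true and is how He's original proof proceeds, but it is a different (and more delicate) planar statement than the containment-based one you wrote down.
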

	This lemma will be proved in the Appendix \ref{appendix}. Now we will prove the equivalence of VEL-hyperbolic and ICP-hyperbolic.
	\begin{thm}\label{uniformization2}
		Let $\mathcal{D}$ be an infinite disk cellular decomposition with vertex degree bounded by $N$. Assume that $\sup_{e\in E}\Theta(e)<\pi$, $(C_1)$, and $(C_2^+)$ hold. Then the following statements are equivalent:
		\begin{enumerate}
			\item $(\mathcal{D},\Theta)$ is ICP-hyperbolic. 
			\item The infinite disk cellular decomposition $\mathcal{D}$ is VEL-hyperbolic.
		\end{enumerate}
	\end{thm}
	\begin{proof}
		By Theorem \ref{uniformization1}, it is easy to see that if $(\mathcal{D},\Theta)$ is ICP-hyperbolic then $\mathcal{D}$ is VEL-hyperbolic. Therefore, we only need to prove ``2.$\Rightarrow$1.''.
		
		Let $\{\mathcal{D}_i=(V_i,E_i,F_i)\}_{i\in\mathbb{N}}$ be an exhausting sequence of sub-complexes of the cellular decomposition $\mathcal{D}$, which are finite disk cellular decompositions, i.e.
		\[
		\mathcal{D}_i\subset\mathcal{D}_{i+1},\cup_{i\in \mathbb{N}}{\mathcal{D}_i}=\mathcal{D}.
		\]
		By Theorem \ref{finite_existence}, there exists an ICP $\Pac_i$ for each $\mathcal{D}_i$, with boundary circles inner tangent to the unit disk $\mathbb{D}^2$, embedded into the hyperbolic space. For a given $v_0\in V$, without loss of generality, we assume that $v_0\in V_i,~\forall i\in V$. By M\"obius transformations, we can locate the circle $\Pac_i(v_0)$ at the center of the unit disk for each $i$. By a maximum principle \ref{MaximumPrincipleHyper}, which will be introduced in Section \ref{Sec:5}, we can prove that $r_i(v_0)\ge r_j(v_0),$ whenever $i<j$. Let $\Pac^*_i=\frac{1}{r_i(v_0)}\Pac_i$ be the ICP obtained by scaling $\Pac_i$.
 Then $\Pac^*_i(v)$ is inner tangent to the circle $C(\frac{1}{r_i(v_0)})$ for each $i$ and $v\in \partial V_i$. We denote by $R_i:=\frac{1}{r_i(v)}$. Since $R_i$ is non-decreasing, it has a limit $R_\infty$. By Lemma \ref{teichumller1}, we see that $R_\infty<+\infty$. Therefore, by the Ring Lemma \ref{ring2}, we obtain an embedded ICP $\mathcal{P}_\infty$ that is contained in $B(0,R_\infty)$. Therefore, we only need to prove the locally finite property.
		
		Assume that $\Pac_\infty$ is not locally finite in $B(0,R_\infty)$. Then there exists a vertex $x\in \mathrm{Int}(B(0,R_\infty))$, which is the accumulation point of centers, such that $x\in \partial\carrier(\Pac_\infty)$. Since $\partial\carrier(\mathcal{P}_\infty)$ is a closed set, without loss of generality, we can assume that $x$ is a point in $\partial\carrier(\mathcal{P}_\infty)$ closest to the origin $o$.
		Since $\Pac_\infty$ is contained in $B(0,R_\infty)$, by Proposition \ref{maple}, 
		\begin{align*}
			\# \{v\in V:r_\infty(v)\ge \delta\}<+\infty,~\forall \delta>0.
		\end{align*}
		Then there exists a connected infinite vertex set $W=\{v_i\}_{i\in\mathbb{N}_+}$ such that $\lim_{n\rightarrow\infty}c(v_n)=x$, and $W_i=\{v_1,\cdots,v_i\}$ is connected for each $i$. Then $W=\cup_{i=1}^\infty W_i$. Actually, one can draw a straight line connecting the origin and $x$ without intersecting $\partial\carrier(\mathcal{P}_\infty)$. Then let $V_1$ be the set of vertices whose maples intersect with the line segment.  Let $R^*=\mathrm{dist}(0,x)$. Then there exist constants $R'$ and $R''$ such that $R^*<R'<R''<R_\infty$ and $\mathcal{M}(v)\subset B(0,R'),~\forall v\in W$. Now we fix a number $N$, and let $k$ be large enough; we can guaranty that   $\mathcal{M}_k(v)\subset B(0,\frac{R'+R''}{2}),~\forall v\in W_n,~n\ge N$. Here, the $M^k(v)$ is the maple of $v$ in the embedded ICP $\mathcal{P}_i$. Let $W'_k$ be the set of vertices whose maples are contained in $ B(0,\frac{R'+R''}{2})$, with a similar argument as in Lemma \ref{teichumller1},  for each sufficiently large $k$ we have
		\[
		\vel(W_n,\partial V_k)\ge \vel(W_k',\partial V_k)\ge C\frac{(R''-R')^2}{(R''+R')^2},
		\]
		for some constant $C$. Therefore, by Lemma \ref{vellimt}, $\vel(W_n,\infty)> C\frac{(R''-R')^2}{(R''+R')^2}$ for each $n$, which contradicts Lemma \ref{hehyp}.
	\end{proof}

\subsection{CP-hyperbolic vs. ICP-hyperbolic}   \label{sec:4.4}
%加一段
It's time to construct an example to show that VEL-parabolicity is not equivalent to ICP-parabolicity when the assumptions $(C_2^+)$ and $\sup_{e\in E}\Theta(e)<\pi$ are weakened.

As was proved previously, when assuming  $\sup_{e\in E}\Theta(e)<\pi$ and  $(C_2^+)$, we have 
\[
(\mathcal{D},\Theta)~\text{is ICP-hyperbolic}~\iff \mathcal{D}~\text{is VEL-hyperbolic}.
\]
This is similar to the result of He  \cite{HE}, which proves that for circle packings with non-obtuse intersection angles $\Theta$ on infinite disk triangulations,
\[
(\mathcal{T},\Theta)~\text{is CP-hyperbolic}~\iff (\mathcal{T},\Theta)~\text{is VEL-hyperbolic},
\]
  where a uniform compact bound for the intersection angles is assumed. Our example shows that this kind of assumption is indispensable. Therefore, the discrete uniformization theorem is not true for circle packings with generalized intersection angles.

Now, we are going to show that there exists a VEL-hyperbolic cellular decomposition $\mathcal{D}=(V,E,F)$ and an intersection angle $\Theta\in(0,\pi)^E$, such that all embedded ICPs of $(\mathcal{D},\Theta)$ cannot be locally finite in the unit disk $\mathbb{D}^2$. Therefore, we construct an example such that $\mathcal{D}$ is VEL-hyperbolic and $(\mathcal{D},\Theta)$ is ICP-parabolic. As a conclusion, the assumptions that $\sup_{e\in E}\Theta(e)<\pi$ and $(C_2^+)$ are reasonable, and the information on the intersection angles is necessary.

Before constructing the example, we first consider the following lemma.
\begin{prop}\label{prop4.12}
    Assuming that $\mathcal{P}$ is an embedded ICP of $(\mathcal{D},\Theta)$, where $\mathcal{D}=(V,E,F)$ and $\Theta\in (0,\pi)^E$.
    Let $\gamma=(e_1,\cdots,e_n)$ be a simple closed loop in $\mathcal{D}$ that is not a boundary of a face in $F$. By $V_\gamma$ we denote the vertices on $\gamma$. Let 
  %  \[
   % W_\gamma=\{v\in \tilde{\partial}V_\gamma:v~\text{is contained in the bounded connected components of }\mathbb{R}^2\backslash\cup_{e\in\gamma}Q_e\}.
    %\]
  \[
W_\gamma = \left\{ v \in \partial V_\gamma : 
\begin{array}{l}
v \text{ is contained in the bounded connected components} \\
\text{of } \mathbb{R}^2 \setminus \bigcup_{e \in \gamma} Q_e
\end{array}
\right\}\]
    Then \[
    \sum_{v\in V_\gamma}\sum_{w\sim v,w\in W_\gamma}\alpha_v^w=\sum_{i=1}^n(\pi-\Theta(e))-2\pi.
    \]
\end{prop}
\begin{proof}
    Since this can be directly deduced from the Gauss-Bonnet formula, we omit the proof here.
\end{proof}
\begin{figure}[h]
    \centering\includegraphics[width=1.0\linewidth]{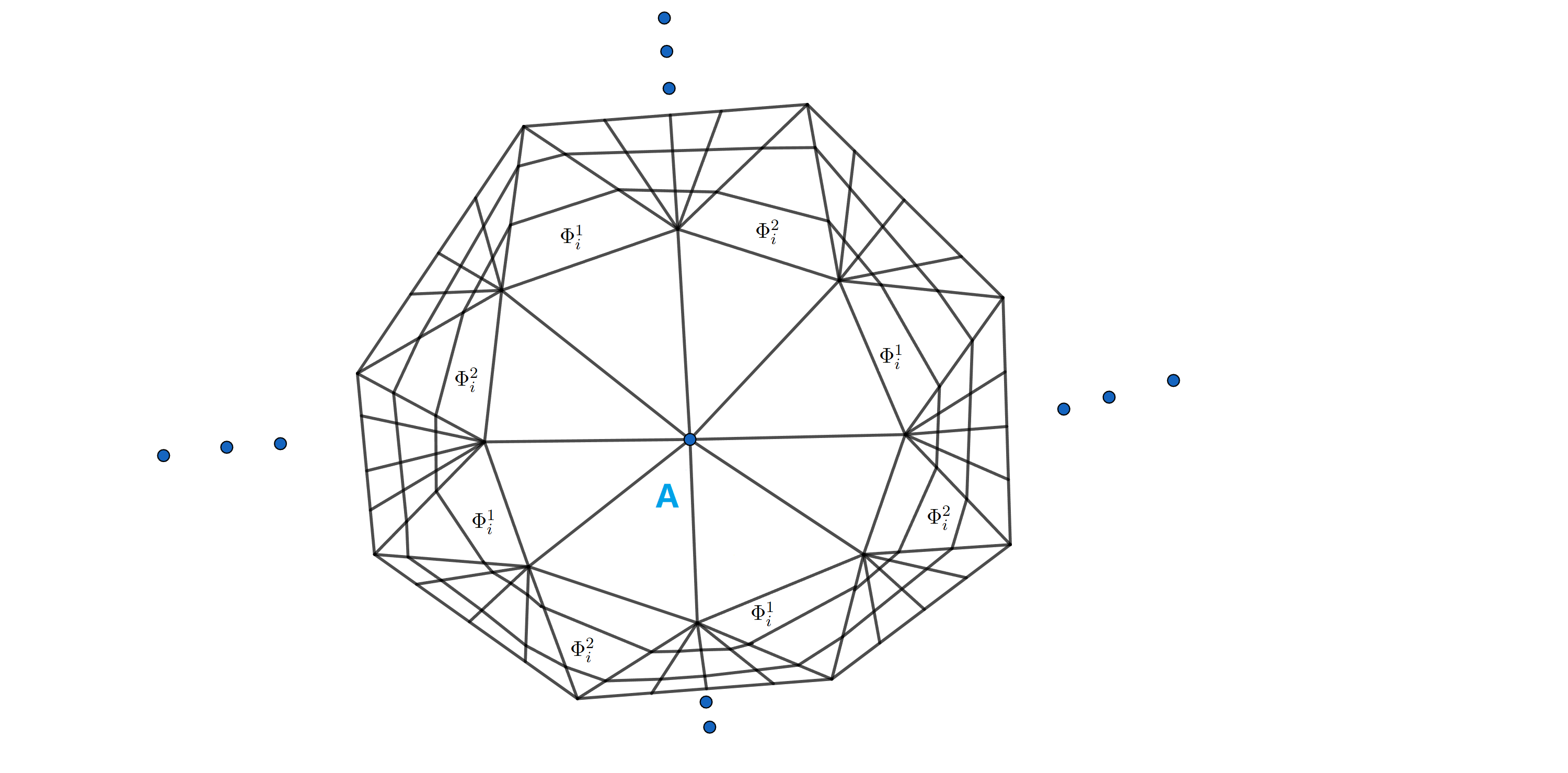}
    \caption{A part of the cellular decomposition $\mathcal{D}_*$}
    \label{example}
\end{figure}
Now we construct the example.

\begin{ex}\label{keyexample}
  Let $\{\delta_n\}_{n\in\mathbb{N_+}}$ be a sequence of positive numbers such that $\sin\delta_n<2^{-3n-1}$ and $\delta_{i+1}<\delta_{i}<\frac{\pi}{2}$ for each $i$. And let $\epsilon_i=\frac{2\pi+\delta_i}{2^{3+2i}}$. Let $\mathcal{D}_*=(V,E,F)$ be an infinite cellular decomposition as shown in Figure \ref{example}. Formally, it can be defined as follows. 
  
  Let $\mathcal{D}_0=(V_0,E_0,F_0)$ be the complex formed by eight successively adjacent triangles that share a common vertex $A$. For any edge $e$ adjacent to $A$, we assign the intersection angle $\Theta_0(e)=\frac{2\pi+\delta_0}{16}.$ For other edges $e'\in E_0$, we define $\Theta_0(e'):=\pi-\epsilon_0=\frac{6\pi-\delta_0}{8}$. Clearly, the pair $(\mathcal{D}_0,\Theta_0)$ satisfies $(C_1)$ and $(C_2)$.
    Assuming that $\mathcal{D}_{i}$, $\Theta_i$ and $\epsilon_i$ are defined for $i\in\mathbb{N}$, and the boundary of $\mathcal{D}_i$ is a loop with $2^{3+2i}$ edges,
    we define $\mathcal{D}_{i+1}$, $\Theta_{i+1}$ and $\epsilon_{i+1}$.
    Firstly, we introduce two complexes that are called right and inverted ladders, as shown in Figure \ref{right_ladders} and Figure \ref{invert_ladder}.
    For each $i\in\mathbb{N}$, we denote by $\Phi^1_i$ and $\Phi^2_i$ the edge weights that correspond to the numbers labeled on the edges in the two cellular decompositions in Figure \ref{right_ladders} respectively.
    Moreover, we denote by $\Phi^3_i$ the edge weights that correspond to the numbers labeled on the edges in Figure \ref{invert_ladder}.
    \begin{figure}[h]
    \centering
\includegraphics[width=1.0\linewidth]{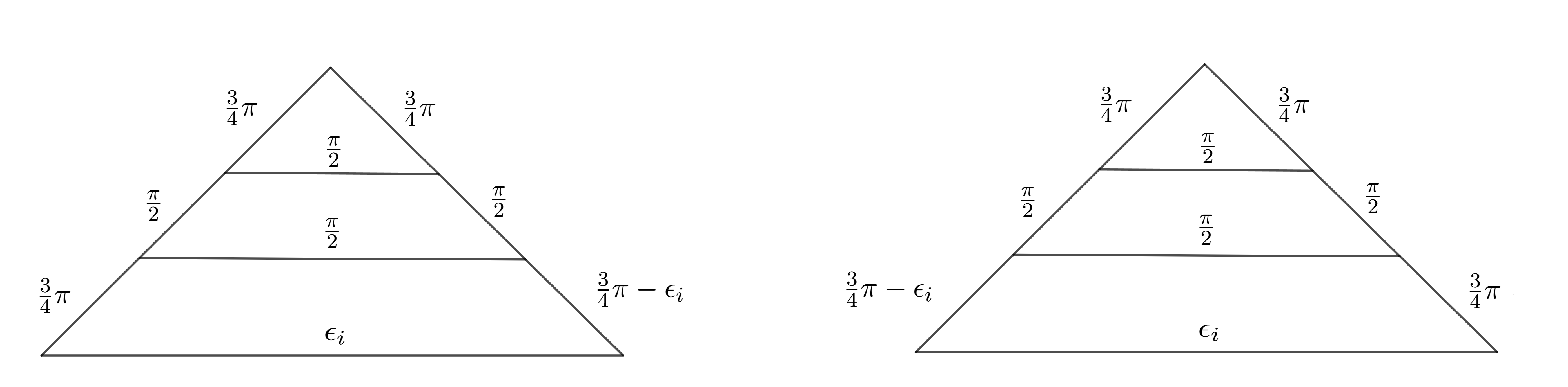}
    \caption{Two right ladders with different edge weights.}
    \label{right_ladders}
\end{figure}

\begin{figure}[htbp]
        \centering
        \includegraphics[width=1.0\linewidth]{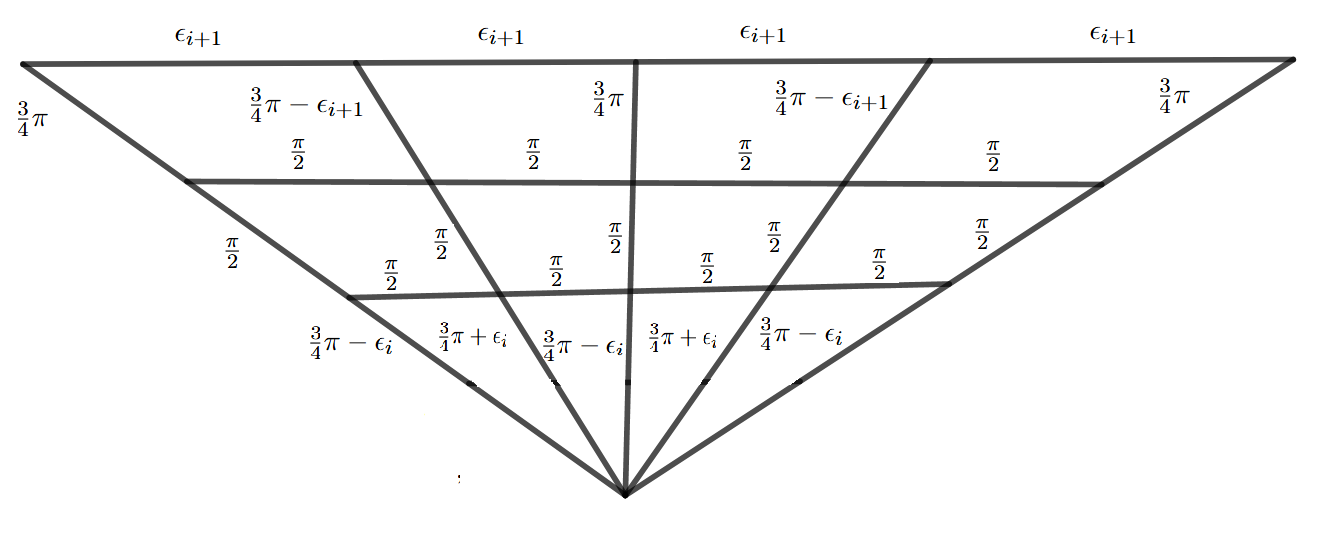}
        \caption{An inverted ladder.}
        \label{invert_ladder}
    \end{figure}
    Next, we attach a right ladder along its base edge to each edge of the boundary of $\mathcal{D}_i$. After that, we alternately assign weights $\Phi_i^1$ and $\Phi_i^2$ on edges of those right ladders, as shown in Figure \ref{example}. 
    
    Finally, we attach an inverted ladder with the weight $\Phi_i^3$ between any two right ladders that share a common point, as shown in Figure \ref{example}. Consequently, we obtain a cellular decomposition $\mathcal{D}_{i+1}$.
    It is easy to check that the weights are compatible during the attachment process. The complex $\mathcal{D}_i$ can be viewed as a subcomplex of $\mathcal{D}_{i+1}$. For each $e\in E_i$, we define $\Theta_{i+1}(e):=\Theta_{i}(e)$. For an edge $e\in E_{i+1}\backslash E_i$, we define $\Theta_{i+1}(e)=\pi-\Phi_i^j(e)$, where the index $j\in\{1,2,3\}$ depends on which ladder $e$ belongs to. By induction, we obtain an infinite cellular decomposition $\mathcal{D}_*=\cup_{i=0}^\infty\mathcal{D}_i$. And $\Theta$ is defined by the exhaustion process.
 
 It is easy to see that $(\mathcal{D},\Theta)$ satisfies the following properties.
 \begin{enumerate}[1)]
     \item $(\mathcal{D},\Theta)$ satisfies $(C_1)$ and $(C_2)$.
     \item Let $L_n$ be the edge set of $\partial \mathcal{D}_n$, then
     \begin{align}\label{4.4}\lim_{n\rightarrow\infty}\sum_{e\in L_n}(\pi-\Theta(e))-2\pi=0.\end{align}
     \item The degree of each vertex in $\partial\mathcal{D}_i$ is at least $7$ for each $i\in\mathbb{N}$. Therefore, $\mathcal{D}$ is VEL-hyperbolic.
\item There exists a constant $C>0$ such that 
\begin{align}\label{4.5}
    C<\Theta(e)<\pi-C,~\forall e\in E\backslash \cup_{i=1}^\infty L_i.
\end{align}
\item By the definition of $\epsilon_i$, we see that $\sup_{e\in E}\Theta(e)=\pi.$ Therefore, $\Theta\notin(0,\pi-\epsilon]^E$ for any $\epsilon>0.$
 \end{enumerate}
 By $2)$, we see that $(C_2^+)$ does not hold for $\mathcal({D_*},\Theta)$. For this example, we have the following proposition.
 \begin{prop}
     The pair $(\mathcal{D}_*,\Theta)$ is not ICP-hyperbolic.
 \end{prop}
    \begin{proof}
        Let $\mathcal{P}$ be the embedded ICP of $(\mathcal{D}_*,\Theta).$ Assuming that the center of $\Pac(A)$ locates at the origin. Let $\Pac'$ be the ICP under the inversion $m(z)=\frac{r^2(\Pac(A))}{z^2}$. Then $\Pac(A)$ is mapped to the unit disk. Moreover,  all but finitely many circles in $\Pac'$ are contained in the region $\mathbb{R}^2\backslash B(0,r^2(\Pac(A)))$. Now we prove that this is impossible. We denote by $I_i$ the vertex set that is separated by $L_i$ from infinity. By Proposition \ref{prop4.12}, we see that $\alpha_v^w<\delta_i$ for any pair $(v,w)\in (\tilde{\partial}L_i\cap I_i)\times L_i.$ Therefore, by Sine law and \eqref{4.5}, we have $$r(\Pac'(w))\le \frac{\sin\delta_i}{\sin \frac{C}{2}}r(\Pac'(v)).$$ 
Let $S_i=S_i(A)$, then we have 
        \begin{align*}
            &\sum_{w\in S_{3i}}r(\Pac'(w))\le\frac{2^{-3i-1}}{\sin \frac{C}{2}}2^{2i+3}\max\{r(\Pac'(w)):w\in S_{3i+1}\} \\&
            \le\frac{2^{2-2i}}{\sin \frac{C}{2}}\max\{r(\Pac'(w)):w\in S_{3i+1}\},~\forall (v,w)\in (\tilde{\partial}L_i\cap I_i)\times L_i.
        \end{align*}
     Since the Euclidean radii of all but finitely many circles in $\Pac'$ are uniformly bounded, we see that 
     \begin{align*}
     &\limsup_{i\rightarrow\infty}\sum_{w\in S_{3i+2}}r(\Pac'(w))\\&\le\limsup_{i\rightarrow\infty}\frac{2^{-3i-1}}{\sin C}2^{2i+3}\max\{r(\Pac'(w)):w\in S_{3i+1}\}\\&=0.
     \end{align*}
     However, this cannot happen since all but finitely many circles of $\Pac'$ are contained in $\mathbb{R}^2\backslash B(0,{r^2(\Pac(A))}).$  
    \end{proof}
    % For each face $f\in F$, there exists an integer $N_f$ such that the vertices of $f$ are contained in $S_{N_f}\cup S_{N_f+1}$. For each $n\in \mathbb{N},$ we denote by $F_N^1$ ($F_N^2$ resp.) the set of faces which has one vertex (two vertices resp.) in $S_N$ and two vertices (one vertex resp.) in $S_{N+1}$. Now we define the intersection angle $\Theta$ on $E$ by induction. We denote by $\gamma_i$ the graph induced by $S_i$. It is easy to see that $\gamma_i$ is a loop. Let $\Theta(e)=\pi-\frac{2\pi+\epsilon_i}{|S_i|}$.
\end{ex}
	\section{The rigidity of ICPs}\label{Sec:5}
	\subsection{Networks and harmonic function}
	
	To prove the rigidity of infinite ideal circle patterns (ICPs), we require some tools from the theory of infinite networks. In this subsection, we provide a brief overview of the relevant background (see \cite{anandam2011harmonic, HE} for more details).
	
	Given an undirected infinite graph $G=(V, E)$, where $V$ and $E$ are the vertex and edge sets of $G$ respectively, we write $v \sim w$ if $v$ and $w$ are connected by an edge in $E$. For a function $f: V \rightarrow \mathbb{R}$, the discrete Laplacian operator $\Delta_\omega$ is given by
	$$
	\Delta_\omega f_v=\sum_{w:w \sim v} \omega_{vw}\left(f_w-f_v\right),
	$$
	where $\omega_{vw}$ is the weight on the edge $E$. For an edge $e=\{v,w\}\in E$, we also write $\omega_{vw}$ as $\omega_{e}$ without ambiguity.
	
	\begin{defn}
		A function $f: V\rightarrow \mathbb{R}$ is called \textbf{harmonic (subharmonic resp.)} if $\Delta_\omega f_v=0$ ($\Delta_\omega f_v\ge0$ resp.), for each $v\in V$.
	\end{defn}
The following maximum principle is basic for harmonic and subharmonic functions on finite graphs.
\begin{lem}(\cite[Proposition 1.4.1]{anandam2011harmonic})\label{maxharmonic}
    Let $G=(V,E)$ be a finite connected graph. If $f$ a subharmonic function on $\mathrm{int}(V)$, that attains its maximum at a vertex in $\mathrm{int}(V)$, then $f$ is constant.
\end{lem}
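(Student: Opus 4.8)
The plan is to run the standard propagation (strong maximum principle) argument. Write $M := \max_{v \in V} f(v)$ and fix, by hypothesis, a vertex $v_0 \in \mathrm{int}(V)$ with $f(v_0) = M$. Introduce the level set $A := \{v \in V : f(v) = M\}$; it is nonempty since $v_0 \in A$, and because $G$ is finite and connected it will suffice to prove $A = V$, which gives $f \equiv M$.

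The core step is the local claim: if $v \in A \cap \mathrm{int}(V)$, then $w \in A$ for every $w \sim v$. To see this I would use subharmonicity at $v$, namely
\[
0 \le \Delta_\omega f_v = \sum_{w : w \sim v} \omega_{vw}\,(f_w - f_v).
\]
Since $f_v = M$ is the global maximum, each term satisfies $f_w - f_v \le 0$, while each weight $\omega_{vw}$ is strictly positive; hence every term must vanish, so $f_w = M$, i.e.\ $w \in A$. Applying this with $v = v_0$ gives $N(v_0) \subseteq A$.

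To finish I would iterate: each interior vertex newly placed into $A$ feeds the same local claim, so by induction every vertex joined to $v_0$ by a path whose vertices (apart from the terminal one) lie in $\mathrm{int}(V)$ belongs to $A$; combined with connectedness of $G$ this forces $A = V$. The only point requiring a word of care is this globalization step, i.e.\ checking that the propagation is not blocked at a boundary vertex; here one uses connectedness together with the convention that each boundary vertex lies in $\tilde{\partial}(\mathrm{int}(V))$, or equivalently restricts attention to the connected component of $v_0$. There is no genuine difficulty: this is precisely \cite[Proposition 1.4.1]{anandam2011harmonic}, and the passage from harmonic to subharmonic functions is automatic since only the sign of $\Delta_\omega f$ enters the argument.
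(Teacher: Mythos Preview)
The paper does not give its own proof of this lemma: it is stated with a citation to \cite[Proposition 1.4.1]{anandam2011harmonic} and used as a black box. Your argument is the standard strong maximum principle propagation and is correct; the only mild caveat you already flag is that propagation through a boundary vertex is not available (subharmonicity is only assumed on $\mathrm{int}(V)$), so strictly speaking the argument yields $f\equiv M$ on the connected component of $\mathrm{int}(V)$ containing $v_0$ together with its outer boundary, which in the paper's setting of a finite disk cellular decomposition is all of $V$.
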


	The following Liouville's theorem is crucial for our proof of the rigidity theorem.
	\begin{lem}(\cite[Lemma 5.4]{HE}\label{harmonic_current})
	Let $G=(V,E)$ be a $\vel$-parabolic	infinite graph.
        If there is a uniform constant $C>0$, such that 
		$$
		\sum_{w:w \sim v}\omega_{vw} \leq C,
		$$
	then all bounded harmonic functions on $G$ are constant functions.
	\end{lem}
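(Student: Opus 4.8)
The plan is to deduce the lemma from recurrence of the weighted random walk determined by $\omega$, combined with the martingale form of the Liouville property. Suppose $h\colon V\to\mathbb{R}$ is $\Delta_\omega$-harmonic with $|h|\le M<\infty$; the goal is to show $h$ is constant. As usual $G$ is connected and $\omega_e>0$ for every edge, so $0<W_v:=\sum_{w\sim v}\omega_{vw}\le C$ for all $v$, and the $\omega$-random walk $(X_n)_{n\ge 0}$ with transition probabilities $\omega_{vw}/W_v$ is a well-defined irreducible Markov chain on $G$.

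First I would check that $(X_n)$ is recurrent. Every edge $e=\{v,w\}$ satisfies $\omega_e\le W_v\le C$, so $\omega$ gives no more conductance than $C$ times unit conductances; by monotonicity of effective resistance (shrinking conductances cannot decrease resistance) and its scaling under uniform rescaling, this forces, for any base vertex $v_0$, $R_{\mathrm{eff}}^{\omega}(v_0\to\infty)\ge C^{-1}R_{\mathrm{eff}}^{\mathbf 1}(v_0\to\infty)$, where $\mathbf 1$ denotes unit conductances. Since $G$ is VEL-parabolic, the simple random walk on $G$ is recurrent, i.e.\ $R_{\mathrm{eff}}^{\mathbf 1}(v_0\to\infty)=\infty$; this is the equivalence of VEL-parabolicity and recurrence, namely the equivalence of $(U1)$ and $(U4)$ in Theorem~\ref{uniformization} (cf.\ \cite{He_schramm}). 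Hence $R_{\mathrm{eff}}^{\omega}(v_0\to\infty)=\infty$ and the $\omega$-walk is recurrent as well. This is the one place where the hypothesis $\sum_{w\sim v}\omega_{vw}\le C$ enters.

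Next comes the standard probabilistic Liouville argument. Harmonicity $\Delta_\omega h_v=0$ rewrites as $W_v^{-1}\sum_{w\sim v}\omega_{vw}h_w=h_v$, so $(h(X_n))_{n\ge 0}$ is a martingale bounded by $M$; by the martingale convergence theorem it converges almost surely to some random variable $Y$. By recurrence and irreducibility, for each fixed $w\in V$ one has $X_n=w$ for infinitely many $n$ almost surely; along those indices $h(X_n)=h(w)$, and convergence of the whole sequence forces $h(w)=Y$ almost surely. Since $w$ is arbitrary, all the numbers $h(w)$ coincide, i.e.\ $h$ is constant, which completes the proof.

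I expect the only real content to be the first step: the uniform conductance bound is precisely what upgrades VEL-parabolicity — a purely combinatorial, unit-weight statement — to recurrence of the weighted chain $(X_n)$, and without such a bound a VEL-parabolic graph can carry non-constant bounded $\omega$-harmonic functions. A purely potential-theoretic route, via the Royden boundary or via the statement that harmonic functions of finite Dirichlet energy on a recurrent network are constant, is available in principle but less convenient here, since a bounded $\Delta_\omega$-harmonic function on $G$ need not have finite Dirichlet energy; so the probabilistic argument above is the cleanest.
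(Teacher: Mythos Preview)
The paper does not supply a proof of this lemma; it merely cites \cite[Lemma 5.4]{HE}, so there is no in-paper argument to compare against directly.

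Your probabilistic route is correct and clean: from $\omega_e\le\sum_{w\sim v}\omega_{vw}\le C$ and Rayleigh monotonicity you get $R^{\omega}_{\mathrm{eff}}(v_0\to\infty)\ge C^{-1}R^{\mathbf 1}_{\mathrm{eff}}(v_0\to\infty)=\infty$, hence the $\omega$-walk is recurrent, and the bounded-martingale argument finishes. One caveat: the implication ``$G$ VEL-parabolic $\Rightarrow$ simple random walk on $G$ recurrent'' that you invoke (via \cite{He_schramm}, or via $(U1)\Leftrightarrow(U4)$ of Theorem~\ref{uniformization}) requires bounded vertex degree in general --- passing from a vertex-admissible metric $m$ to an edge-admissible metric costs a factor of $\max_v\deg(v)$ in the area. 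This hypothesis is not listed in the lemma as stated here, but it holds in every application in the paper and is present in He's setting. He's original argument in \cite{HE} is phrased in the electrical/extremal-length language rather than via random walks, though the underlying content is equivalent.
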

	\subsection{Variational principles and maximum principles for ICPs}
        The rigidity result introduced in this section follows from the pioneering work of He \cite{HE}. We also refer readers to \cite{MR4389487,MR4792293} for some recent studies of the rigidity of infinite triangulations with the discrete conformal structure.
	
    We first introduce the variational principles for ICPs. We denote by $\triangle(vwv_f)$ the triangle formed by two centers $v,w$ and one point $v_f$, which is an intersection point of the circles $\Pac(v), \Pac(w)$. Let $\alpha_{v}^w, \alpha_{w}^v$ be the corresponding inner angles at the centers, as we defined in Preliminary \ref{Ideal_circle_patterns_planes}, $l_{vw}$ be the length of $\overline{vw}$, $d_{vw}$ be the length of the altitude to the side $\overline{vw}$ of $\triangle(v w v_f)$, and $\tilde{Q}_{vw}$ be the quadrilateral formed by two centers $v,w$ and two face points $v_{f_1},v_{f_2}$. 
    For simplicity, let $r_v=r(v)$ be the radius of the circle $\Pac(v)$.
    Set $u_v = \ln r_v$ ($u_v= \ln \tanh \frac{r_v}{2}$ resp.) in Euclidean (hyperbolic resp.) background geometry, which is usually called the \textbf{discrete conformal factor} in the literature; see, e.g., \cite{gu2008computational}. Then the following variational principle holds, see \cite[Lemma 2.2]{ge2021combinatorial}. 
	
	\begin{lem}(Variational principle)\label{vari}
		Let $\Theta(v,w) \in (0, \pi)$ be a fixed intersection angle. In hyperbolic background geometry, we have
		\[
		\frac{\partial \alpha_{v}^w}{\partial u_v} = -\frac{2\cosh l_{vw} \sinh d_{vw}}{\sinh l_{vw}} < 0, \quad \frac{\partial \alpha_{v}^w}{\partial u_w} = \frac{\partial \alpha_{w}^v}{\partial u_v} = \frac{2\sinh d_{vw}}{\sinh l_{vw}} > 0,
		\]
        and
        \[
         \quad \frac{\partial \operatorname{Area}(\tilde{Q}_{vw})}{\partial u_v}=-\frac{\partial(\alpha_v^w+\alpha_w^v)}{\partial u_v}> 0.
        \]
		In Euclidean background geometry, we have
		\[
		\frac{\partial \alpha_{v}^w}{\partial u_v}  = -\frac{2d_{vw}}{l_{vw}} < 0, \quad \frac{\partial \alpha_{v}^w}{\partial u_w} = \frac{\partial \alpha_{w}^v}{\partial u_v}  = \frac{2d_{vw}}{l_{vw}} > 0.
		\]
	\end{lem}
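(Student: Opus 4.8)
The plan is to reduce every statement to an elementary computation inside the single triangle $\triangle(vwv_f)$, which is one of the two congruent halves of the kite $\tilde{Q}_{vw}$ cut along the diagonal $\overline{vw}$. In this triangle the angle at $v_f$ equals $\pi-\Theta(v,w)$ and is fixed, the two sides issuing from $v_f$ have lengths $r_v$ and $r_w$, the side opposite $v_f$ has length $l_{vw}$, and the altitude dropped from $v_f$ has length $d_{vw}$; writing $\theta_v,\theta_w$ for the angles at $v$ and $w$ one has $\alpha_v^w=2\theta_v$ and $\alpha_w^v=2\theta_w$. Thus $(\theta_v,\theta_w,l_{vw},d_{vw})$ are smooth functions of $(r_v,r_w)$ for fixed $\Theta$, determined by the law of cosines, and it suffices to compute $\partial\theta_v/\partial r_v$ and $\partial\theta_v/\partial r_w$ and then pass to the $u$--variables by the chain rule, using $\mathrm{d}r_v/\mathrm{d}u_v=r_v$ in the Euclidean case and $\mathrm{d}r_v/\mathrm{d}u_v=\sinh r_v$ in the hyperbolic case.

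In the Euclidean background geometry the angle sum $\theta_v+\theta_w=\Theta$ is constant, so $\mathrm{d}\theta_w=-\mathrm{d}\theta_v$, while the law of sines $r_v\sin\theta_v=r_w\sin\theta_w$, i.e. $u_v+\ln\sin\theta_v=u_w+\ln\sin\theta_w$, differentiates to $\mathrm{d}u_v+\cot\theta_v\,\mathrm{d}\theta_v=\mathrm{d}u_w+\cot\theta_w\,\mathrm{d}\theta_w$. Eliminating $\mathrm{d}\theta_w$ and using $\cot\theta_v+\cot\theta_w=\sin(\theta_v+\theta_w)/(\sin\theta_v\sin\theta_w)=\sin\Theta/(\sin\theta_v\sin\theta_w)$ gives $\partial\theta_v/\partial u_v=-\sin\theta_v\sin\theta_w/\sin\Theta$ and $\partial\theta_v/\partial u_w=\sin\theta_v\sin\theta_w/\sin\Theta$. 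Finally the planar identity $l_{vw}=d_{vw}(\cot\theta_v+\cot\theta_w)$ turns $\sin\theta_v\sin\theta_w/\sin\Theta$ into $d_{vw}/l_{vw}$, which yields $\partial\alpha_v^w/\partial u_v=-2d_{vw}/l_{vw}$ and $\partial\alpha_v^w/\partial u_w=\partial\alpha_w^v/\partial u_v=2d_{vw}/l_{vw}$, with the asserted signs immediate from $d_{vw},l_{vw}>0$.

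For the hyperbolic background geometry the same scheme applies, now with the hyperbolic laws of sines and cosines, $\sinh r_v\sin\theta_v=\sinh r_w\sin\theta_w=\sinh d_{vw}$ and $\cosh l_{vw}=\cosh r_v\cosh r_w-\sinh r_v\sinh r_w\cos(\pi-\Theta)$, the right--triangle relations $\sin\theta_v=\sinh d_{vw}/\sinh r_v$ and $\cosh r_v=\cosh p\cosh d_{vw}$ (with $p,q\ge 0$ the subsegments of $\overline{vw}$ cut off by the foot of the altitude, $p+q=l_{vw}$), and the Gauss--Bonnet relation $\theta_v+\theta_w+(\pi-\Theta)=\pi-\mathrm{Area}(\triangle(vwv_f))$. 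Differentiating $\ln\sinh r_v+\ln\sin\theta_v=\ln\sinh r_w+\ln\sin\theta_w$ with respect to $u_v$ (note $\partial_{u_v}\ln\sinh r_v=\cosh r_v$), together with the differentiated law of cosines and the relations tying $d_{vw}$, $l_{vw}$, $p$, $q$ together, produces a $2\times 2$ linear system for $(\partial_{u_v}\theta_v,\partial_{u_v}l_{vw})$; solving it and simplifying with hyperbolic identities collapses the answer to $\partial\alpha_v^w/\partial u_v=-2\cosh l_{vw}\sinh d_{vw}/\sinh l_{vw}$ and $\partial\alpha_v^w/\partial u_w=\partial\alpha_w^v/\partial u_v=2\sinh d_{vw}/\sinh l_{vw}$, again with the stated signs. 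For the area derivative, Gauss--Bonnet gives $\mathrm{Area}(\tilde{Q}_{vw})=2\,\mathrm{Area}(\triangle(vwv_f))=2\Theta-(\alpha_v^w+\alpha_w^v)$, whence $\partial_{u_v}\mathrm{Area}(\tilde{Q}_{vw})=-\partial_{u_v}(\alpha_v^w+\alpha_w^v)=\frac{2\sinh d_{vw}}{\sinh l_{vw}}\left(\cosh l_{vw}-1\right)>0$.

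The routine but delicate part is exactly this hyperbolic bookkeeping: one must track which combination of hyperbolic trigonometric identities reduces the raw implicit--differentiation output to the compact form involving $\cosh l_{vw}\sinh d_{vw}/\sinh l_{vw}$, and sign and factor--of--two slips are easy there. A clean way to organize it is to establish the Euclidean formulas first and obtain the hyperbolic ones by the analogous linear system, or simply to invoke \cite[Lemma 2.2]{ge2021combinatorial}, where precisely this computation is carried out in detail. The symmetry $\partial\alpha_v^w/\partial u_w=\partial\alpha_w^v/\partial u_v$ can be read directly off the closed formulas, and is of course the Hessian symmetry of the convex action functional underlying the variational construction of ICPs used later in the Appendix.
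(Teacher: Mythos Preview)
Your argument is correct. The paper itself does not give a proof of this lemma at all: it simply states the formulas and refers the reader to \cite[Lemma~2.2]{ge2021combinatorial}. You go further and supply an actual derivation, reducing everything to elementary trigonometry in the half-kite $\triangle(vwv_f)$, differentiating the law of sines (Euclidean) or the hyperbolic sine and cosine rules (hyperbolic), and then passing to the $u$-variables via the chain rule; the Euclidean computation you wrote out is clean and checks line by line, and your Gauss--Bonnet derivation of the area formula $\mathrm{Area}(\tilde Q_{vw})=2\Theta-(\alpha_v^w+\alpha_w^v)$ together with the explicit value $\partial_{u_v}\mathrm{Area}(\tilde Q_{vw})=\frac{2\sinh d_{vw}}{\sinh l_{vw}}(\cosh l_{vw}-1)>0$ is exactly right. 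So compared to the paper you are not taking a different route so much as filling in what the paper leaves to a citation; the benefit is self-containment, the cost is the ``delicate bookkeeping'' you yourself flag in the hyperbolic case, for which your fallback to the same reference \cite[Lemma~2.2]{ge2021combinatorial} is entirely appropriate.
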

Moreover, in hyperbolic geometry, the following proposition holds.
    \begin{prop}(\cite[Lemma 2.3]{ge2021combinatorial}\label{hyperconstr})
	In hyperbolic geometry, for any $\epsilon > 0$, there exists a positive number $L(\epsilon)$ such that $ \alpha_v^w < \epsilon$ whenever $r_v > L(\epsilon)$.
    \end{prop}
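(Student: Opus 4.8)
The plan is to reduce the statement to elementary hyperbolic trigonometry in the building block $\triangle(vwv_f)$ and then estimate the angle at $v$ by hand. Recall that $\alpha_v^w$ equals (twice) the angle at the center $v$ in the hyperbolic triangle $\triangle(vwv_f)$ whose sides satisfy $|vv_f|=r_v$ and $|wv_f|=r_w$, and whose angle at the intersection point $v_f$ equals $\pi-\Theta$, where $\Theta=\Theta(v,w)$; write $l_{vw}=|vw|$ for the third side and $\beta$ for the angle at $v$, so that $\alpha_v^w=2\beta$. It therefore suffices to show that $\beta\to 0$ as $r_v\to+\infty$, and --- this is the only point that requires any care --- that the convergence is \emph{uniform in $r_w\in(0,+\infty)$}; the dependence of $L$ on the fixed angle $\Theta$ is harmless because $\Theta$ is bounded away from $\pi$.

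First I would record, via the hyperbolic law of cosines,
\[
\cosh l_{vw}=\cosh r_v\cosh r_w+\cos\Theta\,\sinh r_v\sinh r_w,
\]
and, via the hyperbolic law of sines, $\sin\beta=\sin\Theta\cdot\frac{\sinh r_w}{\sinh l_{vw}}$; everything then comes down to a lower bound for $\sinh l_{vw}$. The key manoeuvre is to apply the product-to-sum identities and regroup the right-hand side of the law of cosines as
\[
\cosh l_{vw}=\frac{1+\cos\Theta}{2}\,\cosh(r_v+r_w)+\frac{1-\cos\Theta}{2}\,\cosh(r_v-r_w)\ \ge\ \frac{1+\cos\Theta}{2}\,\cosh(r_v+r_w),
\]
where both coefficients are nonnegative because $\Theta\in(0,\pi)$. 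Since $\cosh(r_v+r_w)\ge\tfrac12 e^{r_v+r_w}\ge e^{r_v}\sinh r_w$, this gives $\cosh l_{vw}\ge\tfrac12(1+\cos\Theta)\,e^{r_v}\sinh r_w$; in particular $l_{vw}\to+\infty$ as $r_v\to+\infty$, so once $r_v$ is large (a threshold depending on $\Theta$) we have $l_{vw}\ge 1$ and hence $\sinh l_{vw}\ge\tanh(1)\cosh l_{vw}$. Combining these,
\[
\sin\beta\ \le\ \frac{\sinh r_w}{\sinh l_{vw}}\ \le\ \frac{2}{\tanh(1)\,(1+\cos\Theta)}\;e^{-r_v},
\]
which tends to $0$ as $r_v\to+\infty$, uniformly in $r_w$. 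Consequently $\beta$, and therefore $\alpha_v^w=2\beta$, is smaller than $\epsilon$ as soon as $r_v$ exceeds an explicit threshold $L(\epsilon)$ depending only on $\epsilon$ and on a positive lower bound for $1+\cos\Theta$.

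The step I expect to carry the real content is the lower bound for $\sinh l_{vw}$, because it must be uniform in $r_w$: the naive estimate $l_{vw}\ge r_v$ (from the triangle inequality, or from $\cosh l_{vw}\ge\cosh r_v\cosh r_w$ when $\Theta\le\pi/2$) leaves a residual $\sinh r_w$ in the numerator of $\sin\beta$ and breaks down when $r_w$ is large and $\Theta$ is obtuse. Isolating the $\cosh(r_v+r_w)$ term --- which carries the \emph{positive} coefficient $\tfrac12(1+\cos\Theta)$ no matter the sign of $\cos\Theta$ --- is exactly what produces a bound for $\sinh l_{vw}$ that absorbs $\sinh r_w$ and decays like $e^{r_v}$. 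A minor bookkeeping point is to reconcile the two descriptions of $\alpha_v^w$ appearing in the paper (the interior angle $\angle v_{f_1}vv_{f_2}$ of the kite $\tilde{Q}_e$ versus the inner angle at $v$ of $\triangle(vwv_f)$), but these differ only by the harmless factor $2$ and do not affect the conclusion.
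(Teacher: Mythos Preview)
Your argument is correct. The paper does not actually prove this proposition; it merely imports it from \cite[Lemma~2.3]{ge2021combinatorial}, so there is no in-paper proof to compare against. Your route via the hyperbolic laws of cosines and sines, together with the product-to-sum identity
\[
\cosh l_{vw}=\tfrac{1+\cos\Theta}{2}\cosh(r_v+r_w)+\tfrac{1-\cos\Theta}{2}\cosh(r_v-r_w),
\]
is clean and gives exactly the uniformity in $r_w$ that is needed; the resulting bound $\sin\beta\le \frac{2}{\tanh(1)(1+\cos\Theta)}e^{-r_v}$ is sharp enough for the application in the proof of Theorem~\ref{finite_existence}. One small point worth making explicit in a final write-up: the threshold $L$ necessarily depends on $\Theta$ (through $1+\cos\Theta$), as you note; the statement as printed writes $L(\epsilon)$, but the intended reading---consistent with Lemma~\ref{vari} just above, where $\Theta(v,w)\in(0,\pi)$ is fixed---is $L=L(\epsilon,\Theta)$. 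This is harmless in the only place the proposition is used (the proof of Theorem~\ref{finite_existence}), since there $\mathcal{D}$ is finite and $\Theta$ takes finitely many values.
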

    By simple computation, we have the following lemma for estimating the partial derivatives.
	\begin{lem}\label{partialthetabound}
		In both hyperbolic and Euclidean background geometries, let $\Theta(v,w) \in [0, \pi-\epsilon]$ be fixed. Then
		\[
		0 < \frac{\partial \alpha_v^w}{\partial u_w} \leq C \alpha_v^w,
		\]
		where $C$ only depends on $\epsilon$.
	\end{lem}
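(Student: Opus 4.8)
I would prove this by reading off a closed form for $\partial\alpha_v^w/\partial u_w$ from the variational principle (Lemma~\ref{vari}) and then applying two elementary trigonometric inequalities; no limiting or compactness argument is needed. The positivity $0<\partial\alpha_v^w/\partial u_w$ is immediate from Lemma~\ref{vari}: in the Euclidean background $\partial\alpha_v^w/\partial u_w=2d_{vw}/l_{vw}$ and in the hyperbolic background $\partial\alpha_v^w/\partial u_w=2\sinh d_{vw}/\sinh l_{vw}$, and both quotients are strictly positive since $d_{vw}$ and $l_{vw}$ are positive lengths. Thus the whole content of the lemma is the upper bound with a constant depending only on $\epsilon$.

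The key step is to express $d_{vw}/l_{vw}$ (resp.\ $\sinh d_{vw}/\sinh l_{vw}$) through angles of the triangle $\triangle(vwv_f)$. Since $|vv_{f_1}|=|vv_{f_2}|=r_v$ and $|wv_{f_1}|=|wv_{f_2}|=r_w$, both $v$ and $w$ lie on the perpendicular bisector of $\overline{v_{f_1}v_{f_2}}$, so the diagonal $\overline{vw}$ is a reflection axis of the quadrilateral $\tilde{Q}_e$ and therefore bisects the angles $\alpha_v^w=\angle v_{f_1}vv_{f_2}$ and $\alpha_w^v=\angle v_{f_1}wv_{f_2}$. Hence $\triangle(vwv_f)$ has angles $\alpha_v^w/2$ at $v$, $\alpha_w^v/2$ at $w$, and $\pi-\Theta(v,w)$ at $v_f$. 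Dropping the altitude $d_{vw}$ from $v_f$ onto $\overline{vw}$, the right-triangle relation $\sin(\text{angle})=\text{opposite}/\text{hypotenuse}$ gives $d_{vw}=r_w\sin(\alpha_w^v/2)$, while the law of sines in $\triangle(vwv_f)$ gives $l_{vw}=r_w\sin\Theta(v,w)/\sin(\alpha_v^w/2)$; dividing and invoking Lemma~\ref{vari} yields
\[
\frac{\partial\alpha_v^w}{\partial u_w}=\frac{2\sin(\alpha_v^w/2)\,\sin(\alpha_w^v/2)}{\sin\Theta(v,w)}.
\]
In the hyperbolic case the same two relations hold verbatim with every length $\ell$ replaced by $\sinh\ell$, so the right-hand side is unchanged; alternatively, in the Euclidean case one may simply differentiate the explicit formula \eqref{anglefunction}.

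The bound now follows at once. From $\epsilon\le\Theta(v,w)\le\pi-\epsilon$ we get $\sin\Theta(v,w)\ge\sin\epsilon>0$; since $\alpha_w^v/2\in(0,\pi)$ we have $\sin(\alpha_w^v/2)\le 1$; and since $\alpha_v^w/2>0$ we have $\sin(\alpha_v^w/2)\le\alpha_v^w/2$. Therefore
\[
\frac{\partial\alpha_v^w}{\partial u_w}\le\frac{2\cdot(\alpha_v^w/2)\cdot 1}{\sin\epsilon}=\frac{\alpha_v^w}{\sin\epsilon},
\]
so the lemma holds with $C=1/\sin\epsilon$. The only delicate point is the derivation of the displayed identity: one must track the factor $2$ relating the full angle $\alpha_v^w=\angle v_{f_1}vv_{f_2}$ to the corresponding angle of the sub-triangle $\triangle(vwv_f)$, and check that the altitude-plus-law-of-sines computation takes the identical form in the Euclidean and hyperbolic settings; both checks are routine.
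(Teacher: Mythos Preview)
Your argument is correct: the identity $\partial\alpha_v^w/\partial u_w=2\sin(\alpha_v^w/2)\sin(\alpha_w^v/2)/\sin\Theta(v,w)$ follows exactly as you describe (in both geometries, by the hyperbolic law of sines and the right-triangle relation $\sinh d_{vw}=\sinh r_w\sin(\alpha_w^v/2)$), and the bound $C=1/\sin\epsilon$ then drops out from $\sin x\le x$, $\sin(\alpha_w^v/2)\le 1$, and $\sin\Theta\ge\sin\epsilon$. The paper does not supply its own proof of this lemma---it is quoted verbatim from \cite{ge2025characterizationinfiniteidealpolyhedra}---so there is no in-paper argument to compare against; your self-contained derivation, with the explicit constant, is exactly what one would want here.
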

    \begin{proof}
        	This is a modification of \cite[Lemma 2.4]{ge2025characterizationinfiniteidealpolyhedra}, which can be derived from simple computations.
    \end{proof}

	Now, we extend the concept of the discrete conformal factor to generalized circles including horocycles and hypercycles as shown in Figure \ref{fig:generalized circles}.
	\begin{figure}[H]
		\centering
		\includegraphics[width=0.3\textwidth]{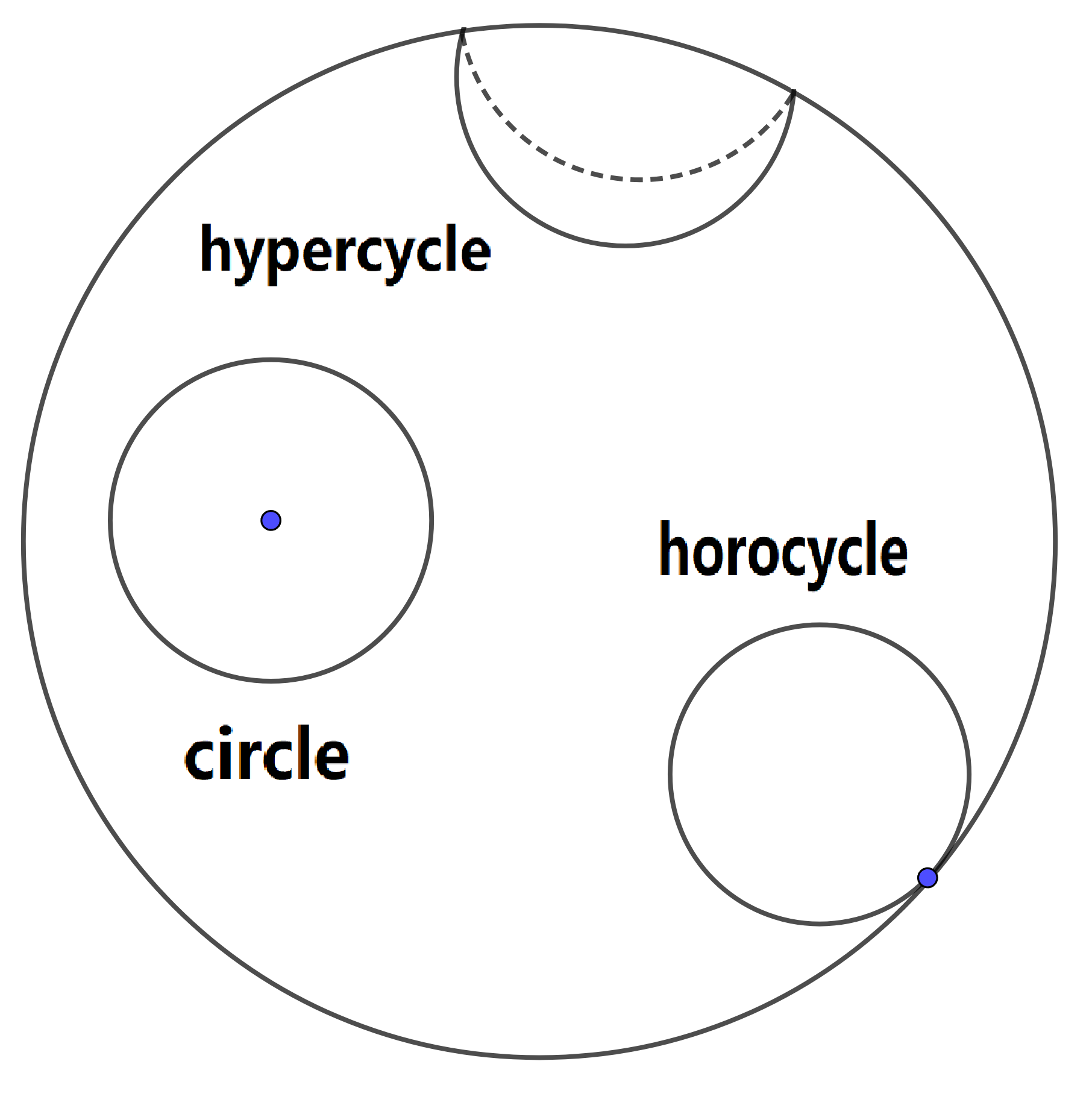}
		\caption{Circle, horocycle and hypercycle.}
		\label{fig:generalized circles}
	\end{figure}

Here, a hypercycle refers to a curve consisting of points that are at a constant distance from a geodesic line. We call the geodesic line the ``center'' of the corresponding hypercycle, as indicated by the dashed line in Figure \ref{fig:generalized circles}. The ``radius'' of a hypercycle in this paper refers to the distance from the center to the hypercycle. Then we define the discrete conformal factor $u_v$ of a hypercycle with radius $r_v$ by 
	\[
	u_v=\mathrm{arccot}(\sinh(r_v))= \int^{+ \infty}_{r_v}\frac{1}{\cosh{ x}}\mathrm{d}x.
	\]\

    With the above definitions, consider an ICP \(\mathcal{P}\) in \(\mathbb{R}^2\). If all circles in \(\mathcal{P}\) have nonempty intersection with the unit disk, then they can be regarded as generalized circles in the hyperbolic plane \(\mathbb{H}^2\). We call this kind of ICPs \textbf{generalized hyperbolic ICPs}.

	Besides, we define the discrete conformal factor of a horocycle as $0$. Then we have the following extended variational principle, which originates from a generalized maximum principle in the work of He \cite[2.3]{HE}. 
    
\begin{figure}[htbp]
		\centering
		\includegraphics[width=1.0\textwidth]{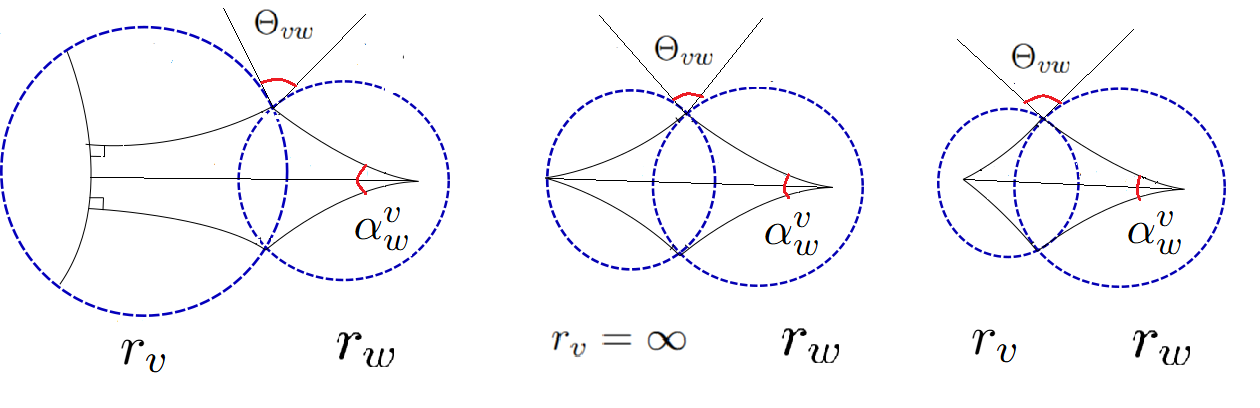}
		\caption{Configurations of two intersecting generalized hyperbolic circles with the intersection angle $\Theta(v,w)=\Theta_{vw}$.}
		\label{fig:generalized quadrilateral}
	\end{figure}

\begin{lem}[Generalized variational principle]\label{vari2}
    Let $\Pac(w)$ be a circle in the hyperbolic plane $\mathbb{H}^2$. Assume that $\mathcal{P}(v)$ is a generalized circle whose generalized discrete conformal factor is $u_v\in(-\infty,\frac{\pi}{2})$. Then the following holds: 
    
    The inner angle $\alpha_w^v=\alpha^v_w(u_v,u_w)$ is a continuous function of $u_v$ and $u_w$ with $u_w\in(-\infty,0)$. Moreover, we have 
    \[
    \pp{\alpha_w^v}{u_v}>0.
    \]
\end{lem}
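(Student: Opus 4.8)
The plan is to make $\alpha_w^v$ explicit and to analyze it on the three regimes determined by the type of the generalized circle $\Pac(v)$: a genuine circle ($u_v<0$), a horocycle ($u_v=0$), or a hypercycle ($0<u_v<\tfrac{\pi}{2}$); the three conclusions are then glued by a continuity argument. Throughout, $\Pac(w)$ stays a genuine circle of radius $r_w>0$ with $u_w=\ln\tanh(r_w/2)<0$, so its center $c_w$ is an ordinary point of $\mathbb{H}^2$; the intersection angle $\Theta(v,w)\in(0,\pi)$ is held fixed (as in Lemma~\ref{vari}), and $\alpha_w^v$ denotes the angle at $c_w$ cut out by the generalized quadrilateral of Figure~\ref{fig:generalized quadrilateral}. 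When $u_v<0$, $\Pac(v)$ is an ordinary circle of radius $r_v$ with $u_v=\ln\tanh(r_v/2)$, and $\tfrac12\alpha_w^v$ is the angle at $c_w$ in the hyperbolic triangle $\triangle(c_v c_w v_f)$, where $v_f$ is a common point of $\Pac(v)$ and $\Pac(w)$ and the angle at $v_f$ equals $\pi-\Theta(v,w)$ since the two circles meet at angle $\Theta(v,w)$. Two applications of the hyperbolic law of cosines write $\cos(\tfrac12\alpha_w^v)$ and $\cosh l_{vw}$ as explicit functions of $r_v,r_w,\Theta(v,w)$, hence of $(u_v,u_w)$, real-analytically on $\{u_v<0,\ u_w<0\}$; that $\alpha_w^v$ increases in $u_v$ there is precisely the identity $\partial\alpha_w^v/\partial u_v=2\sinh d_{vw}/\sinh l_{vw}>0$ from Lemma~\ref{vari}.

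When $0<u_v<\tfrac{\pi}{2}$, $\Pac(v)$ is a hypercycle at distance $r_v$ from its central geodesic $g_v$, with $\sinh r_v=\cot u_v$. Here I would replace the absent center of $\Pac(v)$ by $g_v$: dropping perpendiculars from $c_w$ and from a common point $v_f$ onto $g_v$ produces a quadrilateral with two right angles (Figure~\ref{fig:generalized quadrilateral}) whose angle at $v_f$ is $\Theta(v,w)$ — a hypercycle meets every perpendicular of its geodesic orthogonally — and whose angle at $c_w$ is $\tfrac12\alpha_w^v$. The hyperbolic trigonometry of this Lambert-type quadrilateral again produces an explicit, real-analytic formula for $\alpha_w^v$ in terms of $(u_v,u_w)$, and differentiating it — equivalently, applying the hyperbolic Schl\"afli formula to the quadrilateral — gives a manifestly positive expression for $\partial\alpha_w^v/\partial u_v$.

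The horocycle case $u_v=0$ is then handled as the common limit of the other two families: as $u_v\uparrow0$ the circles $\Pac(v)$ (whose centers $c_v$ escape to an ideal point while the circles keep meeting $\Pac(w)$ at angle $\Theta(v,w)$) converge to a horocycle, and as $u_v\downarrow0$ so do the hypercycles; the two branches $u_v=\ln\tanh(r_v/2)$ and $u_v=\mathrm{arccot}(\sinh r_v)$ of the conformal factor are normalized precisely so that these degenerations agree at $u_v=0$. Consequently the configuration $\Pac(v)\cup\Pac(w)$ varies continuously across $u_v=0$, so $\alpha_w^v$ extends continuously — whence joint continuity in $(u_v,u_w)$, continuity in $u_w$ being immediate because $\Pac(w)$ is always an ordinary circle — and the derivative computations from the two adjacent regimes are reconciled at $u_v=0$.

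The main obstacle is exactly this degenerate bookkeeping: giving "the angle $\alpha_w^v$" and its infinitesimal variation a clean meaning when the configuration has a vertex at infinity (horocycle) or a geodesic in place of a center (hypercycle), checking that the hyperbolic-trigonometric identities and the Schl\"afli formula survive those limits, and — the delicate point — controlling the behavior at $u_v=0$ itself, where the two branches $\ln\tanh(r_v/2)$ and $\mathrm{arccot}(\sinh r_v)$ of the conformal factor are only first-order compatible. Once each configuration has been set up correctly, the remaining work in every regime is a routine one-variable computation; He's generalized maximum principle \cite[2.3]{HE} supplies the template for this degeneration analysis.
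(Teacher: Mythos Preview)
Your plan is correct and complete, but the paper takes a much shorter, purely computational route. Instead of setting up a triangle (circle case) and a Lambert quadrilateral (hypercycle case) separately and then invoking Lemma~\ref{vari} or a Schl\"afli argument, the authors simply write down a closed formula for $\cot(\tfrac12\alpha_w^v)$ in each regime:
\[
\cot\frac{\alpha_w^v}{2}=\frac{1}{\sin\Theta(v,w)}\bigl(\coth r_v\,\sinh r_w+\cos\Theta(v,w)\,\cosh r_w\bigr)
\]
for a circle, and the same expression with $\coth r_v$ replaced by $\tanh r_v$ for a hypercycle. Continuity across $u_v=0$ is then immediate, since $\coth r_v\to 1$ and $\tanh r_v\to 1$ as $r_v\to\infty$, and a one-line differentiation (using $du_v=\dfrac{dr_v}{\sinh r_v}$, resp.\ $du_v=-\dfrac{dr_v}{\cosh r_v}$) yields a single positive expression for $\partial\alpha_w^v/\partial u_v$. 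Your geometric setup is exactly what justifies these formulas, so nothing you wrote is wasted; but once you have the Lambert quadrilateral in hand, it is quicker to extract $\cot(\tfrac12\alpha_w^v)$ directly than to appeal to Schl\"afli, and the matching at the horocycle then requires no separate limit argument at all.
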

\begin{proof}
    With direct calculation, we see that 
    \[ 
    \cot{\frac{\alpha_w^v}{2}}=\frac{1}{\sin\Theta(v,w)}(\mathrm{coth}r_v\sinh r_w+\cos(\Theta(v,w))\cosh r_w),
    \]
    when $\mathcal{P}(v)$ is a hyperbolic circle, and 
    \[
    \cot{\frac{\alpha_w^v}{2}}=\frac{1}{\sin\Theta(v,w)}(\mathrm{tanh}r_v\sinh r_w+\cos(\Theta(v,w))\cosh r_w).
    \]
     when $\mathcal{P}(v)$ is a hypercycle. It is easy to check that $\alpha_w^v$ is a continuous function of $u_v$ and $u_w$. Moreover,
     \[
     \pp{\alpha_w^v}{u_v}=\frac{\sin^2{\alpha^v_w}\sinh{r_w}}{\sin\Theta(v,w)\cosh{r_v}}>0.
     \]
     
     Therefore, we finish the proof.
\end{proof}
	\subsection{The rigidity of embedded ICPs in the hyperbolic case}
	In order to prove the rigidity of embedded ICPs in the hyperbolic case,  we  establish the following maximum principle for generalized hyperbolic ICPs such that $u_v< 0,~\forall v\in \mathrm{Int}(V)$.  
	\begin{lem}\label{MaximumPrincipleHyper}(Maximum principle for generalized hyperbolic ICPs)
	Let $\mathcal{D}=(V,E,F)$ be a finite disk cellular decomposition, and $\Theta\in (0,\pi)^E$ be the given intersection angle. Assuming that $\Pac$ and $\Pac^*$ are two generalized ICPs of $(\mathcal{D},\Theta)$ with $u(\Pac(v)),u(\Pac^*(v))\le 0,~\forall v\in \mathrm{int}(V)$. 
 If $u(\Pac(v))\ge u(\Pac^*(v))$ and $u(\Pac^*(v))\le 0$,~$\forall v\in \partial V$. Then we have 
            \[
            u(\Pac(v))\ge u(\Pac^*(v)),~\forall v\in V
            \]
	
	\end{lem}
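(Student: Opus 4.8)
The plan is to argue by a comparison/maximum-principle argument applied to the difference of conformal factors, mimicking the classical proof of the maximum principle for discrete harmonic functions but adapted to the monotonicity furnished by the generalized variational principle. Set $\psi(v) := u(\Pac^*(v)) - u(\Pac(v))$ and suppose for contradiction that $\psi$ attains a positive maximum $M > 0$ at some vertex. By the hypothesis on boundary vertices, $\psi \le 0$ on $\partial V$, so the maximum is attained at some $v_0 \in \mathrm{int}(V)$. Let $W$ be the (face-connected) set of vertices where $\psi$ equals this maximum $M$; since $\mathcal{D}$ is a finite connected disk cellular decomposition, $W$ is finite and nonempty, and if $W = \mathrm{int}(V)$ one still reaches a contradiction at a boundary vertex; so I may assume $W$ has a vertex $v_0 \in \mathrm{int}(V)$ adjacent to a vertex $w$ with $\psi(w) < M$, i.e. $u(\Pac^*(w)) - u(\Pac(w)) < u(\Pac^*(v_0)) - u(\Pac(v_0))$.

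The key step is then to derive a contradiction from the cone-angle condition $\alpha_{v_0} = 2\pi$ for both patterns at $v_0$, using the sign of the partial derivatives in Lemma \ref{vari} and Lemma \ref{vari2}. Write
\[
0 = \alpha_{v_0}(\Pac^*) - \alpha_{v_0}(\Pac) = \sum_{w : w \sim v_0} \big( \alpha_{v_0}^w(u^*_{v_0}, u^*_w) - \alpha_{v_0}^w(u_{v_0}, u_w) \big),
\]
and interpolate each summand along a path in $(u_{v_0}, u_w)$-space from $(u_{v_0}, u_w)$ to $(u^*_{v_0}, u^*_w)$. Since $\partial \alpha_{v_0}^w / \partial u_{v_0} < 0$ (Lemma \ref{vari}, hyperbolic case, valid since $\Pac, \Pac^*$ have conformal factors $\le 0$, i.e. the relevant circles are genuine hyperbolic circles or horocycles where the generalized principle Lemma \ref{vari2} gives the matching sign $\partial \alpha_w^v / \partial u_v > 0$ by symmetry of the mixed partial), increasing $u_{v_0}$ by $M > 0$ strictly decreases each angle contribution, while increasing $u_w$ by $\psi(w) \le M$ increases it by at most the compensating amount, with strict inequality at the neighbor $w$ where $\psi(w) < M$. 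Hence the right-hand side would be strictly negative, contradicting that it equals $0$. The bookkeeping requires handling the two monotonicities jointly: the standard trick is to first move from $(u_{v_0}, u_w)$ to $(u^*_{v_0}, u_w)$ (this lowers the angle sum) and then from $(u^*_{v_0}, u_w)$ to $(u^*_{v_0}, u^*_w)$ (this raises it, but since $u^*_w - u_w = \psi(w) \le M = u^*_{v_0} - u_{v_0}$ and $\partial \alpha_{v_0}^w/\partial u_w = \partial \alpha_w^{v_0}/\partial u_{v_0} = -\partial \alpha_{v_0}^w/\partial u_{v_0}$ by Lemma \ref{vari}, the net change is $\le 0$, strict at the distinguished neighbor), giving the contradiction.

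There is a subtlety I would flag as the main obstacle: the generalized circles in $\Pac, \Pac^*$ may be horocycles (conformal factor exactly $0$) or genuine circles (conformal factor $< 0$), and one must check the monotonicity $\partial \alpha_{v_0}^w / \partial u_{v_0} < 0$ and the symmetry of mixed partials remain valid and uniform across these cases and across the boundary of the hypercycle/circle/horocycle regimes. This is exactly what Lemma \ref{vari2} (the generalized variational principle, continuity of $\alpha$ in the conformal factors up to and including the horocyclic value $0$) is designed to supply; I would invoke continuity to extend the strict inequalities from the open circle regime $u < 0$ to the closed regime $u \le 0$ by a limiting argument, or alternatively perturb $\Pac^*$ slightly to put all interior factors strictly below $0$, run the argument, and pass to the limit. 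A second, minor point is the reduction to a ``good'' maximizing vertex: if the maximizing set $W$ contains no interior vertex adjacent to a lower-valued vertex, then $W$ is a union of full interior components, and connectedness of $\mathcal{D}$ plus $\psi \le 0$ on $\partial V$ forces $\psi \le 0$ everywhere unless $W$ touches $\partial V$, where again $\psi \le 0$; in all cases $M \le 0$, contradicting $M > 0$. Once the contradiction is reached, $\psi \le 0$ on all of $V$, which is the assertion $u(\Pac(v)) \ge u(\Pac^*(v))$ for all $v \in V$.
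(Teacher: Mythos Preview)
Your approach is essentially the same as the paper's: both derive the result from the monotonicity of the inner angles $\alpha_v^w$ in the conformal factors (Lemmas \ref{vari} and \ref{vari2}) and a maximum-principle argument applied to $u^*-u$. The paper packages this via Newton--Leibniz as $\Delta_\omega(u^*-\tilde u)\ge h(u^*-\tilde u)$ with $h>0$, then observes that $\max\{u^*-\tilde u,0\}$ is subharmonic and invokes Lemma \ref{maxharmonic}; your direct contradiction at a maximizing interior vertex is the unrolled version of the same idea.

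Two corrections are needed. First, your claimed identity $\partial\alpha_{v_0}^w/\partial u_w=-\partial\alpha_{v_0}^w/\partial u_{v_0}$ is false in hyperbolic geometry: by Lemma \ref{vari} one has $\partial\alpha_v^w/\partial u_w=\tfrac{2\sinh d}{\sinh l}$ while $\partial\alpha_v^w/\partial u_v=-\tfrac{2\cosh l\,\sinh d}{\sinh l}$, so only the inequality $|\partial_{u_{v_0}}\alpha_{v_0}^w|>|\partial_{u_w}\alpha_{v_0}^w|$ holds. Fortunately this inequality is exactly what your argument needs (increasing $u_{v_0}$ by $M$ decreases the angle by more than increasing $u_w$ by $\psi(w)\le M$ can restore), so the conclusion survives, but the stated reason must be replaced.

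Second, you misidentify the ``subtlety'': the issue is not passing from $u<0$ to $u=0$ at interior vertices, but that boundary vertices of $\Pac$ may have $u>0$ (hypercycles), and neither Lemma \ref{vari} nor Lemma \ref{vari2} directly gives $\partial\alpha_{v_0}^w/\partial u_{v_0}<0$ when the neighbor $w$ is a hypercycle. The paper sidesteps this cleanly by replacing $u$ with the truncation $\tilde u_v=\min\{0,u_v\}$ on $\partial V$: then Lemma \ref{vari2} yields $\sum_w\alpha_{v_0}^w(\tilde u)\le 2\pi$, and the subsequent interpolation between $\tilde u$ and $u^*$ stays entirely in the circle/horocycle regime where Lemma \ref{vari} applies. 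In your setup you can achieve the same effect by, for each boundary neighbor $w$, first moving $u_w$ down to $u^*_w\le 0$ (Lemma \ref{vari2} gives $\partial\alpha_{v_0}^w/\partial u_w>0$, so this decreases the angle) and only then moving $u_{v_0}$; but this needs to be said explicitly.
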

	\begin{proof}
	    For simplicity, we write $u_v=u(\Pac(v))$ and $u^*_v=u(\Pac^*(v))$ for each vertex $v$. Let \[\tilde{u}_v=
		\left\{\begin{array}{cc}
			u_v, & v\in \mathrm{int}(V),\\
			\min\{0,u_v\}, & v\in \partial V.
		\end{array}\right.
		\]
        By the definition of ICPs and the variational principle \ref{vari2} we have 
        \[
        \sum_{w:w\sim v}\alpha_v^w(\tilde{u}_v,\tilde{u}_w)\le2\pi,~\forall v\in \mathrm{int}
(V)\]
        and
        \[\sum_{w:w\sim v}\alpha_v^w(u_v^*,u_w^*)=2\pi,~\forall v\in \mathrm{int}
(V).        \]
Then by the Newton-Leibniz formula, for each $v\in \mathrm{int}(V)$ we have 
\begin{align*}
    0&<\sum_{w:v\sim w}\int_0^1\pp{\alpha_v^w((1-t)\tilde{u}_v+tu_v^*,(1-t)\tilde{u}_w+tu_w^*)}{u_w}\mathrm{d}t\cdot(u_w^*-\tilde{u}_w)
    \\&+\sum_{w:v\sim w}\int_0^1\pp{\alpha_v^w((1-t)\tilde{u}_v+tu_v^*,(1-t)\tilde{u}_w+tu_w^*)}{u_v}\mathrm{d}t\cdot({u}_v^*-\tilde{u}_v).
\end{align*}

Then by the variational principles \ref{vari} and \ref{vari2}, we have
\[
\Delta_\omega(u^*_v-\tilde{u}_v)\ge h(u_v^*-\tilde{u}_v),~\forall v\in \mathrm{int}(V),
\]
where $\omega$ is a weight such that $$\omega_{vw}=\int_0^1\pp{\alpha_v^w((1-t)\tilde{u}_v+tu_v^*,(1-t)\tilde{u}_w+tu_w^*)}{u_w}\mathrm{d}t>0,$$ and 
$h$ is a positive function. It is easy to check that the function $\max\{u_v^*-\tilde{u}_v,0\}$ is a subharmonic function, i.e.
\[
\Delta_{\omega}\max\{u_v^*-\tilde{u}_v,0\}\ge 0,~\forall v\in \mathrm{int}(V).
\]
Therefore, by the maximum principle of subharmonic functions \ref{maxharmonic}, we have
\[
\max_{v\in \mathrm{int(V)}}\{u^*_v-u_v\}=\max_{v\in \mathrm{int(V)}}\{u^*_v-\tilde{u}_v\}=0
\]
Therefore, we finish the proof.
	\end{proof}
\begin{rem}
    Actually, if we only assume that  $u(\Pac(v))\ge u(\Pac^*(v))$,~$\forall v\in \tilde\partial V$, the maximum principle also holds.
\end{rem}
    
	\begin{thm}
		Let $\mathcal{D} = (V, E, F)$ be an infinite cellular decomposition of a disk, and let $\Theta\in (0, \pi)^E$ be the intersection angle that satisfies the conditions $(C_1)$ and $(C_2)$. Let $\Pac$ and $\Pac^*$ be two embedded ICPs realizing $(\mathcal{D}, \Theta)$ that are locally finite in $U$. Assume that both $\Pac$ and $\Pac^*$ are locally finite in $U$. Then there exists a hyperbolic isometry $f: U \rightarrow U$ such that $\Pac^* = f(\Pac)$.
	\end{thm}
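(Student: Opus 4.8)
The plan is to pass to the generalized discrete conformal factors of the two patterns, invoke the maximum principle for generalized hyperbolic ICPs (Lemma~\ref{MaximumPrincipleHyper}) twice to conclude that the two patterns have identical radii, and then upgrade equality of radii to congruence by a developing-map argument. No preliminary normalization by an isometry is needed for the first two steps, since the conformal factor $u_v$ of a generalized circle depends only on its hyperbolic radius and is therefore invariant under the isometries of $U$.

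First I would read off from the hypotheses the facts about the conformal factors that feed the maximum principle. With the convention of Theorem~\ref{finite_existence}, a disk pattern in $U$ realizing $(\mathcal{D},\Theta)$ has each boundary circle $\Pac(v)$, $v\in\partial V$, inner tangent to $\partial U$, hence a horocycle of the hyperbolic metric of $U$; since the conformal factor of a horocycle is $0$, we get $u(\Pac(v))=u(\Pac^*(v))=0$ for all $v\in\partial V$. The assumption that $\Pac$ and $\Pac^*$ are locally finite in $U$ guarantees that every circle indexed by an interior vertex is a genuine hyperbolic circle or a horocycle rather than a hypercycle reaching $\partial U$, so $u(\Pac(v))\le 0$ and $u(\Pac^*(v))\le 0$ for all $v\in\mathrm{Int}(V)$. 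Being ICPs, both patterns also satisfy the cone-angle identity $\sum_{w\sim v}\alpha_v^w=2\pi$ at every interior vertex.

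Now I would apply Lemma~\ref{MaximumPrincipleHyper} to $u:=u(\Pac(\cdot))$ and $u^*:=u(\Pac^*(\cdot))$. On $\partial V$ both vanish, so $u\ge u^*$ and $u^*\le 0$ there, and $u,u^*\le 0$ on $\mathrm{Int}(V)$; hence the lemma yields $u\ge u^*$ on all of $V$. The hypothesis is symmetric in $\Pac$ and $\Pac^*$, so exchanging their roles gives $u^*\ge u$ on $V$, whence $u\equiv u^*$, i.e. $r(v)=r^*(v)$ for every $v\in V$. (If $\Theta$ vanishes on some edges, Lemma~\ref{MaximumPrincipleHyper} still applies, as its proof only uses $\partial\alpha_v^w/\partial u_v<0$ and the positivity of the weights $\omega_{vw}=\int_0^1\partial\alpha_v^w/\partial u_w\,\mathrm{d}t>0$, both valid for all $\Theta\in[0,\pi)$.)

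Finally I would convert equality of radii into the desired congruence. The angles $\Theta$ together with the common radii $r$ determine each quadrilateral $\tilde{Q}_e$ up to isometry, so $\carrier(\Pac)$ and $\carrier(\Pac^*)$, carrying the piecewise-hyperbolic metric glued from these quadrilaterals along $\mathcal{D}$, are both isometric to the same abstract surface $(\Omega,g(\Theta,r))$; composing the two isometric embeddings into $U$ produces a local isometry between connected subregions of $\mathbb{H}^2$, which extends uniquely to a global hyperbolic isometry $f$ of $U$ with $f(\carrier(\Pac))=\carrier(\Pac^*)$, and $f$ then carries $\Pac$ to $\Pac^*$ circle by circle since each circle is intrinsically determined by its carrier. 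The step demanding the most care is the first one: correctly extracting from the hypotheses that the boundary circles are horocycles (so $u=0$ there) and that local finiteness forbids interior circles from being hypercycles (so $u\le 0$ on $\mathrm{Int}(V)$), because these sign conditions are precisely what license the maximum principle; once they are in hand, the maximum principle does the real work and the rest is routine.
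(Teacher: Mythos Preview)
Your argument hinges on the claim that the boundary circles of $\Pac$ and $\Pac^*$ are horocycles, i.e.\ that $u=u^*=0$ on $\partial V$. This is not part of the hypotheses. Theorem~\ref{finite_existence} \emph{constructs} one particular ICP whose boundary circles are horocycles, but the present theorem concerns \emph{arbitrary} locally finite disk patterns in $U$; nothing in the statement forces the boundary circles to touch $\partial U$. More seriously, the word ``finite'' in the statement is almost certainly a slip for ``infinite'' (this is the rigidity theorem for hyperbolic ICPs announced in the introduction, and ``locally finite in $U$'' is vacuous for finite patterns). For an infinite disk cellular decomposition $\partial V=\emptyset$, so there is no boundary on which to compare $u$ and $u^*$, and Lemma~\ref{MaximumPrincipleHyper}, which is stated only for finite $\mathcal{D}$, cannot be applied directly to the whole pattern.

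The paper's proof supplies exactly the missing idea. Assuming for contradiction that $r_{\mathrm{hyp}}(\Pac(v_0))<r_{\mathrm{hyp}}(\Pac^*(v_0))$, one dilates $\Pac$ by a Euclidean factor $\delta>1$ close to $1$. Because $\Pac$ is locally finite in $U$, only finitely many circles of $\Pac_\delta$ still lie inside $U$; the vertices whose circles now meet $\partial U$ form an artificial finite boundary on which $u(\Pac_\delta(\cdot))\ge 0\ge u(\Pac^*(\cdot))$ automatically (they are horocycles or hypercycles). Lemma~\ref{MaximumPrincipleHyper} then applies to this finite subgraph and forces $r_{\mathrm{hyp}}(\Pac_\delta(v_0))\ge r_{\mathrm{hyp}}(\Pac^*(v_0))$, contradicting the choice of $\delta$. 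The scaling trick is the substantive step you are missing; once radii agree, your developing-map argument for the final congruence is fine.
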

	
	\begin{proof}
		Let $P$ and $P^*$ be as given. Choose an arbitrary vertex $v_0$ of $G$. Since they are ICPs contained in the unit disk, we view those two patterns as ICPs in the hyperbolic plane $\mathbb{H}^2$. We denote by $r_{\mathrm{hyp}}(\cdot)$ the hyperbolic radius of the corresponding circle.
        We will show that 
		\[
		r_{\text{hyp}}\left(\Pac(v_0)\right) = r_{\text{hyp}}\left(\Pac^*(v_0)\right),
		\]
		which implies the theorem. 
        
        Without loss of generality, we assume that $\Pac(v_0)$ and $\Pac^*(v_0)$ are circles centered at the origin.
		Assume, for contradiction, that $r_{\text{hyp}}\left(\Pac(v_0)\right) < r_{\text{hyp}}\left(\Pac^*(v_0)\right)$. Then there exists a constant $\delta > 1$ that is sufficiently close to $1$, such that 
		\[
		r_{\text{hyp}}\left(\delta \Pac(v_0)\right) < r_{\text{hyp}}\left(\Pac^*(v_0)\right).
		\]
		
		Suppose that $\Pac_\delta$ is an ICP obtained from $\mathcal{P}$ via the affine transformation $g(x)=\delta x$.
        Since $\mathcal{P}$ is locally finite in $U$, there are only finite circles in $\Pac_\delta$ that intersect $U$.
        Let $V_1\subset V$ be the set of vertices defined as 
        \[
        V_1=\{v\in V:\Pac_\delta(v)\subset U\}.
        \]
        It is easy to see that $\mathcal{P}_\delta(v)\cap \tilde\partial U\neq \emptyset$ for all $v\in \partial V_1$.
        Let $G$ be the $1-$skeleton of $\mathcal{D}$.
        Now we consider the induced subgraph $G_1:=I_G(V_1\cup\tilde{\partial}V_1)$.
        Let $\Pac_1 \subset \Pac_\delta$  and $\Pac_1^* \subset \Pac^*$ be the ICPs of $G_1$ in $\Pac_\delta$ and $\Pac^*$ respectively.
		
		Since for each vertex $v\in \tilde{\partial}V_1$, the disk $\Pac_1(v)$ intersects the boundary of $U$, $\Pac$ can be viewed as a horocycle or hypercycle, which means that $u(P_1(v))\ge 0$.
        Therefore, we have
		\[
		r_{\text{hyp}}\left(P_1(v)\right) > r_{\text{hyp}}\left(\Pac^*(v)\right).
		\]
		
		Applying Lemma~\ref{MaximumPrincipleHyper}, we conclude that
		\[
		r_{\text{hyp}}(P_\delta(v_0))=r_{\text{hyp}}\left(P_1(v_0)\right) \geq r_{\text{hyp}}(P_1^*(v_0))=r_{\text{hyp}}(P^*(v_0)),
		\]
		which contradicts the fact that $r_{\text{hyp}}(P_\delta(v_0))< r_{\text{hyp}}(P^*(v_0))$. Therefore, the assumption must be false, and we conclude that 
		\[
		r_{\text{hyp}}\left(P(v_0)\right) = r_{\text{hyp}}\left(P^*(v_0)\right).
		\]
		
		Since $v_0$ was arbitrary, the result follows by the rigidity of hyperbolic isometries.
	\end{proof}

	\subsection{The rigidity of embedded ICPs in the parabolic case}
	Denote the 1-skeleton of $\mathcal{D}$ by $G$.
	We establish the following maximum principle for generalized hyperbolic circle packings:
	
	\begin{lem}\label{maximum_principle_euc}(Maximum Principle in Euclidean plane)
		Let $\mathcal{D}=(V,E,F)$  be a finite cellular decomposition of a disk and $\Theta \in(0, \pi)^E$ be the intersection angle satisfying $(C_1)$ and $(C_2)$.  Assume that $\mathcal{P}= \{\Pac(v)\}_{v\in V}$ and $\mathcal{P^*}= \{\Pac^*(v)\}_{v\in V}$ are two finite ideal disk patterns realizing $(\mathcal{D}, \Theta)$. Then the maximum (or minimum) of $r^*_v/ r_v$ is attained at a boundary vertex.
	\end{lem}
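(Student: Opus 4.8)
The plan is to reduce the statement to the maximum principle for discrete harmonic functions (Lemma~\ref{maxharmonic}) by way of the Euclidean variational principle (Lemma~\ref{vari}). Introduce the discrete conformal factors $u_v=\ln r_v$ and $u^*_v=\ln r^*_v$, and set $s_v:=u^*_v-u_v=\ln(r^*_v/r_v)$. Since both $\mathcal{P}$ and $\mathcal{P}^*$ realize $(\mathcal{D},\Theta)$, the faces incident to an interior vertex close up into a single cycle around it, so the cone angle at each interior vertex equals $2\pi$ for both patterns:
\[
\sum_{w\sim v}\alpha_v^w(u_v,u_w)=2\pi=\sum_{w\sim v}\alpha_v^w(u^*_v,u^*_w),\qquad\forall\,v\in\mathrm{int}(V),
\]
where for both patterns $\alpha_v^w$ is the same function of the radii because the prescribed intersection angle $\Theta$ is the same.

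Next I would subtract these identities and expand the difference by the Newton--Leibniz formula along the straight homotopy $t\mapsto\bigl(u_v+ts_v,\,u_w+ts_w\bigr)$, $t\in[0,1]$, obtaining
\[
0=\sum_{w\sim v}\int_0^1\!\left(\frac{\partial\alpha_v^w}{\partial u_v}\,s_v+\frac{\partial\alpha_v^w}{\partial u_w}\,s_w\right)dt .
\]
Because $\alpha_v^w$ depends on the radii only through the ratio $r_v/r_w$, equivalently through $u_v-u_w$, one has $\partial\alpha_v^w/\partial u_v=-\,\partial\alpha_v^w/\partial u_w$, so the integrand collapses to $(\partial\alpha_v^w/\partial u_w)(s_w-s_v)$ and the relation becomes
\[
\sum_{w\sim v}\omega_{vw}(s_w-s_v)=0,\qquad \omega_{vw}:=\int_0^1\frac{\partial\alpha_v^w}{\partial u_w}\,dt,\qquad\forall\,v\in\mathrm{int}(V).
\]
By the Euclidean case of Lemma~\ref{vari}, $\partial\alpha_v^w/\partial u_w=2d_{vw}/l_{vw}>0$ for every $\Theta(v,w)\in(0,\pi)$ and all positive radii, and the angle function \eqref{anglefunction} together with this derivative is defined on all of $\mathbb{R}_+^2$; hence the homotopy never leaves the domain, $\omega_{vw}>0$, and $s$ is harmonic with respect to the positive weights $\omega$ on $\mathrm{int}(V)$.

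Finally, being harmonic, $s$ is both subharmonic and superharmonic, so applying Lemma~\ref{maxharmonic} to $s$ and to $-s$ on each connected component of the finite graph $G$ shows that the maximum and the minimum of $s$ over $V$ are attained at boundary vertices (if $s$ is constant on a component the claim is immediate, since $\partial V\neq\emptyset$ for a finite disk cellular decomposition). As $t\mapsto e^{t}$ is strictly monotone, $r^*_v/r_v=e^{s_v}$ attains its extrema at the same vertices, which is the assertion.

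The step requiring the most care is the strict positivity of the edge weights $\omega_{vw}$ uniformly along the homotopy, i.e. the fact that no triangle $\triangle(vwv_f)$ degenerates as the conformal factors interpolate between $\mathcal{P}$ and $\mathcal{P}^*$; this is precisely what the global definition of the angle function \eqref{anglefunction} on $\mathbb{R}_+^2$ and the sign statement in Lemma~\ref{vari} provide. The only other bookkeeping point is the normalization $\sum_{w\sim v}\alpha_v^w=2\pi$ at interior vertices, which is built into the definition of an ideal circle pattern once $(C1)$ holds.
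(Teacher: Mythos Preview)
Your proof is correct and follows essentially the same approach as the paper: pass to conformal factors, use the Newton--Leibniz formula along the linear homotopy together with the Euclidean variational principle (Lemma~\ref{vari}) to show that the difference is $\omega$-harmonic with strictly positive weights $\omega_{vw}=\int_0^1(\partial\alpha_v^w/\partial u_w)\,dt$, and conclude via Lemma~\ref{maxharmonic}. You have in fact spelled out more detail than the paper, which simply refers back to the argument of Lemma~\ref{MaximumPrincipleHyper}; in particular your observation that $\partial\alpha_v^w/\partial u_v=-\partial\alpha_v^w/\partial u_w$ in the Euclidean case (scale invariance) is exactly what makes the function genuinely harmonic rather than merely sub/superharmonic.
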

	\begin{proof}
		To see this, we only need to prove that the difference of the discrete conformal factors $u_v-u^*_v=\ln r_v-\ln r^*_v$ attains its minimum and maximum at the boundary $\partial V$. Indeed, with a similar argument as in Lemma \ref{MaximumPrincipleHyper} we see that 
        $\Delta_\omega( u_v-u^*_v)=0$, where
        \[\omega_{vw}=\int_0^1\pp{\alpha_v^w((1-t)u_v+tu_v^*,(1-t)u_w+tu_w^*)}{u_w}\mathrm{d}t>0.\]
        Therefore, by the maximum principle of discrete harmonic functions, we finish the proof.
	\end{proof}
The following lemma is an analogue of Lemma 6.1 in He's work \cite{HE}. It will play a key role in proving our main result, i.e. Theorem \ref{thm-rigidity-IIP}. Taking advantage of our uniform Ring Lemma \ref{ring2}, we provide a simpler proof for it compared to He's original methods.   %Let $D(v)$ be the corresponding closed disk of $\Pac(v)$ and $c_v$ be the center of the circle $\Pac(v)$. For a circle $C$, we denote by $V_C(\mathcal{P})$ the set of vertices $v \in V$ for which $\Pac(v) \cap C \neq \emptyset$. And we denote $\mathcal{P}(V_C) = \cup_{v \in V_C} \Pac(v)$.
	
%%	Here we give a discrete version of modulus of annuli.

\begin{comment}	
	For any circle \( C \subset \mathbb{C} \), we define its \textbf{normalized radius} (or distortion) by
	\[
	\tau(C) = \frac{r(C)}{\mathrm{dist}(0, C)},
	\]
	where \( r(C) \) denotes the radius of the circle \( C \), and \( \mathrm{dist}(0, C) \) is the Euclidean distance from the origin to \( C \).
	
	Given an infinite circle pattern \( \mathcal{P} = \{ \Pac(v) \}_{v \in V} \), we define its \textbf{distortion coefficient} by
	\[
	\tau(\mathcal{P}) = \limsup_{i \rightarrow +\infty} \tau(\Pac({v_i})),
	\]
	where \( \{v_i\}_{i \in \mathbb{N}} \) is an arbitrary enumeration of the vertex set \( V \). It is clear that \( \tau(\mathcal{P}) \) is independent of the choice of enumeration, and moreover, it is invariant under Euclidean similarities \( f: \mathbb{C} \to \mathbb{C} \), that is,
	\[
	\tau(f(\mathcal{P})) = \tau(\mathcal{P}).
	\]
	
	The quantity \( \tau(\mathcal{P}) \) measures how uniformly the radii of the circles are controlled relative to their Euclidean distance from the origin.
\end{comment}	
	
	\begin{lem}\label{uniform_bound_infinite}
    Let $\mathcal{D}$ be a VEL-parabolic cellular decomposition whose vertex degree is bounded, and $\Theta\in (0,\pi)^E$ with $\sup_{e\in E}\Theta(e)<\pi$ be the intersection angle satisfying conditions $(C_1)$ and $(C_2^+)$.
		Let $\mathcal{P}$ and $\mathcal{P}^*$ be two embedded ICPs that realize $(\mathcal{D},\Theta)$, then there exists a uniform constant $C \geq 1$ such that
		\[
		\frac{1}{C} \leq \frac{r^*_{v}}{r_{v}} \leq C, \quad \forall v \in V,
		\]
        where $C$ only depends on the structure of the graph $\mathcal{D}$ and the intersection angle $\Theta$.
	\end{lem}
	
	\begin{proof}
		Without loss of generality, we assume that $\mathcal{P}^*({v_0})= \mathcal{P}({v_0})$ and the center $c_{v_0}$ of $\Pac(v_0)$ lies at the origin. Furthermore, we assume that $r(\Pac(v_0))=r(\Pac^*(v_0))=1$.
		
		Let $v_1$ be an arbitrary vertex different from $v_0$, and let $V_0 \subset V$ be a finite set of vertices containing both $v_0$ and $v_1$. We can choose $\delta > 1+\delta_0$, where $\delta_0$ is the constant in Lemma \ref{appl_ring}. Let $r_0$ be a positive number such that $\Pac(v)\subset B(0,r_0),~\forall v\in V_0.$
		
		Let $r_n := \delta^{n} r_0$, $W_n := \{w\in V:c(w)\in B(0, r_n)\}$ and $V_n=\tilde{\partial}W_n$, for $n \geq 1$. From Lemma \ref{appl_ring}, the sets $\{V_n\}_{n \in \mathbb{N}}$ are mutually disjoint and satisfy the property that $V_{n_2}$ separates $V_{n_1}$ from $V_{n_3}$ for $0 \leq n_1 < n_2 < n_3$. 
		
		By Lemma \ref{vel_sum} and Lemma~\ref{teichumller1}, we have
		\begin{align*}
			\mathrm{VEL}(V_2, V_{2k-1}) &\geq \sum_{n=1}^{k-1} \mathrm{VEL}(V_{2n}, V_{2n+1}) \\
			&\geq \frac{(k-1)\sin^4 \epsilon}{8(\sin \epsilon + 4C(\epsilon, \epsilon_0, N) + 4)^2}.
		\end{align*}
		
		Let $k$ be an integer larger than
		\[
		\frac{576\pi^2(\sin \epsilon + 4C(\epsilon, \epsilon_0, N) + 4)^4}{\sin^8 \epsilon} + 2.
		\]
		Then
		\[
		\mathrm{VEL}(V_2, V_{2k-1}) \geq \frac{72(\sin \epsilon + 4C(\epsilon, \epsilon_0, N) + 4)^2}{\sin^4 \epsilon}.
		\]
		
		From Lemma~\ref{teichmuller2}, there exists $\hat{r} > 0$ such that for any $\rho \in [\hat{r}, 2\hat{r}]$, the vertex set $V_{C(0, \rho)}(\mathcal{P}^*)$ separates $V_2$ from $V_{2k-1}$, and hence separates $V_1$ from $V_{2k}$. Since $V_1\cap V_2=\emptyset$,  by the choice of $\hat{r}$
		\begin{align}
			P^*(V_1) &\subset B(0, \hat{r}), \label{taufiniteP3} 
		\end{align}
		
		Let $G_1$ be the induced subgraph of $G$ whose vertex set is $W_1\cup V_1$. For each path  $\gamma=(w_0,\cdots,w_m)$ connecting $V_1$ and $V_{2k}$ with $w_0\in V_1$ and $w_m\in V_{2k}$, we denote by 
        \[
        n_{\gamma}=\min\{i:w_i\in V_{C(0,2\hat{r})}\}.
        \]
         Let $v_\gamma:=w_{n_\gamma},~\forall \gamma\in \Gamma(V_1,V_{2k})$. Then by the definition of $v_\gamma$, we see that $H=\{v_\gamma:\gamma\in \Gamma(V_1,V_{2k})\}$ separates $V_1$ and $V_{2k}$. Let $\tilde{W}$ be the union of finite vertex sets that are connected in the induced subgraph of $V\backslash H$. Let $G_2$ be the induced subgraph of $\tilde{W}\cup H$.
       
        Applying Lemma~\ref{MaximumPrincipleHyper} to $G_1$, for each $v \in W_1\cup V_1$, we have
		\begin{equation} \label{taufiniteP5}
			r_{\mathrm{hyp}}\left(\frac{1}{2\hat{r}} \Pac^*(v)\right) \leq r_{\mathrm{hyp}}\left(\frac{1}{{r}_1} \Pac(v)\right).
		\end{equation}
        Also, we see $\Pac(v)\subset B(0,r_{2k+1}),~\forall v\in\tilde{W}$. Therefore, applying Lemma~\ref{MaximumPrincipleHyper} to $G_2$, we have
		\begin{equation} \label{taufiniteP6}
			r_{\mathrm{hyp}}\left(\frac{1}{2\hat{r}} \Pac^*(v)\right) \geq r_{\mathrm{hyp}}\left(\frac{1}{{r}_{2k}} \Pac(v)\right).
		\end{equation}
		
		Since the disks $\Pac(v_i)$ and $\Pac^*(v_i)$, $i=1,2$, involved in the ICP in  \eqref{taufiniteP5} and \eqref{taufiniteP6} are all contained in $B(0,\frac{1}{2})$, the hyperbolic and Euclidean radii are uniformly comparable. Thus, there exists a universal constant $\tilde{C} > 0$ such that for $v = v_0, v_1$,
		\begin{align}
			\frac{1}{2\hat{r}} r^*_{v} &\leq \tilde{C} \cdot \frac{1}{{r}_1} r_{v}, \label{taufiniteP7} \\
			\tilde{C} \cdot \frac{1}{2\hat{r}} r^*_{v} &\geq \frac{1}{r_{2k+1}} r_{v}. \label{taufiniteP8}
		\end{align}
		
		Combining \eqref{taufiniteP7} and \eqref{taufiniteP8}, and using $\mathcal P^*(v_0) = \mathcal P(v_0)$, we have
        \[
        \frac{r_1}{2\tilde{C}}\le\hat{r}\le \frac{\tilde{C}r_{2k+1}}{2}.
        \]
        
        Since $r_n = \delta^n {r}$, we obtain
		\[
		\frac{1}{\tilde{C}^2 \delta^{2k}} \leq \frac{r^*_{v_1}}{r_{v_1}} \leq \tilde{C}^2 \delta^{2k}.
		\]
		
		Let $C := \tilde{C}^2 \delta^{2k}$ be the constant that does not depend on the choice of $v_1$, we complete the proof.
	\end{proof}

	\begin{lem}\label{rigidity_euc_final}
		Let $\mathcal{D}=(V,E,F)$ be an infinite disk cellular decomposition with bounded vertex degree. Assume that the intersection angle $\Theta$  satisfies $(C_1)$, $(C_2^+)$ and $\sup_{e\in E}\Theta(e)<\pi$. Assume $\mathcal{P}= \{\Pac(v)\}_{v\in V}$ and $\mathcal{P^*}= \{\Pac^* (v)\}_{v\in V}$ be two infinite ideal disk patterns realizing $(\mathcal{D}, \Theta)$. Assume that $P$ is locally finite in the plane. Then there is a Euclidean similarity $f: \mathbb{C} \rightarrow \mathbb{C}$ such that $\mathcal P^*=f(\mathcal P)$.
	\end{lem}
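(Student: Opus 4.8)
The plan is to adapt the He--Schramm argument: reduce rigidity to a Liouville statement for a bounded harmonic function built from the discrete conformal factors. Put $u_v=\ln r(\Pac(v))$, $u_v^{*}=\ln r(\Pac^{*}(v))$, and $h_v:=u_v^{*}-u_v$. By Lemma~\ref{uniform_bound_infinite} there is a uniform constant $C\ge 1$ with $\tfrac1C\le r^{*}_v/r_v\le C$ for all $v$, so $h$ is a \emph{bounded} function on $V$. It suffices to prove that $h$ is constant: then $r^{*}_v=\lambda r_v$ for a fixed $\lambda>0$, every Euclidean quadrilateral $\tilde Q_e$ of $\mathcal{P}^{*}$ equals $\lambda$ times the corresponding one of $\mathcal{P}$, the two glued flat metrics $g(\Theta,r^{*})$ and $g(\Theta,r)$ on $\Omega$ (which are genuinely smooth, since every vertex has cone angle $2\pi$ and $V_F$ carries no cone points by $(C1)$) differ by the constant factor $\lambda^{2}$, and hence the developing maps of these simply connected flat surfaces into $\mathbb{R}^2$ agree up to a Euclidean isometry; composing with the scaling by $\lambda$ gives the Euclidean similarity $f$ with $\mathcal{P}^{*}=f(\mathcal{P})$.

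First I would show $h$ is harmonic for an explicit weight. Since $\mathcal{D}$ decomposes an open disk, every vertex is interior, so the cone-angle condition gives $\sum_{w\sim v}\alpha_v^w(u)=2\pi=\sum_{w\sim v}\alpha_v^w(u^{*})$ at each $v$. Differentiating $t\mapsto\alpha_v^w\big((1-t)u+tu^{*}\big)$, integrating over $[0,1]$, and using the Euclidean identity $\partial_{u_v}\alpha_v^w=-\partial_{u_w}\alpha_v^w$ from Lemma~\ref{vari}, one gets $\alpha_v^w(u^{*})-\alpha_v^w(u)=\omega_{vw}(h_w-h_v)$ with
\[
\omega_{vw}=\int_0^1\partial_{u_w}\alpha_v^w\big((1-t)u+tu^{*}\big)\,dt>0,
\]
which is symmetric in $v,w$ by the mixed-derivative identity in Lemma~\ref{vari}. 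Summing over $w\sim v$ yields $\Delta_\omega h_v=0$ for every $v$; this is the infinite analogue of the computation behind Lemma~\ref{maximum_principle_euc}. Next I would bound the row sums: Lemma~\ref{partialthetabound} gives $0<\partial_{u_w}\alpha_v^w\le C_\epsilon\,\alpha_v^w<2\pi C_\epsilon$ with $C_\epsilon$ depending only on $\epsilon$, so with the bounded vertex degree $\deg(v)\le N$ we obtain $\sum_{w\sim v}\omega_{vw}\le 2\pi C_\epsilon N=:C_0$ uniformly.

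The last ingredient is $\vel$-parabolicity of the $1$-skeleton $G$ of $\mathcal{D}$: since $\Pac$ is locally finite in $\mathbb{R}^2$, the pair $(\mathcal{D},\Theta)$ is ICP-parabolic, hence $\mathcal{D}$ is $\vel$-parabolic (cf. Theorem~\ref{uniformization1}; equivalently, this follows from a length--area estimate on the carrier $\carrier(\Pac)=\mathbb{R}^2$ in the spirit of Lemma~\ref{teichumller1}). Feeding the bounded $\omega$-harmonic function $h$, the row-sum bound $C_0$, and $\vel$-parabolicity into the Liouville theorem Lemma~\ref{harmonic_current}, we conclude $h\equiv\mathrm{const}$, which completes the proof as described above.

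I expect the main obstacle to be two bookkeeping points rather than one deep step: (i) establishing the positivity \emph{together with} the uniform summability $\sum_{w\sim v}\omega_{vw}\le C_0$ of the weights, which is exactly where $\Theta$ being bounded away from $0$ and $\pi$ and the bounded-degree hypothesis enter (via Lemma~\ref{partialthetabound}), and where one must be careful that the bound holds along the whole interpolating segment; and (ii) confirming $\vel$-parabolicity of $\mathcal{D}$ purely from $\Pac$ filling the plane, i.e. that a locally finite carrier forces recurrence. The genuinely hard analytic inputs --- the uniform comparison $r^{*}_v\asymp r_v$ and the Liouville property on recurrent networks --- are already packaged in Lemmas~\ref{uniform_bound_infinite} and~\ref{harmonic_current}, so the remaining work is assembling these pieces correctly.
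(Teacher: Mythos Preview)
Your proposal is correct and reaches the same conclusion through the same three ingredients (the uniform bound of Lemma~\ref{uniform_bound_infinite}, a discrete Liouville theorem on a $\vel$-parabolic graph, and Lemma~\ref{partialthetabound} for the row sums), but the route is genuinely more direct than the paper's.

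The paper does \emph{not} show that $h=u^{*}-u$ is itself harmonic. Instead it builds, via an exhaustion $\mathcal D^{[n]}\uparrow\mathcal D$ and the finite Dirichlet problem of Lemma~\ref{prescribed_boundary_radius_prob}, a Lipschitz one–parameter family $u(t)$ of genuine ICPs joining $u$ to $u^{*}$; differentiating $K_v(u(t))=0$ yields that $f_v(t)=du_v/dt$ is $\omega(t)$-harmonic with $\omega_{vw}(t)=\partial_{u_w}\alpha_v^w(u(t))$, and Liouville is applied for a.e.\ $t$, then integrated. Your argument bypasses this construction entirely: using the Euclidean identity $\partial_{u_v}\alpha_v^w=-\partial_{u_w}\alpha_v^w$ you obtain $\Delta_\omega h=0$ directly with the secant weights $\omega_{vw}=\int_0^1\partial_{u_w}\alpha_v^w((1-t)u+tu^{*})\,dt$, exactly as in the finite Lemma~\ref{maximum_principle_euc}. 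This is shorter and avoids the subsequence extraction and a.e.\ differentiability issues.

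The only substantive difference is how the row-sum bound $\sum_{w\sim v}\omega_{vw}\le C_0$ is obtained. Along the paper's path one has $\sum_w\alpha_v^w(u(t))=2\pi$ identically, so Lemma~\ref{partialthetabound} gives $\sum_w\omega_{vw}\le 2\pi C_\epsilon$ with no appeal to degree. Along your linear interpolation the cone angle need not stay $2\pi$, so you fall back on $\alpha_v^w<2\pi$ together with the bounded vertex degree $N$. This is harmless: bounded degree is already needed (in both proofs) to invoke Theorem~\ref{uniformization1} for $\vel$-parabolicity, and the lemma is only ever applied under that hypothesis. If you wanted to match the paper's degree-free bound, note that $\big|\tfrac{d}{dt}\sum_w\alpha_v^w\big|\le 2(\ln C)\,C_\epsilon\sum_w\alpha_v^w$ along the straight segment (since $|h|\le\ln C$), so Gr\"onwall gives $\sum_w\alpha_v^w(t)\le 2\pi\,e^{2C_\epsilon\ln C}$ and hence $\sum_w\omega_{vw}\le 2\pi C_\epsilon\,C^{2C_\epsilon}$ without using $N$.
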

	\begin{proof}
		
		Let $\mathcal{P}(0)=\mathcal{P}$ and $\mathcal{P}(1)=\mathcal{P^*}$. For $v \in V$, denote by $r_v(0)$ and $r_v(1)$  the radius of $\Pac(v)$ and $\Pac^* (v)$, respectively. 
		
		Now, we construct a curve $r(t)$ connecting $r(0)$ and $r(1)$. Let $u_v (t)= \ln r_v(t)$. Firstly, we construct $\tilde u_v (t)= u_v (0) + (u_v(1)- u_v(0))t$. Lemma \ref{uniform_bound_infinite} tells us that $|u_v(1)- u_v(0)|\leq \ln C$, $\forall v \in V$. Hence, we have 
		\begin{equation}\label{harmonic_proof_1}
			|\tilde u_v (t+h)- \tilde u_v (t)| \leq |u_v(1)- u_v(0)||h| \leq \ln C \cdot |h|, \quad \forall v \in V.
		\end{equation}
		
		Consider an exhaustive sequence of finite, simple, connected cellular decompositions
		$$
		\left\{\mathcal{D}^{[n]}=\left(V^{[n]}, E^{[n]}, F^{[n]}\right)\right\}_{n=1}^{\infty}
		$$
		satisfying
		$$
		\mathcal{D}^{[n]} \subset \mathcal{D}^{[n+1]}, \quad \text { and } \quad \bigcup_{n=1}^{\infty} \mathcal{D}^{[n]}=\mathcal{D}.
		$$
		The sequence must exist, since we have assumed the connectedness of $\mathcal{D}$, which means $d\left(v, w\right)<+\infty$ for all $v, w \in V$. Denote by $\partial V^{[n]}$ and $\mathrm{int} (V)^{[n]}$ the set of boundary vertices and interior vertices in $\mathcal{D}^{[n]}$ respectively. For any $t \in [0,1]$, from Lemma \ref{prescribed_boundary_radius_prob}, we know that there is a unique $\mathcal{P}^{[n]}(t)$ realizing $(\mathcal{D}^{[n]}, \Theta)$ whose radius $r^{[n]}_v(t)= \tilde r_v(t)$ for any boundary vertex $v \in \partial V^{[n]}$. In particular, ideal circle pattern $\mathcal{P}^{[n]}(0)$ ($\mathcal{P}^{[n]}(1)$ resp.) is exactly the $\mathcal{P}(0)$ ($\mathcal{P}(1)$ resp.) restricted on $\mathcal{D}^{[n]}$. 
		
		By Lemma \ref{maximum_principle_euc}, we know $u_v^{[n]}(t+h)- u_v^{[n]}(t)$ attains its maximum and minimum at the vertex in $ \partial V^{[n]}$. Hence, by \eqref{harmonic_proof_1}, we have
		\begin{equation}\label{harmonic_proof_2}
			|u_v^{[n]}(t+h)- u_v^{[n]}(t)| \leq \ln C\cdot |h|, \quad \forall v \in V^{[n]}.
		\end{equation}
		Since for $t \in [0,1]$, we have 
		$$|u_v^{[n]}(t)- u_v^{[n]}(0)| \leq  \ln C,$$
		 for any $t \in (0, 1)$, by the standard diagonal argument there is a sub-sequence $\{u^{[n_k]}(t)\}_{k}$ of $\{u^{[n]}(t)\}$ that converges as $k \rightarrow + \infty$. Let $u(t)$ be the limit of $\{u^{[n_k]}(t)\}$.  From \eqref{harmonic_proof_2}, we have
		\begin{equation}\label{harmonic_proof_3}
			|u_v(t+h)- u_v(t)| \leq \ln C\cdot |h|, \quad \forall v \in V.
		\end{equation}
		Therefore, $u_v(t)$ is absolutely continuous, and for a.e. $t$, we have
		\begin{equation}
			\bigg|\dfrac{d u_v(t)}{dt}\bigg|\leq \ln C, \quad \forall v \in V.
		\end{equation}
		It is obvious that the ICP $\mathcal{P}(t)$ consisting of circles $\Pac(t)(v)$ with radius $e^{u_v(t)}$ is an ICP realizing  $(\mathcal{D}, \Theta)$. Therefore, 
		$$
		K_v(t) =0, \quad \forall v \in V.
		$$
		So, 
		\begin{equation}\label{harmonic_proof_4}
			\begin{aligned}
				0 &= \dfrac{d K_v(u(t))}{dt}= \dfrac{\partial K_v}{\partial u_v} \dfrac{d u_v(t)}{dt}+ \sum_{w:w\sim v} \dfrac{\partial K_v}{\partial u_w}\dfrac{d u_w(t)}{dt}\\
				&=  \sum_{w:w\sim v} \dfrac{\partial K_v}{\partial u_w}\left(\dfrac{d u_w(t)}{dt}- \dfrac{d u_v(t)}{dt}\right)  +  \left(\sum_{w:w\sim v} \dfrac{\partial K_v}{\partial u_w}+\dfrac{\partial K_v}{\partial u_v}\right) \dfrac{d u_v(t)}{dt}\\
			\end{aligned}
		\end{equation}
		From Lemma \ref{vari}, we know 
		$$
		\sum_{w:w\sim v} \dfrac{\partial K_v}{\partial u_w}+\dfrac{\partial K_v}{\partial u_v}=0.
		$$
		Let $f_v(t)= {d u_v(t)}/{dt}$, by \eqref{harmonic_proof_4}, 
		$$
		\Delta_\omega f_v= \sum_{w:w \sim v} \omega_{vw}\left(f_w-f_v\right)=0
		$$
		where
		$$
		\omega_{vw} = -\dfrac{\partial K_v}{\partial u_w}.
		$$
		From Lemma \ref{vari}, we know $\omega_{vw}=\omega_{wv}>0$. Therefore, $f$ is  a harmonic function for weighted graph $(\mathcal{D}, \omega)$.
		
		From Lemma \ref{partialthetabound}, we know
		\begin{equation}\label{harmonic_proof_5}
			\sum_{w:w\sim v} \omega_{vw} =-\sum_{w:w\sim v} \dfrac{\partial K_v}{\partial u_w}=\sum_{w:w\sim v} \dfrac{\partial \alpha_v^w}{\partial u_w}\leq \sum_{w:w\sim v} C_0 \alpha_v^w= 2\pi C_0.
		\end{equation}
		
		From Theorem \ref{uniformization1}, we know $\mathcal{D}$ is VEL-parabolic. Since $f$ is  a harmonic function for weighted graph $(\mathcal{D}, \omega)$, by Lemma \ref{harmonic_current} and \eqref{harmonic_proof_5}, we know $f_v(t)= c(t)$, $\forall v \in V$. It follows that
		$$
		u_v(1)-u_v(0)= \int_0^1 c(t) dt.
		$$
		Since $ \int_0^1 c(t) dt$ is a constant, we know $r_v(1)/r_v(0)= e^{ \int_0^1 c(t) dt}$ is constant, for any $v \in V$. This implies that $\Pac^*$ and $\Pac$ are images of each other by Euclidean similarities.
	\end{proof}
	
	\section{Appendix}\label{appendix}
	\subsection{Existence of finite embedded ICPs}
	For the completeness of our paper, we will establish the existence of embedded ICPs of finite disk cellular decompositions. In particular, we will prove Theorem \ref{finite_existence}, which asserts the existence of an embedded ICP $\mathcal{P}$ on $\mathcal{D}$ such that each boundary circle is a horocycle. This result serves as an analogue of the classical existence theorem for circle packings in the plane; see, for example, \cite[Chapter 6]{Stephenson_intro}. Moreover, we will prove the existence and uniqueness of an ICP $\mathcal{P}$ on $\mathcal{D}$ whose boundary circles realize prescribed radii.

	\begin{lem}\label{bobenko_springborn_type_lemma}
		In both hyperbolic and Euclidean background geometry, let $\mathcal{D}=(V,E,F)$  be a finite cellular decomposition embedding on a plane and $\Theta \in(0, \pi)^E$ satisfying $(C_1)$. If we prescribe a radius $\hat r_i>0$ for any boundary vertex $i \in \partial V$.
		Then discrete Gauss curvature $K$ is an injective function for $u$, which takes values in $\mathbb{R}^{| \mathrm{int}\, (V)|}$.  Moreover, all $\left(K_1, K_2, \ldots, K_{|\mathrm{int}\, V|}\right)$ satisfying
		\begin{equation}\label{k_value1}
			K_i< 2 \pi
		\end{equation}
		and 
		\begin{equation}\label{k_value2}
			\sum_{v_i \in A} K_i>2 \pi|A|-2 \sum_{e \in E(A)} \Theta(e) 
		\end{equation}
		is attainable, where $A$ is an arbitrary non-empty subset of $\mathrm{int}\, V$.
	\end{lem}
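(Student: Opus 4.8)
The plan is to run the Bobenko--Springborn variational argument (cf.\ \cite{MR2022715,ge2021combinatorial}) adapted to ideal circle patterns with the boundary radii frozen. Write $n=|\mathrm{int}(V)|$, fix the boundary conformal factors $u_i$, $i\in\partial V$, at the values determined by $\hat r_i$, and regard the discrete curvature as a map $\mathcal{K}\colon\mathcal{U}\to\mathbb{R}^{n}$, $u=(u_v)_{v\in\mathrm{int}(V)}\mapsto(K_v)_{v\in\mathrm{int}(V)}$, where $\mathcal{U}=\mathbb{R}^{n}$ in the Euclidean background and $\mathcal{U}$ is the open convex set of admissible conformal factors in the hyperbolic background. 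By the reciprocity $\partial\alpha^w_v/\partial u_w=\partial\alpha^v_w/\partial u_v$ of Lemma~\ref{vari}, the $1$-form $\sum_{v\in\mathrm{int}(V)}K_v\,du_v$ is closed on the simply connected set $\mathcal{U}$, so there is a smooth potential $\mathcal{F}$ with $\nabla\mathcal{F}=\mathcal{K}$ (up to an overall sign).

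\emph{Injectivity.} I would show $\mathcal{F}$ is strictly convex, whence $\mathcal{K}=\nabla\mathcal{F}$ is injective. By Lemma~\ref{vari} the Hessian $D\mathcal{K}=(\partial K_v/\partial u_w)_{v,w\in\mathrm{int}(V)}$ is symmetric, with $\partial K_v/\partial u_v=-\sum_{w\sim v}\partial\alpha^w_v/\partial u_v>0$, $\partial K_v/\partial u_w=-\partial\alpha^w_v/\partial u_w<0$ for interior neighbours $w\sim v$, and $\partial\alpha^w_v/\partial u_v+\partial\alpha^w_v/\partial u_w\le 0$ (with equality in the Euclidean case). Hence $D\mathcal{K}$ has nonpositive off-diagonal entries and nonnegative row sums over $\mathrm{int}(V)$, so it is positive semidefinite; since $\mathcal{D}$ is connected and $\partial V\neq\varnothing$, each connected component of the graph induced on $\mathrm{int}(V)$ has a vertex adjacent to $\partial V$, giving a strictly dominant row in every diagonal block, so $D\mathcal{K}$ is in fact positive definite.

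\emph{The image.} Let $\mathcal{C}\subseteq\mathbb{R}^{n}$ be the open convex polyhedron defined by the strict inequalities \eqref{k_value1} and \eqref{k_value2}. The inclusion $\mathcal{K}(\mathcal{U})\subseteq\mathcal{C}$ is elementary: $K_v=2\pi-\alpha_v<2\pi$ because $\alpha_v>0$, and for nonempty $A\subseteq\mathrm{int}(V)$ one splits $\sum_{v\in A}\alpha_v$ into the edges of $E(A)$ and the edges leaving $A$, then uses the angle sum of $\tilde{Q}_e$ (which yields $\alpha^w_v+\alpha^v_w\le 2\Theta(e)$, with equality in the Euclidean case) together with positivity of the remaining angles. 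For the converse I would show that $\mathcal{F}_{K^{*}}(u):=\mathcal{F}(u)-\langle K^{*},u\rangle$ is coercive on $\mathcal{U}$ for each $K^{*}\in\mathcal{C}$; being convex it then attains a minimum, at which $\nabla\mathcal{F}_{K^{*}}=0$, i.e.\ $\mathcal{K}(u)=K^{*}$. Equivalently, $\mathcal{K}$ is a local diffeomorphism, hence $\mathcal{K}(\mathcal{U})$ is open, coercivity shows it is relatively closed in $\mathcal{C}$, and connectedness of $\mathcal{C}$ forces $\mathcal{K}(\mathcal{U})=\mathcal{C}$; together with injectivity, $\mathcal{K}$ is a diffeomorphism onto $\mathcal{C}$.

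\emph{The main obstacle} is this coercivity, equivalently the properness of $\mathcal{K}$ relative to $\mathcal{C}$. I would use the block argument of Bobenko--Springborn: for a sequence $u^{(k)}\to\infty$ in $\mathcal{U}$, after normalizing one may assume $u^{(k)}/|u^{(k)}|\to\xi$ and that $u^{(k)}_v-u^{(k)}_w$ converges in $[-\infty,+\infty]$ on every edge, which partitions $\mathrm{int}(V)\cup\partial V$ into level blocks ordered by growth rate. Using the limiting values $\hat{\alpha}^w_v(0,\Theta)=2\Theta$ and $\hat{\alpha}^w_v(\infty,\Theta)=0$ of the extended angle function introduced before Proposition~\ref{property}, the asymptotic slope $\mathcal{F}'_\infty(\xi):=\lim_{t\to+\infty}\mathcal{F}(u_0+t\xi)/t$ is an explicit piecewise-linear functional of $\xi$ and the block data, whose contribution from a block $A$ is governed by the quantity $2\pi|A|-2\sum_{e\in E(A)}\Theta(e)$; since $K^{*}$ satisfies \eqref{k_value1}--\eqref{k_value2} \emph{strictly}, $\mathcal{F}'_\infty(\xi)$ exceeds $\langle K^{*},\xi\rangle$ by a fixed positive amount, which forces $\mathcal{F}_{K^{*}}(u^{(k)})\to+\infty$. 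The delicate part is the bookkeeping---tracking which intersection angles degenerate to $0$, which adjacent pairs sum to $2\Theta(e)$, and why the fastest-growing block reproduces precisely the expression in \eqref{k_value2}; condition $(C1)$, which keeps the dual vertices non-conical, is what makes this accounting consistent. The hyperbolic case runs in parallel, additionally invoking Proposition~\ref{hyperconstr} to control the angles at vertices whose radius tends to infinity.
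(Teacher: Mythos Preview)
Your proposal is correct and follows the same overall strategy as the paper---both are adaptations of the Bobenko--Springborn argument, establishing injectivity via positive-definiteness of the Jacobian $(\partial K_v/\partial u_w)$, inclusion in the target polytope by the angle-sum identity $\alpha^w_v+\alpha^v_w\le 2\Theta(e)$ in each quadrilateral, and surjectivity via properness of $\mathcal{K}$. The paper, however, does not introduce the potential $\mathcal{F}$ explicitly and replaces your block/coercivity analysis with a bare-hands two-case boundary argument: if some $u_i\to+\infty$ then $\alpha^j_i\to 0$ and $K_i\to 2\pi$, so \eqref{k_value1} degenerates; if instead some nonempty $A\subseteq\mathrm{int}\,V$ has $u_i\to-\infty$ for all $i\in A$, then for every edge of $E(A)$ the angle at the endpoint outside $A$ tends to $0$, forcing $\sum_{i\in A}\alpha_i\to 2\sum_{e\in E(A)}\Theta(e)$, so \eqref{k_value2} degenerates. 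Brouwer invariance of domain then finishes. Your route via the asymptotic slope of $\mathcal{F}_{K^*}$ along level blocks is more systematic and closer in spirit to the original variational principle, but in this setting the simpler dichotomy already exhausts the ways $u$ can escape $\mathcal{U}$, and the ``delicate bookkeeping'' you anticipate collapses to the two limiting values $\hat\alpha^w_v(0,\Theta)=2\Theta$ and $\hat\alpha^w_v(\infty,\Theta)=0$.
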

	
	\begin{proof}
		The proof is similar to the method of Bobenko--Springborn \cite{MR2022715}.  
		Let $Z \subset \mathbb{R}^{| \mathrm{int}\, V|}$ be the set of points that satisfy the inequalities \eqref{k_value1} and \eqref{k_value2}. Since these inequalities are linear, $Z$ is convex.
		
		Let \( u_i = \ln \tanh\frac{r_i}{2} \) and take values in $\mathbb{R}_+^{| \mathrm{int}\, V|}$. The discrete Gauss curvature $K$ is a continuous function for $u$. We will divide the proof into parts.
		
		\emph{Step I: $K$ is an injective function.}

    	From Lemma~\ref{vari}, for any \( i, j \in \mathrm{int}\, V \), we have
        \[
        \dfrac{\partial K_i}{\partial u_j} = \dfrac{\partial K_j}{\partial u_i} < 0,
        \]
        \[
        \dfrac{\partial K_i}{\partial u_i} = -\sum_{j:j \sim i} \dfrac{\partial \alpha_i^j}{\partial u_i} > 0,
        \]
        and
        \[
        \left| \dfrac{\partial K_i}{\partial u_i} \right| - \sum_{j \neq i} \left| \dfrac{\partial K_i}{\partial u_j} \right| = \sum_{j=1}^{|V|} \dfrac{\partial K_i}{\partial u_j} \geq 0,
        \]
        where in the Euclidean background geometry, the inequality is strict if the vertex \( i \) is adjacent to a boundary vertex in \( \partial V \), and equality holds otherwise. In contrast, in the hyperbolic background geometry, the inequality is always strict.

		Linear algebra tells us that the matrix $\left(\dfrac{\partial K_i}{\partial u_j}\right)$ is positive definite, which implies that $K$ is an injective function.
		
		\emph{Step II: $K (u) \in Z$, for any $u \in \mathbb{R}^{| \mathrm{int}\, V|}$ .}
		
		We only need to prove that, for any $u \in \mathbb{R}^{| \mathrm{int}\, V|}$, $K (u) $ satisfies the inequalities \eqref{k_value1} and \eqref{k_value2}. The inequality \eqref{k_value1} is natural from the definition of discrete Gauss curvature. As for inequality \eqref{k_value2}, since for any arbitrary non-empty subset $A$ of $\mathrm{int}\, V$, there exists at least one  $e=[i, j] \in E(A)$, whose $ \Theta(e)$ attributes an angle of a vertex not in $A$. So, we have
		$$
		\sum_{v_i \in A} \alpha_i(u)< 2 \sum_{e \in E(A)} \Theta(e),
		$$
		which is equivalent to inequality \eqref{k_value2}.
		
		\emph{Step III: $K$ is an inverse proper function.}
		
		We only need to prove that, as $u$ approaches the boundary of $\mathbb{R}^{| \mathrm{int}\, (V)|}$, one of the inequalities \eqref{k_value1} and \eqref{k_value2} becomes an equality. There are two cases:
		\begin{itemize}
			\item[(i)] There exists a vertex $i\in  \mathrm{int}\, V$ such that $u_i \rightarrow + \infty$. By calculus, we know $\alpha_i^j \rightarrow 0$. Hence $K_i  \rightarrow 2\pi$.
			\item[(ii)] There exists a non-empty set $A \subset \mathrm{int}\, V$ such that $u_i\rightarrow - \infty$ for all $i \in A$. By calculus, we know $\alpha_j^i \rightarrow 0$, which implies $ \Theta(e)$ attributes zero angle of a vertex not in $A$. So, we have
			$$
			\sum_{v_i \in A} \alpha_i(u) \rightarrow \sum_{e \in E(A)} \Theta(e),
			$$
			which means inequality \eqref{k_value2} becomes equality.
		\end{itemize}
		
		\emph{Step IV: we finish the proof.}
		
		From Brouwer invariance of domain Theorem,  we know that $K$ is a bijective function from $ \mathbb{R}^{| \mathrm{int}\, V|}$ to $Z$. 
	\end{proof}
	
	Given $A \subset  \mathrm{int}\, V$, for convenience, we use the following notations:
	\begin{itemize}
		\item $F(A)=\{f \mid \partial f \cap A \neq \emptyset\}$
		\item $E(A)=\{e \mid \partial e \cap A \neq \emptyset\}$
		\item $\operatorname{Tk}(A)=\{(e, f) \mid e \in E(A), f \in F(A), e \subset \partial f\}$
		\item $\operatorname{Lk}(A)=\{(e, f) \mid e \notin E(A), f \in F(A), e \subset \partial f\}$
        \item  We denote by $S(A)$ the CW-complex whose $2$-simplexes consists of all faces in $F(A)$. 
	\end{itemize}
	
	\begin{lem}\label{prescribed_boundary_radius_prob}
		In both hyperbolic and Euclidean background geometry, let $\mathcal{D}=(V,E,F)$  be a finite planar cellular decomposition, and $\Theta \in(0, \pi)^E$ satisfy $(C_1)$ and $(C_2)$. If we prescribe a radius $\hat r_i>0$ for any boundary vertex $i \in \partial V$, then there is a unique $\mathcal{P}= \{C_i\}_{i\in V}$ realizing $(\mathcal{D}, \Theta)$ and its radius $r_i = \hat r_i$ for any boundary vertex $i \in \partial V$.
	\end{lem}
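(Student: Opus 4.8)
The plan is to read the statement off Lemma~\ref{bobenko_springborn_type_lemma}. Fix the prescribed boundary radii $\hat r_i$ ($i\in\partial V$) and regard the discrete curvature $K=(K_i)_{i\in\mathrm{int}\,V}$ as a function of the interior log‑radii $u\in\mathbb R^{|\mathrm{int}\,V|}$. Any ideal circle pattern $\Pac$ realizing $(\mathcal D,\Theta)$ with boundary radii $\hat r$ has a circle packing metric whose cone angle at each interior vertex equals $2\pi$, i.e.\ $K(u)=0$; hence \emph{uniqueness} is immediate from the injectivity of $K$ proved in Lemma~\ref{bobenko_springborn_type_lemma}, while for \emph{existence} it suffices to show that $0$ lies in the attainable set $Z$. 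Conversely, once we know $0\in Z$, let $u^{*}$ be the unique point with $K(u^{*})=0$; the circle packing metric with radii $(e^{u^{*}_i})_{i\in\mathrm{int}\,V}$ and $(\hat r_i)_{i\in\partial V}$ has vanishing curvature at every interior vertex and, by $(C1)$, at every dual vertex of $V_F$, so the glued piecewise metric $g(r,\Theta)$ is flat (resp.\ hyperbolic) on the interior of the simply connected $\Omega$, its developing map $\eta\colon\Omega\to\mathbb R^{2}$ (resp.\ $\mathbb H^{2}$) is globally defined, and $\eta$ sends each $\Pac(v)$ to a genuine round circle; the intersection angles and the face‑incidence condition of Definition~\ref{ICP} are built into the quadrilaterals $\tilde Q_e$, so $\eta(\Pac)$ is the required pattern, with boundary radii $\hat r_i$.

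It remains to check $0\in Z$. Inequality \eqref{k_value1} at $K=0$ is trivial, and \eqref{k_value2} becomes $\sum_{e\in E(A)}\Theta(e)>\pi|A|$ for every nonempty $A\subseteq\mathrm{int}\,V$; writing $\theta(e):=\pi-\Theta(e)\in(0,\pi)$, this is equivalent to $\sum_{e\in E(A)}\theta(e)<\pi(|E(A)|-|A|)$. First consider $A=\{v\}$: let $k=\deg v$ and let $\gamma$ be the boundary cycle of the star of $v$. Since $v$ is interior and $\Omega$ is a disk with nonempty boundary, $\gamma$ cannot be the boundary of a single face (otherwise $\Omega$ would be that face glued to the star of $v$, forcing $\Omega$ to be closed or to contain an annular region equal to a face), so $(C2)$ gives $\sum_{e\in\gamma}\theta(e)>2\pi$. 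Summing $(C1)$ over the $k$ faces at $v$, in which each edge incident to $v$ is counted twice and each edge of $\gamma$ once, gives $2\pi k=2\sum_{e\ni v}\theta(e)+\sum_{e\in\gamma}\theta(e)>2\sum_{e\ni v}\theta(e)+2\pi$, whence $\sum_{e\ni v}\Theta(e)=\pi k-\sum_{e\ni v}\theta(e)>\pi$. The general case follows the same pattern with the subcomplex $S(A)$ (its $2$‑cells are the faces in $F(A)$) in place of the star: every edge of $E(A)$ is incident to an interior vertex, hence lies in exactly two faces of $F(A)$, so summing $(C1)$ over $F(A)$ yields $2\pi|F(A)|=2\sum_{e\in E(A)}\theta(e)+\sum_{(e,f)\in\mathrm{Lk}(A)}\theta(e)$; the boundary $\partial S(A)$ is a disjoint union of closed edge‑paths $\gamma_1,\dots,\gamma_H$, each satisfying $\sum_{e\in\gamma_j}\theta(e)\ge 2\pi$ by $(C1)$ or $(C2)$ and at least one satisfying it strictly (again because a disk has nonempty boundary); feeding these relations, the Euler‑characteristic identity $|V(S(A))|-|E(S(A))|+|F(A)|=2c-H$ for the planar complex $S(A)$ (with $c$ connected components), and the positivity $\theta>0$ into the target inequality produces $\sum_{e\in E(A)}\theta(e)<\pi(|E(A)|-|A|)$.

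The analytic core — injectivity and inverse‑properness of $K$, via positive‑definiteness of $(\partial K_i/\partial u_j)$ coming from Lemma~\ref{vari} — is already packaged in Lemma~\ref{bobenko_springborn_type_lemma}, so the genuine work here is the purely combinatorial claim $0\in Z$, and within it the one delicate point is the Euler‑characteristic bookkeeping in the general‑$A$ step: controlling $|E(A)|$ and $|F(A)|$, counting the connected and boundary components of $S(A)$, and verifying that $S(A)$ has a non‑facial boundary curve. The single‑vertex case is immediate, uniqueness is one line from injectivity, and the developing‑map construction of the actual circle pattern is routine by comparison.
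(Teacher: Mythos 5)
Your overall route is the same as the paper's: delegate the analytic work to Lemma~\ref{bobenko_springborn_type_lemma}, get uniqueness from the injectivity of $K$, and reduce existence to the purely combinatorial claim that $K=0$ lies in the attainable set $Z$, i.e.\ that \eqref{k_value2} holds at $K=0$, which you correctly rewrite as $\sum_{e\in E(A)}\theta(e)<\pi\bigl(|E(A)|-|A|\bigr)$ with $\theta=\pi-\Theta$. Your single-vertex case is complete and correct, and your summation of $(C1)$ over $F(A)$ (valid because every edge of $E(A)$ has an interior endpoint in $A$ and hence lies in exactly two faces of $F(A)$) is exactly the paper's splitting into $\operatorname{Tk}(A)$ and $\operatorname{Lk}(A)$.

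The general-$A$ step, however, is asserted rather than proved, and it is the crux of the lemma. After the $(C1)$-summation the target inequality becomes $\pi\bigl(|A|-|E(A)|+|F(A)|\bigr)<\tfrac12\sum_{(e,f)\in\operatorname{Lk}(A)}\theta(e)$, and the alternating sum on the left runs over $A$, $E(A)$, $F(A)$ only; it is not $\chi(S(A))=|V(S(A))|-|E(S(A))|+|F(A)|$, because $S(A)$ also contains the vertices and edges disjoint from $A$, and these two counts differ exactly by that non-$A$ part of the $1$-skeleton. Moreover $S(A)$ need not be a surface with boundary (two stars of vertices of $A$ can meet in a single vertex, creating pinch points), so the formula $\chi=2c-H$ you invoke, and the description of $\partial S(A)$ as disjoint simple closed edge-paths, both need justification before they can be ``fed in''. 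Bridging these counts — showing the extra vertices and edges of $S(A)$ are absorbed by the $\operatorname{Lk}$-pairs and the boundary cycles, and then applying $(C2)$ to a non-facial boundary cycle (or $\Theta<\pi$ when the Euler characteristic is nonpositive) — is precisely the content of the paper's computation \eqref{finiteICP_proof_1}--\eqref{finiteICP_proof_3}, carried out after reducing to $S(A)$ connected by additivity. Since you explicitly defer this bookkeeping, the proposal has a genuine gap at exactly the point where the paper's proof does its work; everything else (uniqueness from injectivity, the reduction to $0\in Z$, the single-vertex case, and the argument that the outer boundary cycle cannot bound a face) matches the paper and is fine.
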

	\begin{proof}
		By Lemma \ref{bobenko_springborn_type_lemma}, we only need to check $\left(0, 0, \ldots, 0\right)$ satisfies the inequalities \eqref{k_value1} and \eqref{k_value2}.  The inequality \eqref{k_value1} is natural from the definition of discrete Gauss curvature. As for inequality \eqref{k_value2}, from  $(C_1)$, we have
		$$
		2 \pi+\sum_{e_i \in \partial f}\left(\Theta\left(e_i\right)-\pi\right)=0, \quad \forall f \in F.
		$$
		Hence, for  any non-empty $A \subset \mathrm{int}\, V$,
		\begin{equation}\label{finiteICP_proof_1}
			\begin{aligned}
				0 & =\sum_{f \in F(A)}\left[2 \pi+\sum_{e_i \in \partial f}\left(\Theta\left(e_i\right)-\pi\right)\right] \\
				& =2 \pi|F(A)|+\sum_{f \in F(A)} \sum_{e_i \in \partial f}\left(\Theta\left(e_i\right)-\pi\right) \\
				& =2 \pi|F(A)|+\sum_{(e, f) \in \operatorname{Tk}(A)}(\Theta(e)-\pi)+\sum_{(e, f) \in \operatorname{Lk}(A)}(\Theta(e)-\pi) .
			\end{aligned}
		\end{equation}
        Without loss of generality, we assume that $S(A)$ is connected, since the quantities $|A|$ and $\sum_{e \in E(A)} \Theta(e)$ are additive with respect to connected components.
		For an edge $e$ in $\operatorname{Tk}(A)$, e must connect with two faces in $F(A)$, we have
		\begin{equation}\label{finiteICP_proof_2}
			\sum_{(e, f) \in \operatorname{Tk}(A)}(\Theta(e)-\pi)=2 \sum_{E(A)} \Theta(e)-2 \pi\left|E(A)\right|.
		\end{equation}
		Combining \eqref{finiteICP_proof_1} and \eqref{finiteICP_proof_2}, it follows
		\begin{equation}\label{finiteICP_proof_3}
			\begin{aligned}
				& 2 \pi|A| -2 \sum_{e \in E(A)} \Theta(e)\\
				=& 2 \pi|A| -2 \pi\left|E(A)\right|-\sum_{(e, f) \in \operatorname{Tk}(A)}(\Theta(e)-\pi)\\
				= & 2 \pi|A| -2 \pi\left|E(A)\right| +2 \pi|F(A)|+\sum_{(e, f) \in \operatorname{Lk}(A)}(\Theta(e)-\pi)    \\
				= & 2\pi \chi (S(A)) +\sum_{(e, f) \in \operatorname{Lk}(A)}(\Theta(e)-\pi),  \\
			\end{aligned}
		\end{equation}
		where $\chi (S(A))$ is the Euler characteristic number of $S(A)$. If $\chi (S(A))>0$, $S(A)$ is homeomorphic to a disk. Therefore, $\chi (S(A))=1$. Together with $(C_2)$, from \eqref{finiteICP_proof_3}, we have
		$$
		2 \pi|A| -2 \sum_{e \in E(A)} \Theta(e)< 0,
		$$
		which is exactly the inequality \eqref{k_value2} when $$\left(K_1, K_2, \ldots, K_{|\mathrm{int}\, V|}\right)= \left(0, 0, \ldots, 0\right)$$.
	\end{proof}

  By employing Lemma \ref{MaximumPrincipleHyper} and Lemma \ref{prescribed_boundary_radius_prob}, we can give a proof of Theorem \ref{finite_existence}. 
  
  \begin{comment}
  This proof follows the mind game described by Stephenson. In this game, the boundary circles are in charge of expanding the territory, while the interior circles work to keep the team stable. That is:
\begin{itemize}
    \item In each round, each boundary circle doubles its radius.
    \item In each round, each interior circle adjusts its radius so that the total angle around it stays equal to \(2\pi\), while the intersection angle pattern \(\Theta\) remains unchanged.
\end{itemize}
\end{comment}
    \begin{proof}[Proof of Theorem \ref{finite_existence}]

    We consider this problem in the hyperbolic background geometry.

    Choose any initial prescribed boundary radius \( \hat{r}_i > 0 \) for \( i \in \partial V \).

    Define \( \hat{r}_i^{[n]} = 2^n \hat{r}_i \) for any \( i \in \partial V \) and \( n \in \mathbb{N} \). From Lemma~\ref{prescribed_boundary_radius_prob}, we know that for each \( n \), there exists a unique circle pattern \( \mathcal{P}^{[n]} = \{ C_i^{[n]} \}_{i \in V} \) realizing the pair \( (\mathcal{D}, \Theta) \). Let \( r_i^{[n]} \) be the radius of \( C_i^{[n]} \). Then for any \( i \in \partial V \), we have \( r_i^{[n]} = \hat{r}_i^{[n]} \).
    
    From Lemma~\ref{MaximumPrincipleHyper}, we know that for any \( n_1 < n_2 \), the radii satisfy
    \begin{equation} \label{finitehoro_p1}
        r_i^{[n_1]} < r_i^{[n_2]}, \quad \forall i \in V.
    \end{equation}
    
    Since \( \mathcal{D} \) is finite, Proposition~\ref{hyperconstr} implies that for each \( i \in \mathrm{Int}~V \), there exists a constant \( L_i > 0 \) such that if \( r_i > L_i \), then the total angle \( \alpha_i < \pi \), which contradicts the fact that \( \mathcal{P}^{[n]} \) realizes \( (\mathcal{D}, \Theta) \). Therefore, for all \( n \in \mathbb{N} \), we have
    \begin{equation} \label{finitehoro_p2}
        r_i^{[n]} \leq L_i, \quad \forall i \in \mathrm{Int}~V.
    \end{equation}
    
    Combining \eqref{finitehoro_p1} and \eqref{finitehoro_p2}, we see that the sequence \( \{ r^{[n]} \} \) converges as \( n \to +\infty \). Let \( \tilde{r} \) be the limit of this sequence. Then for each \( i \in \partial V \), we have \( \tilde{r}_i = +\infty \), corresponding to a horocycle; and for each \( i \in \mathrm{Int}~V \), we have \( \tilde{r}_i \leq L_i \), corresponding to a finite circle.
    
    It follows that the limiting configuration \( \mathcal{P} = \{ \Pac(v) \}_{v \in V} \), where each \( \Pac(v) \) has radius \( \tilde{r}_v \), defines an immersed ideal circle pattern realizing \( (\mathcal{D}, \Theta) \), with boundary vertices corresponding to horocycles. By a similar topological argument introduced in Stephenson's book \cite[Page 68]{Stephenson_intro}, we see it is an embedded ICP in hyperbolic background geometry. This is because one can modify the developing map around the boundary, see Figure \ref{modification}.

    \begin{figure}
        \centering
        \includegraphics[width=0.8\linewidth]{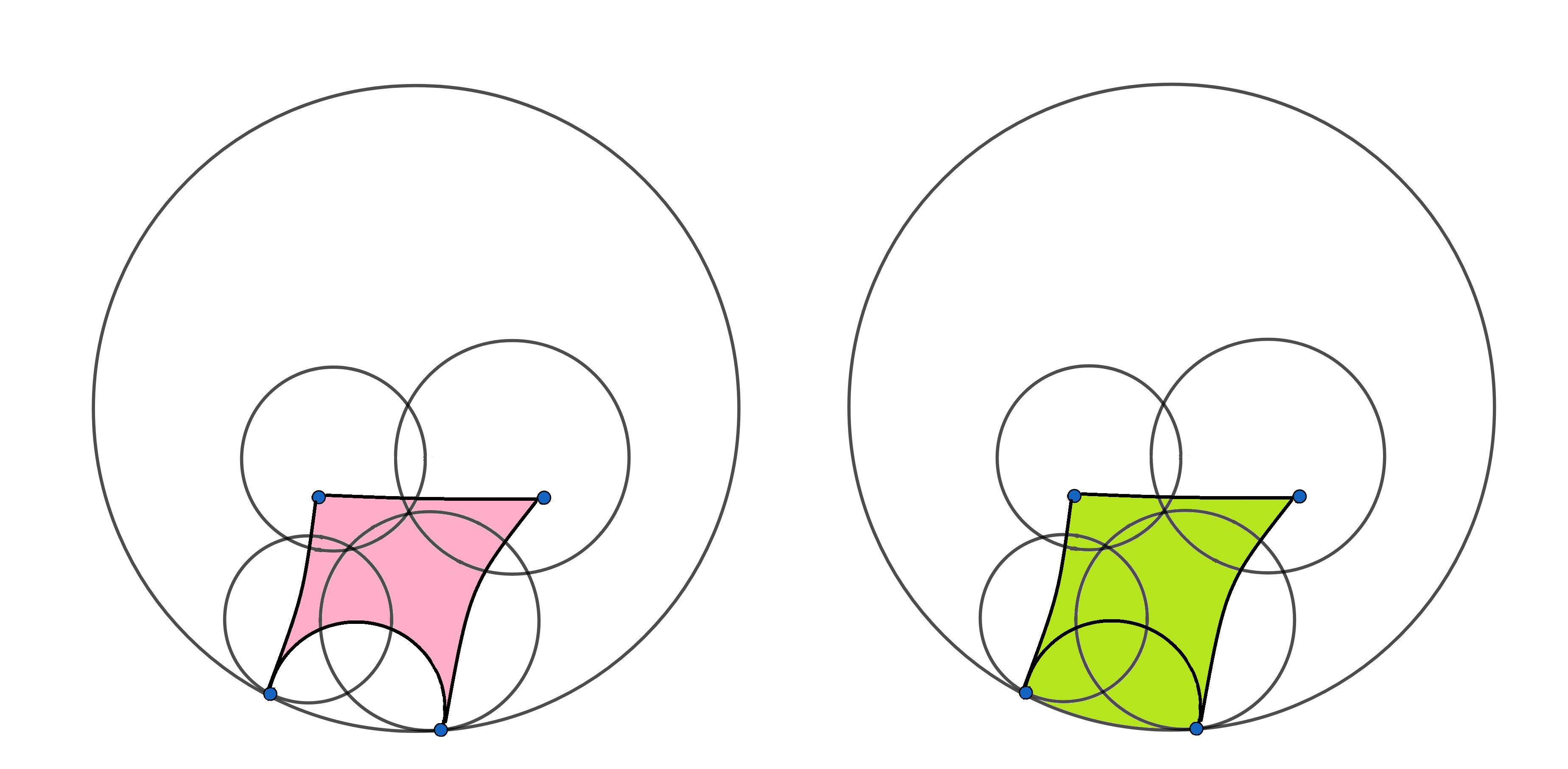}
        \caption{The modification of the developing map.}
        \label{modification}
    \end{figure}
Therefore, the modified map $\tilde{\eta}$ maps the underlying space of $\mathcal{D}$ to the unit disk. Moreover, $\tilde{\eta}$ is a local homeomorphism that maps the boundary of $\mathcal{D}$ to $\partial\mathbb{D}^2$. Therefore, $\tilde{\eta}$ is a homeomorphism. So $\eta$ is an embedding. The uniqueness of such ICP follows from the maximum principle \ref{MaximumPrincipleHyper} directly.
\begin{comment}
    Let $\tilde r^{\prime}$ be the radius of another different circle pattern $\mathcal{P}^{\prime}$. Then by Lemma \ref{MaximumPrincipleHyper}, we know 
    \begin{equation}\label{finitehoro_p3}
    r_i^{[n]}\leq \tilde r^{\prime}_i, \quad \forall i \in \mathrm{Int~} V.
    \end{equation}
    hence,
    \begin{equation}\label{finitehoro_p4}
    \tilde r_i \leq \tilde r^{\prime}_i, \quad \forall i \in \mathrm{Int~} V.
    \end{equation}

    Let $S$ be the area of all quadrilaterals $\tilde{Q}_{e}$, $\forall e \in E$. For a hyperbolic triangle, we know its area equals to $\pi$ minus the sum of its inner angles. Thus, for any possible circle pattern, we have
    $$
    S= \sum_{e} 2\Theta_e-2\pi |\mathrm{Int~} V|, 
    $$

    From Lemma \ref{vari}, if 
    
   \end{comment}

    \end{proof}

	\subsection{Vertex extremal lengths on cellular decompositions and triangulations}
	In this part, we will prove Lemma \ref{hehyp}. Actually, when $\mathcal{D}$ is an infinite triangulation, this lemma was already proved by He, see \cite[Lemma 7.3]{HE}. Therefore, we actually need to prove the following lemma.
	\begin{lem}
		Let $\mathcal{D}=(V,E,F)$ be an infinite disk cellular decomposition whose vertex degree and face degree are bounded by $N$. By adding an auxiliary point (or dual point) $v_f$ in each face $f\in F$ and connecting $v_f$ with all vertices on $f$, we obtain a triangulation $\mathcal{T}=(V',E',F')$ which is the subdivision of $\mathcal{D}$. Let $\vel_{\mathcal{T}}$ and $\vel_{\mathcal{D}}$ be the vertex extremal length defined  in $\mathcal{T}$ and $\mathcal{D}$, respectively. Then there exists a uniform constant $C=C(N)$ such that for any two disjoint non-empty vertex sets $W_1$ and $W_2$ of $\mathcal{D}$, we have
		\[
		C^{-1}\vel_{\mathcal{D}}(W_1,W_2)\le\vel_{\mathcal{T}}(W_1,W_2)\le C\vel_{\mathcal{D}}(W_1,W_2)
		\]
	\end{lem}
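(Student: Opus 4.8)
The plan is to compare vertex extremal lengths in the cellular decomposition $\mathcal{D}$ and in its triangulation $\mathcal{T}$ by transporting admissible metrics back and forth, exploiting the bounded-degree hypothesis to control the multiplicative loss. The key observation is that $\mathcal{T}$ has bounded vertex degree: a vertex $v\in V$ has $\mathcal{T}$-degree at most $\deg_{\mathcal D}(v)+\sum_{f:v<f}\deg(f)\le N+N^2$, while a dual vertex $v_f$ has $\mathcal{T}$-degree exactly $\deg(f)\le N$. So $\mathcal{T}\in\mathcal{BG}(N+N^2)$, and every edge of $\mathcal{D}$ is an edge of $\mathcal{T}$, every $\mathcal{D}$-path is a $\mathcal{T}$-path, and every $\mathcal{T}$-path lies within bounded combinatorial distance of a $\mathcal{D}$-path.

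First I would prove the inequality $\vel_{\mathcal T}(W_1,W_2)\le C\,\vel_{\mathcal D}(W_1,W_2)$, equivalently $\mathrm{MOD}_{\mathcal D}(\Gamma_{\mathcal D})\le C\,\mathrm{MOD}_{\mathcal T}(\Gamma_{\mathcal T})$. Given a $\Gamma_{\mathcal T}(W_1,W_2)$-admissible metric $m'$ on $V'$, define $m$ on $V$ by $m(v)=m'(v)+\sum_{f:v<f}m'(v_f)$ (a vertex $v$ ``absorbs'' the mass of the dual vertices of its incident faces). For any $\mathcal{D}$-path $\gamma=(v_0,\dots,v_n)$ connecting $W_1$ and $W_2$, I would show that the $\mathcal{T}$-length of $\gamma$ (viewed in $\mathcal{T}$) is at most $\int_\gamma dm$, so $m$ is $\Gamma_{\mathcal D}$-admissible; and $\area(m)\le C(N)\area(m')$ by Cauchy–Schwarz, since each $m'(v_f)$ is counted at most $\deg(f)\le N$ times among the $v<f$, and $(a+b_1+\cdots+b_N)^2\le (N+1)(a^2+b_1^2+\cdots+b_N^2)$. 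This gives the upper bound with an explicit $C=C(N)$.

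For the reverse inequality $\vel_{\mathcal D}(W_1,W_2)\le C\,\vel_{\mathcal T}(W_1,W_2)$, I would start from a $\Gamma_{\mathcal D}(W_1,W_2)$-admissible metric $m$ on $V$ and build $m'$ on $V'$: set $m'(v)=c_1 m(v)$ for $v\in V$ and $m'(v_f)=c_2\max_{v<f}m(v)$ for dual vertices, with constants $c_1,c_2=c_1(N),c_2(N)$ chosen large enough (but bounded in terms of $N$ only). The point is that any $\mathcal{T}$-path $\gamma'$ joining $W_1$ to $W_2$ can be ``projected'' to a $\mathcal{D}$-path: replace each dual vertex $v_f$ appearing in $\gamma'$ by an arc through $V(f)$ of bounded length (at most $\deg(f)\le N$ vertices), and at each step $v_f\to v_g$ through a shared vertex the projected path stays an edge path in $\mathcal{D}$. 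Because $W_1,W_2\subset V$ (they are vertex sets of $\mathcal{D}$), the projected path still connects $W_1$ to $W_2$; then $\int_{\gamma'}dm'\ge c_2\cdot(\text{something})\ge$ a controlled multiple of $\int_{\text{projection}}dm\ge 1$. The area estimate $\area(m')\le C(N)\area(m)$ again follows from bounded face degree, since each $m'(v_f)^2=c_2^2(\max_{v<f}m(v))^2\le c_2^2\sum_{v<f}m(v)^2$ and each $m(v)^2$ is charged by at most $N$ faces.

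The main obstacle I anticipate is the second (reverse) direction: making the ``projection of $\mathcal{T}$-paths to $\mathcal{D}$-paths'' rigorous and verifying that the projected path is genuinely admissible, i.e. has $m$-length $\ge 1$, while the detour through $V(f)$ for each traversed dual vertex does not inflate the length by more than a factor depending only on $N$. One must be careful that consecutive dual vertices $v_f,v_g$ in $\gamma'$ need not be adjacent in $\mathcal{D}$ unless the faces $f,g$ share a vertex — but in $\mathcal{T}$ they are adjacent only through a common vertex $v$, so this is automatic — and that a $\mathcal{T}$-path might oscillate between $V$ and $V_F$ many times, each oscillation contributing a bounded-length $\mathcal{D}$-detour. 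Handling this bookkeeping cleanly, and choosing $c_1,c_2$ so that admissibility is preserved, is where the real work lies; everything else is Cauchy–Schwarz with constants governed by $N$.
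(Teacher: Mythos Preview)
Your overall strategy---transporting admissible metrics between $\mathcal{D}$ and $\mathcal{T}$ and controlling the area by Cauchy--Schwarz with constants depending on $N$---is exactly the paper's approach, and your second direction (extending a $\mathcal{D}$-metric to $\mathcal{T}$ by assigning to each $v_f$ a value comparable to the $m$-mass of $V(f)$) matches the paper almost verbatim: the paper sets $\tilde m(v_f)=\sum_{v<f}m(v)$, which is the same as your $c_2\max_{v<f}m(v)$ up to a factor $\le N$.

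However, you substantially overcomplicate the first direction. Since every edge of $\mathcal{D}$ is an edge of $\mathcal{T}$, every $\mathcal{D}$-path is literally a $\mathcal{T}$-path. Hence if $m'$ is $\Gamma_{\mathcal T}(W_1,W_2)$-admissible, the bare restriction $m=m'|_V$ is already $\Gamma_{\mathcal D}(W_1,W_2)$-admissible, with $\area(m)\le\area(m')$; this immediately gives $\vel_{\mathcal T}(W_1,W_2)\le\vel_{\mathcal D}(W_1,W_2)$ with constant $1$. Your absorption trick $m(v)=m'(v)+\sum_{f:v<f}m'(v_f)$ is unnecessary. Relatedly, the ``main obstacle'' you anticipate in the projection argument is not a real difficulty: dual vertices in $\mathcal{T}$ are never adjacent to each other, so every occurrence of $v_f$ in a $\mathcal{T}$-path is sandwiched between two vertices of $V(f)$, and replacing $v_f$ by the boundary arc of $f$ between them (of $m$-length at most $\sum_{v<f}m(v)=\tilde m(v_f)$) gives a $\mathcal{D}$-path at once. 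No oscillation bookkeeping is needed.
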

	\begin{proof}
		Let $m$ be a $\Gamma(W_1,W_2)$-admissible vertex metric in $\mathcal{D}$. We define a new vertex metric $\tilde{m}$ on $V'$ as follows.
		\[\tilde{m}(v)=
		\left\{\begin{array}{cc}
			m(v), & v\in V,\\
			\sum_{v<f}m(v), & v=v_f\in V'\backslash V.
		\end{array}\right.
		\]
		It is easy to see that $\tilde{m}$ is also $\Gamma(W_1,W_2)$-admissible in $\mathcal{T}$.
		Moreover, \begin{align*}
		\area(\tilde{m})&=\area(m)+\sum_{f\in F}(\sum_{v<f}m(v))^2\\&\le\area(m)+N\sum_{f\in F}\sum_{v<f}m(v)^2\\&\le (N^2+1)\area(m).
        \end{align*}
		Therefore, we have 
		\[
		\vel_{\mathcal{D}}(W_1,W_2)\le(N^2+1)\vel_{\mathcal{T}}(W_1,W_2).
		\]
		We then assume that $m$ is a $\Gamma(W_1,W_2)$-admissible vertex metric in $\mathcal{T}$. We define a vertex metric $\tilde{m}=m|_{V}$. Since $m$ is $\Gamma(W_1,W_2)$-admissible in $\mathcal{T}$, $\tilde{m}$ is also $\Gamma(W_1,W_2)$-admissible in $\mathcal{T}$. Therefore, we have
		\[
		\vel_{\mathcal{D}}(W_1,W_2)\ge\vel_{\mathcal{T}}(W_1,W_2).
		\]

	\end{proof}

    \bigskip
    \textbf{Acknowledgements.} 
    The authors would like to  express their gratitude to Feng Luo, Bobo Hua and Longsong Jia for helpful discussions and suggestions.
    H. Yu would like to thank Chuwen Wang, Guangming Hu, and Yangxiang Lu; P. Zhou would like to thank Tianqi Wu, Guangming Hu, Wai Yeung Lam, and Te Ba for their valuable suggestions. H. Ge is supported by NSFC, no.12341102, no.12122119, no.12525103. H. Yu is supported by NSFC, no.12531001.

	\bibliographystyle{plain}
	\bibliography{reference}

@book {anandam2011harmonic,
    AUTHOR = {Anandam, Victor},
     TITLE = {{{\normalfont Harmonic functions and potentials on finite or infinite networks}}},
    SERIES = {Lecture Notes of the Unione Matematica Italiana},
    VOLUME = {12},
 PUBLISHER = {Springer, Heidelberg; UMI, Bologna},
      YEAR = {2011},
     PAGES = {x+141},
      ISBN = {978-3-642-21398-4},
   MRCLASS = {31-02 (05C82 31C20 35R02)},
  MRNUMBER = {2816628},
MRREVIEWER = {Flavia\ Colonna},
       DOI = {10.1007/978-3-642-21399-1},
       URL = {https://doi.org/10.1007/978-3-642-21399-1},
}

@article{convergence_quad,
 author = {B{\"u}cking, Ulrike},
 title = {On rigidity and convergence of circle patterns},
 fjournal = {Discrete \& Computational Geometry},
 journal = {Discrete Comput. Geom.},
 issn = {0179-5376},
 volume = {61},
 number = {2},
 pages = {380--420},
 year = {2019},
 language = {English},
 doi = {10.1007/s00454-018-0022-0},
 keywords = {52C26,52C25,30E10},
 zbMATH = {7007881},
 Zbl = {1407.52030}
}

@article {MR4829680,
    AUTHOR = {Hua, Bobo and M\"unch, Florentin},
     TITLE = {Graphs with nonnegative curvature outside a finite subset,
              harmonic functions, and number of ends},
   JOURNAL = {J. Lond. Math. Soc. (2)},
  FJOURNAL = {Journal of the London Mathematical Society. Second Series},
    VOLUME = {110},
      YEAR = {2024},
    NUMBER = {6},
     PAGES = {Paper No. e70034, 19},
      ISSN = {0024-6107,1469-7750},
   MRCLASS = {53C21 (31C20 53A70 53C23)},
  MRNUMBER = {4829680},
       DOI = {10.1112/jlms.70034},
       URL = {https://doi.org/10.1112/jlms.70034},
}

@article{thurston1980geometry,
  title={The Geometry and Topology of Three-Manifolds},
  author={Thurston, William P.},
  journal={\href{http://www.msri.org/gt3m}{http://www.msri.org/gt3m}},
  year={1980}
}

@incollection {MR2052831,
    AUTHOR = {Schlenker, Jean-Marc},
     TITLE = {Des immersions isom\'etriques de surfaces aux vari\'et\'es
              hyperboliques \`a{} bord convexe},
 BOOKTITLE = {S\'eminaire de {T}h\'eorie {S}pectrale et {G}\'eom\'etrie.
              {V}ol. 21. {A}nn\'ee 2002--2003},
    SERIES = {S\'emin. Th\'eor. Spectr. G\'eom.},
    VOLUME = {21},
     PAGES = {165--216},
 PUBLISHER = {Univ. Grenoble I, Saint-Martin-d'H\`eres},
      YEAR = {2003},
   MRCLASS = {53C45 (53C24 53C50)},
  MRNUMBER = {2052831},
MRREVIEWER = {Igor\ Rivin},
}

@article {MR4907954,
    AUTHOR = {Chen, Qiyu and Schlenker, Jean-Marc},
     TITLE = {The geometric data on the boundary of convex subsets of
              hyperbolic manifolds},
   JOURNAL = {Trans. Amer. Math. Soc.},
  FJOURNAL = {Transactions of the American Mathematical Society},
    VOLUME = {378},
      YEAR = {2025},
    NUMBER = {6},
     PAGES = {4225--4301},
      ISSN = {0002-9947,1088-6850},
   MRCLASS = {58J32 (57K32 58J05)},
  MRNUMBER = {4907954},
       DOI = {10.1090/tran/9391},
       URL = {https://doi.org/10.1090/tran/9391},
}

@article {MR4809242,
    AUTHOR = {Luo, Feng and Wu, Tianqi},
     TITLE = {The {K}oebe conjecture and the {W}eyl problem for convex
              surfaces in hyperbolic 3-space},
   JOURNAL = {Adv. Math.},
  FJOURNAL = {Advances in Mathematics},
    VOLUME = {458},
      YEAR = {2024},
     PAGES = {Paper No. 109969, 56},
      ISSN = {0001-8708,1090-2082},
   MRCLASS = {30C35 (30C20 30C62 53A55)},
  MRNUMBER = {4809242},
       DOI = {10.1016/j.aim.2024.109969},
       URL = {https://doi.org/10.1016/j.aim.2024.109969},
}

@article {Rodin_Sullivan,
    AUTHOR = {Rodin, Burt and Sullivan, Dennis},
     TITLE = {The convergence of circle packings to the {R}iemann mapping},
   JOURNAL = {J. Differential Geom.},
  FJOURNAL = {Journal of Differential Geometry},
    VOLUME = {26},
      YEAR = {1987},
    NUMBER = {2},
     PAGES = {349--360},
      ISSN = {0022-040X,1945-743X},
   MRCLASS = {30C35 (05B40 11H31 52A45 57N05)},
  MRNUMBER = {906396},
       URL = {http://projecteuclid.org/euclid.jdg/1214441375},
}

@book {burago2022course,
    AUTHOR = {Burago, Dmitri and Burago, Yuri and Ivanov, Sergei},
     TITLE = {{{\normalfont A course in metric geometry}}},
    SERIES = {Graduate Studies in Mathematics},
    VOLUME = {33},
 PUBLISHER = {American Mathematical Society, Providence, RI},
      YEAR = {2001},
     PAGES = {xiv+415},
      ISBN = {0-8218-2129-6},
   MRCLASS = {53C23},
  MRNUMBER = {1835418},
MRREVIEWER = {Mario\ Bonk},
       DOI = {10.1090/gsm/033},
       URL = {https://doi.org/10.1090/gsm/033},
}

@article {MR2022715,
    AUTHOR = {Bobenko, Alexander I. and Springborn, Boris A.},
     TITLE = {Variational principles for circle patterns and {K}oebe's
              theorem},
   JOURNAL = {Trans. Amer. Math. Soc.},
  FJOURNAL = {Transactions of the American Mathematical Society},
    VOLUME = {356},
      YEAR = {2004},
    NUMBER = {2},
     PAGES = {659--689},
      ISSN = {0002-9947},
   MRCLASS = {52C26 (05B40 57M50)},
  MRNUMBER = {2022715},
MRREVIEWER = {Igor Rivin},
       DOI = {10.1090/S0002-9947-03-03239-2},
       URL = {https://doi.org/10.1090/S0002-9947-03-03239-2},
}

@article {MR4389487,
    AUTHOR = {Dai, Song and Ge, Huabin and Ma, Shiguang},
     TITLE = {Rigidity of the hexagonal {D}elaunay triangulated plane},
   JOURNAL = {Peking Math. J.},
  FJOURNAL = {Peking Mathematical Journal},
    VOLUME = {5},
      YEAR = {2022},
    NUMBER = {1},
     PAGES = {1--20},
      ISSN = {2096-6075,2524-7182},
   MRCLASS = {52C25 (52C26)},
  MRNUMBER = {4389487},
MRREVIEWER = {Ren\ Ding},
       DOI = {10.1007/s42543-021-00036-8},
       URL = {https://doi.org/10.1007/s42543-021-00036-8},
}

@article {coxeter1950self,
    AUTHOR = {Coxeter, H. S. M.},
     TITLE = {Self-dual configurations and regular graphs},
   JOURNAL = {Bull. Amer. Math. Soc.},
  FJOURNAL = {Bulletin of the American Mathematical Society},
    VOLUME = {56},
      YEAR = {1950},
     PAGES = {413--455},
      ISSN = {0002-9904},
   MRCLASS = {48.0X},
  MRNUMBER = {38078},
MRREVIEWER = {W.\ T.\ Tutte},
       DOI = {10.1090/S0002-9904-1950-09407-5},
       URL = {https://doi.org/10.1090/S0002-9904-1950-09407-5},
}

@article {2003Combinatorial,
    AUTHOR = {Chow, Bennett and Luo, Feng},
     TITLE = {Combinatorial {R}icci flows on surfaces},
   JOURNAL = {J. Differential Geom.},
  FJOURNAL = {Journal of Differential Geometry},
    VOLUME = {63},
      YEAR = {2003},
    NUMBER = {1},
     PAGES = {97--129},
      ISSN = {0022-040X,1945-743X},
   MRCLASS = {53C44},
  MRNUMBER = {2015261},
MRREVIEWER = {Igor\ Rivin},
       URL = {http://projecteuclid.org/euclid.jdg/1080835659},
}

@article {verdiere91,
    AUTHOR = {Colin de Verdi\`ere, Y.},
     TITLE = {Un principe variationnel pour les empilements de cercles},
   JOURNAL = {Invent. Math.},
  FJOURNAL = {Inventiones Mathematicae},
    VOLUME = {104},
      YEAR = {1991},
    NUMBER = {3},
     PAGES = {655--669},
      ISSN = {0020-9910},
   MRCLASS = {57M50 (30F99 51M16 52C15)},
  MRNUMBER = {1106755},
MRREVIEWER = {N. V. Ivanov},
       DOI = {10.1007/BF01245096},
       URL = {https://doi.org/10.1007/BF01245096},
}

@article{zbMATH08008639,
 author = {Bowers, Philip L. and Ruffoni, Lorenzo},
 title = {Infinite circle packings on surfaces with conical singularities},
 fjournal = {Computational Geometry},
 journal = {Comput. Geom.},
 issn = {0925-7721},
 volume = {127},
 pages = {13},
 note = {Id/No 102160},
 year = {2025},
 language = {English},
 doi = {10.1016/j.comgeo.2024.102160},
 keywords = {30F10,52C26},
 zbMATH = {8008639},
 Zbl = {1559.30012}
}

@article {ge2021combinatorial,
    AUTHOR = {Ge, Huabin and Hua, Bobo and Zhou, Ze},
     TITLE = {Combinatorial {R}icci flows for ideal circle patterns},
   JOURNAL = {Adv. Math.},
  FJOURNAL = {Advances in Mathematics},
    VOLUME = {383},
      YEAR = {2021},
     PAGES = {Paper No. 107698, 26},
      ISSN = {0001-8708,1090-2082},
   MRCLASS = {53E20 (52C26 53A70)},
  MRNUMBER = {4235204},
       DOI = {10.1016/j.aim.2021.107698},
       URL = {https://doi.org/10.1016/j.aim.2021.107698},
}

@article{zhou2023generalizing,
 author = {Zhou, Ze},
 title = {Generalizing {Andreev}'s theorem via circle patterns},
 fjournal = {Mathematische Annalen},
 journal = {Math. Ann.},
 issn = {0025-5831},
 volume = {393},
 number = {3-4},
 pages = {2899--2923},
 year = {2025},
 language = {English},
 doi = {10.1007/s00208-025-03286-4},
 keywords = {52C26,52B10,57M15,51M15},
 zbMATH = {8153386}
}

@article {MR1193599,
    AUTHOR = {Rivin, Igor and Hodgson, Craig D.},
     TITLE = {A characterization of compact convex polyhedra in hyperbolic
              {$3$}-space},
   JOURNAL = {Invent. Math.},
  FJOURNAL = {Inventiones Mathematicae},
    VOLUME = {111},
      YEAR = {1993},
    NUMBER = {1},
     PAGES = {77--111},
      ISSN = {0020-9910,1432-1297},
   MRCLASS = {52A55 (52A15 53C23 53C45)},
  MRNUMBER = {1193599},
MRREVIEWER = {Feng\ Luo},
       DOI = {10.1007/BF01231281},
       URL = {https://doi.org/10.1007/BF01231281},
}

@article{guo2007note,
  title={A note on circle patterns on surfaces},
  author={Guo, R.},
  journal={Geometriae Dedicata},
  volume={125},
  pages={175--190},
  year={2007}
}

@book {gu2008computational,
    AUTHOR = {Gu, Xianfeng D. and Yau, Shing-Tung},
     TITLE = {{{\normalfont Computational conformal geometry}}},
    SERIES = {Advanced Lectures in Mathematics (ALM)},
    VOLUME = {3},
 PUBLISHER = {International Press, Somerville, MA; Higher Education Press,
              Beijing},
      YEAR = {2008},
     PAGES = {vi+295},
      ISBN = {978-1-57146-171-1},
   MRCLASS = {68U05 (52B55 53A30 65D17)},
  MRNUMBER = {2439718},
MRREVIEWER = {Lyuba\ S.\ Alboul},
}

@article{andreev1970convex1,
  title={On convex polyhedra in Loba{\v{c}}evski{\u\i} spaces},
  author={{A}ndreev, E. M.},
  journal={Mathematics of the USSR-Sbornik},
  volume={10},
  number={3},
  pages={413},
  year={1970},
  publisher={IOP Publishing}
}

@article{andreev1970convex2,
  title={On convex polyhedra of finite volume in Lobachevskii space},
  author={Andreev, E. M.},
  journal={Matematicheskii Sbornik},
  volume={125},
  number={2},
  pages={256--260},
  year={1970},
  publisher={Russian Academy of Sciences, Steklov Mathematical Institute of Russian~…}
}

@article {Rivin_Euclidean,
    AUTHOR = {Rivin, Igor},
     TITLE = {Euclidean structures on simplicial surfaces and hyperbolic
              volume},
   JOURNAL = {Ann. of Math. (2)},
  FJOURNAL = {Annals of Mathematics. Second Series},
    VOLUME = {139},
      YEAR = {1994},
    NUMBER = {3},
     PAGES = {553--580},
      ISSN = {0003-486X,1939-8980},
   MRCLASS = {57M50 (57Q99)},
  MRNUMBER = {1283870},
       DOI = {10.2307/2118572},
       URL = {https://doi.org/10.2307/2118572},
}

@article {MR1370757,
    AUTHOR = {Rivin, Igor},
     TITLE = {A characterization of ideal polyhedra in hyperbolic
              {$3$}-space},
   JOURNAL = {Ann. of Math. (2)},
  FJOURNAL = {Annals of Mathematics. Second Series},
    VOLUME = {143},
      YEAR = {1996},
    NUMBER = {1},
     PAGES = {51--70},
      ISSN = {0003-486X},
   MRCLASS = {52A55 (52A15 53C23 53C45)},
  MRNUMBER = {1370757},
MRREVIEWER = {Feng Luo},
       DOI = {10.2307/2118652},
       URL = {https://doi.org/10.2307/2118652},
}

@book{alexandrov2005convex,
  title={Convex polyhedra},
  author={Alexandrov, Alexandr D.},
  volume={109},
  year={2005},
  publisher={Springer}
}

@book {Stephenson_intro,
    AUTHOR = {Stephenson, Kenneth},
     TITLE = {{{\normalfont Introduction to circle packing}}},
      NOTE = {The theory of discrete analytic functions},
 PUBLISHER = {Cambridge University Press, Cambridge},
      YEAR = {2005},
     PAGES = {xii+356},
      ISBN = {978-0-521-82356-2; 0-521-82356-0},
   MRCLASS = {52C26 (30G25)},
  MRNUMBER = {2131318},
MRREVIEWER = {Fr\'{e}d\'{e}ric\ Math\'{e}us},
}

@article {He_schramm,
    AUTHOR = {He, Zheng-Xu and Schramm, Oded},
     TITLE = {Hyperbolic and parabolic packings},
   JOURNAL = {Discrete Comput. Geom.},
  FJOURNAL = {Discrete \& Computational Geometry. An International Journal
              of Mathematics and Computer Science},
    VOLUME = {14},
      YEAR = {1995},
    NUMBER = {2},
     PAGES = {123--149},
      ISSN = {0179-5376,1432-0444},
   MRCLASS = {52C15 (30C35 30C62 60J15)},
  MRNUMBER = {1331923},
MRREVIEWER = {Tomasz\ Dubejko},
       DOI = {10.1007/BF02570699},
       URL = {https://doi.org/10.1007/BF02570699},
}

@article {HE,
    AUTHOR = {He, Zheng-Xu},
     TITLE = {Rigidity of infinite disk patterns},
   JOURNAL = {Ann. of Math. (2)},
  FJOURNAL = {Annals of Mathematics. Second Series},
    VOLUME = {149},
      YEAR = {1999},
    NUMBER = {1},
     PAGES = {1--33},
      ISSN = {0003-486X,1939-8980},
   MRCLASS = {30F20 (30C25 52C25 52C26)},
  MRNUMBER = {1680531},
MRREVIEWER = {Kenneth\ Stephenson},
       DOI = {10.2307/121018},
       URL = {https://doi.org/10.2307/121018},
}

@article {MR1301392,
    AUTHOR = {Cannon, James W.},
     TITLE = {The combinatorial {R}iemann mapping theorem},
   JOURNAL = {Acta Math.},
  FJOURNAL = {Acta Mathematica},
    VOLUME = {173},
      YEAR = {1994},
    NUMBER = {2},
     PAGES = {155--234},
      ISSN = {0001-5962,1871-2509},
   MRCLASS = {30C62 (30F10 30F40 57N05)},
  MRNUMBER = {1301392},
MRREVIEWER = {Oded\ Schramm},
       DOI = {10.1007/BF02398434},
       URL = {https://doi.org/10.1007/BF02398434},
}

@article {MR4792293,
    AUTHOR = {Dai, Song and Wu, Tianqi},
     TITLE = {Rigidity of the {D}elaunay triangulations of the plane},
   JOURNAL = {Adv. Math.},
  FJOURNAL = {Advances in Mathematics},
    VOLUME = {456},
      YEAR = {2024},
     PAGES = {Paper No. 109910, 22},
      ISSN = {0001-8708,1090-2082},
   MRCLASS = {52C20 (52C26 53A70 57Q15)},
  MRNUMBER = {4792293},
       DOI = {10.1016/j.aim.2024.109910},
       URL = {https://doi.org/10.1016/j.aim.2024.109910},
}

@article{ge2025characterizationinfiniteidealpolyhedra,
    title={Characterization of Infinite Ideal Polyhedra in Hyperbolic 3-Space via Combinatorial Ricci Flow},
  author={Ge, Huabin and Hua, Bobo and Yu, Hao and Zhou, Puchun},
  journal={arXiv:2506.05036},
  year={2025}
}

@article {Springborn2020,
    AUTHOR = {Springborn, Boris A.},
     TITLE = {Ideal hyperbolic polyhedra and discrete uniformization},
   JOURNAL = {Discrete Comput. Geom.},
  FJOURNAL = {Discrete \& Computational Geometry. An International Journal
              of Mathematics and Computer Science},
    VOLUME = {64},
      YEAR = {2020},
    NUMBER = {1},
     PAGES = {63--108},
      ISSN = {0179-5376,1432-0444},
   MRCLASS = {57K20 (52B10 52C26)},
  MRNUMBER = {4110530},
       DOI = {10.1007/s00454-019-00132-8},
       URL = {https://doi.org/10.1007/s00454-019-00132-8},
}

@article {Huang2017,
    AUTHOR = {Huang, Xiaojun and Liu, Jinsong},
     TITLE = {Characterizations of circle patterns and finite convex
              polyhedra in hyperbolic 3-space},
   JOURNAL = {Math. Ann.},
  FJOURNAL = {Mathematische Annalen},
    VOLUME = {368},
      YEAR = {2017},
    NUMBER = {1-2},
     PAGES = {213--231},
      ISSN = {0025-5831,1432-1807},
   MRCLASS = {52A55 (30F15 30F60 51M09 52C26)},
  MRNUMBER = {3651572},
MRREVIEWER = {Matthieu\ Jacquemet},
       DOI = {10.1007/s00208-016-1433-y},
       URL = {https://doi.org/10.1007/s00208-016-1433-y},
}

@article {springborn2008variational,
    AUTHOR = {Springborn, Boris A.},
     TITLE = {A variational principle for weighted {D}elaunay triangulations
              and hyperideal polyhedra},
   JOURNAL = {J. Differential Geom.},
  FJOURNAL = {Journal of Differential Geometry},
    VOLUME = {78},
      YEAR = {2008},
    NUMBER = {2},
     PAGES = {333--367},
      ISSN = {0022-040X,1945-743X},
   MRCLASS = {52B10 (52B55 52B70 52C15 57M50 57Q15)},
  MRNUMBER = {2394026},
MRREVIEWER = {Johann\ Linhart},
       URL = {http://projecteuclid.org/euclid.jdg/1203000270},
}

@article {dimitrov2015hyper,
    AUTHOR = {Dimitrov, Nikolay},
     TITLE = {Hyper-ideal circle patterns with cone singularities},
   JOURNAL = {Results Math.},
  FJOURNAL = {Results in Mathematics},
    VOLUME = {68},
      YEAR = {2015},
    NUMBER = {3-4},
     PAGES = {455--499},
      ISSN = {1422-6383,1420-9012},
   MRCLASS = {52C26 (52A55 52C25 57M50)},
  MRNUMBER = {3407569},
MRREVIEWER = {Ulrike\ B\"ucking},
       DOI = {10.1007/s00025-015-0453-3},
       URL = {https://doi.org/10.1007/s00025-015-0453-3},
}

@article {rivin2003combinatorial,
    AUTHOR = {Rivin, Igor},
     TITLE = {Combinatorial optimization in geometry},
   JOURNAL = {Adv. in Appl. Math.},
  FJOURNAL = {Advances in Applied Mathematics},
    VOLUME = {31},
      YEAR = {2003},
    NUMBER = {1},
     PAGES = {242--271},
      ISSN = {0196-8858,1090-2074},
   MRCLASS = {52A55 (57M50)},
  MRNUMBER = {1985831},
       DOI = {10.1016/S0196-8858(03)00093-9},
       URL = {https://doi.org/10.1016/S0196-8858(03)00093-9},
}

@article {gueritaud2004elementary,
    AUTHOR = {Gu\'eritaud, Fran\c{c}ois},
     TITLE = {On an elementary proof of {R}ivin's characterization of convex
              ideal hyperbolic polyhedra by their dihedral angles},
   JOURNAL = {Geom. Dedicata},
  FJOURNAL = {Geometriae Dedicata},
    VOLUME = {108},
      YEAR = {2004},
     PAGES = {111--124},
      ISSN = {0046-5755,1572-9168},
   MRCLASS = {52A55 (51M09)},
  MRNUMBER = {2112668},
MRREVIEWER = {Raquel\ D\'iaz},
       DOI = {10.1007/s10711-004-3180-y},
       URL = {https://doi.org/10.1007/s10711-004-3180-y},
}

@article {bobenko2015discrete,
    AUTHOR = {Bobenko, Alexander I. and Pinkall, Ulrich and Springborn,
              Boris A.},
     TITLE = {Discrete conformal maps and ideal hyperbolic polyhedra},
   JOURNAL = {Geom. Topol.},
  FJOURNAL = {Geometry \& Topology},
    VOLUME = {19},
      YEAR = {2015},
    NUMBER = {4},
     PAGES = {2155--2215},
      ISSN = {1465-3060,1364-0380},
   MRCLASS = {52C26 (57M50)},
  MRNUMBER = {3375525},
MRREVIEWER = {Gunter\ Semmler},
       DOI = {10.2140/gt.2015.19.2155},
       URL = {https://doi.org/10.2140/gt.2015.19.2155},
}

@article {Schlenker2005Hyperideal,
    AUTHOR = {Schlenker, Jean-Marc},
     TITLE = {Hyperideal circle patterns},
   JOURNAL = {Math. Res. Lett.},
  FJOURNAL = {Mathematical Research Letters},
    VOLUME = {12},
      YEAR = {2005},
    NUMBER = {1},
     PAGES = {85--102},
      ISSN = {1073-2780},
   MRCLASS = {52C26 (05C10 57M50)},
  MRNUMBER = {2122733},
MRREVIEWER = {Fr\'ed\'eric\ Math\'eus},
       DOI = {10.4310/MRL.2005.v12.n1.a9},
       URL = {https://doi.org/10.4310/MRL.2005.v12.n1.a9},
}

@article {danciger2020polyhedra,
    AUTHOR = {Danciger, Jeffrey and Maloni, Sara and Schlenker, Jean-Marc},
     TITLE = {Polyhedra inscribed in a quadric},
   JOURNAL = {Invent. Math.},
  FJOURNAL = {Inventiones Mathematicae},
    VOLUME = {221},
      YEAR = {2020},
    NUMBER = {1},
     PAGES = {237--300},
      ISSN = {0020-9910,1432-1297},
   MRCLASS = {52B10 (52A55 53A35)},
  MRNUMBER = {4105089},
MRREVIEWER = {Vasyl\ Gorkavyy},
       DOI = {10.1007/s00222-020-00948-9},
       URL = {https://doi.org/10.1007/s00222-020-00948-9},
}

@article{chen2022weakly,
  title={Weakly inscribed polyhedra},
  author={Chen, Hao and Schlenker, Jean-Marc},
  JOURNAL = {Trans. Amer. Math. Soc.},
  FJOURNAL = {Transactions of the American Mathematical Society},
  volume={9},
  number={14},
  pages={415--449},
  year={2022}
}
	
	%Furthermore, we also need a geometric lemma, which will also be proved in the Appendix \ref{appendix}.
	%%   Let $\mathcal{D}$ be an infinite disk cellular decomposition with vertex degree bounded by $N$. Let $\mathcal{P}\in \mathrm{ICP}(\mathcal{D},\epsilon,\epsilon_0)$. Assuming that $p\in\mathbb{R}^2$ is a point in $\carrier(\Pac)$, then it is the accumulation point of the centers of the circles.
	%\end{lem}

	\noindent Huabin Ge, hbge@ruc.edu.cn\\
	\emph{School of Mathematics, Renmin University of China, Beijing 100872, P. R. China.}\\[-8pt]

	\noindent Hao Yu, yoho@ruc.edu.cn\\
	\emph{School of Mathematics, Renmin University of China, Beijing 100872, P. R. China.}\\[-8pt]
	
	\noindent Puchun Zhou, pczhou22@m.fudan.edu.cn\\
	\emph{School of Mathematical Sciences, Fudan University, Shanghai, 200433, P.R. China.}
\end{document}